\newtheorem{PARA}{}%[section]
\newtheorem{theorem}[PARA]{Theorem}
\newtheorem{corollary}[PARA]{Corollary}
\newtheorem{lemma}[PARA]{Lemma}
\newtheorem{proposition}[PARA]{Proposition}
\newtheorem{definition}[PARA]{Definition}
\theoremstyle{definition}
\newtheorem{remark}[PARA]{Remark}
\newtheorem{example}[PARA]{Example}
\numberwithin{equation}{section}
\newcommand{\para}{\begin{PARA}\rm}
\newcommand{\arap}{\end{PARA}\rm}
\newcommand{\dfn}{\begin{definition}\rm}
\newcommand{\nfd}{\end{definition}\rm}
\newcommand{\rmk}{\begin{remark}\rm}
\newcommand{\kmr}{\end{remark}\rm}
\newcommand{\xmpl}{\begin{example}\rm}
\newcommand{\lpmx}{\end{example}\rm}
\newcommand{\cA}{\mathcal{A}}
\newcommand{\cC}{\mathcal{C}}
\newcommand{\cH}{\mathcal{H}}
\newcommand{\cJ}{\mathcal{J}}
\newcommand{\cK}{\mathcal{K}}
\newcommand{\cL}{\mathcal{L}}
\newcommand{\cM}{\mathcal{M}}
\newcommand{\cO}{\mathcal{O}}
\newcommand{\cP}{\mathcal{P}}
\newcommand{\cS}{\mathcal{S}}
\newcommand{\cV}{\mathcal{V}}
\newcommand{\cW}{\mathcal{W}}
\newcommand{\oalpha}{{\overline{\alpha}}}
\newcommand{\og}{{\overline{\gamma}}}
\newcommand{\ug}{{\underline{\gamma}}}
\newcommand{\oev}{\overline{\mathrm{ev}}}
\newcommand{\uev}{\underline{\mathrm{ev}}}
\newcommand{\olambda}{{\overline{\lambda}}}
\newcommand{\ulambda}{{\underline{\lambda}}}
\newcommand{\op}{{\overline{p}}}
\newcommand{\up}{{\underline{p}}}
\newcommand{\ox}{\overline{x}}
\newcommand{\one}
{{{\mathchoice \mathrm{ 1\mskip-4mu l} \mathrm{ 1\mskip-4mu l}
\mathrm{ 1\mskip-4.5mu l} \mathrm{ 1\mskip-5mu l}}}}
\newcommand{\C}{{\mathbb{C}}}
\newcommand{\N}{{\mathbb{N}}}
\newcommand{\Q}{{\mathbb{Q}}}
\newcommand{\R}{{\mathbb{R}}}
\renewcommand{\u}{{\mathbf{u}}}
\newcommand{\Z}{{\mathbb{Z}}}
\newcommand{\ind}{\mathrm{ind}}
\newcommand{\Jreg}{\cJ_{\mathrm{reg}}}   %regular J's
\newcommand{\Sp}{{\mathrm{Sp}}}
\newcommand{\OO}{{\mathrm{O}}}
\newcommand{\Mat}{{\mathrm{Mat}}}
\newcommand{\reg}{{\mathrm{reg}}}
\newcommand{\eps}{{\varepsilon}}
\newcommand{\om}{{\omega}}
\newcommand{\tH}{{\widetilde{H}}}
\newcommand{\tgamma}{{\widetilde{\gamma}}}
\newcommand{\tmu}{{\widetilde{\mu}}}
\newcommand{\tM}{{\widetilde{M}}}
\def\NABLA#1{{\mathop{\nabla\kern-.5ex\lower1ex\hbox{$#1$}}}}
\def\Nabla#1{\nabla\kern-.5ex{}_{#1}}
\def\Tabla#1{\Tilde\nabla\kern-.5ex{}_{#1}}
\renewcommand{\Tilde}{\widetilde}
\newcommand{\p}{{\partial}}
\newcommand{\dbar}{{\bar\partial}}
\begin{document}

\title[The index of parametrized action functionals]{The index of Floer moduli problems for parametrized action functionals}
\author{Fr\'ed\'eric Bourgeois}
\address{Universit\'e Libre de Bruxelles, B-1050 Bruxelles, Belgium}
\author{Alexandru Oancea}
\address{Institut de Recherche Math\'ematique Avanc\'ee, UMR 7501, CNRS \& Universit\'e de
Strasbourg, France \\  and Institute for Advanced Study, Princeton, NJ 08540, USA}
%\author{Fr\'ed\'eric {\sc Bourgeois}\footnote{Universit\'e Libre de Bruxelles, B-1050 Bruxelles, Belgium} \ \ and \
%Alexandru {\sc Oancea}\footnote{Institut de Recherche Math\'ematique Avanc\'ee, UMR 7501, CNRS \& Universit\'e de
%Strasbourg, France and Institute for Advanced Study, Princeton, NJ 08540, USA}
%}
\date{July 17, 2012}

%%%%%%%%%%%%%%%%%%%%%%%%%%%%%%%%%%%%%%%%%%%%%%
%%%%%%%%%%%%%%%%%%%%%%%%%%%%%%%%%%%%%%%%%%%%%%
%%%%%%%%%%%%%%%% Abstract %%%%%%%%%%%%%%%%%%%%
%%%%%%%%%%%%%%%%%%%%%%%%%%%%%%%%%%%%%%%%%%%%%%
%%%%%%%%%%%%%%%%%%%%%%%%%%%%%%%%%%%%%%%%%%%%%%

\begin{abstract}
We define an index for the critical points of parametrized Hamiltonian action functionals. The expected dimension of moduli spaces of parame-\break trized Floer trajectories equals the difference of indices of the asymptotes.
\end{abstract}

\maketitle

%\tableofcontents

%%%%%%%%%%%%%%%%%%%%%%%%%%%%%%%%%%%%%%%%%%%%%%
%%%%%%%%%%%%%%%%%%%%%%%%%%%%%%%%%%%%%%%%%%%%%%
%%%%%%%%%%%%%%%% Section 1 %%%%%%%%%%%%%%%%%%%
%%%%%%%%%%%%%%%%%%%%%%%%%%%%%%%%%%%%%%%%%%%%%%
%%%%%%%%%%%%%%%%%%%%%%%%%%%%%%%%%%%%%%%%%%%%%%

\section{Main definition and main theorem}

\subsection{The parametrized action functional} Let $\Lambda$ be a manifold of dimension $m$, $(W,\omega)$ a symplectic manifold of dimension $2n$, and $H:S^1\times W\times \Lambda\to \R$, $H(\theta,x,\lambda)=H_\lambda(\theta,x)$ a smooth family of Hamiltonians defined on $W$. Let $\cL W$ denote the space of loops in $W$ and assume for symplicity that $\omega=d\alpha$ is exact. We are interested in the \emph{parametrized Hamiltonian action functional}
$$
A_H:\cL W\times \Lambda \to \R, \quad (\gamma,\lambda)\longmapsto -\int_\gamma \alpha - \int_{S^1}H_\lambda(\theta,\gamma(\theta))\, d\theta.
$$
Such functionals appear in a variety of settings (Appendix~\ref{app:examples}), and we analyzed in~\cite{BOtransv} their Fredholm theory and their transversality theory.

\subsection{Critical points}
A pair $(\gamma,\lambda)\in\cL W\times \Lambda$ is a critical point of $A_H$ iff it solves the system
\begin{equation} \label{eq:periodicpar}
\dot\gamma(\theta) = X_{H_\lambda}(\theta,\gamma(\theta)), \ \theta\in S^1 \ \mbox{and} \ \int_{S^1} \frac{\partial H}{\partial \lambda}(\theta,\gamma(\theta),\lambda)\, d\theta = 0.
\end{equation}
%Alex added
Our convention for the definition of $X_{H_\lambda}$  is $\omega(X_{H_\lambda},\cdot)=d H_\lambda$. 
We say that the critical point $(\gamma,\lambda)$ is \emph{nondegenerate} if the Hessian $d^2A_H(\gamma,\lambda)$ is injective. If the critical points of $A_H$ are all nondegenerate (which is a generic assumption), they can be used to define a Floer chain complex whose differential is expressed as a count of rigid $L^2$-gradient trajectories~\cite{BOGysin}. The purpose of the present paper is to associate an index to each critical point of $A_H$, in such a way that the dimension of the moduli space of connecting Floer trajectories is expressed as the difference of the indices at the endpoints.

Equation~\eqref{eq:periodicpar} can be interpreted as follows. Every
loop $\gamma:S^1\to W$ determines a function
\begin{equation} \label{eq:Fgamma}
F_\gamma:\Lambda \to \R, \qquad \lambda \mapsto \int_{S^1}
H(\theta,\gamma(\theta),\lambda) \, d\theta.
\end{equation}
A pair $(\gamma,\lambda)$ belongs to $\mathrm{Crit}(A_H)$ iff $\gamma$ is a $1$-periodic orbit of $X_{H_\lambda}$ and $\lambda$ is a critical point of $F_\gamma$.   However, the nondegeneracy of $(\gamma,\lambda)$
  does not imply that $\gamma$ is a nondegenerate orbit of
  $H_\lambda$, nor that $\lambda$ is a nondegenerate critical point of
  $F_\gamma$. This situation is already present in Morse theory, as
  the following example shows.

%{\bf Example.}
\begin{example} Consider the Morse function
  $f:\R\times \R\to\R$, $(x,\lambda)\mapsto x\lambda$. Then $(x_0,\lambda_0)=(0,0)$ is a
  nondegenerate critical point, but $f$ is constant along
  $\R\times \{0\}$ and $\{0\}\times \R$, hence $x_0=0$
  and $\lambda_0=0$ are degenerate critical points.
\end{example}

It is thus not \emph{a priori} clear how to define the index of a critical point $(\gamma,\lambda)$, unless the Hamiltonian $H$ is \emph{split}, i.e. of the form $H(\theta,x,\lambda)=K(\theta,x)+f(\lambda)$, in which case the system~\eqref{eq:periodicpar} is uncoupled. Our discovery is that one can define the index using a parametrized version of the Robbin-Salamon index which we now explain. 
%Alex added
Our method works in general and our approach is fundamentally different from other attempts dealing with particular cases~\cite{Viterbo99,Cieliebak-Frauenfelder}.

\subsection{The parametrized Robbin-Salamon index} \label{sec:paramRS}
Given a Hamiltonian $H:S^1\times W\times
\Lambda\to\R$, we extend it to $\tH:S^1\times  W\times
T^*\Lambda\to\R$ by the formula
$$
\tH(\theta,x,(\lambda,p)):=H(\theta,x,\lambda)=H_\lambda(\theta,x),
$$
so that
%Alex modified sign
$$
X_\tH=X_{H_\lambda}-\frac {\p H} {\p \lambda}\frac \p
{\p p}.
$$
(We use the symplectic form $d\lambda \wedge dp$ on $T^*\Lambda$.) 
A $1$-periodic orbit $\tgamma$ of $X_\tH$ is of the form
$\tgamma=(\gamma(\cdot),\lambda,p(\cdot))$, with $\gamma$ a $1$-periodic orbit of
$X_{H_\lambda}$ and 
%Alex modified sign
$p(\theta)=p(0)-\int_0^\theta\frac {\p H} {\p \lambda}
(\tau,\gamma(\tau),\lambda)\, d\tau$. The closing condition
$p(1)=p(0)$ is equivalent to
$\int_0^1\frac {\p H} {\p \lambda}
(\tau,\gamma(\tau),\lambda)\, d\tau=0$, while $p(0)\in T^*_\lambda
\Lambda$ can be chosen arbitrarily. Thus critical
points of $A_H$ are in one-to-one
bijective correspondence with families of $1$-periodic
orbits of $X_{\tH}$, of dimension
$\dim\,T^*_\lambda\Lambda=\dim\,\Lambda$.

We assume in this paper that
$$
\langle c_1(W),\pi_2(W)\rangle=0
$$
and we consider only critical points $(\gamma,\lambda)$ such that $\gamma$ is contractible in $W$. These restrictions are only meant to focus the discussion and are by no means essential. The associated periodic orbits $\tgamma$ are then contractible in $W\times T^*\Lambda$, and we have $\langle c_1(W\times T^*\Lambda),\pi_2(W\times T^*\Lambda)\rangle =0$. In this situation we can associate without ambiguity to the periodic orbit $\tgamma$ a half-integer called the \emph{Robbin-Salamon index}. This index is defined as the Maslov index~\cite{RS} of the path of symplectic matrices obtained by linearizing the Hamiltonian flow of $\widetilde H$ along $\tgamma$ and by trivializing $T(W\times T^*\Lambda)$ over a disc bounded by $\tgamma$.

\medskip

\noindent {\bf Main Definition.} \emph{The \emph{parametrized Robbin-Salamon
  index} $\mu(\gamma,\lambda)$ of a critical point of $A_H$ is
the Robbin-Salamon index of one of the corresponding $1$-periodic orbits
$(\gamma(\cdot),\lambda,p(\cdot))$ of $\tH$.}

\subsection{The parametrized Floer equation}

Let $J=(J_\lambda^\theta)$, $\lambda\in\Lambda$, $\theta\in S^1$ be a
family of compatible
almost complex structures on $W$. This induces a
  $\Lambda$-family of $L^2$-metrics on $C^\infty(S^1,W)$,
  defined by
$$
\langle \zeta,\eta\rangle_\lambda := \int_{S^1}
\om(\zeta(\theta),J_\lambda^\theta\eta(\theta)) d\theta, \quad \zeta,\eta\in
T_\gamma C^\infty(S^1,W)=\Gamma(\gamma^*TW).
$$
Such a metric can be coupled with any metric $g$ on $\Lambda$ and
gives rise to a metric on
$C^\infty(S^1,W)\times \Lambda$ acting
at a point $(\gamma,\lambda)$ by
$$
\langle(\zeta,\ell), (\eta,k)\rangle_{J,g}:= \langle \zeta,\eta\rangle_\lambda +
g(\ell,k), \qquad (\zeta,\ell),(\eta,k)\in \Gamma(\gamma^*TW)\oplus T_\lambda\Lambda.
$$

The \emph{parametrized Floer equation} is the negative gradient equation for
$A_H$ with respect to such a metric
$\langle\cdot,\cdot\rangle_{J,g}$. More precisely, given
$(\og,\olambda),(\ug,\ulambda)\in \mathrm{Crit}(A_H)$
we denote by
$$
\cM((\og,\olambda),(\ug,\ulambda);H,J,g)
$$
the \emph{space of parametrized Floer trajectories}, consisting of
pairs $(u,\lambda)$ with
$$
u:\R\times S^1 \to W, \qquad \lambda:\R\to \Lambda,
$$
satisfying
\begin{eqnarray}
\label{eq:Floer1par}
 \p_s u + J_{\lambda(s)}^\theta (\p_\theta u -
X_{H_{\lambda(s)}}^\theta (u)) & = & 0, \\
\label{eq:Floer2par}
 \dot \lambda (s) - \int_{S^1} \vec \nabla_\lambda
H(\theta,u(s,\theta),\lambda(s)) d\theta & = & 0,
\end{eqnarray}
and
\begin{equation} \label{eq:asymptoticpar}
 \lim_{s\to -\infty} (u(s,\cdot),\lambda(s)) = (\og,\olambda), \quad
 \lim_{s\to +\infty} (u(s,\cdot),\lambda(s)) = (\ug,\ulambda).
\end{equation}
Here and in the sequel we use the notation $\vec \nabla$ for a
gradient vector field, whereas $\nabla$ will denote a
covariant derivative.

\subsection{The index theorem for the linearized operator}

Let us fix $p> 1$. By linearizing
equations~(\ref{eq:Floer1par}-\ref{eq:Floer2par}) we obtain the
operator
$$
D_{(u,\lambda)} : W^{1,p}(u^*TW) \oplus
W^{1,p}(\lambda^*T\Lambda) \to
L^p(u^*TW) \oplus L^p(\lambda^*T\Lambda),
$$
\begin{equation}\label{eq:Dulambda}
D_{(u,\lambda)} (\zeta,\ell) :=
\left(\begin{array}{c}
D_u\zeta + (D_\lambda J\cdot \ell)(\p_\theta u - X_{H_\lambda}(u)) -
J_\lambda (D_\lambda X_{H_\lambda}\cdot \ell) \\
\nabla_s \ell - \nabla_\ell \int_{S^1} \vec \nabla_\lambda H
(\theta,u,\lambda)
- \int_{S^1} \nabla_\zeta \vec \nabla_\lambda H(\theta,u,\lambda)
\end{array}\right),
\end{equation}
where
$$
D_u : W^{1,p}(u^*TW) \to L^p(u^*TW)
$$
is the usual Floer-Gromov operator
$$
D_u\zeta := \nabla_s \zeta + J_\lambda \nabla_\theta \zeta -
J_\lambda \nabla_\zeta X_{H_\lambda} + \nabla_\zeta J_\lambda (\p_\theta u -
X_{H_\lambda}).
$$

Let us denote
\begin{eqnarray*}
\cW^{1,p} & := & W^{1,p}(\R\times S^1,u^*T W) \oplus
W^{1,p}(\R,\lambda^*T\Lambda), \\
\cL^p & := & L^p(\R\times S^1,u^*TW) \oplus
L^p(\R,\lambda^*T\Lambda).
\end{eqnarray*}
We proved in~\cite[Theorem~2.6]{BOtransv} that, given $(\og,\olambda),(\ug,\ulambda)\in\mathrm{Crit}(A_H)$ which are nondegenerate, and given $(u,\lambda)\in
\cM((\og,\olambda),(\ug,\ulambda); H,J,g)$, the operator
$$
D_{(u,\lambda)} : \cW^{1,p}\to \cL^p
$$
is Fredholm for $1<p<\infty$.
Moreover, for a generic choice of the triple $(H,J,g)$, the space of Floer trajectories
$\cM((\og,\olambda),(\ug,\ulambda); H,J,g)$ is a smooth manifold whose local dimension at $(u,\lambda)$ is equal to $\ind \, D_{(u,\lambda)}$~\cite[Theorem~4.1]{BOtransv}.

\medskip

\noindent {\bf Main Theorem.} \emph{Assume $(\og,\olambda),(\ug,\ulambda)\in\mathrm{Crit}(A_H)$ are
  nondegenerate and fix $1<p<\infty$. For any $(u,\lambda)\in
\cM((\og,\olambda),(\ug,\ulambda); H,J,g)$ the index of the
Fredholm operator $D_{(u,\lambda)}:\cW^{1,p}\to\cL^p$ is}
$$
\ind \, D_{(u,\lambda)} = -\mu(\og, \olambda) + \mu(\ug, \ulambda) .
$$

%%%%%%%%%%%%%%%%%%%%%%%%%%%%%%%%%%%%%%%%%%%%%%
%%%%%%%%%%%%%%%%%%%%%%%%%%%%%%%%%%%%%%%%%%%%%%
%%%%%%%%%%%%%%%% Section 2 %%%%%%%%%%%%%%%%%%%
%%%%%%%%%%%%%%%%%%%%%%%%%%%%%%%%%%%%%%%%%%%%%%
%%%%%%%%%%%%%%%%%%%%%%%%%%%%%%%%%%%%%%%%%%%%%%

\section{Proof of the main theorem}

\subsection{A subgroup of $\Sp(2n+2m)$}

Let $n,m\ge 1$ be integers and define the subgroup $\cS_{n,m}\subset \Sp(2n+2m)$ to consist of
matrices of the form
\begin{equation} \label{eq:M}
M=M(\Psi,X,E)=\left(\begin{array}{ccc}
\Psi & \Psi X & 0 \\
0 & \one & 0 \\
X^TJ_0 & E+\frac 1 2 X^T J_0 X & \one
\end{array}\right),
\end{equation}
with $\Psi\in\Sp(2n)$,
$X\in\mathrm{Mat}_{2n,m}(\R)$, and $E\in \mathrm{Mat}_m(\R)$
symmetric. Here we have denoted $J_0:=\left(\begin{array}{cc} 0 &
    -\one \\ \one & 0 \end{array}\right)$ the standard complex
structure on $\R^{2n}$, and the elements $\Psi\in\Sp(2n)$ are
characterized by the condition $\Psi^T J_0\Psi=J_0$.
Similarly, we denote the standard complex structure on $\R^{2n}\times \R^{2m}$ by
$$
\widetilde J_0:=\left(
  \begin{array}{ccc}
J_0 & 0 & 0 \\
0 & 0 & -\one \\
0 & \one & 0
  \end{array}\right),
$$
and the elements $\widetilde \Psi\in\Sp(2n+2m)$ are characterized by
the condition $\widetilde \Psi^T \widetilde J_0\widetilde
\Psi=\widetilde J_0$.
%Alex rephrased
We have that $\cS_{n,m}$
is a subgroup (but we shall not use this fact). The subgroup property follows from the relations 
\begin{eqnarray*}
\lefteqn{M(\Psi_1,X_1,E_1)\cdot M(\Psi_2,X_2,E_2)}\\
&=& M(\Psi_1\Psi_2,X_2+\Psi_2^{-1}X_1, E_1+E_2+\mathrm{Sym}(X_1^TJ_0\Psi_2X_2))
\end{eqnarray*}
and
\begin{equation} \label{eq:inverse}
M(\Psi,X,E)^{-1}=M(\Psi^{-1},-\Psi X,-E).
\end{equation}
Here we have used the notation
$$
\mathrm{Sym}(P):=(P+P^T)/2
$$
for the symmetric part of a square matrix $P$.

The form of the elements of $\cS_{n,m}$ may seem less artificial in view of the following Lemma. Elements of the form~\eqref{eq:MABC} arise naturally in the next section.

\begin{lemma} \label{lem:MABC}
Let
\begin{equation}\label{eq:MABC}
M=\left(\begin{array}{ccc}\Psi & A & 0 \\ 0 & \one & 0 \\ B & C & \one \end{array}\right)
\end{equation}
be a square $(2n+2m)$-matrix, such that $\Psi$ is a square $2n$-matrix and $\one$ is the identity $m$-matrix. Then $M$ is symplectic if and only if $\Psi$ is symplectic and there exists a matrix $X$ and a symmetric matrix $E$ such that $M=M(\Psi,X,E)$.
\end{lemma}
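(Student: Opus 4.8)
The plan is to verify directly that the symplectic condition $M^T\widetilde J_0 M=\widetilde J_0$, applied to a matrix $M$ of the block form~\eqref{eq:MABC}, is equivalent to the two requirements (i) $\Psi^TJ_0\Psi=J_0$ and (ii) $A=\Psi X$, $B=X^TJ_0$, and $C=E+\frac12 X^TJ_0X$ for some matrix $X\in\Mat_{2n,m}(\R)$ and some symmetric $E\in\Mat_m(\R)$. First I would compute the product $M^T\widetilde J_0 M$ blockwise. Writing $M$ with blocks indexed by the splitting $\R^{2n}\oplus\R^m\oplus\R^m$, one gets a $3\times 3$ block matrix whose entries are quadratic expressions in $\Psi,A,B,C$; setting it equal to $\widetilde J_0$ produces a short list of matrix equations. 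I expect the $(1,1)$-block to give exactly $\Psi^TJ_0\Psi=J_0$, i.e. condition (i); the block equations involving the second and third factors will express $A,B,C$ in terms of $\Psi$ up to the freedom encoded by $X$ and $E$.

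Concretely, the $(1,3)$- and $(3,1)$-blocks should force $B^T=J_0 A$ together with a compatibility with $\widetilde J_0$, which (using that $\Psi$ is invertible, since it is symplectic) lets one define $X:=\Psi^{-1}A$ and then check $B=X^TJ_0$. The $(2,3)$/$(3,2)$-blocks combined with the off-diagonal identity block of $\widetilde J_0$ pin down the relation between $C$ and $X$, and the remaining $(3,3)$-block equation forces the combination $C-\frac12 X^TJ_0X$ to be symmetric, which is precisely the statement that $E:=C-\frac12 X^TJ_0X$ is symmetric. Conversely, for the "if" direction one simply plugs $M=M(\Psi,X,E)$ into $M^T\widetilde J_0M$ and checks the identity; this is the computation that implicitly justifies the well-definedness of $\cS_{n,m}$, so it may already be taken as known, but I would include it for completeness.

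The argument is essentially a bookkeeping exercise in block matrix multiplication, so the main obstacle is not conceptual but organizational: keeping track of the nine blocks and the asymmetric appearance of $\widetilde J_0$ (whose lower $2m\times 2m$ block is the off-diagonal $\begin{pmatrix}0&-\one\\\one&0\end{pmatrix}$), and making sure the symmetry of $E$ drops out cleanly rather than being imposed by hand. One subtle point worth isolating is that invertibility of $\Psi$ — needed to solve $A=\Psi X$ for $X$ — is a consequence of $\Psi$ being symplectic, which itself comes out of the $(1,1)$-block equation; so the equations should be read in the order (i) first, then the definition of $X$, then the constraints on $B$, $C$, and $E$. I would present the forward direction by listing the block equations, then deriving (i), then $X$, then the rest, and dispatch the converse in one line by referring to the product formula already displayed before the Lemma.
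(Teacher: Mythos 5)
Your proposal follows exactly the paper's proof, which is simply the statement that the claim follows from a straightforward block-matrix computation of the condition $M^T\widetilde J_0 M=\widetilde J_0$; your outline correctly identifies that the $(1,1)$-block yields $\Psi^TJ_0\Psi=J_0$, that one then sets $X:=\Psi^{-1}A$, and that the remaining equations give $B=X^TJ_0$ and force $C-\frac12 X^TJ_0X$ to be symmetric. (The only quibble is bookkeeping: it is the $(1,2)$/$(2,1)$-blocks that give $B=A^TJ_0\Psi$ and the $(2,2)$-block that gives $C-C^T=A^TJ_0A=X^TJ_0X$, not the blocks you name, but this does not affect the argument.)
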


\begin{proof}
The proof is a straightforward computation using block matrices and the condition $M^T\widetilde J_0 M=\widetilde J_0$.
\end{proof}

We refer to Appendix~\ref{app:RS} for a summary of the properties of
the Robbin-Salamon index of paths with values in $\cS_{n,m}$.

\subsection{The linearized flow of $\tH$}

Recall from~\S\ref{sec:paramRS} the Hamiltonian
$$
\widetilde H:S^1\times W\times T^*\Lambda\to \R, \qquad \widetilde H(\theta,x,(\lambda,p)):=H(\theta,x,\lambda),
$$
whose flow is given by
%Alex modified sign
\begin{equation} \label{eq:flowtH}
\varphi^\theta_\tH(x,\lambda,p)=\left(\varphi^\theta_{H_\lambda}(x),\lambda,p-\int_0^\theta\frac {\partial H} {\partial \lambda}(\tau,\varphi^\tau_{H_\lambda}(x),\lambda)\, d\tau\right).
\end{equation}
Let $(\gamma,\lambda)\in\mathrm{Crit}(A_H)$ be a critical point and 
%Alex corrected order in expression of $\tgamma$
$\tgamma=(\gamma(\cdot),\lambda,p(\cdot))$ be an associated $1$-periodic orbit of $X_{\tH}$. We fix a unitary trivialization of $\gamma^*TW$ coming from a spanning disc and we fix an isometry $T_\lambda\Lambda\equiv\R^m$, and these together determine a unitary trivialization of $\tgamma^*T(W\times T^*\Lambda)$. The linearized flow $d\varphi^\theta_\tH$ read in such a trivialization determines a path $M(\theta)$, $\theta\in[0,1]$ of symplectic matrices of the form~\eqref{eq:MABC}, and this path takes values in $\cS_{n,m}$ by Lemma~\ref{lem:MABC}. By definition, the index $\mu(\gamma,\lambda)$ is equal to the Robbin-Salamon index of the path $M$.

The matrices $\Psi(\theta)$, $X(\theta)$, and $E(\theta)$ that determine $M(\theta)=M(\Psi(\theta),X(\theta),E(\theta))$ are expressed as follows. We denote $\Psi$ and $A$ the
components of the linearization of the flow $\varphi^\theta_{H_\lambda}$ in the given trivializations of $\gamma^*TW$ and of $T_\lambda\Lambda$, and set $X:=\Psi^{-1}A$. Thus the linearized flow acts as
\begin{eqnarray}
T_{(\gamma(0),\lambda)} (W \times \Lambda) &\to&
T_{\gamma(\theta)} W, \nonumber \\
(\zeta_0,\ell) &\mapsto& \Psi(\theta) \zeta_0 + \Psi(\theta) X(\theta)
\ell . \label{eq:linflow}
\end{eqnarray}
The matrix $E(\theta)$ is the symmetric part of the
endomorphism 
%Alex modified sign
\begin{eqnarray}
T_\lambda \Lambda &\to& T_\lambda \Lambda, \nonumber \\
\ell &\mapsto& -\frac{d}{d\lambda} \int_0^\theta \vec \nabla_\lambda H
(\tau, \Phi^\tau(\gamma(0),\lambda), \lambda) \, d\tau \cdot \ell . \label{eq:Easy}
\end{eqnarray}

\subsection{The spectral flow of the linearized operator $D_{(u,\lambda)}$}

Let us fix a connecting trajectory $(u,\lambda)\in\cM((\og,\olambda),(\ug,\ulambda);H,J,g)$  between two nondegenerate critical points of $A_H$. We recall here from~\cite[Lemma~2.3]{BOtransv} that the nondegeneracy of a critical point $(\gamma,\lambda)$ is equivalent to the bijectivity of the \emph{asymptotic operator}
$$
D_{(\gamma,\lambda)} : H^1(S^1,\gamma^*TW) \times T_\lambda
\Lambda \to L^2(S^1,\gamma^*T W) \times T_\lambda
\Lambda,
$$
\begin{equation}  \label{eq:Dasy}
D_{(\gamma,\lambda)}(\zeta,\ell) = \left(\begin{array}{c}
J_\lambda(\nabla_\theta \zeta - \nabla_\zeta X_{H_\lambda} -
(D_\lambda X_{H_\lambda})\cdot \ell) \\
-\int_{S^1} \nabla_\zeta \frac {\partial H} {\partial \lambda} d\theta
- \int_{S^1} \nabla_\ell \frac {\partial H} {\partial \lambda} d\theta
\end{array}\right).
\end{equation}
The  operator $D_{(\gamma,\lambda)}$ is formally obtained from the linearized operator $D_{(u,\lambda)}$ in equation~\eqref{eq:Dulambda}
by setting $(u(s,\theta),\lambda(s))\equiv (\gamma(\theta),\lambda)$ and
$(\zeta(s,\theta),\ell(s)) \equiv (\zeta(\theta),\ell)$.

Given a unitary trivialization of $u^*TW$ and an orthogonal trivialization of $\lambda^*T\Lambda$, the operator $D_{(u,\lambda)}$ defined by~\eqref{eq:Dulambda} can be written for $p=2$ as
$$
D_{(u,\lambda)}:H^1(\R\times S^1,\R^{2n}) \times H^1(\R,\R^m) \to
L^2(\R\times S^1,\R^{2n})\times L^2(\R,\R^m),
$$
$$
D_{(u,\lambda)}(\zeta,\ell) = \left(\begin{array}{c} \p_s\zeta \\
    \p_s\ell \end{array}\right) +
A(s)\left(\begin{array}{c}\zeta\\\ell\end{array}\right).
$$
Here $A(s):H^1(S^1,\R^{2n}) \times \R^m \to
L^2(S^1,\R^{2n})\times \R^m$ has the property that $A(s)\to A^\pm$,
$s\to\pm\infty$ and $A^\pm$ coincide through the given trivializations
with the asymptotic operators
$D_{(\og,\olambda)}$ and $D_{(\ug,\ulambda)}$, which are bijective in view of our nondegeneracy assumption.
The operators $A(s)$ are of order one and their principal part is self-adjoint. Thus, up to an order zero (and hence compact) perturbation, we can assume for the purpose of computing the index that $A(s)$ is self-adjoint for all $s\in \R$. In this situation, the Fredholm index of the operator $D_{(u,\lambda)}$ is equal to
the spectral flow of the family of self-adjoint operators $A(s)$, $s
\in \R$~\cite[Theorem~A]{RSspec}.

The spectral flow is described as follows. Let us call $s\in\R$ a \emph{crossing} if $\ker\, A(s)\neq 0$, and define the \emph{crossing form} $\Gamma(A,s):\ker\, A(s) \to\R$ by $\Gamma(A,s)\xi=\langle \xi,\frac d {ds} A(s)\xi\rangle$. A crossing $s\in\R$ is called \emph{regular} if the crossing form $\Gamma(A,s)$ is nondegenerate. Such crossings are isolated. If all crossings are nondegenerate, the \emph{spectral flow} is given by the sum over all crossings of the signature of the crossing form $\Gamma(A,s)$, which is the number of positive minus the number of negative eigenvalues. Heuristically, the spectral flow measures the net difference between the number of eigenvalues of $A(s)$ which cross from $-$ to $+$ and those  which cross from $+$ to $-$. Up to a compact perturbation we can always assume that all the crossings of $A(s)$ are regular.

In view of \eqref{eq:Dasy}, the operators can be written in the
given trivializations of $TW$ and $T\Lambda$ along $u$ and
$\lambda$ as
\begin{equation} \label{eq:A}
A(s)(\zeta,\ell) = \left( \begin{array}{c}
J_0 \partial_\theta \zeta(\theta) + S(s,\theta) \zeta(\theta)
+ C(s,\theta)^T \ell \\
\int_{S^1} C(s,\theta) \zeta(\theta) \, d\theta + \int_{S^1} D(s,\theta) \, d\theta \ \ell
\end{array} \right) ,
\end{equation}
where $S(s,\theta) = S(s,\theta)^T$ and $D(s,\theta) = D(s,\theta)^T$
are symmetric matrices.

\subsubsection{Computation of $\ker\,A(s)$, $s\in\R$}

We define
$$
\Psi:\R\times [0,1]\to\Sp(2n)
$$
by $\dot \Psi(s,\theta) = J_0 S(s,\theta) \Psi(s,\theta)$ and
$\Psi(s,0)=\one$, so that
$$
\lim_{s\to-\infty} \Psi(s,\cdot)=\overline \Psi(\cdot), \qquad
\lim_{s\to\infty} \Psi(s,\cdot)=\underline \Psi(\cdot).
$$

For $(\zeta, \ell) \in \ker A(s)$, we write $\zeta(\theta) =
\Psi(s,\theta) \eta(\theta)$ for some smooth
function $\eta : [0,1] \to \R^{2n}$. Substituting this in the first
component of $A(s)(\zeta, \ell)$, we obtain
\begin{equation}  \label{eq:diffeta}
\dot \eta(\theta) = \Psi(s,\theta)^{-1} J_0 C(s,\theta)^T \ell .
\end{equation}

We define $X : \R\times [0,1] \to \Mat_{2n,m}(\R)$ by
\begin{equation}\label{eq:X}
\dot X(s,\theta) = \Psi(s,\theta)^{-1} J_0 C(s,\theta)^T
\end{equation}
and $X(s,0) = 0$. The solution
of \eqref{eq:diffeta} is then $\eta(\theta) = X(s,\theta) \ell + \eta(0)$, so that
\begin{equation} \label{eq:zeta}
\zeta(\theta) = \Psi(s,\theta) \zeta_0 + \Psi(s,\theta) X(s,\theta) \ell ,
\end{equation}
with $\zeta_0 = \zeta(0) = \eta(0)$. Comparing~\eqref{eq:zeta}
with~\eqref{eq:linflow} we see that
$$
\lim_{s\to-\infty} X(s,\cdot)=\overline X(\cdot), \qquad
\lim_{s\to\infty} X(s,\cdot)=\underline X(\cdot).
$$
The solution $\zeta(\theta)$ given by~\eqref{eq:zeta} descends to $S^1 = \R/\Z$
if and only if
\begin{equation}  \label{eq:degen1}
\zeta_0 = \Psi(s,1) \zeta_0 + \Psi(s,1) X(s,1) \ell .
\end{equation}
Substituting the expression~\eqref{eq:zeta} for $\zeta(\theta)$ in the
second component of\break $A(s)(\zeta, \ell)$, we obtain
\begin{equation}  \label{eq:degen2}
\int_0^1 C(s,\theta) \Psi(s,\theta)  d\theta \, \zeta_0
+ \int_0^1 \big(C(s,\theta) \Psi(s,\theta) X(s,\theta) + D(s,\theta)\big) d\theta \, \ell = 0 .
\end{equation}
We now notice that we have
\begin{equation} \label{eq:CPsi}
C(s,\theta)\Psi(s,\theta) = \dot X(s,\theta)^T J_0,
\end{equation}
which implies in particular
\begin{equation}  \label{eq:CT=B}
\int_0^\theta C(s,\tau) \Psi(s,\tau) \, d\tau = \int_0^\theta \dot
X(s,\tau)^T J_0\, d\tau = X(\theta)^T J_0.
\end{equation}

We define
$$
E : \R\times [0,1] \to \Mat_m(\R)
$$
by
\begin{equation}  \label{eq:E}
E(s,\theta) = \int_0^\theta \big( C(s,\tau) \Psi(s,\tau) X(s,\tau) +
D(s,\tau) \big)d\tau  - \frac 12 X(s,\theta)^T J_0 X(s,\theta).
\end{equation}
We claim that the matrix $\frac 1 2 X(s,\theta)^T J_0 X(s,\theta)$ is the
anti-symmetric part of the matrix $\int_0^\theta
C(s,\tau)\Psi(s,\tau)X(s,\tau)d\tau$, so that $E(s,\theta)$ is
symmetric. Omitting the $s$-variable for clarity and using that
$C(\tau)\Psi(\tau)=\dot X(\tau)^TJ_0$ we obtain
\begin{eqnarray*}
\lefteqn{\hspace{-1.5cm}\int_0^\theta C(\tau)\Psi(\tau)X(\tau)d\tau - \int_0^\theta X(\tau)^T
\Psi(\tau)^TC(\tau)^Td\tau}\\
& = & \int_0^\theta \dot
X(\tau)^TJ_0X(\tau)d\tau+\int_0^\theta X(\tau)^TJ_0\dot X(\tau)d\tau
\\
& = & X(\theta)^TJ_0X(\theta).
\end{eqnarray*}
It follows that $E(s,\theta)$ is the symmetric part of
$\int_0^\theta
(C\Psi X+D)(s,\tau)d\tau$. Comparing
this with~\eqref{eq:Easy}, it follows that
$$
\lim_{s\to-\infty} E(s,\cdot)=\overline E(\cdot), \qquad
\lim_{s\to\infty} E(s,\cdot)=\underline E(\cdot).
$$

With these notations in place, we see that~\eqref{eq:degen1}
and~\eqref{eq:degen2} are equivalent to the $(2n+m) \times (2n+m)$
system of linear equations
\begin{equation}  \label{eq:degen}
\left( \begin{array}{cc}
\Psi(s,1) - \one & \Psi(s,1) X(s,1) \\
X(s,1)^TJ_0 & E(s,1) + \frac 1 2 X(s,1)^TJ_0X(s,1)
\end{array} \right)
\left( \begin{array}{c}
\zeta_0 \\ \ell
\end{array} \right) =
\left( \begin{array}{c}
0 \\ 0
\end{array} \right) .
\end{equation}
The solutions of the system~\eqref{eq:degen} are in bijective correspondence
with the elements $(\zeta, \ell) \in \ker A(s)$ through equation~\eqref{eq:zeta}.
On the other hand, it follows from the definition of $\cS_{n,m}$ that
solutions of~\eqref{eq:degen} are in bijective correspondence with elements
$$
(\zeta_0,\ell,0)\in \ker\, \big(M(\Psi(s,1), X(s,1)
, E(s,1)) - \one\big).
$$
Since $(0,0,v)\in \ker\, \big(M(\Psi(s,1), X(s,1)
, E(s,1)) - \one\big)$ for all $v\in \R^m$, we infer that $\ker A(s)
\neq 0$ if and only if
\begin{equation}
\dim\ker \, \big( M(\Psi(s,1), X(s,1) , E(s,1)) - \one \big) >m.
\end{equation}

\begin{remark} {\rm
We associated to each operator $A(s)$ of the form \eqref{eq:A}
a path of matrices $M : [0,1] \to \cS_{n,m}$,
$M(\theta) = M(\Psi(\theta), X(\theta), E(\theta))$  such that $M(0)
=\one$. Conversely, any such path $M$ determines a unique
operator $A(s)$ of the form \eqref{eq:A} by the formulas
\begin{eqnarray*}
S(\theta) &=& -J_0 \dot \Psi(\theta) \Psi(\theta)^{-1} \\
C(\theta) &=& \dot X(\theta)^T \Psi(\theta)^T J_0  \\
D(\theta) &=& \dot E(\theta) + \mathrm{Sym} \left( X(\theta)^T J_0 \dot X(\theta) \right).
\end{eqnarray*}}
\end{remark}

\subsubsection{Computation of the crossing form $\Gamma(A,s)$ on $\ker\, A(s)$} \label{sec:Gamma}
We have
$$
\frac d {ds}  A(s) (\zeta, \ell) = \left( \begin{array}{c}
\partial_s S(s,\theta) \zeta(\theta) + \partial_s C(s,\theta)^T \ell \\
\int_{S^1} \partial_s C(s,\theta) \zeta(\theta)  d\theta
+ \int_{S^1} \partial_s D(s,\theta) d\theta \ \ell
\end{array} \right) .
$$
Since $(\zeta, \ell) \in \ker A(s)$, we have
$\zeta(\theta)=\Psi(s,\theta)\zeta_0+\Psi(s,\theta)X(s,\theta)\ell$. We obtain
\begin{eqnarray}
\lefteqn{\Gamma(A,s)(\zeta,\ell)} \nonumber \\
&=&\langle(\zeta,\ell),\frac d {ds} A(s) (\zeta,\ell)\rangle\nonumber \\
&=&\int_{S^1} \left\langle \zeta(\theta) , \partial_s S(s,\theta) \zeta(\theta)
+ \partial_s C(s,\theta)^T \ell \right\rangle \, d\theta \nonumber \\
&&+ \left\langle \ell, \int_{S^1} \partial_s C(s,\theta) \zeta(\theta) \, d\theta
+ \int_{S^1} \partial_s D(s,\theta) \, d\theta \ \ell \right\rangle \nonumber \\
&=& \int_0^1  (\zeta_0 + X (s,\theta) \ell)^T \Psi(s,\theta)^T
\partial_s S(s,\theta) \Psi(s,\theta) (\zeta_0 +
X(s,\theta) \ell) \, d\theta \label{eq:first-term} \\
&& + \int_0^1  (\zeta_0 + X (s,\theta) \ell)^T \Psi(s,\theta)^T
\partial_s C(s,\theta)^T \ell  \, d\theta \nonumber \\
&& + \ \ell^T \int_0^1 \partial_s C(s,\theta)
\Psi(s,\theta) (\zeta_0 + X(s,\theta) \ell) \, d\theta  \nonumber \\
&& + \ \ell^T \int_{S^1} \partial_s D(s,\theta) d\theta \, \ell.    \label{eq:cross}
\end{eqnarray}
Let us define symmetric matrices $\widehat S(s,\theta)$ by
$\partial_s \Psi(s,\theta) = J_0 \widehat S(s,\theta)
\Psi(s,\theta)$. The condition $\Psi(s,0)=\one$ implies $\widehat
S(s,0)=0$. We claim that (see
also~\cite[proof of Lemma~2.6]{Salamon-lectures})
\begin{equation} \label{eq:stheta}
\Psi(s,\theta)^T \partial_s S(s,\theta) \Psi(s,\theta)
= \partial_\theta\left( \Psi(s,\theta)^T \widehat S(s,\theta) \Psi(s,\theta) \right).
\end{equation}
Dropping the $(s,\theta)$ variables for clarity, we have~\cite{Salamon-lectures}
\begin{eqnarray*}
\partial_\theta  \left( \Psi^T \widehat S \Psi \right)
 &=&  \Psi^T S^T (- J_0)  \widehat S \Psi
+ \Psi^T \partial_\theta ( \widehat S  \Psi ) \\
 &=& - \Psi^T S \partial_s \Psi
+ \Psi^T \partial_\theta \left( - J_0 \partial_s \Psi \right)  \\
 &=& - \Psi^T S \partial_s \Psi  - \Psi^T J_0 \partial_s
 \partial_\theta \Psi  \\
 &=& - \Psi^T S \partial_s \Psi  - \Psi^T J_0 \partial_s
 (J_0 S \Psi)  \\
 &=& \Psi^T \partial_s S \Psi .
\end{eqnarray*}
Using~\eqref{eq:stheta}, the term~\eqref{eq:first-term} becomes
\begin{eqnarray*}
\lefteqn{ \int_0^1  (\zeta_0 + X (s,\theta) \ell)^T
\partial_\theta  \left( \Psi(s,\theta)^T \widehat S(s,\theta) \Psi(s,\theta) \right)
(\zeta_0 + X(s,\theta) \ell) \, d\theta } \\
&=& (\zeta_0 + X(s,1) \ell)^T  \Psi(s,1)^T \widehat S(s,1) \Psi(s,1)  (\zeta_0 + X (s,1) \ell) \\
&& - \ell^T  \int_0^1 \partial_\theta X(s,\theta)^T
\Psi(s,\theta)^T \widehat S(s,\theta) \Psi(s,\theta) (\zeta_0 + X(s,\theta) \ell) \, d\theta  \\
&& -  \int_0^1 (\zeta_0 + X (s,\theta) \ell)^T  \Psi(s,\theta)^T \widehat S(s,\theta) \Psi(s,\theta)
\partial_\theta X(s,\theta)  \, d\theta \ \ell \\
&=& \zeta_0^T    \widehat S(s,1)   \zeta_0
 + \ell^T  \int_0^1 C(s,\theta) J_0
\widehat S(s,\theta) \Psi(s,\theta) (\zeta_0 + X(s,\theta) \ell) \, d\theta  \\
&& -  \int_0^1 (\zeta_0 + X (s,\theta) \ell)^T  \Psi(s,\theta)^T \widehat S(s,\theta)
J_0 C(s,\theta)^T  \, d\theta \ \ell \\
&=& \zeta_0^T  \widehat S(s,1) \zeta_0
 + \ell^T  \int_0^1 C(s,\theta) \partial_s \Psi(s,\theta) (\zeta_0 + X(s,\theta) \ell) \, d\theta  \\
&& +  \int_0^1 (\zeta_0 + X (s,\theta) \ell)^T  \partial_s \Psi(s,\theta)^T
C(s,\theta)^T  \, d\theta \ \ell .
\end{eqnarray*}
The second equality uses~\eqref{eq:degen1} and~\eqref{eq:X}. Thus, equation \eqref{eq:cross} becomes
\begin{eqnarray}
\Gamma(A,s)(\zeta,\ell) &=&
 \zeta_0^T \widehat S(s,1) \zeta_0
  + \ell^T  \int_0^1  \partial_s \big(C(s,\theta) \Psi(s,\theta) \big)
 (\zeta_0 + X(s,\theta) \ell) d\theta  \nonumber \\
&& +  \int_0^1 (\zeta_0 + X (s,\theta) \ell)^T  \partial_s \big(\Psi(s,\theta)^T
C(s,\theta)^T \big) d\theta \, \ell \nonumber \\
&& + \ \ell^T \int_{S^1} \partial_s D(s,\theta) d\theta \, \ell \nonumber \\
&=& \zeta_0^T \widehat S(s,1) \zeta_0 + \ell^T \big(\partial_s X(s,1)^T J_0\big) \zeta_0 +
\zeta_0^T\big(-J_0\partial_s X(s,1)\big) \ell \nonumber \\
&& + \ \ell^T
\int_0^1\Big(\partial_s (C\Psi)X + X^T\partial_s (\Psi^T C^T) + \partial_s D\Big)
\ell . \label{eq:lastterm}
\end{eqnarray}
We used~\eqref{eq:CPsi} in the second equality. We claim that the matrix of the quadratic form $\Gamma(A,s)$ acting on
the space of elements $(\zeta_0,\ell)$
satisfying~\eqref{eq:degen1} is given by
\begin{equation} \label{eq:crossA2}
\left( \begin{array}{cc}
\widehat S(s,1) & - J_0\partial_s X(s,1) \\
\partial_s X(s,1)^T J_0 &
\partial_s E(s,1) -\mathrm{Sym}\big(X^T(s,1) J_0 \partial_s X(s,1)\big)
\end{array} \right) .
\end{equation}
This amounts to proving the identity
\begin{equation} \label{eq:longidentity}
\partial_sE(s,1) -\mathrm{Sym}\big(X^T(s,1) J_0 \partial_s X(s,1)\big)=\int_0^1\partial_s(C\Psi)X + X^T\partial_s (\Psi^T C^T) + \partial_s D
\end{equation}
for the term in the lower right corner. This is seen by a direct computation:
\begin{eqnarray*}
\lefteqn{\int_0^1\partial_s(C\Psi)X + X^T\partial_s (\Psi^T C^T) + \partial_s D} \\
&=& \partial_s \mathrm{Sym}\int_0^1(C\Psi X +D) +\mathrm{Sym}\int_0^1\partial_s(C\Psi)X
- \mathrm{Sym}\int_0^1C\Psi\partial_s X \\
&=& \partial_s E(s,1) +\mathrm{Sym}\int_0^1\partial_s(C\Psi)X \\
&& - \ \mathrm{Sym}\big(X(s,1)^TJ_0\partial_s X(s,1)\big) + \mathrm{Sym}\int_0^1X^TJ_0\partial_s\dot X \\
&=& \partial_s E(s,1) - \mathrm{Sym}\big(X(s,1)^TJ_0\partial_s X(s,1)\big).
\end{eqnarray*}
The second equality uses the definition of $E$, the identity $C\Psi=\dot X^T J_0$ from~\eqref{eq:CPsi}, and integration by parts. The third equality uses that $X^TJ_0\partial_s \dot X=-\big(\partial_s(C\Psi)X\big)^T$, which is a consequence of $C\Psi=\dot X^T J_0$.

\subsection{Proof of the Main Theorem}

Let us compute the crossing form $\Gamma(M,s)$ for the
Robbin-Salamon index of the path
$$
s\mapsto M(s,1)=M(\Psi(s,1),X(s,1),E(s,1)).
$$
By definition, the crossing form is $\Gamma(M,s)(\zeta_0,\ell,v)=\langle
(\zeta_0,\ell,v), Q(s)(\zeta_0,\ell,v)\rangle$, with $Q(s):=-\widetilde J_0
\partial_s M(s,1)M(s,1)^{-1}$.
Using~\eqref{eq:inverse} and the definition of $\widehat
S(s,1)=-J_0\partial_s \Psi(s,1) \Psi(s,1)^{-1}$ from~\S\ref{sec:Gamma}, a
straightforward computation shows that $Q(s)$ is given by
$$
\left(
  \begin{array}{ccc}
\widehat S(s,1) & -J_0\Psi(s,1)\partial_s X(s,1) & 0
\\
\partial_s X(s,1)^T\Psi(s,1)^TJ_0 & \partial_s E(s,1) + \mathrm{Sym}\big(X(s,1)^TJ_0\partial_s X(s,1)\big) & 0 \\
0 & 0 & 0
  \end{array}\right).
$$

The key observation now is that, for any $(\zeta_0,\ell,0)\in \ker \,
(M(s,1)-\one)$, we have
$$
\Gamma(M,s)(\zeta_0,\ell,0) =\Gamma(A,s)(\zeta,\ell),
$$
with $\zeta(\theta)=\Psi(s,\theta)\zeta_0 +
\Psi(s,\theta)X(s,\theta)\ell$. This is seen by a direct
computation, substituting
$\zeta_0=\Psi(s,1)\zeta_0+\Psi(s,1)X(s,1)\ell$ in the non-diagonal terms
of $\Gamma(M,s)(\zeta_0,\ell,0)$:
\begin{eqnarray*}
\lefteqn{\Gamma(M,s)(\zeta_0,\ell,0)} \\
&=& \zeta_0^T\widehat S \zeta_0 + \ell^T\big(\partial_s E+\mathrm{Sym}(X^TJ_0\partial_s X)\big)\ell
+ \ell^T \partial_s X^T\Psi^TJ_0\zeta_0 + \zeta_0(-J_0\Psi\partial_s X)\ell \\
&=& \zeta_0^T\widehat S \zeta_0 + \ell^T\big(\partial_s E+\mathrm{Sym}(X^TJ_0\partial_s X)\big)\ell \\
&& + \ \ell^T\partial_sX^TJ_0\zeta_0 + \ell^T\partial_s X^T J_0 X\ell + \zeta_0^T(-J_0\partial_s X)\ell + \ell^T X^T (-J_0) \partial_s X \ell \\
&=& \zeta_0^T\widehat S \zeta_0 + \ell^T\big(\partial_s E+\mathrm{Sym}(X^TJ_0\partial_s X)\big)\ell \\
&& + \ \ell^T\partial_sX^TJ_0\zeta_0 + \zeta_0^T(-J_0\partial_s X)\ell - 2\ell^T \mathrm{Sym}(X^TJ_0\partial_s X)\ell.
\end{eqnarray*}
This last expression is equal to $\Gamma(A,s)(\zeta,\ell)$ in view of~\eqref{eq:crossA2}.

By Proposition~\ref{prop:RSstratum} in Appendix~\ref{app:RS} (applied with
$E(s)\equiv \{0\}\oplus\{0\}\oplus\R^m$), it follows that the spectral
flow of $A(s)$ coincides with the Robbin-Salamon index of the
degenerate path $s\mapsto M(s,1)$. Thus
 $$
 \ind \, D_{(u,\lambda)} = \mu_{RS}\left( M(\Psi(s,1), X(s,1), E(s,1)), s \in \R \right) .
 $$
 By the \emph{(Homotopy)} and \emph{(Catenation)} axioms for the
 Robbin-Salamon index~\cite{RS}, and using that
 $\lim_{s\to-\infty}M(s,\theta)=\overline M(\theta)$ and
 $\lim_{s\to\infty}M(s,\theta)=\underline M(\theta)$, we obtain
\begin{eqnarray*}
 \ind \, D_{(u,\lambda)}
 &=& \mu_{RS}\left( M(\underline \Psi(\theta), \underline X(\theta),
 \underline E(\theta)), \theta \in [0,1] \right) \\
&& -  \mu_{RS}\left( M(\overline \Psi(\theta), \overline X(\theta),
 \overline E(\theta)), \theta \in [0,1] \right) \\
&=& \mu(\ug, \ulambda) - \mu(\og, \olambda).
\end{eqnarray*}
\hfill{$\square$}

\appendix

\section{Examples} \label{app:examples}

We explain in this appendix several examples in which parametrized Hamiltonian action functionals appear naturally.

\subsection{$S^1$-equivariant Floer homology~\cite{Viterbo99, BOGysin}} One takes $\Lambda=ES^1$ (or rather a finite-dimensional approximation of it), where $ES^1$ is up to equivariant homotopy the unique contractible $S^1$-space carrying a free action. The $S^1$-equivariant Floer homology groups are defined using Hamiltonians which are invariant
$$
H(\theta+\tau,x,\tau\lambda)=H(\theta,x,\lambda), \qquad \tau\in S^1.
$$
Compared to the classical, non-equivariant Floer homology groups, these carry refined information coming from the $S^1$-action on $\cL W$ given by reparametrization at the source.

\subsection{Rabinowitz-Floer homology~\cite{Cieliebak-Frauenfelder}} 
\label{sec:RFH}
One takes $\Lambda=\R$ and
$$
H(\theta,x,\lambda)=\lambda K(x),
$$
with $K:W\to \R$ an autonomous Hamiltonian. The critical points of $A_H$ solve the equations $\dot\gamma = \lambda X_K$ and $\int_{S^1} K(\gamma(\theta))\,d\theta = 0$, which are equivalent to $\dot\gamma=\lambda X_K$ and $\mathrm{im}\,\gamma\subset K^{-1}(0)$. Thus critical points of $A_H$ correspond to \emph{closed characteristics on the fixed energy level} $K^{-1}(0)$.

\subsection{Rabinowitz-Floer homology for leafwise intersections of hypersurfaces~\cite{AF10}} One takes again
$\Lambda=\R$ but
$$
H(\theta,x,\lambda)=\lambda \rho(\theta)K(x)+F(\theta,x).
$$
Here $\rho:S^1\to \R$ is such that $\mathrm{supp}(\rho)\subset[0,\frac 1 2]$ and $\int_{S^1}\rho(\theta)\, d\theta =1$, while $F(\theta,\cdot)=0$ for $\theta\in[0,\frac 1 2]$. The equations for a critical point $(\gamma,\lambda)$ are equivalent to $x:=\gamma(\frac 1 2)\in K^{-1}(0)$ and $\gamma(0)=\gamma(1)=\phi^1_F(x)\in L_x$, with $L_x$ the orbit of the characteristic flow passing through $x$. One calls $x$ a \emph{leafwise intersection of the flow $\phi^1_F$}.

\subsection{Rabinowitz-Floer homology for leafwise coisotropic intersections~\cite{Kang10}} Let $\cK=(K_1,\dots,K_k):W\to\R^k$ be a system of autonomous Poisson-commuting Hamiltonians. The preimage $\cK^{-1}(a)$ of a regular value $a\in\R^k$ is then a coisotropic submanifold which is foliated by isotropic leaves that are tangent to the span of the Hamiltonian vector fields $X_{K_1},\dots,X_{K_k}$. Let $\langle\cdot,\cdot\rangle$ be the Euclidean scalar product on $\R^k$, take $\Lambda=\R^k$ and define
$$
H(\theta,x,\lambda)=\rho(\theta)\langle \lambda,\cK(x)\rangle + F(\theta,x)
$$
with $\rho$ and $F$ as above. The equations for a critical point $(\gamma,\lambda)$ are equivalent to $x:=\gamma(\frac 1 2)\in \cK^{-1}(0)$ and $\gamma(0)=\gamma(1)=\phi^1_F(x)\in L_x$, with $L_x$ the isotropic leaf through $x$. One calls $x$ a \emph{leafwise coisotropic intersection of the flow $\phi^1_F$}.

% \subsection{$G$-equivariant Floer homology} Assume that $W$ carries a symplectic $G$-action, and take $\Lambda=EG$ (or a finite-dimensional approximation of it) and a $G$-invariant Hamiltonian
% $$
% H(\theta,gx,g\lambda)=H(\theta,x,\lambda), \qquad g\in G.
% $$
% The resulting $G$-equivariant Floer homology groups are the subject of current
% investigation~\cite{Oancea-Wehrheim}.

\subsection{Floer homology for families~\cite{Hutchings03}} This construction generalizes the setup that we consider in this paper to nontrivial fibrations. Assume $\pi:E\to \Lambda$ is a symplectic fibration endowed with an exact $2$-form $\Omega=d\Theta\in\Omega^2(E;\R)$ which restricts to a symplectic form in the fibers (this is called a \emph{coupling form}). Let $H:S^1\times E\to \R$ be a Hamiltonian and let $\cL_\Lambda E$ denote the space of loops $\gamma$ in $E$ such that $\pi\circ \gamma$ is constant. One considers the action functional
$$
A_H:\cL_\Lambda E\to \R, \qquad \gamma\mapsto -\int_\gamma\Theta-\int_{S^1}H(\theta,\gamma(\theta))\, d\theta.
$$
The critical points of $A_H$ are the basis for the \emph{Floer homology groups of the family $(E,\Lambda)$}~\cite{Hutchings03}. Since the fibration is locally trivial and the critical points of $A_H$ are contained in a fiber, the definition of the index that we give in this paper applies also to this more general setup.

%%%%%%%%%%%%%%%%%%%%%%%%%%%%%%%%%%%%%%%
\section{The parametrized Robbin-Salamon index} \label{app:RS}
%%%%%%%%%%%%%%%%%%%%%%%%%%%%%%%%%%%%%%%

We summarize in this appendix the properties of the
Robbin-Salamon index on paths with values in the subgroup $\cS_{n,m}\subset \Sp(2n+2m)$ defined in~\S\ref{sec:paramRS}. We also prove a result (Proposition~\ref{prop:RSstratum}) which is used in the proof of our Main Theorem.

We recall that $\cS_{n,m}$ consists of matrices of the form
$$
M=M(\Psi,X,E)=\left(\begin{array}{ccc}
\Psi & \Psi X & 0 \\
0 & \one & 0 \\
X^TJ_0 & E+\frac 1 2 X^T J_0 X & \one
\end{array}\right),
$$
with $\Psi\in\Sp(2n)$,
$X\in\mathrm{Mat}_{2n,m}(\R)$, and $E\in \mathrm{Mat}_m(\R)$
symmetric. We have denoted by $J_0:=\left(\begin{array}{cc} 0 &
    -\one \\ \one & 0 \end{array}\right)$ the standard complex
structure on $\R^{2n}$, so that $\Psi\in\Sp(2n)$ if and only if
$\Psi^T J_0\Psi=J_0$. The standard complex structure on $\R^{2n}\times
\R^{2m}$ is
$$
\widetilde J_0:=\left(
  \begin{array}{ccc}
J_0 & 0 & 0 \\
0 & 0 & -\one \\
0 & \one & 0
  \end{array}\right),
$$
and we have $M^T\widetilde J_0 M=\widetilde J_0$. Note that we have
natural embeddings (which respect the group structure)
$$
\cS_{n,m}\times \cS_{n',m'}\hookrightarrow \cS_{n+n',m+m'}
$$
which associate to $M=M(\Psi,X,E)\in\cS_{n,m}$ and
$M'=M(\Psi',X',E')\in\cS_{n',m'}$ the matrix
$$
M\oplus M':=M(\Psi\oplus \Psi',X\oplus X',E\oplus
E')\in\cS_{n+n',m+m'}.
$$
The space $\cS_{n,m}$ is stratified as $\coprod_{k=0}^{2n+m}
\cS^k_{n,m}$, with
$$
\cS^k_{n,m}:=\big\{M\in\cS_{n,m} \ : \ \dim\, \ker \, (M-\one) = m+k. \big\}.
$$

%Alex changed statement of Proposion below
The following are general properties of the Robbin-Salamon index $\mu=\mu_{RS}$ defined on paths with values in $\Sp(2n+2m)$~\cite[Theorem~4.1 and Theorem~4.7]{RS}.
\begin{description}
\item[(Homotopy)] If $M,M':[a,b]\to \cS_{n,m}$ are homotopic with
  fixed endpoints then
$$
\mu(M)=\mu(M');
$$
\item[(Catenation)] For any $c\in [a,b]$ we have
$$
\mu(M)=\mu(M|_{[a,c]}) + \mu(M|_{[c,b]});
$$
\item[(Naturality)] For any path $P:[a,b]\to \Sp(2n)\times \Sp(2m)$ of
the form
\begin{equation} \label{eq:Ptheta}
P(\theta)=\left(\begin{array}{ccc}
\Phi(\theta) & 0 & 0 \\
0 & A(\theta) & 0 \\
0 & 0 & A(\theta)
  \end{array}\right)
\end{equation}
(with $\Phi(\theta)\in\Sp(2n)$ and $A(\theta)\in\OO(m)$, hence $PMP^{-1}\in\cS_{n,m}$), we have
$$
\mu(PMP^{-1})=\mu(M);
$$
\item[(Product)] For any $M\in \cS_{n,m}$ and $M'\in \cS_{n',m'}$ we
  have
$$
\mu(M\oplus M')=\mu(M) + \mu(M');
$$
\item[(Zero)] For any path $M:[a,b]\to \cS^k_{n,m}$ we have
$$
\mu(M)=0;
$$
\item[(Integrality)] Given a path $M:[a,b]\to \cS_{n,m}$ with $M(a)\in
  \cS^{k_a}_{n,m}$, $M(b)\in \cS^{k_b}_{n,m}$, we have
$$
\mu(M)+\frac {k_a-k_b} 2 \in \Z;
$$
\end{description}

The next statement summarizes properties that are specific to the index function restricted to paths with values in $\cS_{n,m}$. 

\begin{proposition} \label{prop:mu}
The Robbin-Salamon index $\mu=\mu_{RS}$ defined on paths $M:[a,b]\to
\cS_{n,m}$, $M(\theta)=M(\Psi(\theta),X(\theta),E(\theta))$ has the
following properties.
\begin{description}
%\item[(Homotopy)] If $M,M':[a,b]\to \cS_{n,m}$ are homotopic with
%  fixed endpoints then
%$$
%\mu(M)=\mu(M');
%$$
%\item[(Catenation)] For any $c\in [a,b]$ we have
%$$
%\mu(M)=\mu(M|_{[a,c]}) + \mu(M|_{[c,b]});
%$$
%\item[(Naturality)] For any path $P:[a,b]\to \Sp(2n)\times \Sp(2m)$ of
%the form
%\begin{equation} \label{eq:Ptheta}
%P(\theta)=\left(\begin{array}{ccc}
%\Phi(\theta) & 0 & 0 \\
%0 & A(\theta) & 0 \\
%0 & 0 & A(\theta)
%  \end{array}\right)
%\end{equation}
%(with $\Phi(\theta)\in\Sp(2n)$ and $A(\theta)\in\OO(m)$), we have
%$$
%\mu(PMP^{-1})=\mu(M);
%$$
\item[(Loop)] For any loop $P:[a,b]\to \Sp(2n)\times \Sp(2m)$ of the
form~\eqref{eq:Ptheta}, we have
$$
\mu(PM)=\mu(M)+ 2\mu(\Phi);
$$
%\item[(Product)] For any $M\in \cS_{n,m}$ and $M'\in \cS_{n',m'}$ we
%  have
%$$
%\mu(M\oplus M')=\mu(M) + \mu(M');
%$$
\item[(Splitting)] Given $M=M(\Psi,0,E):[a,b]\to \cS_{n,m}$, we have
$$
\mu(M)=\mu(\Psi) + \frac 1 2 \mathrm{sign}\, E(b) - \frac 1 2
\mathrm{sign}\, E(a);
$$
\item[(Signature)] Given symmetric matrices $E\in \R^{m\times m}$
  and $S\in\R^{2n\times 2n}$ with $\| S \| <2\pi$, we have
$$
\mu\big\{M(\exp(J_0St), 0,tE),\ t\in[0,1]\big\} = \frac 1 2 \mathrm{sign}(S) +
\frac 1 2 \mathrm{sign}(E);
$$
%\item[(Zero)] For any path $M:[a,b]\to \cS^k_{n,m}$ we have
%$$
%\mu(M)=0;
%$$
%\item[(Integrality)] Given a path $M:[a,b]\to \cS_{n,m}$ with $M(a)\in
%  \cS^{k_a}_{n,m}$, $M(b)\in \cS^{k_b}_{n,m}$, we have
%$$
%\mu(M)+\frac {k_a-k_b} 2 \in \Z;
%$$
\item[(Determinant)] Given a path $M=M(\Psi,X,E):[a,b]\to \cS_{n,m}$ with
  $M(a)=\one$ and $M(b)\in\cS^0_{n,m}$, we have
$$
(-1)^{n+\frac m 2 - \mu(M)}=\mathrm{sign}\, \det \,
\left(\begin{array}{cc}
\Psi-\one & \Psi X \\
X^TJ_0 & E+\frac 1 2 X^TJ_0 X
\end{array}\right).
$$
We have denoted for simplicity $\Psi=\Psi(b)$, $X=X(b)$, $E=E(b)$.
\item[(Involution)] For any $M=M(\Psi,X,E):[a,b]\to\cS_{n,m}$ we have
$$
\mu(M(\Psi,X,E))=\mu(M(\Psi,-X,E))
$$
and
$$
\mu(M(\Psi^{-1},\Psi X,-E))=\mu(M(\Psi^T,J_0\Psi X,-E))=-\mu(M(\Psi,X,E)).
$$
\end{description}
\end{proposition}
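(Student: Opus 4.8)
The plan is to derive all the listed properties from the general axioms (Homotopy)--(Integrality) recalled above, together with two standard supplementary facts: the crossing-form formula $\mu(M)=\tfrac12\sign\Gamma(M,a)+\sum_{a<s<b}\sign\Gamma(M,s)+\tfrac12\sign\Gamma(M,b)$ \cite{RS}, and the identity $\mu\big(M(\cdot)^{-1}\big)=-\mu\big(M(\cdot)\big)$, which follows from a one-line crossing-form computation ($M$ and $M^{-1}$ have the same fixed vectors, $\tfrac{d}{d\theta}M^{-1}=-M^{-1}\dot M\,M^{-1}$, and $M^{-1}$ is symplectic). I would dispose of \emph{(Involution)} first, since it is purely algebraic. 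A block computation with~\eqref{eq:M} shows that for the constant matrix $P=\diag(\one_{2n},-\one_m,-\one_m)$, which has the form~\eqref{eq:Ptheta} with $\Phi=\one_{2n}$ and $A=-\one_m\in\OO(m)$, one has $PM(\Psi,X,E)P^{-1}=M(\Psi,-X,E)$; hence the first identity is an instance of (Naturality). For the remaining ones, \eqref{eq:inverse} gives $M(\Psi,X,E)^{-1}=M(\Psi^{-1},-\Psi X,-E)$, so the first identity together with $\mu(M^{-1})=-\mu(M)$ yields $\mu\big(M(\Psi^{-1},\Psi X,-E)\big)=-\mu\big(M(\Psi,X,E)\big)$; and a second block computation gives $PM(\Psi^{-1},\Psi X,-E)P^{-1}=M(\Psi^T,J_0\Psi X,-E)$ for $P=\diag(-J_0,-\one_m,-\one_m)$ (of the form~\eqref{eq:Ptheta} since $\pm J_0\in\Sp(2n)$; here one uses $\Psi^T=-J_0\Psi^{-1}J_0$), so one more application of (Naturality) finishes it.

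For \emph{(Splitting)} and \emph{(Signature)} the key observation is that when $X=0$ one has the block splitting $M(\Psi,0,E)=\Psi\oplus N(E)$ with $N(E)=\left(\begin{smallmatrix}\one_m&0\\ E&\one_m\end{smallmatrix}\right)\in\cS_{0,m}$, so by (Product) it suffices to evaluate $\mu\big(N(E(\cdot))\big)$. The kernel of $N(E(s))-\one$ is always $\ker E(s)\oplus\R^m$, and the crossing form there is $v\mapsto v^{T}\dot E(s)v$ in the sign convention fixed by $\widetilde J_0$; hence, by the crossing-form formula, $\mu\big(N(E(\cdot))\big)$ is the spectral flow of the path $E(\cdot)$ of symmetric matrices, namely $\tfrac12\sign E(b)-\tfrac12\sign E(a)$ --- a special case of Proposition~\ref{prop:RSstratum}, elementary when $E(a),E(b)$ are nondegenerate. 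This gives (Splitting). For (Signature), apply (Product) to $M(\exp(J_0St),0,tE)=\exp(J_0St)\oplus N(tE)$: the first summand contributes $\tfrac12\sign S$ by the standard evaluation of $\mu$ on $t\mapsto\exp(J_0St)$ valid when $\|S\|<2\pi$ (so that $t=0$ is the only crossing), and the second contributes $\tfrac12\sign E$ by the computation just made.

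For \emph{(Determinant)}, note first that (Integrality) makes $n+\tfrac m2-\mu(M)$ an integer (since $M(a)=\one\in\cS^{2n+m}_{n,m}$ and $M(b)\in\cS^0_{n,m}$), so the left-hand side is a well-defined sign; write $R$ for the displayed matrix, which is $M(b)-\one$ read on a complement of its tautological $\R^m$-kernel. I would show that $h(M):=(-1)^{n+\frac m2-\mu(M)}\cdot\sign\det R$ is identically $1$ on the space $\mathcal A$ of paths with $M(a)=\one$, $M(b)\in\cS^0_{n,m}$: it is locally constant there, and it extends across the codimension-one part of the degeneracy wall (where $M(b)\in\cS^1_{n,m}$), because at a transverse crossing of that wall both $\sign\det R$ flips sign and $\mu(M)$ jumps by $\pm1$, so their product is unchanged. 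Since the space of all paths based at $\one$ is contractible and remains connected after deleting the remaining (codimension $\ge2$) locus of further endpoint degeneracy, $h$ is constant; and $h=1$ on the model path $M(\exp(J_0St),0,tE)$ with $S,E$ nondegenerate and $\|S\|<2\pi$, which one checks from (Signature) using $(-1)^{n-\frac12\sign S}=\sign\det(\exp(J_0S)-\one)$ and $(-1)^{m/2-\frac12\sign E}=\sign\det E$, thereby reducing the model case to the classical Conley--Zehnder determinant identity $(-1)^{n-\mu_{\CZ}}=\sign\det(\one-\Psi(1))$.

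Finally, for \emph{(Loop)}: by (Homotopy) one may deform the loop $P$ rel basepoint to $\diag(\Phi,\one_m,\one_m)$, since its $SO(m)$-factor $\theta\mapsto\diag(A(\theta),A(\theta))$ is null-homotopic in $\Sp(2m)$, its complex determinant being $\det_{\R}A(\theta)\equiv1$; as $\diag(\Phi,\one,\one)M(\Psi,X,E)=M(\Phi\Psi,X,E)$, the loop property of $\mu_{RS}$ in $\Sp(2n+2m)$ \cite{RS} together with (Product) then give $\mu(PM)=\mu(M)+2\mu(\Phi)$. The step I expect to require the most care is the jump computation in (Determinant) --- that $\mu(M)$ changes by exactly $\pm1$ as the nondegenerate endpoint $M(b)$ crosses the wall $\cS^1_{n,m}$ transversally --- which is the $\cS_{n,m}$-analogue of the transversality analysis of crossings in $\Sp(2n)$ underlying the classical formula, and carrying it out means identifying the normal direction to $\cS^1_{n,m}$ in $\cS_{n,m}$ and computing the endpoint crossing form along it.
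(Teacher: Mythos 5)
Your treatments of \emph{(Loop)}, \emph{(Splitting)}, \emph{(Signature)} and \emph{(Involution)} are correct and essentially coincide with the paper's own arguments: the paper also gets \emph{(Involution)} from \emph{(Naturality)} conjugations (it conjugates by $J_0\oplus\one_{2m}$, you by $\diag(\one_{2n},-\one_m,-\one_m)$ and $\diag(-J_0,-\one_m,-\one_m)$, which is the same mechanism) together with $\mu(M^{-1})=-\mu(M)$ and~\eqref{eq:inverse}; it gets \emph{(Splitting)} and \emph{(Signature)} from \emph{(Product)} plus the shear normalization and $\mu_{RS}(\exp(J_0St))=\frac12\sign(S)$, where your direct crossing-form evaluation of $N(E)=\left(\begin{smallmatrix}\one&0\\ E&\one\end{smallmatrix}\right)$ is an acceptable substitute (with Proposition~\ref{prop:RSstratum} absorbing the tautological degeneracy, also for degenerate $E(a),E(b)$); and \emph{(Loop)} is in both cases the loop axiom in $\Sp(2n+2m)$ plus contractibility of the loop $\diag(A,A)$ in $\Sp(2m)$, which you see via the constancy of the complex determinant and the paper via $\pi_1(\OO(m))=\Z/2\to\pi_1(\Sp(2m))=\Z$.

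For \emph{(Determinant)} you take a genuinely different route, and as written it has a real gap, which you yourself flag: the whole argument rests on the wall-crossing lemma that, as the endpoint $M(b)$ crosses $\cS^1_{n,m}$ transversally, $\mu(M)$ jumps by exactly $\pm1$ while $\det$ of the displayed $(2n+m)\times(2n+m)$ matrix changes sign, together with the stratification facts needed for the connectivity step (that $\cS^1_{n,m}$ is a codimension-one wall in $\cS_{n,m}$ and the deeper strata have codimension at least two). None of this is established, and it is precisely the nontrivial content of the property; the model-path evaluation and the parity bookkeeping via \emph{(Integrality)} are fine but do not carry the load. The paper avoids all of this with a short catenation trick: extend $M$ by the explicit path $M'(b+\theta)$ written in its proof, which opens up the tautological kernel $\{0\}\oplus\{0\}\oplus\R^m$; this extension has a single crossing, at $b$, with crossing form $-\one_m$, hence index $-\frac m2$, and the catenated path $N$ ends at a matrix with $\det(N(b+\eps)-\one)\neq0$, so one can quote the Salamon--Zehnder identity $(-1)^{n+m-\mu_{RS}(N)}=\sign\det(N(b+\eps)-\one)$ and compute $\det(N(b+\eps)-\one)=\eps^m(-1)^m\det\left(\begin{smallmatrix}\Psi-\one&\Psi X\\ X^TJ_0&E+\frac12X^TJ_0X\end{smallmatrix}\right)$. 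If you want to keep your deformation argument you must supply the normal form of $\cS^1_{n,m}$ and the endpoint crossing-form computation along its normal direction (plus the codimension estimates); otherwise the catenation argument is the efficient way to close this item.
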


\begin{proof}

%The \emph{(Homotopy)},  \emph{(Catenation)}, \emph{(Naturality)},
%\emph{(Product)}, and \emph{(Zero)} properties are exactly the
%corresponding properties of the Robbin-Salamon
%index~\cite[Theorem~4.1]{RS}. For the \emph{(Naturality)} property, a
%straightforward verification shows that $PMP^{-1}\in\cS_{n,m}$.

To prove the \emph{(Loop)} property we use the equality
$$
\mu(PM)=\mu(M)+2\mu(P)=\mu(M)+2\mu(\Phi)+
2\mu\left(\begin{array}{cc} A& 0 \\ 0 & A \end{array}\right).
$$
Since $\pi_1(\OO(m))=\Z/2\Z$ and $\pi_1(\Sp(2m))=\Z$, the last term
vanishes.

The \emph{(Splitting)} property follows from the \emph{(Product)}
property and the normalization axiom for the Robbin-Salamon index of a
symplectic shear.

The \emph{(Signature)} property follows from the
\emph{(Splitting)} property and from the identity
$\mu_{RS}(\exp(J_0St))=\frac 1 2
\mathrm{sign}(S)$~\cite[Theorem~3.3.(iv)]{SZ92}.

%The \emph{(Integrality)} property follows directly from the analogous
%property for the Robbin-Salamon index~\cite[Theorem~4.7]{RS}.

We prove the \emph{(Involution)} property. The first identity
$\mu(M(\Psi^{-1},Y,-E))=\mu(M(\Psi^T,J_0Y,-E))$ follows from the
\emph{(Naturality)} axiom by conjugating with the constant path
$J_0\oplus \one_{2m}$. The identity
$\mu(M(\Psi,X,E))=\mu(M(\Psi,-X,E))$ follows by conjugating twice
with $J_0\oplus \one_{2m}$. 
%Alex added 
Finally, using~\eqref{eq:inverse} we obtain 
$-\mu(M(\Psi,X,E))=\mu(M(\Psi,X,E)^{-1})=\mu(M(\Psi^{-1},-\Psi X,-E))=\mu(M(\Psi^{-1},\Psi X,-E))$.

It remains to prove the \emph{(Determinant)} property. Given a path
$N:[0,1]\to \Sp(2n+2m)$ satisfying $N(0)=\one$ and $\det\,
(N(1)-\one)\neq 0$, we have~\cite[Theorem~3.3.(iii)]{SZ92}
$$
(-1)^{n+m-\mu_{RS}(N)}=\mathrm{sign}\, \det \, (N(1)-\one).
$$
We construct such a path $N:[a,b+\eps]\to \Sp(2n+2m)$ by catenating
$M=M(\Psi(\theta),X(\theta),E(\theta))$ with the path
$M':[b,b+\eps]\to \Sp(2n+2m)$ given by
$$
M'(b+\theta):=\left(\begin{array}{ccc}
\Psi & \Psi X & \theta \Psi X \\
0 & \one & \theta\one \\
X^TJ_0 & E+\frac 1 2 X^T J_0 X & \one+\theta (E + \frac
1 2 X^TJ_0 X)
\end{array}\right).
$$
We have denoted for simplicity $\Psi:=\Psi(b)$, $X:=X(b)$, and $E:=
E(b)$. Since $M(b)\in\cS^0_{n,m}$, the path $M'$ has a single crossing
at $b$ and the kernel of
$M'(b)-\one=M(b)-\one$ is $\{0\}\oplus \{0\}\oplus \R^m$. The crossing
form at $b$ is $-\one_m$, so that $\mu_{RS}(M')=-\frac m 2$. Thus
$\mu_{RS}(N)=\mu(M)-\frac m 2$. On the other hand
$$
\det\,(N(b+\eps)-\one)=\eps^m(-1)^m\det\,\left(\begin{array}{cc}
\Psi-\one & \Psi X \\
X^TJ_0 & E+\frac 1 2 X^TJ_0 X
\end{array}\right).
$$
This implies the desired statement.
\end{proof}

\begin{example}
The index $\mu(M(\Psi,X,E))$ depends in an essential way on $X$, as
the following example shows. Given $a,b\in\R$, let
$$
\Psi:=\left(\begin{array}{cc} 2 & 0 \\ 0 & \frac 1 2
\end{array}\right), \qquad
X_{a,b}:=\left(\begin{array}{c} a \\ b \end{array}\right), \qquad E:=1.
$$
We denote $M_{a,b}:=M(\Psi,X_{a,b},E)\in\cS_{1,1}$.
It follows from the
\emph{(Determinant)} property that a path in $\cS_{1,1}$ starting at
$\one$ and ending at $M_{0,0}$ has an index in $\frac 1 2
+ 2\Z$, whereas a path in $\cS_{1,1}$ starting at
$\one$ and ending at $M_{1,1}$ has an index in  $\frac 1 2
+ 2\Z+1$ (the value of the relevant determinant is $-\frac 1 2 +\frac
3 2 ab$).
\end{example}

For the rest of this Appendix we place ourselves in $\R^{2N}$ equipped with the standard symplectic form $\omega_0$ and the standard complex structure $J_0$. The next Proposition is relevant for the parametrized Robbin-Salamon index when applied with $N=n+m$ and $E(t)\equiv \{0\}\oplus \{0\}\oplus \R^m$. We recall that, given a path of symplectic matrices $M:[0,1]\to \Sp(2N)$, the crossing form at a point $t\in[0,1]$ is the quadratic form $\Gamma(M,t)$ on $\ker\, (M(t)-\one)$ given by $\Gamma(M,t)(v)=\langle v,-J_0\dot M(t) M(t)^{-1} v\rangle$.

\begin{proposition} \label{prop:RSstratum}
Let $M:[0,1]\to \Sp(2N)$ be a $C^1$-path of symplectic
  matrices with the following property: there exists a continuous family of vector spaces
  $t\mapsto E(t)\subset \R^{2N}$ such
  that $E(t)\subset \ker\,(M(t)-\one)$ and the crossing form
  $\Gamma(M,t)$ induces a nondegenerate quadratic form on $\ker\,(M(t)-\one)/E(t)$. Assume $\omega_0$ has constant rank on $E(t)$. Then
$$
\mu_{RS}(M)=\frac 1 2 \mathrm{sign}\, \Gamma(M,0) + \sum_{t:\dim
  \ker(M(t)-\one)/E(t)>0} \mathrm{sign}\, \Gamma(M,t) + \frac 1 2
\mathrm{sign}\, \Gamma(M,1).
$$
\end{proposition}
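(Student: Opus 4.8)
This proposition refines the classical Robbin--Salamon crossing formula \cite[Theorem~4.1]{RS}, which is the case $E(t)\equiv\{0\}$; here the subspaces $E(t)$ are directions that $M(t)$ fixes pointwise for every $t$. The plan is to localize at the ``effective crossings'' and to handle each of them by a normal-form argument. First, a normalization: since $\omega_0$ has constant rank on the continuous family $E(t)$, split it continuously as $E(t)=E^s(t)\oplus E^i(t)$ with $E^s(t)$ symplectic and $E^i(t)$ isotropic; as $M(t)$ is the identity on $E(t)$, it preserves $E^s(t)^{\omega_0}$ and acts there, so after making $E^s(t)$ constant by a homotopy rel endpoints (a fixed-dimensional constant-rank family of subspaces is a curve in one $\Sp(2N)$-orbit, hence lifts to a symplectic path) and using the \emph{(Product)} axiom we may assume $E(t)$ is isotropic. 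Then $M(t)$ preserves the isotropic subspace $E(t)$ pointwise, which -- after a further homotopy rel endpoints making $E(t)\equiv E_0$ constant -- means that $M$ is a path in a subgroup $\cS_{n,m}\subset\Sp(2N)$ attached to $E_0$, with $m=\dim E_0$ and $n=N-m$.

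Call $t_0\in[0,1]$ an \emph{effective crossing} if $\ker(M(t_0)-\one)\supsetneq E(t_0)$, i.e.\ $M(t_0)$ lies outside the open stratum $\cS^0_{n,m}$. Exactly as in \cite{RS}, writing $M(t)-\one$ in block form near $t_0$ relative to $\ker(M(t_0)-\one)\oplus W$ and applying the implicit function theorem, the ``effective kernel'' $\ker(M(t)-\one)/E(t)$ near $t_0$ is the kernel of a smooth family of endomorphisms whose derivative at $t_0$ is the induced crossing form $\overline{\Gamma}(M,t_0)$; its nondegeneracy forces $\ker(M(t)-\one)=E(t)$ for $0<|t-t_0|$ small. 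Hence effective crossings are isolated, so finitely many, and by \emph{(Catenation)} it suffices to compute $\mu$ on a small interval around each of them, on short intervals at $t=0$ and $t=1$, and on the complementary intervals. On each complementary interval $\ker(M(t)-\one)=E(t)$ throughout, so $M$ stays in $\cS^0_{n,m}$ and $\mu=0$ by the \emph{(Zero)} axiom; moreover $\Gamma(M,t)$ vanishes identically there (its domain $E(t)$ lies in its radical), so the right-hand side of the asserted identity picks up nothing on these intervals and its only endpoint contributions are $\tfrac12\,\mathrm{sign}\,\Gamma(M,0)$ and $\tfrac12\,\mathrm{sign}\,\Gamma(M,1)$.

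It remains to show that an interior effective crossing $t_0$ contributes $\mathrm{sign}\,\Gamma(M,t_0)$ to $\mu(M)$ (and, via a half-interval, that $t=0,1$ contribute $\tfrac12\,\mathrm{sign}$). On a tiny interval $[t_0-\epsilon,t_0+\epsilon]$ containing no other effective crossing, I would homotope $M$ rel endpoints to a product $M^{(1)}(t)\oplus M^{(2)}(t)$ adapted to a continuous symplectic splitting $\R^{2N}=V^{(1)}(t)\oplus V^{(2)}(t)$, arranged so that $M^{(1)}$ never leaves its stratum -- hence $\mu(M^{(1)})=0$ by the previous step -- while $M^{(2)}$ is a path in a smaller $\cS_{n',m'}$ with a single effective crossing at $t_0$, which a further homotopy rel endpoints brings to the explicit form $t\mapsto M\!\big(\exp(J_0S(t-t_0)),\,X'(t-t_0),\,(t-t_0)E'\big)$ with crossing form at $t_0$ equal to $\overline{\Gamma}(M,t_0)$. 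For this explicit path the index over $[t_0-\epsilon,t_0+\epsilon]$ is computed from \emph{(Catenation)} together with the \emph{(Signature)} and \emph{(Splitting)} properties of Proposition~\ref{prop:mu} (and a deformation of the $X$-parameter), and equals $\mathrm{sign}\,\overline{\Gamma}(M,t_0)=\mathrm{sign}\,\Gamma(M,t_0)$. The same argument on a half-interval yields the endpoint half-signatures, and catenating all the pieces gives the formula.

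The main obstacle is this last step, the construction of the product normal form near an effective crossing: one cannot simply perturb the path to have a regular crossing in the usual sense, because the inclusion $E(t)\subset\ker(M(t)-\one)$ keeps the crossing form degenerate along $E(t)$ at every time. The constant-rank hypothesis on $\omega_0|_{E(t)}$ is exactly what lets one split off the symplectic part of $E(t)$ and place the isotropic remainder in a fixed standard position, after which the ``inert'' factor contributes nothing and the ``genuine'' factor is a $\cS_{n',m'}$-path whose index is accessible through Proposition~\ref{prop:mu}.
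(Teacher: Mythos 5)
Your localization skeleton is reasonable: catenation plus the \emph{(Zero)} axiom does dispose of the intervals where $\ker(M(t)-\one)=E(t)$, and since $\Gamma(M,t)$ vanishes on $E(t)$ the signatures on the right-hand side may indeed be read off the induced forms. But the proof collapses exactly where the real content of the proposition sits: the local contribution at an ``effective crossing'' (and at the endpoints). Note that in this setting \emph{no} crossing is regular in the Robbin--Salamon sense, because $E(t)\subset\ker(M(t)-\one)$ for every $t$; so the crossing formula you want to invoke on small intervals and half-intervals is simply not available until the degeneracy along $E(t)$ has been removed. Your substitute for this -- a homotopy rel endpoints to a split normal form with one inert factor and one factor of the shape $M\big(\exp(J_0S(t-t_0)),X'(t-t_0),(t-t_0)E'\big)$ -- is asserted, not constructed, and constructing it is of the same order of difficulty as the proposition itself. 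Worse, the index of that normal form is not computable from the \emph{(Signature)} and \emph{(Splitting)} properties ``plus a deformation of the $X$-parameter'': those properties only cover paths with $X\equiv 0$, scaling $X'$ to zero is not a homotopy with fixed endpoints, and the Example following Proposition~\ref{prop:mu} exists precisely to warn that the index depends essentially on $X$; moreover the crossing form itself contains $\partial_t X$ terms, so deforming $X$ can change the very signature you are trying to extract. As written, the argument is circular at the key step.

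For comparison, the paper removes the degeneracy globally rather than by a normal form: it multiplies $M(t)$ by an explicit path supported in the interior -- a $\beta_\eps$-shear of the isotropic part of $E(t)$ into $J_0E(t)$, and a small rotation $\exp(J(t)\beta_\eps(t))$ on the symplectic part -- obtaining $\tM(t)$ homotopic to $M$ with fixed endpoints. One then checks by a direct symplectic-form computation that for $t\in\,]0,1[$ the kernel of $\tM(t)-\one$ is exactly the ``effective'' part of $\ker(M(t)-\one)$, that the crossing forms agree there, and that the endpoint signatures shift by $\pm\dim E(t)$, which cancel; after this the ordinary crossing formula applies to $\tM$ and yields the statement. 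If you want to salvage your outline, replace the unproven normal-form step by this perturbation (applied either globally or on each small interval around an effective crossing); your constant-rank splitting $E=E^s\oplus E^i$ is then exactly the case distinction the paper makes, but the conjugation-and-product reduction you begin with becomes unnecessary.
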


\begin{proof} Let us first assume that the rank of $\omega_0$ is constant equal to $0$ on $E(t)$, i.e. $E(t)$ is isotropic. Let us decompose $\R^{2N}=E(t)\oplus J_0 E(t)\oplus F(t)$, where $F(t)$ is the symplectic orthogonal of $E(t)\oplus J_0 E(t)$. Given $\eps>0$ we denote by $\beta_\eps:[0,1]\to[0,\eps]$ a smoothing of the function
$$
t\mapsto\left\{\begin{array}{ll} t, & 0\le t\le \eps,\\
\eps, & \eps\le t\le 1-\eps,\\
1-t, & 1-\eps\le t\le 1.
\end{array}\right.
$$
We define an element $\Phi^0_\eps(t)\in\Sp(2N)$ which has the following matrix form with respect to the splitting $E(t)\oplus J_0 E(t)\oplus F(t)$:
$$
\Phi^0_\eps(t)=\left(\begin{array}{ccc} \one & 0 & 0 \\ \beta_\eps(t) & \one & 0 \\ 0 & 0 & \one \end{array}\right).
$$
%Alex deleted \eps
We define $\tM(t):=M(t)\Phi^0_\eps(t)$, and we have $\mu_{RS}(\tM)=\mu_{RS}(M)$ since these paths are homotopic with fixed endpoints. We claim that the following equality holds for all $t\in]0,1[$:
\begin{equation} \label{eq:isotropic}
\ker\, (\tM(t)-\one) = \ker\, (M(t)-\one) \cap (J_0 E(t)\oplus F(t)).
\end{equation}
That $\ker\, (M(t)-\one) \cap (J_0 E(t)\oplus F(t))\subset \ker\, (\tM(t)-\one)$ follows from the fact that
$\Phi^0_\eps(t)$ acts by the identity on $J_0 E(t)\oplus F(t)$. Conversely, let $v=v_1+v_2\in\ker\, (\tM(t)-\one)$, with $v_1\in E(t)$ and $v_2\in J_0 E(t)\oplus F(t)$. The identity $\tM(t)v=v$ is equivalent to $M(t)(v_1+\beta_\eps(t)J_0v_1+v_2)=v_1+v_2$, hence to $(M(t)-\one)v_2=-\beta_\eps(t)M(t)J_0v_1$. Using that $M(t)v_1=v_1$ we obtain
$$
0 = \omega_0(v_1,(M(t)-\one)v_2) = -\beta_\eps(t)\omega_0(v_1,M(t)J_0v_1)=-\beta_\eps(t)\omega_0(v_1,J_0v_1).
$$
Since $\beta_\eps(t)\neq 0$, this implies $v_1=0$, so that $v=v_2\in J_0 E(t)\oplus F(t)$
and $(M(t)-\one)v_2=(\tM(t)-\one)v_2=0$, as desired.

Since the restrictions of $M(t)$ and $\tM(t)$ to $J_0E(t)\oplus F(t)$ are the same, it follows that the crossing form $\Gamma(\tM,t)$ coincides with $\Gamma(M,t)$ on $\ker\, (\tM(t)-\one)$ for $t\in]0,1[$. On the other hand, a straightforward computation shows that
\begin{eqnarray} \label{eq:E01}
\mathrm{sign}\,\Gamma(\tM,0)&=&\mathrm{sign}\,\Gamma(M,0) + \dim\, E(0),\\
\mathrm{sign}\,\Gamma(\tM,1)&=&\mathrm{sign}\,\Gamma(M,1) - \dim\, E(1). \nonumber
\end{eqnarray}
Thus, the contributions at the endpoints compensate each other, and the conclusion follows using the definition of the Robbin-Salamon index via crossing forms.

We now assume that the rank of $\omega_0$ on $E(t)$ is equal to $\dim\, E(t)$, i.e. $E(t)$ symplectic. Let us decompose $\R^{2N}=E(t)\oplus F(t)$, where $F(t)$ is the symplectic orthogonal of $E(t)$. Let $J(t)$ be a continuous family of complex structures on $E(t)$ which are compatible with $\omega_0$. For $\eps>0$ we define a path $\Phi^1_\eps:[0,1]\to \Sp(2N)$ whose matrix with respect to the decomposition $E(t)\oplus F(t)$ is
$$
\Phi^1_\eps(t):=\left(\begin{array}{cc} \exp(J(t)\beta_\eps(t)) & 0 \\ 0 & \one \end{array}\right).
$$
We denote $\tM(t):=M(t)\Phi^1_\eps(t)$, so that we have $\mu_{RS}(\tM)=\mu_{RS}(M)$. We claim that
\begin{equation} \label{eq:symplectic}
\ker\,(\tM(t)-\one)=\ker \,(M(t)-\one) \cap F(t)
\end{equation}
for all $t\in]0,1[$, whenever $0<\eps<\pi$. That $\ker \,(M(t)-\one) \cap F(t)\subset \ker\,(\tM(t)-\one)$ follows from the fact that $\Phi^1_\eps(t)$ acts as the identity on $F(t)$. Conversely, let $v=v_1+v_2\in\ker \,(\tM(t)-\one)$ such that $v_1\in E(t)$ and $v_2\in F(t)$. The relation $\tM(t)v=v$ is equivalent to $(M(t)-\one)v_2=(\one- \exp(J(t)\beta_\eps(t)))v_1$. Then
\begin{eqnarray*}
0&=&\omega_0(v_1,(M(t)-\one)v_2)\\
&=&\omega_0(v_1,(\one- \exp(J(t)\beta_\eps(t)))v_1)\\
&=&-\sin(\beta_\eps(t))\omega_0(v_1,J(t)v_1).
\end{eqnarray*}
Since $\sin(\beta_\eps(t))\neq 0$, we obtain $v_1=0$ and the claim follows.

Since the restrictions of $M(t)$ and $\tM(t)$ to $F(t)$ are the same, it follows that the crossing form $\Gamma(\tM,t)$ coincides with $\Gamma(M,t)$ on $\ker\, (\tM(t)-\one)$ for $t\in]0,1[$. On the other hand, a straightforward computation shows that equations~\eqref{eq:E01} still hold, and the conclusion follows.

Finally, we assume that the rank of $\omega_0$ on $E(t)$ lies strictly between $0$ and $\dim\, E(t)$. We choose a continuous splitting $E(t)=E_1(t)\oplus E_0(t)$ with $E_0(t):=E(t)\cap E(t)^{\omega_0}$ isotropic and $E_1(t)=E_0(t)^\perp$ symplectic. Here $E(t)^{\omega_0}$ denotes the symplectic orthogonal of $E(t)$, and $E_0(t)^\perp$ denotes the Euclidean orthogonal of $E_0(t)$ in $E(t)$. We decompose $\R^{2N}=E_1(t)\oplus E_0(t)\oplus J_0E_0(t)\oplus F(t)$, such that $F(t)$ is the symplectic orthogonal of $E_1(t)\oplus E_0(t)\oplus J_0E_0(t)$. Given $0<\eps<\pi$ we define as above two paths $\Phi^0_\eps(t)$  acting as the identity on $E_1(t)\oplus F(t)$, and $\Phi^1_\eps(t)$ acting as the identity on $E_0(t)\oplus J_0E_0(t)\oplus F(t)$. We denote $\tM(t):=M(t)\Phi^0_\eps(t)\Phi^1_\eps(t)$, so that $\mu_{RS}(\tM)=\mu_{RS}(M)$. One proves as above that the crossings of $\tM$ and $M$ on $]0,1[$ are the same, with the same crossing forms on $\ker\, (\tM(t)-\one)$, and moreover equations~\eqref{eq:E01} still hold. This finishes the proof.
\end{proof}

\begin{remark}
The crossing form $\Gamma(M,t)$ vanishes identically on
$E(t)$. Indeed, given a path $v(t)\in
E(t)$ we have $M(t)v(t)=v(t)$ and $\dot M(t) v(t)+M(t)\dot v(t)=\dot
v(t)$. Dropping the $t$-variable for clarity, we have
\begin{eqnarray*}
  \Gamma(M,t)(v(t)) & = & \langle  v,-J_0\dot M M^{-1}v\rangle \\
&=& \langle  v,-J_0(\dot v-M\dot v)\rangle \\
&=& -\langle v,J_0\dot v\rangle + \langle v,(M^{-1})^TJ_0\dot v\rangle
\\
&=& -\langle v,J_0\dot v\rangle + \langle M^{-1}v,J_0\dot v\rangle \ =
\ 0.
\end{eqnarray*}
\end{remark}

%%%%%%%%%%%%%%%%%%%%%%%%%%%%%%%%%%%%%%%
\section{Grading in Rabinowitz-Floer homology} 
%%%%%%%%%%%%%%%%%%%%%%%%%%%%%%%%%%%%%%%

We give in this section a sample computation of the index within the setup of~\S\ref{sec:RFH}. The index of a critical point of $A_H$ is defined as the Robbin-Salamon index of a corresponding $1$-periodic orbit for the Hamiltonian $\tH:W\times T^*\R\to\R$, $\tH(x,\lambda,p)=\lambda K(x)$. The flow of the latter is 
%Alex corrected sign
$$
\varphi_\tH^\theta(x,\lambda,p) = \big(\varphi_K^{\theta\lambda}(x),\lambda,p-\theta K(x)\big),
$$
and its linearization is 
%Alex corrected sign
$$
d\varphi_\tH^\theta(x,\lambda,p)=\left(\begin{array}{ccc} 
d\varphi_K^{\theta\lambda}(x) & \theta X_K(\varphi_K^{\theta\lambda}(x)) & 0 \\
0 & 1 & 0 \\
-\theta dK(x) & 0 & 1 
\end{array}\right). 
$$

We shall compute the index under the following simplifying assumptions:
\begin{itemize} 
\item the level set $\Sigma :=K^{-1}(0)$ is regular and of restricted contact type. This means that the restriction to $\Sigma $ of the primitive of the symplectic form is a contact form, which we denote $\oalpha$.  There exists then a neighborhood $\cV$ of $\Sigma $ and a diffeomorphism $\cV\simeq [1-\eps,1+\eps]\times \Sigma $, $\eps>0$ which transforms the symplectic form into $d(r\oalpha)$, $r\in[1-\eps,1+\eps]$. 
\item the Hamiltonian $K$ has the form $K(r,\overline x)=k(r)$ on $\cV$, with $\ox$ denoting a point on $\Sigma $ and $k(1)=0$, $k'(1)\neq 0$. Then $X_K(r,\ox)=-k'(r)R(\ox)$, with $R$ the Reeb vector field on $\Sigma $ defined by $d\oalpha(R,\cdot)=0$ and $\oalpha(R)=1$. Thus
%Alex corrected sign
$$
\varphi_\tH^\theta(r,\ox,\lambda,p) = \big(r,\varphi_R^{-\theta\lambda k'(r)}(\ox),\lambda,p-\theta k(r)\big).
$$
\end{itemize}

%Alex corrected choice of trivialization
Let $(\gamma,\lambda)\in\mathrm{Crit}(A_H)$ and choose a symplectic trivialization $\gamma^*TW\simeq \R^{2n}=\R^{2n-2}\oplus\R\oplus\R$ which maps the contact distribution $\xi$ to $\R^{2n-2}$, the vector field $\partial/\partial r$ to the constant vector $(0,1,0)$, and the Reeb vector field to the constant vector $(0,0,1)$. The tangent bundle $T(T^*\R)$ is in turn naturally trivialized as $T^*\R\times (\R\oplus \R)$. When read in these trivializations, the linearization $d\varphi_\tH^\theta$, $\theta\in[0,1]$  determines a path of symplectic matrices of the form 
$\left(\begin{array}{cc}\Phi(\theta) & 0 \\ 0 & M(\theta)\end{array}\right)$, where $\Phi(\theta)\in\mathrm{Sp}(2n-2)$ corresponds to $d\varphi_R^{-\theta\lambda k'(1)}(\gamma(0))\big|_\xi$ and $M(\theta)\in\cS_{1,1}\subset\mathrm{Sp}(4)$ has the form
%Alex corrected
$$
M(\theta)=\left(\begin{array}{cccc} 1 & 0 & 0 & 0 \\ 
\theta T & 1 & \theta A & 0 \\
0 & 0 & 1 & 0 \\
\theta A & 0 & 0 & 1  
\end{array}\right), \qquad T=-\lambda k''(1), \ A = -k'(1).
$$
The matrix that represents the crossing form is 
%Alex corrected
$$
-\widetilde J_0\dot M(\theta) M(\theta)^{-1}=\left(\begin{array}{cccc} T & 0 & A & 0 \\
0 & 0 & 0 & 0 \\
A & 0 & 0  & 0 \\
0 & 0 & 0 & 0 
\end{array}\right). 
$$
Using that $A\neq 0$ we obtain that $\ker(M(\theta)-\one)$ is equal to $\R^4$ if $\theta=0$, respectively to $0\oplus \R \oplus 0 \oplus \R$ if $\theta>0$. We can use Proposition~\ref{prop:RSstratum} to compute the Robbin-Salamon index of the path $M(\theta)$, $\theta\in[0,1]$ and we find 
$$
\mu_{RS}(M)= \frac 1 2 \mathrm{sign}\left(\begin{array}{cc} T & A \\ A & 0 \end{array}\right) = 0. 
$$

Given $(\gamma,\lambda)\in\mathrm{Crit}(A_H)$, we denote by $\og(\theta)=\varphi_R^{\theta|\lambda k'(1)|}(\gamma(0))$ the positively parametrized closed Reeb orbit that underlies $\gamma$, and denote $\mu(\og)$ its index. Then   
$$
\mu(\gamma,\lambda)=\mu_{RS}(\Phi(\theta),\ \theta\in[0,1])=\left\{\begin{array}{cl} 
\mathrm{sign}(-k'(1))\mu(\og), & \lambda>0, \\
0, & \lambda=0, \\
-\mathrm{sign}(-k'(1))\mu(\og), & \lambda<0.
\end{array}\right.
$$

This agrees with the Rabinowitz-Floer homology grading in~\cite{Cieliebak-Frauenfelder} up to a global shift of $\frac 1 2$.

\medskip

\noindent {\bf Acknowledgements.} F.B.: Partially supported by ERC Starting Grant StG-239781-ContactMath. A.O.: This material is based upon work supported by the National Science Foundation under agreement No. DMS-0635607. Any opinions, findings and conclusions or recommendations expressed in this material are those of the author(s) and do not necessarily reflect the views of the National Science Foundation. A.O. was also partially supported by ANR project ``Floer Power'' ANR-08-BLAN-0291-03 and ERC Starting Grant StG-259118-STEIN. A.O. is grateful to the organizers of the GESTA 2011 conference in Castro Urdiales for having given him the opportunity to lecture on $S^1$-equivariant symplectic homology.

%%%%%%%%%%%%%%%%%%%%%%%%%%%%%%%%%%%%%%%%%%%%%%
%%%%%%%%%%%%%%%%%%%%%%%%%%%%%%%%%%%%%%%%%%%%%%
%%%%%%%%%%%%%%%% References %%%%%%%%%%%%%%%%%%
%%%%%%%%%%%%%%%%%%%%%%%%%%%%%%%%%%%%%%%%%%%%%%
%%%%%%%%%%%%%%%%%%%%%%%%%%%%%%%%%%%%%%%%%%%%%%

\bibliographystyle{abbrv}
\bibliography{LS}

\enddocument

%%%%%%%%%%%%%%%%%%%%%%%%%%%%%%%%%%%%%%%%%%%%%%
%%%%%%%%%%%%%%%%%%%%%%%%%%%%%%%%%%%%%%%%%%%%%%
%%%%%%%%%%%%%%%% Section 1 %%%%%%%%%%%%%%%%%%%
%%%%%%%%%%%%%%%%%%%%%%%%%%%%%%%%%%%%%%%%%%%%%%
%%%%%%%%%%%%%%%%%%%%%%%%%%%%%%%%%%%%%%%%%%%%%%

\section{Introduction}

\noindent {\sc Topological background.} Given an oriented fibration
$S^1\hookrightarrow M \stackrel \pi \to B$, the homology groups of the
base and total space are related by the {\bf Gysin exact sequence}
\begin{equation} \label{eq:Gysin}
\ldots \to H_k(M) \stackrel {\pi_*} \to H_k(B) \stackrel D \to H_{k-2}(B)
\to H_{k-1}(M) \to \ldots
\end{equation}
Here $D$ is the cap-product with the Euler class of the fibration and
is equal to the differential $d^2$ of the Leray-Serre spectral
sequence~\cite[Example~5.C]{McC}.

A particular case of the above construction is the following. Assume $M$
carries an $S^1$-action and define the $S^1$-equivariant homology
$H_*^{S^1}(M)$ by
$$
H_*^{S^1}(M) := H_*(M_{S^1}), \qquad M_{S^1}:= M\times_{S^1} ES^1,
$$
where $ES^1$ is a contractible space on which $S^1$ acts freely.
Since $S^1$ acts freely on $M\times ES^1$, the
projection $M \times ES^1 \to M_{S^1}$ is an $S^1$-fibration and the
exact sequence~\eqref{eq:Gysin} becomes
\begin{equation} \label{eq:equiGysin}
 \ldots \to H_k(M) \to H_k^{S^1}(M) \stackrel D \to H_{k-2}^{S^1}(M)
\to H_{k-1}(M) \to \ldots
\end{equation}
We call this the {\bf Gysin exact sequence for \boldmath$S^1$-equivariant
homology}.
Two relevant instances of this construction are the following:

(i) If the action of $S^1$ on $M$ is free
then $H_*^{S^1}(M)\simeq H_*(M/S^1)$ and the Gysin exact sequence for
$S^1$-equivariant homology is the Gysin exact sequence for the fibration
$S^1\hookrightarrow M \to M/S^1$.

(ii) We denote $BS^1:=ES^1/S^1$. Taking the model of $ES^1$ to be
$S^\infty := \lim_{N\to \infty} S^{2N+1}$, with $S^{2N+1}$ the unit
sphere in $\C^{N+1}$, we see that $BS^1\simeq \C P^\infty$. Now, if
$S^1$ acts trivially on $M$, then
$H_*^{S^1}(M)\simeq H_*(M)\otimes H_*(BS^1)$ and~\eqref{eq:equiGysin}
becomes
$$
\ldots \stackrel 0 \to H_k(M) \stackrel i \to \bigoplus _{m\ge 0}
H_{k-2m}(M) \stackrel p \to \bigoplus _{m\ge 1} H_{k-2m}(M)
\stackrel 0 \to H_{k-1}(M) \to \ldots
$$
Here $i$ and $p$ are the obvious inclusion and projection.

\medskip

\noindent {\sc Main results.} This paper is concerned with a Floer homology long exact sequence
of Gysin type. Let $(W,\om)$ be a symplectic manifold with contact
type boundary satisfying
\begin{equation} \label{eq:asph}
\int _{T^2} f^*\om =0 \quad \mbox{for all smooth } f:T^2\to W.
\end{equation}
Our main class of examples consists of exact symplectic manifolds. Let
$a$ be a free homotopy class of loops in $W$. One can define in this
situation {\bf symplectic homology groups} $SH_*^a(W)$ and {\bf
\boldmath$S^1$-equivariant symplectic homology groups} $SH_*^{a,S^1}(W)$, as
well as variants $SH_*^+(W)$, $SH_*^{+,S^1}(W)$ truncated in positive
values of the action functional when $a=0$. The original definition is
due to Viterbo~\cite{V} and we refer to \S\ref{sec:S1equivsymplhom} for
the details of the construction. Our first result is the following.

\begin{theorem} \label{thm:SGysin}
The symplectic homology groups fit into an exact sequence of
Gysin type (we allow $a=+$)
\begin{equation} \label{eq:SGysin}
 \ldots \to SH_k^a(W) \to SH_k^{a,S^1}(W) \stackrel D \to
SH_{k-2}^{a,S^1}(W) \to SH_{k-1}^a(W) \to \ldots
\end{equation}
\end{theorem}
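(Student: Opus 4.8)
The plan is to obtain the sequence~\eqref{eq:SGysin} as the direct limit of the long exact sequences attached to a short exact sequence of Floer complexes that relates the $S^1$-equivariant theory to the ordinary one, at each finite-dimensional approximation of $ES^1$. Recall that $SH^{a,S^1}_*(W)$ is built by replacing $ES^1$ with $S^{2N+1}$, choosing $S^1$-equivariantly and generically a family of Hamiltonians and almost complex structures parametrized by $S^{2N+1}$ together with a Morse function on $\C P^N=S^{2N+1}/S^1$ having exactly one critical point in each degree $0,2,\dots,2N$, and counting rigid parametrized Floer trajectories between the critical points of $A_H$ lying over these Morse critical points. Writing $(C_*,\partial)$ for the Floer complex computing $SH^a_*(W)$, the equivariant complex has the form $C^{S^1,(N)}_*=\bigoplus_{j=0}^{N}C_{*-2j}$, graded by the parametrized Robbin-Salamon index, with differential $\partial^{S^1}=\partial_0+\dots+\partial_N$ in which $\partial_0$ is the Floer differential on each summand and $\partial_k$ maps the $j$-th summand into the $(j-k)$-th. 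That $\partial^{S^1}$ is lower triangular with this diagonal is precisely the content of the Main Theorem together with the transversality theory of~\cite{BOtransv}: a rigid parametrized trajectory from a critical point over a Morse critical point of index $2j$ to one over index $2j'$ must satisfy $j'\le j$, with equality forcing a fiberwise Floer trajectory. One then sets $SH^{a,S^1,(N)}_*:=H_*(C^{S^1,(N)}_*,\partial^{S^1})$ and $SH^{a,S^1}_*(W):=\varinjlim_N SH^{a,S^1,(N)}_*(W)$ along the maps induced by the inclusions $S^{2N+1}\hookrightarrow S^{2N+3}$.

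The key step is to produce, for every $N$, the short exact sequence of complexes
$$
0 \longrightarrow C_* \longrightarrow C^{S^1,(N)}_* \longrightarrow C^{S^1,(N-1)}_{*-2} \longrightarrow 0 .
$$
The $j=0$ summand $C_*\subset C^{S^1,(N)}_*$ is a subcomplex, because no component of $\partial^{S^1}$ can decrease the $\C P^N$-index below zero, and it computes $SH^a_*(W)$ since over the minimum the chosen auxiliary data are the standard ones. The quotient carries the differential reindexed by $j\mapsto j-1$, and, choosing the data on $\C P^{N-1}$ to be the restrictions of those on $\C P^N$ (the compatibility that is in any case needed to form the direct system), it is identified on the nose with $C^{S^1,(N-1)}_{*-2}$. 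Passing to homology gives the long exact sequence
$$
\cdots \to SH^a_k(W) \to SH^{a,S^1,(N)}_k(W) \to SH^{a,S^1,(N-1)}_{k-2}(W) \to SH^a_{k-1}(W) \to \cdots ,
$$
and by that same compatibility these sequences form a direct system over $N$, with the $SH^a$-terms constant. Since filtered colimits are exact, taking $\varinjlim_N$ yields the exact sequence
$$
\cdots \to SH^a_k(W) \to SH^{a,S^1}_k(W) \xrightarrow{\ D\ } SH^{a,S^1}_{k-2}(W) \to SH^a_{k-1}(W) \to \cdots ,
$$
in which $D$ is the limit of the quotient maps, namely the cap product with the generator of $H^2(BS^1)$, a map of degree $-2$; this is~\eqref{eq:SGysin}. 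The truncated case $a=+$ requires no change: $\partial^{S^1}$, the inclusion of the subcomplex, the quotient map and the direct-system maps all preserve the filtration by the value of $A_H$, so the whole argument restricts to the positive-action subquotient complexes.

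The main obstacle lies entirely in the first step and is geometric rather than algebraic. One must show that $\partial^{S^1}$ is strictly triangular with the Floer differential on the diagonal (this uses the index formula of the present paper to exclude rigid parametrized trajectories that raise the Morse index on $\C P^N$, together with the transversality results of~\cite{BOtransv}, since the $S^1$-invariant data needed here are not generic), and one must arrange the Hamiltonians, almost complex structures and Morse functions coherently over all $N$ so that the sub- and quotient complexes are identified with $C_*$ and $C^{S^1,(N-1)}_{*-2}$ strictly, not merely up to quasi-isomorphism. Granted this, the exact sequence is a formal consequence.
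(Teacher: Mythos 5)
Your argument rests entirely on the structural claim that, at the level of the approximation $S^{2N+1}$, the equivariant complex is $C^{S^1,(N)}_*=\bigoplus_{j=0}^{N}C_{*-2j}$ with a lower-triangular differential whose diagonal is the ordinary Floer differential, the summands being indexed by the critical points of a Morse function on $\C P^N$. That is not the definition used in the paper: $SC^{a,S^1,N}_*(H)$ is generated by $S^1$-orbits $S_p$ of critical points of the parametrized action functional for an $S^1$-invariant family $H(\theta,x,\lambda)$, graded by the parametrized Robbin--Salamon index, and no Morse function on $\C P^N$ enters the construction. To realize your model one would need invariant data for which every generator ``lies over'' a Morse critical point and the fiberwise trajectories are ordinary Floer trajectories; the obvious candidate, a split family $K(\theta,x)+\tilde f(\lambda)$, is ruled out because $S^1$-invariance forces $K$ to be autonomous and $\tilde f$ to descend to $\C P^N$, so the nonconstant orbits are degenerate (the critical sets become tori rather than nondegenerate $S^1$-orbits), and no perturbation within split invariant data repairs this. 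One must instead construct invariant Hamiltonians with time-dependent perturbations localized near the critical points of $f$ and prove nondegeneracy and transversality for them -- this, together with the triangularity of $\partial^{S^1}$ (which does not follow from the index theorem alone, since the $\lambda$-component of the Floer equation is a gradient flow of $\int_{S^1}H\,d\theta$ coupled to $u$, not of $f$), the identification of the $j=0$ column with $(C_*,\partial)$, and the strict identification of the quotient with $C^{S^1,(N-1)}_{*-2}$, is the analytic heart of the theorem, not an arrangement that can simply be ``granted''. So the gap you flag at the end is genuine and is essentially the whole content of the statement in this paper's framework.

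The paper avoids splitting the equivariant complex altogether. It forms a Morse--Bott complex $BC^{a,N}_*$ for the \emph{parametrized} (non-equivariant) homology with parameter space $S^{2N+1}$, generated by the minima and maxima $m_p,M_p$ of perfect Morse functions on the orbits $S_p$; its homology can be computed with split, non-invariant Hamiltonians, giving the K\"unneth isomorphism with $SH^a_*(W)\otimes H_*(S^{2N+1})$. Filtering by the parametrized Robbin--Salamon index produces a two-line spectral sequence; the paper proves that $d^0=0$ and that $(E^1,\bar d^1)$ is isomorphic to $SC^{a,S^1,N}_*(H)\otimes H_*(S^1)$ with $\bar d^1=\partial^{S^1}\otimes\mathrm{Id}$, so degeneration at $E^3$ yields precisely the long exact sequence at level $(H,N)$; filtered continuation maps then permit the direct limits over $H$ and over $N$ (using the K\"unneth formula to identify the limit of the parametrized groups with $SH^a_*(W)$), and the action filtration gives the case $a=+$. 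Your short-exact-sequence/cone algebra is formally equivalent to this two-line spectral sequence, and your route does work in later formulations of $S^1$-equivariant symplectic homology where the split model of the equivariant complex is constructed; but as a proof of the present theorem it assumes exactly what the paper's Morse--Bott construction is designed to supply.
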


As a matter of fact, we prove in~\cite{BO4} that the above Gysin exact sequence for $a=+$ is isomorphic to the long exact sequence of~\cite{BOcont}, relating $SH_*^+(W)$ with the linearized contact homology of the filled contact manifold $\p W$.

In the case $a=0$, the symplectic
homology groups
$$
SH_*(W):= SH_*^0(W), \qquad SH_*^{S^1}(W):= SH_*^{0,S^1}(W)
$$
also fit into tautological long exact sequences~\cite{V}
\begin{equation} \label{eq:taut}
\ldots \to SH_{*+1}^+(W) \to H_{*+n}(W,\p W) \to
SH_*(W) \to SH_*^+(W) \to \ldots \ ,
\end{equation}
\begin{equation} \label{eq:tautS1}
\ldots \to SH_{*+1}^{+,S^1}(W) \to H_{*+n}^{S^1}(W,\p W) \to
SH_*^{S^1}(W) \to SH_*^{+,S^1}(W) \to \ldots
\end{equation}
Here the $S^1$-equivariant homology of the pair $(W,\p W)$ is
considered with respect to the trivial action of $S^1$. Our next
result is that the Gysin exact sequence is compatible with these
tautological exact sequences.

\begin{theorem} \label{thm:grid}
There is a commutative diagram whose rows and columns are,
respectively, the tautological and Gysin exact sequences
\begin{equation} \label{eq:grid}
\xymatrix
@C=10pt
@R=10pt
{
& \vdots \ar[d] & \vdots \ar[d] & \vdots \ar[d] & \vdots \ar[d] & \\
\ldots \ar[r] & SH_{k+1}^+ \ar[r] \ar[d] & H_{k+n} \ar[r] \ar[d] &
SH_k \ar[r] \ar[d] & SH_k^+ \ar[r] \ar[d] & \ldots \\
\ldots \ar[r] & SH_{k+1}^{+,S^1} \ar[r] \ar[d] & H_{k+n}^{S^1} \ar[r] \ar[d] &
SH_k^{S^1} \ar[r] \ar[d] & SH_k^{+,S^1} \ar[r] \ar[d] & \ldots \\
\ldots \ar[r] & SH_{k-1}^{+,S^1} \ar[r] \ar[d] & H_{k+n-2}^{S^1}
\ar[r] \ar[d] & SH_{k-2}^{S^1} \ar[r] \ar[d] & SH_{k-2}^{+,S^1} \ar[r]
\ar[d] & \ldots \\
\ldots \ar[r] & SH_k^+ \ar[r] \ar[d] & H_{k+n-1} \ar[r] \ar[d] &
SH_{k-1} \ar[r] \ar[d] & SH_{k-1}^+ \ar[r] \ar[d] & \ldots \\
& \vdots & \vdots & \vdots & \vdots &
}
\end{equation}
\end{theorem}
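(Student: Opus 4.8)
The plan is to obtain both families of exact sequences from a single commutative $3\times 3$ diagram of chain complexes with short exact rows and columns, and then read off~\eqref{eq:grid} as the induced diagram of long exact sequences in homology. First I would recall the chain-level origin of the two inputs. The tautological sequences~\eqref{eq:taut}--\eqref{eq:tautS1} come from the short exact sequences of complexes
\[
0 \to SC_*^{\mathrm{low}}(W) \to SC_*(W) \to SC_*^{+}(W) \to 0
\]
and its $S^1$-equivariant analogue, where $SC_*^{\mathrm{low}}$ is the subcomplex generated by the $1$-periodic orbits of small action (the critical points of a $C^2$-small Morse function in the interior); it is a subcomplex because Floer trajectories decrease the action, and it computes $H_{*+n}(W,\p W)$, respectively the equivariant homology $H_{*+n}^{S^1}(W,\p W)$ for the \emph{trivial} $S^1$-action. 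The Gysin sequence~\eqref{eq:SGysin} comes, as in the proof of Theorem~\ref{thm:SGysin}, from a short exact sequence $0 \to SC_*^{a} \xrightarrow{\iota} SC_*^{a,S^1} \xrightarrow{D} SC_{*-2}^{a,S^1} \to 0$ for each relevant $a$, with $SC_*^{a,S^1} = SC_*^{a} \otimes \Z[u]$, $\deg u = 2$, where $\iota$ is the inclusion of the $u^0$-summand and $D$ sends $x\,u^{j} \mapsto x\,u^{j-1}$.

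Then I would assemble these. The chain maps $SC_*^{\mathrm{low}} \hookrightarrow SC_* \twoheadrightarrow SC_*^{+}$ intertwine not merely the Floer differential but the full equivariant differential $\p^{S^1} = \p + u\,\p_1 + u^2\,\p_2 + \cdots$, because each higher operation $\p_k$ is again a count of parametrized Floer trajectories and hence strictly decreases the Hamiltonian action, so it respects the action filtration that defines $SC_*^{\mathrm{low}}$ as a subcomplex and $SC_*^{+}$ as a quotient. Tensoring the tautological short exact sequence with $\Z[u]$ --- equivalently, running the construction over the approximations $\Lambda = S^{2N+1}$ of $ES^1$ and passing to the (exact) direct limit --- therefore promotes it to a short exact sequence of the Gysin short exact sequences, i.e. a commutative $3\times 3$ diagram of chain complexes with short exact rows and columns; all squares commute strictly because $\iota$ and $D$ touch only the $\Z[u]$-factor while the tautological maps touch only the $SC_*$-factor.

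Taking homology of this diagram produces precisely~\eqref{eq:grid}: each column gives a Gysin sequence~\eqref{eq:SGysin} --- for $a=+$, for $a=0$, and, in the $H(W,\p W)$-column, the Gysin sequence of the trivial $S^1$-action, whose connecting homomorphism is cap-product with a vanishing Euler class and is therefore zero --- while each row gives the matching tautological sequence~\eqref{eq:taut} or~\eqref{eq:tautS1}, up to a degree shift in the bottom row. It remains to check that every square of~\eqref{eq:grid} commutes. Squares with no connecting homomorphism commute by functoriality of $H_*$; squares with exactly one connecting homomorphism commute by the naturality of the connecting homomorphism of a short exact sequence of complexes under morphisms of such sequences.

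The step I expect to demand the most care is the square occurring once per Gysin period in which a tautological connecting homomorphism meets a Gysin connecting homomorphism --- for instance the square with corners $SH_{k-1}^{+,S^1}$, $H_{k+n-2}^{S^1}$, $SH_{k}^{+}$, $H_{k+n-1}$. For such a corner square of a short exact sequence of short exact sequences the two iterated connecting homomorphisms agree only up to an overall sign, by the classical anticommutativity lemma, so honest commutativity would a priori force a sign convention on one of the families. Here, though, one of the two composites factors through the Gysin connecting homomorphism of the $H(W,\p W)$-column, which we just observed is zero; by the anticommutativity lemma the other composite equals it up to sign, hence both vanish and the square commutes on the nose with no adjustment. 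Everything else being formal, the real content is the first two steps: verifying that the action filtration is compatible with the complete $S^1$-equivariant differential and that this compatibility is preserved by the maps between the finite-dimensional approximations, so that the commuting $3\times 3$ diagram of chain complexes genuinely exists.
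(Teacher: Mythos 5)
Your strategy presupposes a chain-level structure that this paper never constructs: you take the Gysin sequence to arise from a short exact sequence of complexes $0\to SC^a_*\to SC^{a,S^1}_*\to SC^{a,S^1}_{*-2}\to 0$ with $SC^{a,S^1}_*=SC^a_*\otimes\Z[u]$ and differential $\partial+u\partial_1+u^2\partial_2+\cdots$, and then tensor the tautological (action) short exact sequence with it to get a $3\times 3$ diagram of complexes. But here $SC^{S^1,N}_*$ is generated by $S^1$-orbits $S_p$ of critical points of the parametrized action functional on $C^\infty(S^1,\widehat W)\times S^{2N+1}$, and the maps $E$ and $D$ of Theorem~\ref{thm:SGysin} are never realized as chain maps: the Gysin sequence is produced only on homology, from the two-line spectral sequence of the index-filtered Morse--Bott complex $BC^N_*$, equivalently as the long exact sequence of the mapping cone of $\bar d^2$ on the $E^1$-page. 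Consequently the commuting $3\times3$ diagram of chain complexes you want to ``assemble'' does not exist in this framework; producing it would mean constructing the $\Z[u]$-model with its higher operations $\partial_k$, proving it computes $SH^{S^1}_*$ with the same maps $E,D,M$, and checking that the $\partial_k$ respect the action filtration --- exactly the nontrivial content your proposal asserts rather than proves. The paper instead works in \S\ref{sec:MBparam}: since $d^0=0$, the map $\bar d^2$ is a morphism between the action-filtered short exact sequences on the $E^1$-page (display~\eqref{eq:bard2}), and the cone Lemma~\ref{lem:grid} yields the grid. In that formulation the Gysin map $D$ enters as an induced map $(\bar d^2)_*$, not as a connecting homomorphism, so all squares commute by naturality and the sign problem you wrestle with never appears.

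On that sign problem, your resolution is right in outcome but wrong in justification. The map you need to vanish is the connecting homomorphism $M\colon H^{S^1}_{k+n-2}(W,\partial W)\to H_{k+n-1}(W,\partial W)$ of the trivial-action column; it is not ``cap product with a vanishing Euler class'': the Euler class here is the pullback of the generator of $H^2(BS^1)$, which is nonzero, and cap product with it is the map $D$ of that column (which is surjective). The vanishing of $M$ holds because for the trivial action $H^{S^1}_*\simeq H_*\otimes H_*(BS^1)$, so $E$ is injective and the sequence splits (instance (ii) in the Introduction); at chain level, in your model, it would again have to be deduced from unestablished structure, e.g.\ vanishing of the higher operations on the low-action subcomplex for a split Hamiltonian as in Lemma~\ref{lem:minus}. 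So while the skeleton of your argument (compatibility of the action filtration with the full equivariant differential, a $3\times3$ diagram, anticommutativity neutralized by the trivial-action column) is a legitimate route in settings where the $u$-linear model is available, as a proof of the theorem within this paper it has a genuine gap at its very first step, and the actual proof proceeds by a different mechanism.
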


\medskip

\noindent {\sc Examples.} We discuss the consequences of our main theorems for two important classes of symplectic manifolds.

\smallskip \noindent {\it Cotangent bundles.} Let $L$ be a closed oriented Riemannian manifold, and denote by $\Lambda L$ the free loop space of $L$. We consider  the symplectic manifold $W=DT^*L=\{p\in T^*L\, : \, \|p\|\le 1\}$. It was proved by Viterbo~\cite{Vcotangent} that
$$
SH_*(DT^*L)\simeq H_*(\Lambda L), \qquad SH_*^{S^1}(DT^*L)\simeq H_*^{S^1}(\Lambda L).
$$
Alternative proofs for the first isomorphism are due to Abbondandolo and Schwarz~\cite{AS1}, respectively to Salamon and Weber~\cite{SW}.
Our proof of Theorem~\ref{thm:SGysin} can be combined with the methods of~\cite{AS1} in order to prove that the long exact sequence~\eqref{eq:SGysin} is isomorphic to the Gysin sequence for $\Lambda L$, namely
\begin{equation} \label{eq:Gysinloop}
\xymatrix
@C=20pt
{
\dots \ar[r] & H_*(\Lambda L) \ar[r]^-E & H_*^{S^1}(\Lambda L) \ar[r]^-D &
H_{*-2}^{S^1}(\Lambda L) \ar[r]^-M & H_{*-1}(\Lambda L) \ar[r] & \dots
}
\end{equation}
Similarly, for $a=+$, we obtain the Gysin sequence of the pair $(\Lambda^0L,L)$, where $\Lambda^0L$ is the component of free contractible loops in $L$.

\smallskip \noindent {\it Subcritical Stein manifolds.} A subcritical Stein manifold is a complex manifold $(W,J)$, of complex dimension $n$, endowed with a pluri-subharmonic function $\phi:W\to \R$, satisfying the following conditions: (i) the boundary $\p W$ is a regular level set of $\phi$ along which $\vec\nabla \phi$ points outwards; (ii) $\phi$ is Morse and the index of all its critical points is strictly smaller than $n$. The complex structure $J$ is compatible with the natural symplectic form $\omega_\phi:=-d(d\phi\circ J)$.

It was proved by Cieliebak~\cite{C} that $SH_*(W)=0$. His proof can be adapted in a straightforward way in order to show that $SH_*^{S^1}(W)=0$. However, this fact follows also from Theorem~\ref{thm:SGysin} in the case $c_1(W)=0$.

\begin{corollary} \label{cor:subcrit}
Assume $W$ is a subcritical Stein manifold with $c_1(W)=0$. Then we have $SH_*^{S^1}(W)=0$ and there is an isomorphism of exact sequences
$$
{\scriptsize
\xymatrix
@C=15pt
{
\dots \ar[r] & SH_*^+(W) \ar[r] \ar[d]_\simeq & SH_*^{+,S^1}(W) \ar[r]^-D \ar[d]_\simeq &
SH_{*-2}^{+,S^1}(W) \ar[r] \ar[d]_\simeq & SH_{*-1}^+(W) \ar[r] \ar[d]_\simeq & \dots \\
\dots \ar[r]^-0 & H_{*+n-1}(W,\p W) \ar[r] & H_{*+n-1}^{S^1}(W,\p W) \ar[r] & H_{*+n-3}^{S^1}(W,\p W) \ar[r]^-0 & H_{*+n-2}(W,\p W) \ar[r] & \dots
}
}
$$
\end{corollary}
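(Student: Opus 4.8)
The plan is to derive everything from three ingredients already in place: Cieliebak's vanishing $SH_*(W)=0$ for subcritical Stein manifolds~\cite{C}, the Gysin exact sequence of Theorem~\ref{thm:SGysin}, and the compatibility grid of Theorem~\ref{thm:grid}. Since $c_1(W)=0$, all the groups below are $\Z$-graded and these tools apply degreewise.

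\emph{Step 1: $SH_*^{S^1}(W)=0$.} I would plug $SH_*(W)=0$ into the Gysin sequence~\eqref{eq:SGysin} for $a=0$,
$$
\cdots \to SH_k(W)\to SH_k^{S^1}(W)\stackrel{D}{\to} SH_{k-2}^{S^1}(W)\to SH_{k-1}(W)\to\cdots,
$$
which shows that $D\colon SH_k^{S^1}(W)\to SH_{k-2}^{S^1}(W)$ is an isomorphism for every $k$; hence within each parity class all the groups $SH_*^{S^1}(W)$ are mutually isomorphic. To upgrade this to a vanishing statement I would use that $SH_*^{S^1}(W)$ is bounded below in degree -- this is the point where subcriticality really enters, via the fact that such a $W$ splits, after deformation of the Liouville structure, as $V\times\C$, so that $\p W$ is index-positive and its equivariant symplectic homology is concentrated in a half-line of degrees. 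Together with the isomorphisms $D$ this forces $SH_*^{S^1}(W)=0$. (Alternatively one can sidestep Theorem~\ref{thm:SGysin} and adapt Cieliebak's splitting argument directly to the equivariant Floer complex, using an equivariant K\"unneth formula and $SH_*^{S^1}(\C)=0$.)

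\emph{Step 2: the two isomorphisms.} With $SH_*(W)=0$, the tautological sequence~\eqref{eq:taut} breaks into pieces $0\to SH_*^+(W)\to H_{*+n-1}(W,\p W)\to 0$, so its connecting map $\delta\colon SH_*^+(W)\to H_{*+n-1}(W,\p W)$ is an isomorphism. In the same way, using $SH_*^{S^1}(W)=0$ from Step~1, the equivariant tautological sequence~\eqref{eq:tautS1} yields an isomorphism $\delta^{S^1}\colon SH_*^{+,S^1}(W)\to H_{*+n-1}^{S^1}(W,\p W)$. The top row of the asserted diagram is the Gysin sequence~\eqref{eq:SGysin} for $a=+$, while the bottom row is the Gysin sequence~\eqref{eq:equiGysin} for the \emph{trivial} $S^1$-action on the pair $(W,\p W)$; by the computation in instance~(ii) of the introduction the latter splits, its maps $H_j(W,\p W)\to H_j^{S^1}(W,\p W)$ being injective and its connecting maps $H_{j-2}^{S^1}(W,\p W)\to H_{j-1}(W,\p W)$ being zero, which accounts for the zero arrows in the statement.

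\emph{Step 3: compatibility.} It remains to see that $\delta$ and $\delta^{S^1}$ intertwine these two Gysin sequences. This is exactly what Theorem~\ref{thm:grid} gives: in the commutative grid the connecting maps of the tautological rows are horizontal arrows, the Gysin maps $E$, $D$, $M$ are vertical arrows, and every square commutes, so the squares expressing the naturality of $\delta$ and $\delta^{S^1}$ with respect to $E$, $D$ and $M$ commute; since $\delta$ and $\delta^{S^1}$ are isomorphisms this is an isomorphism of long exact sequences, i.e.\ the displayed diagram. The main obstacle is Step~1: the Gysin sequence alone only propagates $SH_*^{S^1}(W)$ unchanged along each parity class and is silent about whether it vanishes, so passing from ``$D$ is an isomorphism'' to ``$SH_*^{S^1}(W)=0$'' genuinely requires the boundedness input (equivalently index-positivity of $\p W$, equivalently the $V\times\C$ splitting), after which everything is formal.
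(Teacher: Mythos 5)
Your proposal is correct and follows essentially the same route as the paper: the Gysin sequence with $SH_*(W)=0$ gives the periodicity isomorphisms $D$, a degree bound coming from subcriticality upgrades this to $SH_*^{S^1}(W)=0$ by induction, the tautological sequences then degenerate to the vertical isomorphisms $SH_*^+(W)\simeq H_{*+n-1}(W,\p W)$ and $SH_*^{+,S^1}(W)\simeq H^{S^1}_{*+n-1}(W,\p W)$, and Theorem~\ref{thm:grid} yields the commutativity. The only difference is in how the degree bound is justified: where you invoke the $V\times\C$ splitting to claim index-positivity of $\p W$, the paper cites M.-L.~Yau's theorem that the plurisubharmonic function can be chosen so that all closed characteristics on $\p W$ have positive Conley--Zehnder index, which makes the equivariant chain complex vanish in sufficiently low degree.
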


\begin{proof}
Applying Theorem~\ref{thm:SGysin} we obtain that $SH_k^{S^1}(W)\simeq SH_{k-2}^{S^1}(W)$ for all $k\in \Z$.
It was proved by M.-L.~Yau~\cite{Y} that one can choose the plurisubharmonic function $\phi$ so that the Conley-Zehnder indices of all closed characteristics on $\p W$ are positive. It follows from the definition of $S^1$-equivariant symplectic homology that the underlying chain complex is zero if the degree is small enough (one can use "split" Hamiltonians as in the proof of Lemma~\ref{lem:minus}). Reasoning by induction, it follows that $SH_*^{S^1}(W)=0$. The isomorphism of exact sequences follows immediately from Theorem~\ref{thm:grid}, since the columns involving $SH_*$ and $SH_*^{S^1}$ vanish identically.
\end{proof}

\medskip

\noindent {\sc Algebraic Weinstein conjecture.} Following Viterbo~\cite{V}, we say that $W$ satisfies the \emph{Strong Algebraic Weinstein Conjecture (SAWC)} if the map
$$
H_{2n}(W,\p W)\to SH_n(W)
$$
vanishes. Let $\mu_{2n}\in H_{2n}(W,\p W)$ be the fundamental class and $u_k$ be a generator of $H_{2k}(BS^1)$, $k\ge 0$.
We say that $W$ satisfies the \emph{Strong Equivariant Algebraic Weinstein Conjecture (EWC)} if, for all $k\ge 0$, the element
$\mu_{2n}\otimes u_k$ lies in the kernel of the map
$$
H_{2n+2k}^{S^1}(W,\p W)\to SH_{n+2k}^{S^1}(W).
$$
Our next result clarifies the relationship between $SAWC$ and $EWC$, which are the two key notions in Viterbo's fundamental paper~\cite{V}.
\begin{corollary} \label{cor:WC} $SAWC\Longrightarrow EWC$.
\end{corollary}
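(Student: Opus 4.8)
The plan is to prove Corollary~\ref{cor:WC} by induction on $k$, using the commutative grid of Theorem~\ref{thm:grid} to transport the non‑equivariant hypothesis $SAWC$ into the equivariant conclusions $EWC$ one power of $u$ at a time. Write $c_*: H_{2n}(W,\partial W)\to SH_n(W)$ for the tautological map of~\eqref{eq:taut}, so that $SAWC$ reads $c_*\mu_{2n}=0$; write $\psi: H^{S^1}_{*+n}(W,\partial W)\to SH^{S^1}_*(W)$ for its equivariant analogue in~\eqref{eq:tautS1}; and write $E,D$ for the maps in the Gysin sequence~\eqref{eq:SGysin}, $E_H,D_H$ for the maps in the topological Gysin sequence of the trivial $S^1$-action on $(W,\partial W)$ (a column of the grid), recalling that $E_H: H_*(W,\partial W)\hookrightarrow H^{S^1}_*(W,\partial W)$ is the inclusion of the $u_0$-summand and that $D_H(\mu_{2n}\otimes u_j)=\mu_{2n}\otimes u_{j-1}$. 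The base case $k=0$ is then immediate: the square $\psi\circ E_H=E\circ c_*$, which is part of the grid of Theorem~\ref{thm:grid}, gives $\psi(\mu_{2n}\otimes u_0)=E(c_*\mu_{2n})=0$, i.e. $EWC$ for $k=0$.

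For the inductive step I would combine the commutation $D\circ\psi=\psi\circ D_H$ (again part of Theorem~\ref{thm:grid}) with the identity $D_H^k(\mu_{2n}\otimes u_k)=\mu_{2n}\otimes u_0$, obtained by iterating $D_H(\mu_{2n}\otimes u_j)=\mu_{2n}\otimes u_{j-1}$; I note in passing that, since $H_i(W,\partial W)=0$ for $i>2n$, the groups flanking each $D_H: H^{S^1}_{2n+2j}(W,\partial W)\to H^{S^1}_{2n+2j-2}(W,\partial W)$ in the Gysin sequence vanish (for $j\ge1$), so $D_H$ is an isomorphism in those degrees. Feeding in the base case, $D^k(\psi(\mu_{2n}\otimes u_k))=\psi(D_H^k(\mu_{2n}\otimes u_k))=\psi(\mu_{2n}\otimes u_0)=0$, so $\psi(\mu_{2n}\otimes u_k)$ lies in the kernel of $D^k: SH^{S^1}_{n+2k}(W)\to SH^{S^1}_n(W)$.

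It remains --- and this is the main obstacle --- to improve ``lies in $\ker D^k$'' to ``equals zero''. The clean route is to observe that $SAWC$ in fact forces $SH_*(W)=0$: the map $c_*$ intertwines the intersection product on $H_*(W,\partial W)$ with the pair‑of‑pants product on $SH_*(W)$ and carries the fundamental class $\mu_{2n}$ to the unit, so $c_*\mu_{2n}=0$ annihilates the unit and hence all of $SH_*(W)$; then the Gysin sequence~\eqref{eq:SGysin} of Theorem~\ref{thm:SGysin} makes $D: SH^{S^1}_k(W)\to SH^{S^1}_{k-2}(W)$ injective for every $k$ (its kernel is the image of $E$, whose source $SH_k(W)$ vanishes), so $D^k$ is injective and $\psi(\mu_{2n}\otimes u_k)=0$, which is $EWC$. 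The whole difficulty is thus concentrated in the implication $SAWC\Rightarrow SH_*(W)=0$, equivalently the injectivity of $D$ on $SH^{S^1}_*(W)$: the grid by itself only propagates vanishing ``downward'', since every $D$ in it lowers degree, and leaves precisely the $\ker D^k$ ambiguity; removing it requires either the multiplicative input just described, or --- if one insists on arguing inside the formalism of Theorems~\ref{thm:SGysin}--\ref{thm:grid} alone --- a finiteness property of $SH^{S^1}_*(W)$ as a module over $H^*(BS^1)=\Z[u]$ (with $u$ acting as $D$), which would force an element of $\ker D^k$ that is moreover infinitely $D$-divisible --- as $\psi(\mu_{2n}\otimes u_k)$ necessarily is, by the vanishing of the connecting maps of the topological Gysin sequence together with the surjectivity of $D_H$ --- to be zero.
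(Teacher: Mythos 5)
Your proposal is correct and follows essentially the same route as the paper: the key input in both is that $SAWC$ forces $SH_*(W)=0$ (since the unit of the pair-of-pants ring structure is the image of $\mu_{2n}$), which via the Gysin sequence makes $D$ injective, and then the commutativity of the grid of Theorem~\ref{thm:grid} transports $c_*\mu_{2n}=0$ to $\psi(\mu_{2n}\otimes u_k)=0$ for all $k$. The only cosmetic difference is that you phrase the degree-shifting step as injectivity of $D^k$ applied to $\psi(\mu_{2n}\otimes u_k)$, whereas the paper runs the same computation as an induction on $k$ using that $D$ is an isomorphism.
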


\begin{proof}
We first note that $SAWC$ is equivalent to the vanishing of $SH_*(W)$. This follows from the fact that $SH_*(W)$ is a ring with unit~\cite{McLean}, and the unit is the image of the fundamental class $\mu_{2n}$ under the map $H_{2n}(W,\p W)\to SH_n(W)$~\cite{Seidel-biased}.

We now consider the top middle square in the commutative diagram~\eqref{eq:grid} of Theorem~\ref{thm:grid}. Since $\mu_{2n}\otimes u_0$ is the image of $\mu_{2n}$ under the injection $H_{2n}\to H_{2n}^{S^1}$, it follows that $\mu_{2n}\otimes u_0$ is in the kernel of $H_{2n}^{S^1}\to SH_n^{S^1}$. We now prove by induction that $\mu_{2n}\otimes u_k$ is in the kernel of $H_{2n+2k}^{S^1}\to SH_{n+2k}^{S^1}$. This follows from the middle square in the commutative diagram~\eqref{eq:grid}, using that $\mu_{2n}\otimes u_{k+1}$ is sent to $\mu_{2n}\otimes u_k$ by the map $H_{2n+2k+2}^{S^1}\to H_{2n+2k}^{S^1}$, and the fact that $SH_{n+2k+2}^{S^1}\to SH_{n+2k}^{S^1}$ is an isomorphism.
\end{proof}

\begin{remark}
 The same argument as above shows that, under the assumption $SAWC$, the maps $H_{k+n}^{S^1}\to SH_k^{S^1}$ vanish for all $k\in\Z$.
\end{remark}

\medskip

\noindent {\sc The parametrized Robbin-Salamon index.} Given a smooth manifold $X$ with an $S^1$-action, the $S^1$-equivariant homology $H_*^{S^1}(X)$ can be realized as $S^1$-invariant Morse homology on $X\times ES^1$, which in turn can be approximated by $X\times S^{2N+1}$, $N\to \infty$. In analogy, $S^1$-equivariant symplectic homology is defined as an $S^1$-invariant Floer theory for a parametrized action functional $\cA_H:C^\infty(S^1,W)\times S^{2N+1}\to \R$ which, on contractible loops, is of the form
$$
\cA_H(\gamma,\lambda):=-\int_{D^2}\sigma^*\omega - \int_{S^1}H(\theta,\gamma(\theta),\lambda)\,d\theta,
$$
where $\sigma:D^2\to W$ is a capping disc for $\gamma$. Here $H:S^1\times W\times
S^{2N+1}\to\R$ is an $S^1$-invariant family of Hamiltonians, meaning that $H(\theta+\tau,x,\tau\lambda)=H(\theta,x,\lambda)$ for all $\tau\in S^1$. This condition ensures that $\cA_H$ is invariant with respect to the diagonal action of $S^1$ on $C^\infty(S^1,W)\times S^{2N+1}$, given by $\tau\cdot(\gamma,\lambda):=(\gamma(\cdot-\tau),\tau\lambda)$.

The corresponding chain complex is generated by $S^1$-orbits of critical points of $\cA_H$, which are pairs $(\gamma,\lambda)$ such that $\gamma$ is a $1$-periodic orbit of the Hamiltonian vector field of $H(\cdot,\cdot,\lambda)$, and $\int_{S^1}\frac {\p H} {\p \lambda}
(\theta,\gamma(\theta),\lambda)\, d\theta=0$ (see~\S\ref{sec:action}). In order to associate a grading to these generators, we define in~\S\ref{sec:RS} a parametrized version of the Robbin-Salamon index.

The construction is valid for an arbitrary finite-dimensional smooth parameter space $\Lambda$. Given a parametrized Hamiltonian $H:S^1\times W\times
\Lambda\to\R$, we extend it to $\tH:S^1\times  W\times
T^*\Lambda\to\R$ by the formula
$$
\tH(t,x,(\lambda,p)):=H(t,x,\lambda).
$$
We then have $X_\tH=X_{H_\lambda}+\frac {\p H} {\p \lambda}\frac \p
{\p p}$, with $H_\lambda:=H(\cdot,\cdot,\lambda)$. A
$1$-periodic orbit of $X_\tH$ has the form
$(\gamma(\cdot),p(\cdot),\lambda)$, with $\gamma$ a $1$-periodic orbit of
$X_{H_\lambda}$ and $p(t)=p(0)+\int_0^t\frac {\p H} {\p \lambda}
(\theta,\gamma(\theta),\lambda)\, d\theta$. The closing condition
$p(1)=p(0)$ is equivalent to the condition
$\int_0^1\frac {\p H} {\p \lambda}
(\theta,\gamma(\theta),\lambda)\, d\theta=0$, while $p(0)\in T^*_\lambda
\Lambda$ can be chosen arbitrarily. Thus (nondegenerate) critical
points of the parametrized action functional for $H$ are in one-to-one
bijective correspondence with (Morse-Bott) families of $1$-periodic
orbits of $\tH$, of dimension
$\dim\,\Lambda=\dim\,T^*_\lambda\Lambda$.

\begin{definition} \label{defi:RS} The \emph{parametrized Robbin-Salamon
  index} $\mu(\gamma,\lambda)$ of a critical point
$(\gamma(\cdot),\lambda)$ for the parametrized Hamiltonian $H$ is
the Robbin-Salamon index~\cite{RS} of one of the corresponding $1$-periodic orbits
$(\gamma(\cdot),p(\cdot),\lambda)$ of $\tH$.
\end{definition}

The construction given in~\S\ref{sec:RS} is phrased directly in terms of the Hamiltonian $H$, rather than in terms of $\widetilde H$ (see~\eqref{eq:defiRS}). The properties of the parametrized Robbin-Salamon index are proved in Appendix~\ref{app:RS}, and they amount to the study of paths of symplectic matrices of a special form. The dimension of the moduli spaces of trajectories connecting a pair of critical points is given by the difference of the indices of these critical points (see Theorem~\ref{thm:index}).

\medskip

\noindent {\sc Ramifications.} We now present several directions of investigation which are related to the present paper.

\smallskip

\noindent {\it Algebraic structures.} The Gysin exact sequence~\eqref{eq:SGysin} can be used to define algebraic operations
in ($S^1$-equivariant) symplectic homology.

As already mentioned in the proof of Corollary~\ref{cor:WC}, symplectic homology $SH_*(W)$ is a unitary ring, with the
pair-of-pants product. This is described by
Seidel~\cite{Seidel-biased}, and was used in a crucial way by McLean~\cite{McLean}
in his construction of exotic affine $\R^{2n}$'s. We denote the pair of pants
product by
$$
\bullet:SH_k(W)\otimes SH_\ell(W)\longrightarrow SH_{k+\ell-n}(W).
$$

Let us write the Gysin exact sequence~\eqref{eq:SGysin} as
$$
\xymatrix
@C=20pt
{
\dots \ar[r] & SH_*(W) \ar[r]^E & SH_*^{S^1}(W) \ar[r]^D &
SH_{*-2}^{S^1}(W) \ar[r]^M & SH_{*-1}(W) \ar[r] & \dots
}
$$
The notation is motivated by the isomorphism with the exact sequence~\eqref{eq:Gysinloop} in the case $W=DT^*L$.
The letters $M$ and $E$ stand for ``mark'' and ``erase'', in the terminology of
Chas and Sullivan~\cite{CS}. It was proved by Abbondandolo and Schwarz~\cite{AS2} that, in the case $W=DT^*L$, the pair-of-pants product
is identified with the Chas-Sullivan loop product~\cite{CS}.

Inspired by Chas and Sullivan~\cite{CS}, we formulate the following definitions and claims, which we will prove in a forthcoming paper.
\begin{itemize}
\item[---] The map
$$
\Delta:SH_*(W)\to SH_{*+1}(W), \qquad \Delta:=M\circ E
$$
is a \emph{Batalin-Vilkovisky (BV) operator}, in the sense that
$\Delta^2=0$, and
$$
\{\cdot,\cdot\} : SH_k(W)\otimes SH_\ell(W)\to SH_{k+\ell-n+1}(W),
$$
$$
\{a,b\} \  := \  \pm \, \Delta(a\bullet b) \pm  a \bullet \Delta(b) \pm
b\bullet \Delta(a)
$$
is a bracket on $SH_*(W)$ (called \emph{the loop bracket}).
\item[---] The map
$$
[\cdot,\cdot]:SH_k^{S^1}(W)\otimes SH_\ell^{S^1}(W)\to SH_{k+\ell-n+2}^{S^1}(W)
$$
$$
[a,b]:=\pm\, E(M(a)\bullet M(b))
$$
is a bracket on $SH_*^{S^1}(W)$ (called \emph{the string bracket}).
\end{itemize}
We give a chain-level description of $\Delta$ in Remark~\ref{rmk:Delta}. The above claims are analogous to Theorems~4.7, 5.4, and 6.1 of~\cite{CS}. The string bracket can be further generalized as follows. Any operation
$$
\widetilde \sigma:SH_*^{\otimes k}\to SH_*, k\ge 2
$$
yields an operation
$$
\sigma:=E\circ \widetilde \sigma \circ M^{\otimes k}:(SH_*^{S^1})^{\otimes k}\to SH_*^{S^1}.
$$
One particular case is $\widetilde \sigma:=\bullet ^{\otimes k-1}$, $k\ge 2$, which yields higher-order operations on $SH_*^{S^1}$ analogous to the ones of~\cite[Theorem~6.2]{CS}.

The range of applications of such operations depends on their explicit
knowledge in particular situations (e.g. cotangent
bundles). However, the Chas-Sullivan string
operations are only beginning to be
understood by topologists (see the work of Felix, Thomas, and
Vigu\'e-Poirrier~\cite{Felix-Thomas-Vigue,Felix-Thomas-Vigue-2}).

 It should also be possible to describe these operations directly in terms of
holomorphic curves. Such a construction is sketched by Seidel in~\cite{Seidel-biased}.

\smallskip

\noindent {\it Relation to Hochschild and cyclic homology.}
Paul Seidel has conjectured in~\cite{Se} that,
given an exact Lefschetz fibration $E\to D$ over the disc, the
symplectic homology of $E$ is isomorphic to the Hochschild homology of
a certain $A_\infty$-category $\cC$ built from the vanishing cycles of
$E$:
$$
SH_*(E) \simeq HH_*(\cC).
$$
It is implicit in~\cite{Se} that there is an equivariant version of
this conjectural isomorphism, namely that the $S^1$-equivariant symplectic
homology of $E$ is isomorphic to the cyclic homology of $\cC$:
$$
SH_*^{S^1}(E)\simeq HC_*(\cC).
$$
On the other hand, Hochschild and cyclic homology are related by the
Connes exact sequence
\begin{equation} \label{eq:Connes}
 \ldots \to HH_k(\cC) \to HC_k(\cC) \stackrel D \to HC_{k-2}(\cC) \to
HH_{k-1}(\cC) \to \ldots
\end{equation}
We conjecture that the two previous isomorphisms are such that the
Gysin exact sequence~\eqref{eq:SGysin} and the Connes exact
sequence~\eqref{eq:Connes} are isomorphic. This fits with the general
philosophy that the Gysin exact sequence for $S^1$-equivariant
homology of certain topological spaces is isomorphic to
the Connes exact sequence of suitable algebras (a good reference is
Loday's book~\cite{Lo}, in particular~\cite[Theorem~7.2.3]{Lo}).

\smallskip

\noindent {\it Relation to Givental's point of view.} Given a closed symplectic manifold $X$, Givental defined in~\cite{Gi} a $D$-module structure on
$H^*(X;\C)\otimes \Lambda_{Nov}\otimes \C[\hbar]$, where $\Lambda_{Nov}$ is a suitable Novikov ring and $\hbar$ is the generator of $H^*(BS^1)$. He interprets this
as being the $S^1$-equivariant Floer cohomology of $X$. Our construction of $S^1$-equivariant Floer homology provides an interpretation of the underlying homology group as the homology of a Floer-type complex. We expect that the $D$-module structure can also be defined within our setup.

\medskip

 \noindent {\sc Structure of the paper.} In~\S\ref{sec:symplhom} we briefly recall the construction of symplectic homology. In~\S\ref{sec:param}
 we introduce a new variant of it, which we call ``parametrized symplectic homology''. It corresponds to families of Hamiltonians, indexed by a finite dimensional parameter space.
 In order to assign a grading to the generators of the underlying chain complex, under the sole nondegeneracy assumption and regardless of the specific form of the Hamiltonian, we define the ``parametrized Robbin-Salamon index" in~\S\ref{sec:RS}. Section~\ref{sec:S1} is devoted to the $S^1$-equivariant theory. We recall in~\S\ref{sec:S1equivhom} the Borel construction and its interpretation in Morse homology. We define $S^1$-equivariant symplectic homology in~\S\ref{sec:S1equivsymplhom}, following Viterbo~\cite[\S5]{V}. We prove Theorems~\ref{thm:SGysin} and~\ref{thm:grid} in~\S\ref{sec:MBparam}, using a Morse-Bott construction and a spectral sequence argument. In~\S\ref{sec:continuation} we use similar techniques to study continuation maps. We prove in Appendix~\ref{app:RS} some important properties of the parametrized Robbin-Salamon index.

 \medskip

 \noindent {\sc Acknowledgements.} The authors were partially supported by the Minist\`ere Belge
des Affaires \'etrang\`eres and the Minist\`ere
Fran\c{c}ais des Affaires \'etrang\`eres et europ\'eennes, through the
programme PHC--Tournesol Fran\c{c}ais. F.B. was partially supported by the Fonds National de la Recherche Scientifique (Belgium). Both authors were partially
supported by ANR project ``Floer Power'' ANR-08-BLAN-0291-03 (France).

%%%%%%%%%%%%%%%%%%%%%%%%%%%%%%%%%%%%%%%%%%%%%%
%%%%%%%%%%%%%%%%%%%%%%%%%%%%%%%%%%%%%%%%%%%%%%
%%%%%%%%%%%%%%%% Section 2 %%%%%%%%%%%%%%%%%%%
%%%%%%%%%%%%%%%%%%%%%%%%%%%%%%%%%%%%%%%%%%%%%%
%%%%%%%%%%%%%%%%%%%%%%%%%%%%%%%%%%%%%%%%%%%%%%

\section{Symplectic homology} \label{sec:symplhom}

We briefly recall in this section the definition of symplectic
homology, and we refer to~\cite{BOauto} for full details. In the sequel
$(W,\om)$ denotes a compact symplectic manifold with
contact type boundary $M:=\p W$. This means that there exists a vector
field $X$ defined in a neighbourhood of $M$, transverse and pointing
outwards along $M$, and such that
$$
\cL _X \om = \om.
$$
Such an $X$ is called a {\bf Liouville vector field}. The $1$-form
$\alpha:=(\iota_X\om)|_M$ is a contact form on $M$ and is called the
{\bf Liouville \boldmath$1$-form}. We denote by
$\xi:=\ker \alpha$ the contact structure defined by $\alpha$, and
we note that the isotopy class of $\xi$ is uniquely determined by
$\om$. The {\bf Reeb vector field} $R_\alpha$ is defined by
the conditions $\ker \, \om|_M = \langle R_\alpha \rangle$ and
$\alpha(R_\alpha)=1$. We denote by $\phi_\alpha$ the flow of
$R_\alpha$. The {\bf action spectrum} of $(M,\alpha)$ is
defined by
$$
\textrm{Spec}(M,\alpha) := \{ T \in \R^+\, | \, \textrm{ there is a
   closed } R_\alpha\textrm{-orbit of period } T\}.
$$

Let $\phi$ be the flow of $X$. We parametrize a neighbourhood $U$ of $M$ by
$$
G: M \times [-\delta, 0] \to U, \qquad (p,t) \mapsto \phi^t(p).
$$
Then $d(e^t\alpha)$ is a symplectic form on $M\times \R^+$ and
$G$ satisfies $G^*\om = d(e^t \alpha)$.
We denote by
$$
\widehat W : = W \ \bigcup _{G} \ M\times \R^+
$$
the {\bf symplectic completion of \boldmath$W$} and endow it with the
symplectic form
$$
\widehat \om : =
\left\{\begin{array}{ll}
\om & \textrm{ on } W, \\
d(e^t \alpha) & \textrm{ on } M\times \R^+.
\end{array} \right.
$$

Given a time-dependent Hamiltonian $H :S^1\times \widehat W \to \R$
we define the
{\bf Hamiltonian vector field} $X^\theta_H$ by
$$
\widehat \om (X^\theta_H,\cdot) = d H_\theta, \qquad \theta\in S^1 = \R/\Z,
$$
where $H_\theta:=H(\theta,\cdot)$. We denote by $\phi_H$ the flow of
$X_H^\theta$, defined by $\phi_H^0=\textrm{Id}$ and
$$
  \frac d {d\theta} \phi_H^\theta (x) = X^\theta_H(\phi_H^\theta(x)),
\qquad \theta\in \R.
$$
We denote by $\cP(H)$ the set of $1$-periodic orbits of $X^\theta_H$,
and we denote by $\cP^a(H)\subset \cP(H)$ the set of $1$-periodic
orbits in the free homotopy class $a$.

We define the class $\cH$ of {\bf admissible Hamiltonians} to consist of
smooth functions $H:S^1\times \widehat W\to \R$ satisfying the following
conditions:
\begin{itemize}
\item $H<0$ on $W$;
\item there exists $t_0\ge 0$ such that $H(\theta,p,t)=\beta e^t
  +\beta'$ for $t\ge t_0$, with $0<\beta\notin
  \mathrm{Spec}(M,\alpha)$ and $\beta'\in\R$.
\end{itemize}

We denote by $\cH_{\textrm{reg}}\subset \cH$ the dense set of Hamiltonians $H$
  such that all elements of $\cP(H)$ are nondegenerate, i.e. the Poincar\'e
  return map has no eigenvalues equal to $1$.
Let $a$ be a free homotopy class of loops in $W$.
The {\bf symplectic homology groups} of $(W,\om)$ are defined by
\begin{equation*}
   SH_*^a(W,\om) := \lim _{\substack{ \longrightarrow \\ H\in \cH_{\textrm{reg}}}
    } SH_*^a(H,J).
\end{equation*}
Here $J$ is an almost complex structure on $\widehat W$ which is
compatible with $\widehat \om$, convex and invariant under translation
in the $t$-variable outside a compact set, and regular for
$H$ (in particular one must allow $J$ to depend on $\theta$).
We denote by $SH_*^a(H,J)$ the Floer homology groups of the pair
$(H,J)$ in the free homotopy class $a$ and with coefficients in the
Novikov ring $\Lambda_\om$. We assume throughout this paper that $W$
satisfies condition~\eqref{eq:asph}, so that the energy of a Floer
trajectory does not depend on its homology class, but only on its
endpoints. We refer to \cite{BOauto} for the details of the construction
and in particular for the definition of the coefficient ring
$\Lambda_\om$. Throughout this paper the Novikov ring is understood to
be defined over $\Q$.

For the trivial homotopy class $a=0$ we denote
the symplectic homology groups by $SH_*(W,\om)$. The {\bf
reduced Hamiltonian action functional} is
$$
\cA_H^0 : C^\infty_{\textrm{contr}}(S^1,\widehat W) \to \R,
$$
$$
\cA_H^0(\gamma) := -\int_{D^2} \sigma^*\widehat \om - \int_{S^1}
H(\theta,\gamma(\theta)) \, d\theta.
$$
Here $C^\infty_{\textrm{contr}}(S^1,\widehat W)$ denotes the space of
smooth contractible loops in $\widehat W$ and $\sigma:D^2\to \widehat
W$ is a smooth extension of $\gamma$. Note that $\cA_H^0$ is
well-defined thanks to condition~(\ref{eq:asph}) and
is decreasing along Floer trajectories.

We now consider a special cofinal class of Hamiltonians $\cH'\subset
\cH$, consisting of elements $H\in \cH'$ which satisfy the following
conditions:
\begin{itemize}
\item there exists $t_0\ge 0$ such that $H(\theta,p,t)=\beta e^t
  +\beta'$ for $t\ge t_0$, with $0<\beta\notin
  \mathrm{Spec}(M,\alpha)$ and $\beta'\in\R$;
\item $H<0$ and $C^2$-small on $W$;
\item $H(\theta,p,t)$ is $C^2$-close to an increasing function of $t$
  on $S^1\times M \times [0,t_0]$.
\end{itemize}
The last condition implies that, in the region $M\times [0,t_0]$, each
$1$-periodic orbit of $H$ is $C^1$-close to a closed characteristic on
some level $M\times\{t\}$.

Given $H\in\cH'_\reg:=\cH_\reg\cap \cH'$, a regular almost complex
structure $J$, and a choice of $\epsilon>0$ small enough, we define
the chain complexes
\begin{equation} \label{eq:SC-}
SC_*^-(H,J) := \bigoplus _{\substack{ \gamma \in
     \cP^0(H) \\ \cA_H^0(\gamma) \le \epsilon }} \Lambda_\om
\langle \gamma \rangle \ \subset SC_*(H,J)
\end{equation}
and
$$
SC_*^+(H,J) := SC_*(H,J) / SC_*^-(H,J).
$$
The differential on $SC_*^\pm(H,J)$ is induced by $\p$. The groups
$$
SH_*^\pm(H,J) := H_*(SC_*^\pm(H),\p)
$$
do not depend on $J$, nor on $\epsilon$, and we define
$$
SH_*^\pm(W,\om) := \lim _{\substack{ \to \\ H\in \cH'_{\textrm{reg}}}} SH_*^\pm(H).
$$
We call $SH_*^+(W,\om)$ the {\bf positive symplectic homology group}
of $(W,\om)$.

\begin{remark} {\rm
Condition~\eqref{eq:asph} can be
replaced in the case of contractible orbits by the weaker {\bf
symplectic asphericity} condition $\langle \om,\pi_2(W)\rangle =0$.
}
\end{remark}

Let us assume now that $W$ has {\bf positive contact
   type} boundary~\cite[\S5.4]{O1}. This means that every positively oriented
closed characteristic $\gamma$ on $M$ which is contractible in $W$ has
positive action $\cA_\om(\gamma)$ bounded away from zero, where
$$
\cA_\om(\gamma) := \int_{D^2} \sigma^*\om
$$
for some extension $\sigma:D^2\to W$ of $\gamma$. This condition is
automatically satisfied if the boundary $M$ is of restricted contact
type, i.e. the vector field $X$ is globally defined on $W$. Under the
positive contact type assumption we have~\cite{V}
$$
SH_*^-(W,\om) = H_{*+n}(W,\p W;\Lambda_\om), \qquad n=\frac 1 2 \dim
\, W,
$$
and the short exact sequence of complexes $SC_*^-(H) \to SC_*(H)
\to SC_*^+(H)$ induces the tautological long exact
sequence~\eqref{eq:taut}.

%%%%%%%%%%%%%%%%%%%%%%%%%%%%%%%%%%%%%%%%%%%%%%
%%%%%%%%%%%%%%%%%%%%%%%%%%%%%%%%%%%%%%%%%%%%%%
%%%%%%%%%%%%%%%% Section 3 %%%%%%%%%%%%%%%%%%%
%%%%%%%%%%%%%%%%%%%%%%%%%%%%%%%%%%%%%%%%%%%%%%
%%%%%%%%%%%%%%%%%%%%%%%%%%%%%%%%%%%%%%%%%%%%%%

\section{Parametrized symplectic homology} \label{sec:param}

We introduce in this section a new variant of Floer
homology, which we call ``parametrized Floer homology''.
In the sequel $\Lambda$ is a finite dimensional closed
manifold of dimension $m$, which we call ``parameter space''. The
elements of $\Lambda$ are denoted by $\lambda$. When the
parameter space is $S^{2N+1}$, the parametrized symplectic homology
groups will be the abutment of the spectral sequence which gives rise
to the Gysin exact sequence~\eqref{eq:SGysin}.

\subsection{The parametrized Floer equation} \label{sec:action}
For each free homotopy class $a$ in $W$, we fix a reference loop
$l_a:S^1\to \widehat W$ such that $[l_a]=a$. If $a$ is the trivial
homotopy class, we choose $l_a$ to be a constant loop.
Recall that free homotopy classes of loops in $\widehat W$ are in
one-to-one correspondence with conjugacy classes in $\pi_1(\widehat
W)$. As a consequence, the inverse $a^{-1}$ of a free homotopy class
is well-defined. We require that $l_{a^{-1}}$ coincides with the loop
$l_a$ with the opposite orientation.

We define the set $\cH_\Lambda$ of
{\bf admissible Hamiltonian families} to consist of elements
$H\in C^\infty(S^1\times \widehat W\times \Lambda,\R)$ which satisfy
the following conditions:
\begin{itemize}
\item $H<0$ on $S^1\times W\times \Lambda$;
\item there exists $t_0\ge 0$ such that $H(\theta,p,t,\lambda)=\beta
  e^t +\beta'(\lambda)$ for $t\ge t_0$, with $0<\beta\notin
  \mathrm{Spec}(M,\alpha)$ and $\beta'\in C^\infty(\Lambda,\R)$.
\end{itemize}

Let $H:S^1 \times \widehat W \times \Lambda \to
\R$ be an admissible Hamiltonian family denoted by
$H(\theta,x,\lambda)=H_\lambda(\theta,x)$. This defines a family of
action functionals
$$
\cA : C^\infty(S^1,\widehat W)\times \Lambda \to \R,
$$
$$
\cA(\gamma,\lambda) = \cA_\lambda(\gamma) := -\int_{[0,1]\times S^1} \sigma^*
\om - \int_{S^1} H_\lambda(\theta,\gamma(\theta)) d\theta,
$$
where $\sigma:[0,1]\times S^1\to \widehat W$ is a smooth homotopy from
$l_{[\gamma]}$ to $\gamma$. The functional $\cA$ is well-defined due
to our standing assumption~\eqref{eq:asph}.

The differential of $\cA$ is given by
\begin{equation} \label{eq:dA}
d\cA(\gamma,\lambda) \cdot (\zeta,\ell)=
\int_{S^1}\om(\dot\gamma(\theta)-X_{H_\lambda}(\gamma(\theta)),\zeta(\theta))
d\theta
-
\int_{S^1} \frac {\p H} {\p \lambda} (\theta,\gamma(\theta),\lambda) d\theta
\cdot \ell
\end{equation}
and therefore $(\gamma,\lambda)$ is a critical point of $\cA$ if and
only if
\begin{equation} \label{eq:periodicpar}
 \gamma\in\cP(H_\lambda) \quad \mbox{and} \quad
 \int_{S^1} \frac {\p H} {\p \lambda} (\theta,\gamma(\theta),\lambda)\,
d\theta =0.
\end{equation}
We denote by $\cP(H)$ the set of critical points of $\cA$ consisting
of pairs $(\gamma,\lambda)$ satisfying~\eqref{eq:periodicpar}. We
denote by $\cP^a(H)$ the set of pairs $(\gamma,\lambda)\in\cP(H)$ such
that $\gamma$ lies in the free homotopy class $a$.

\begin{remark} {\rm
 Equation~\eqref{eq:periodicpar} can be interpreted as follows. Every
loop $\gamma:S^1\to\widehat W$ determines a function
\begin{equation} \label{eq:Fgamma}
F_\gamma:\Lambda \to \R, \qquad \lambda \mapsto \int_{S^1}
H(\theta,\gamma(\theta),\lambda) \, d\theta.
\end{equation}
A pair $(\gamma,\lambda)$ belongs therefore to $\cP(H)$ if and only if
$$
\gamma\in \cP(H_\lambda) \quad \mbox{ and } \quad \lambda\in
\textrm{Crit}(F_\gamma).
$$
}
\end{remark}

Let $J=(J_\lambda^\theta)$, $\lambda\in\Lambda$, $\theta\in S^1$ be a
family of $\theta$-dependent compatible
almost complex structures on $\widehat W$
which, at infinity, are invariant under translations in the
$t$-variable and satisfy the relations
\begin{equation} \label{eq:standardJ}
J_\lambda^\theta \xi=\xi, \qquad J_\lambda^\theta (\frac \partial
{\partial t}) =R_\alpha.
\end{equation}
Such an {\bf admissible family of almost complex
  structures} $J$ induces a
  family of $L^2$-metrics on the space $C^\infty(S^1,\widehat W)$, parametrized
  by $\Lambda$ and defined by
$$
\langle \zeta,\eta\rangle_\lambda := \int_{S^1}
\om(\zeta(\theta),J_\lambda^\theta\eta(\theta)) d\theta, \quad \zeta,\eta\in
T_\gamma C^\infty(S^1,\widehat
W)=\Gamma(\gamma^*T\widehat W).
$$
Such a metric can be coupled with any metric $g$ on $\Lambda$ and
gives rise to a metric on
$C^\infty(S^1,\widehat W)\times \Lambda$ acting
at a point $(\gamma,\lambda)$ by
$$
\langle(\zeta,\ell), (\eta,k)\rangle_{J,g}:= \langle \zeta,\eta\rangle_\lambda +
g(\ell,k), \qquad (\zeta,\ell),(\eta,k)\in \Gamma(\gamma^*T\widehat
W)\oplus T_\lambda\Lambda.
$$
We denote by $\cJ_\Lambda$ the set of pairs $(J,g)$ consisting of an
admissible almost complex structure $J$ on $\widehat W$ and of a
Riemannian metric $g$ on $\Lambda$.

The {\bf parametrized Floer equation} is the gradient equation for
$\cA$ with respect to such a metric
$\langle\cdot,\cdot\rangle_{J,g}$. More precisely, given
$\op:=(\og,\olambda),\up:=(\ug,\ulambda)\in \cP(H)$
we denote by
$$
\widehat \cM(\op,\up;H,J,g)
$$
the {\bf space of parametrized Floer trajectories}, consisting of
pairs $(u,\lambda)$ with
$$
u:\R\times S^1 \to \widehat W, \qquad \lambda:\R\to \Lambda,
$$
satisfying
\begin{eqnarray}
\label{eq:Floer1par}
 \p_s u + J_{\lambda(s)}^\theta (\p_\theta u -
X_{H_{\lambda(s)}}^\theta (u)) & = & 0, \\
\label{eq:Floer2par}
 \dot \lambda (s) - \int_{S^1} \vec \nabla_\lambda
H(\theta,u(s,\theta),\lambda(s)) d\theta & = & 0,
\end{eqnarray}
and
\begin{equation} \label{eq:asymptoticpar}
 \lim_{s\to -\infty} (u(s,\cdot),\lambda(s)) = (\og,\olambda), \quad
 \lim_{s\to +\infty} (u(s,\cdot),\lambda(s)) = (\ug,\ulambda).
\end{equation}
Here and in the sequel we use the notation $\vec \nabla$ for a
gradient vector field, whereas $\nabla$ will denote a
covariant derivative.

\begin{remark}{\rm Equation~\eqref{eq:Floer2par} is equivalent to
\begin{equation} \label{eq:Floer2parbis}
 \dot \lambda(s) - \vec \nabla F_{u(s,\cdot)}(\lambda(s))=0,
\end{equation}
where $F_{u(s,\cdot)}$ is defined by~\eqref{eq:Fgamma}. Thus, the
parametrized Floer equation is a system involving a Floer equation and
a finite-dimensional gradient equation.
}
\end{remark}

The additive group $\R$ acts on $\widehat \cM(\op,\up;H,J,g)$
by reparametrization in the $s$-variable and we denote by
$$
\cM(\op,\up;H,J,g) := \widehat \cM(\op,\up;H,J,g)/\R
$$
the {\bf moduli space of parametrized Floer trajectories}.

Let us fix $p\ge 2$. The linearization of the
equations~(\ref{eq:Floer1par}-\ref{eq:Floer2par}) gives rise to the
operator
$$
D_{(u,\lambda)} : W^{1,p}(u^*T\widehat W) \oplus
W^{1,p}(\lambda^*T\Lambda) \to
L^p(u^*T\widehat W) \oplus L^p(\lambda^*T\Lambda),
$$
$$
D_{(u,\lambda)} (\zeta,\ell) :=
\left(\begin{array}{c}
D_u\zeta + (D_\lambda J\cdot \ell)(\p_\theta u - X_{H_\lambda}(u)) -
J_\lambda (D_\lambda X_{H_\lambda}\cdot \ell) \\
\nabla_s \ell - \nabla_\ell \int_{S^1} \vec \nabla_\lambda H
(\theta,u,\lambda) d\theta
- \int_{S^1} \nabla_\zeta \vec \nabla_\lambda H(\theta,u,\lambda) d\theta
\end{array}\right),
$$
where
$$
D_u : W^{1,p}(u^*T\widehat W) \to L^p(u^*T\widehat W)
$$
is the usual Floer operator given by
$$
D_u\zeta := \nabla_s \zeta + J_\lambda \nabla_\theta \zeta -
J_\lambda \nabla_\zeta X_{H_\lambda} + \nabla_\zeta J_\lambda (\p_\theta u -
X_{H_\lambda}).
$$

The Hessian of $\cA$ at a critical point $p=(\gamma,\lambda)$ is given
by the formula
\begin{eqnarray} \label{eq:d2A}
\lefteqn{d^2\cA(\gamma,\lambda)\big((\zeta,\ell),(\eta,k)\big)} \\
&=& \int_{S^1} \omega(\nabla_\theta\eta-\nabla_\eta
X_{H_\lambda},\zeta) d\theta - \int_{S^1}\eta(\frac {\partial H}
{\partial \lambda}\cdot \ell) d\theta \nonumber \\
&& - \ \int_{S^1} k(dH_\lambda\cdot \zeta) d\theta - \int_{S^1} \frac
{\partial^2 H} {\partial \lambda^2}(\ell,k) d\theta \nonumber \\
&=& d^2\cA_{H_\lambda}(\gamma)(\zeta,\eta) - \int_{S^1}\eta(\frac {\partial H}
{\partial \lambda}\cdot \ell) d\theta
- \int_{S^1} k(dH_\lambda\cdot \zeta) d\theta
- d^2 F_\gamma(\lambda)(\ell,k). \nonumber
\end{eqnarray}

We define the asymptotic operator at a critical point
$(\gamma,\lambda)$ by
$$
D_{(\gamma,\lambda)} : H^1(S^1,\gamma^*T\widehat W) \times T_\lambda
\Lambda \to L^2(S^1,\gamma^*T\widehat W) \times T_\lambda
\Lambda,
$$
\begin{equation}  \label{eq:Dasy}
D_{(\gamma,\lambda)}(\zeta,\ell) = \left(\begin{array}{c}
J_\lambda(\nabla_\theta \zeta - \nabla_\zeta X_{H_\lambda} -
(D_\lambda X_{H_\lambda})\cdot \ell) \\
-\int_{S^1} \nabla_\zeta \frac {\partial H} {\partial \lambda} d\theta
- \int_{S^1} \nabla_\ell \frac {\partial H} {\partial \lambda} d\theta
\end{array}\right).
\end{equation}
Note that $D_{(\gamma,\lambda)}$ is obtained from $D_{(u,\lambda)}$
for $(u(s,\theta),\lambda(s))\equiv (\gamma(\theta),\lambda)$ and
$(\zeta(s,\theta),\ell(s)) \equiv (\zeta(\theta),\ell)$.

We say that a critical point $(\gamma,\lambda)$ is {\bf nondegenerate} if
the Hessian $d^2\cA(\gamma,\lambda)$ has trivial kernel. In~\cite[Lemma~2.3]{BOtrans} we proved that this condition is equivalent to the injectivity of the asymptotic
operator $D_{(\gamma,\lambda)}$. Since the latter is self-adjoint, this condition is also equivalent to
its surjectivity.

\begin{remark}
  We note that nondegeneracy of a critical point $(\gamma,\lambda)$
  does not imply that $\gamma$ is a nondegenerate orbit of
  $H_\lambda$, nor that $\lambda$ is a nondegenerate critical point of
  $F_\gamma$. This situation is already present in Morse theory, as
  the following example shows. We consider the Morse function
  $f:\R\times \R\to\R$, $(x,y)\mapsto xy$. Then $(x_0,y_0)=(0,0)$ is a
  nondegenerate critical point, but the restrictions of $f$ to
  $\R\times \{0\}$ and $\{0\}\times \R$ are constant, hence $x_0=0$
  and $y_0=0$ are degenerate critical points.
\end{remark}

An admissible Hamiltonian family $H$ is called {\bf nondegenerate} if $\cP(H)$
consists of nondegenerate elements.
We denote the set of nondegenerate and admissible Hamiltonian families by
$\cH_{\Lambda,\textrm{reg}}\subset \cH_\Lambda$. By~\cite[Proposition~2.5]{BOtrans}, the
set $\cH_{\Lambda,\textrm{reg}}$ is of the second Baire category in
$\cH_\Lambda$. Moreover, if $H\in\cH_{\Lambda,\textrm{reg}}$ the set
$\cP(H)$ is discrete.

We denote
\begin{eqnarray*}
\cW^{1,p} & := & W^{1,p}(\R\times S^1,u^*T\widehat W) \oplus
W^{1,p}(\R,\lambda^*T\Lambda), \\
\cL^p & := & L^p(\R\times S^1,u^*T\widehat W) \oplus
L^p(\R,\lambda^*T\Lambda).
\end{eqnarray*}
Let $(\og,\olambda),(\ug,\ulambda)\in\cP(H)$ be nondegenerate. We proved in~\cite[Theorem~2.6]{BOtrans} that, given any $(u,\lambda)\in
\widehat \cM((\og,\olambda),(\ug,\ulambda); H,J,g)$, the operator
$$
D_{(u,\lambda)} : \cW^{1,p}\to \cL^p
$$
is Fredholm for $1<p<\infty$.

\begin{remark} We can choose a unitary trivialization of
$u^*T\widehat W$
and a trivialization of $\lambda^*T\Lambda$ in which $D_{(u,\lambda)}$ has
the form
\begin{equation} \label{eq:Dtriv}
D_{(u,\lambda)}\left(\begin{array}{c}
\zeta \\ \ell
\end{array}\right)
:=
\Bigg[\left(\begin{array}{cc}
\partial_s +J_0\partial_\theta & 0 \\ 0 & \frac d {ds}
\end{array}\right)
+ N
\Bigg]
\left(\begin{array}{c}
\zeta \\ \ell
\end{array}\right),
\end{equation}
with $N:\R\times S^1\to \mathrm{Mat}_{2n+m}(\R)$ pointwise
bounded and $\lim_{s\to\pm\infty}N(s,\theta)$
symmetric.
\end{remark}

Let $H\in\cH_{\Lambda,\mathrm{reg}}$. A pair $(J,g)\in\cJ_\Lambda$ is
  called {\bf regular for \boldmath$H$} if the operator $D_{(u,\lambda)}$ is
  surjective for any solution $(u,\lambda)$
  of~(\ref{eq:Floer1par}-\ref{eq:asymptoticpar}). We denote the space
  of such pairs by $\cJ_{\Lambda,\mathrm{reg}}(H)$. We proved in~\cite[Theorem~4.1]{BOtrans} that there exists a subset of second Baire category
$\cH\cJ_{\Lambda,\mathrm{reg}}\subset
\cH_{\Lambda,\mathrm{reg}}\times \cJ_\Lambda$ such that $H\in\cH_{\Lambda,\mathrm{reg}}$ and $(J,g)\in
\cJ_{\Lambda,\mathrm{reg}}(H)$ whenever $(H,J,g)\in
\cH\cJ_{\Lambda,\mathrm{reg}}$.

As a consequence, whenever $(H,J,g)\in\cH\cJ_{\Lambda,\mathrm{reg}}$, the moduli spaces of parametrized Floer trajectories
$\cM(\op,\up;H,J,g)$ are smooth manifolds, for all $\op,\up\in\cP(H)$. The local dimension at $(u,\lambda)\in \cM(\op,\up;H,J,g)$ is equal to
$\mathrm{ind}\,D_{(u,\lambda)}-1$.
The purpose of the next section is to compute this Fredholm index.

\subsection{The parametrized Robbin-Salamon index} \label{sec:RS}

Recall that, for each free homotopy class $a$ in $\widehat W$, we have chosen
in Section~\ref{sec:action} a reference loop $l_a$ such that
$[l_a]=a$. We now choose a symplectic trivialization
$$
\Phi^1_a:S^1\times \R^{2n} \to l_a^*T\widehat W
$$
for each free homotopy class $a$. If $a$ is the trivial homotopy
class we choose the trivialization to be constant.
%Moreover, we require that
%$\Phi^1_{a^{-1}}(\theta,\cdot)=\Phi^1_a(-\theta,\cdot)$, $\theta\in
%S^1=\R/\Z$.

For each $p=(\gamma,\lambda)\in\cP(H)$ we choose a smooth
homotopy $\sigma_p:[0,1]\times S^1\to\widehat W$ such that
$\sigma_p(0,\cdot)=l_{[\gamma]}$ and $\sigma_p(1,\cdot)=\gamma$.
This gives rise to a unique (up to homotopy) symplectic trivialization
$$
\Phi^1_p: [0,1]\times S^1\times \R^{2n} \to \sigma^*_pT\widehat W
$$
such that $\Phi^1_p=\Phi^1_{[\gamma]}$ on $\{0\} \times S^1 \times
\R^{2n}$. Moreover, we fix an isometry $\Phi^2_p:\R^m\to
T_\lambda\Lambda$.

We define the subgroup $\cS_{n,m}\subset \Sp(2n+2m)$ to consist of
matrices of the form
$$
M=M(\Psi,X,E)=\left(\begin{array}{ccc}
\Psi & \Psi X & 0 \\
0 & \one & 0 \\
X^TJ_0 & E+\frac 1 2 X^T J_0 X & \one
\end{array}\right),
$$
with $\Psi\in\Sp(2n)$,
$X\in\mathrm{Mat}_{2n,m}(\R)$, and $E\in \mathrm{Mat}_m(\R)$
symmetric. Here we have denoted $J_0:=\left(\begin{array}{cc} 0 &
    -\one \\ \one & 0 \end{array}\right)$ the standard complex
structure on $\R^{2n}$, and the elements $\Psi\in\Sp(2n)$ are
characterized by the condition $\Psi^T J_0\Psi=J_0$.
Similarly, we denote the standard complex structure on $\R^{2n}\times \R^{2m}$ by
$$
\widetilde J_0:=\left(
  \begin{array}{ccc}
J_0 & 0 & 0 \\
0 & 0 & -\one \\
0 & \one & 0
  \end{array}\right),
$$
and the elements $\widetilde \Psi\in\Sp(2n+2m)$ are characterized by
the condition $\widetilde \Psi^T \widetilde J_0\widetilde
\Psi=\widetilde J_0$.
That $\cS_{n,m}$
is a subgroup follows from the relations
\begin{eqnarray*}
\lefteqn{M(\Psi_1,X_1,E_1)\cdot M(\Psi_2,X_2,E_2)}\\
&=& M(\Psi_1\Psi_2,X_2+\Psi_2^{-1}X_1, E_1+E_2+\mathrm{Sym}(X_1^TJ_0\Psi_2X_2))
\end{eqnarray*}
and
\begin{equation} \label{eq:inverse}
M(\Psi,X,E)^{-1}=M(\Psi^{-1},-\Psi X,-E).
\end{equation}
Here we have denoted by
$$
\mathrm{Sym}(P):=(P^T+P)/2
$$
the symmetric part of a square matrix $P$.

To an element $p=(\gamma,\lambda)\in\cP(H)$ equipped with a unitary
trivialization of $\gamma^*T\widehat W$ and an isometry
$T_\lambda\Lambda\equiv\R^m$, we will now associate a path
$$
M(\theta)=M(\Psi(\theta),X(\theta),E(\theta)), \qquad \theta\in[0,1],
$$
with $M(0)=\one=M(\one,0,0)$.
In the given trivialization of $T(\widehat W \times  \Lambda)$ along $\gamma$,
the linearization of the flow $\Phi^\theta$ of the Hamiltonian vector
field $X^\theta_H$ has the form
\begin{eqnarray}
T_{(\gamma(0),\lambda)} (\widehat W \times \Lambda) &\to&
T_{(\gamma(\theta),\lambda)} (\widehat W \times \Lambda), \nonumber \\
(\zeta_0, l) &\mapsto& (\Psi(\theta) \zeta_0 + \Psi(\theta) X(\theta)
l, l) . \label{eq:linflow}
\end{eqnarray}
This uniquely defines the matrices $\Psi(\theta)$ and $X(\theta)$. The
matrices $E(\theta)$ are defined to be the symmetric part of the
endomorphisms
\begin{eqnarray}
T_\lambda \Lambda &\to& T_\lambda \Lambda, \nonumber \\
l &\mapsto& -\frac{d}{d\lambda} \int_0^\theta \vec \nabla_\lambda H
(\tau, \Phi^\tau(\gamma(0),\lambda), \lambda) \, d\tau \cdot l . \label{eq:Easy}
\end{eqnarray}

We define the {\bf parametrized
  Robbin-Salamon index} $\mu(p)$ of $p$ with respect to the given
trivialization as the Robbin-Salamon index~\cite{RS} of the
path $M:[0,1]\to\cS_{n,m}\subset \Sp(2n+2m)$:
\begin{equation} \label{eq:defiRS}
\mu(p):=\mu_{RS}(M(\cdot)).
\end{equation}
Note that the path $M$ corresponds to the linearized Hamiltonian flow of $\widetilde H:S^1\times \widehat W\times T^*\Lambda\to \R$ defined by
$\widetilde H(\theta,x,(\lambda,p)):=H(\theta,x,\lambda)$ (see Definition~\ref{defi:RS} in the Introduction).
We refer to Appendix~\ref{app:RS} for a summary of the properties of
the parametrized Robbin-Salamon index.

\begin{theorem} \label{thm:index}
Assume $(\og,\olambda),(\ug,\ulambda)\in\cP(H)$ are
  nondegenerate and fix $1<p<\infty$. For any $(u,\lambda)\in
\widehat \cM((\og,\olambda),(\ug,\ulambda); H,J,g)$ the index of the
Fredholm operator $D_{(u,\lambda)}:\cW^{1,p}\to\cL^p$ is
$$
\ind \, D_{(u,\lambda)} = -\mu(\og, \olambda) + \mu(\ug, \ulambda) .
$$
\end{theorem}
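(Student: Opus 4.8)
The plan is to identify $\ind D_{(u,\lambda)}$ first with the spectral flow of a family of self-adjoint operators on the circle, and then with the parametrized Robbin--Salamon index of a suitable degenerate path in $\cS_{n,m}$. First I would trivialize: choosing a unitary trivialization of $u^*T\widehat W$ and an orthogonal trivialization of $\lambda^*T\Lambda$, the operator $D_{(u,\lambda)}$ acquires the normal form $\p_s + A(s)$, where each $A(s)$ is a first-order operator on $H^1(S^1,\R^{2n})\times\R^m$ with self-adjoint principal part whose limits as $s\to\pm\infty$ agree, through the trivialization, with the asymptotic operators $D_{(\og,\olambda)}$ and $D_{(\ug,\ulambda)}$; these limits are bijective by the nondegeneracy hypothesis and \cite[Lemma~2.3]{BOtrans}. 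After an order-zero, hence compact, perturbation I may assume that each $A(s)$ is self-adjoint and that all its crossings are regular, so by the spectral-flow theorem \cite[Theorem~A]{RSspec} the Fredholm index $\ind D_{(u,\lambda)}$ equals the spectral flow of $\{A(s)\}_{s\in\R}$.

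The second step is to describe the crossings. Solving the first component of $A(s)(\zeta,\ell)=0$ via the fundamental solution $\Psi(s,\theta)$ of $\dot\Psi = J_0 S\Psi$, $\Psi(s,0)=\one$, one gets $\zeta(\theta)=\Psi(s,\theta)\zeta_0 + \Psi(s,\theta)X(s,\theta)\ell$, where $X$ solves $\dot X = \Psi^{-1}J_0 C^T$, $X(s,0)=0$; the requirement that $\zeta$ descend to $S^1$ together with the second component of $A(s)(\zeta,\ell)=0$ then becomes a $(2n+m)\times(2n+m)$ linear system in $(\zeta_0,\ell)$. Setting $E(s,\theta)$ equal to the symmetric part of $\int_0^\theta (C\Psi X + D)\,d\tau$ and using $C\Psi = \dot X^T J_0$, this system is precisely the restriction to $\{(\zeta_0,\ell,0)\}$ of $M\big(\Psi(s,1),X(s,1),E(s,1)\big) - \one$, where $M(\Psi,X,E)$ is the matrix defining $\cS_{n,m}$. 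Since $(0,0,v)$ lies in $\ker\big(M(s,1)-\one\big)$ for every $v\in\R^m$, a crossing of $A$ occurs exactly when $\dim\ker\big(M(s,1)-\one\big)>m$, that is, when the path $s\mapsto M(s,1)\in\cS_{n,m}$ leaves the top stratum; and comparing the ODEs defining $\Psi,X,E$ with \eqref{eq:linflow}--\eqref{eq:Easy} shows that $\lim_{s\to-\infty}M(s,\cdot)=\overline M$ and $\lim_{s\to+\infty}M(s,\cdot)=\underline M$, the paths whose Robbin--Salamon indices are $\mu(\og,\olambda)$ and $\mu(\ug,\ulambda)$.

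The \emph{crucial} and most computation-heavy step is to show that the crossing form $\Gamma(A,s)$ of the spectral flow, evaluated on $(\zeta,\ell)\in\ker A(s)$, coincides with the Robbin--Salamon crossing form of the path $s\mapsto M(s,1)$, evaluated on the corresponding $(\zeta_0,\ell,0)\in\ker\big(M(s,1)-\one\big)$. I would do this by introducing symmetric matrices $\widehat S(s,\theta)$ via $\p_s\Psi = J_0\widehat S\Psi$, proving the one-parameter analogue $\Psi^T\p_s S\,\Psi = \p_\theta\big(\Psi^T\widehat S\Psi\big)$ of \cite[Lemma~2.6]{Salamon-lectures}, and then reducing $\Gamma(A,s)$ by integration by parts and repeated use of $C\Psi = \dot X^T J_0$ to an explicit symmetric block matrix; a parallel computation of $Q(s):=-\widetilde J_0\,\p_s M(s,1)\,M(s,1)^{-1}$ via \eqref{eq:inverse} produces the matrix of the Robbin--Salamon crossing form, and substituting the closing relation $\zeta_0 = \Psi(s,1)\zeta_0 + \Psi(s,1)X(s,1)\ell$ into the off-diagonal terms makes the two agree. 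This bookkeeping, where the particular shape of the matrices in $\cS_{n,m}$ is genuinely used, is the part I expect to be the main obstacle. Once the crossing forms are matched, Proposition~\ref{prop:RSstratum}, applied to $s\mapsto M(s,1)$ with the fixed subspace $E(s)\equiv\{0\}\oplus\{0\}\oplus\R^m$ (isotropic for $\omega_0$, hence of constant rank), identifies the spectral flow of $\{A(s)\}$ with $\mu_{RS}(s\mapsto M(s,1))$. Finally the \emph{(Homotopy)} and \emph{(Catenation)} axioms for the Robbin--Salamon index, together with the asymptotics $\lim_{s\to\mp\infty}M(s,\cdot)=\overline M,\underline M$ established above, give
$$
\ind D_{(u,\lambda)} = \mu_{RS}\big(\underline M\big) - \mu_{RS}\big(\overline M\big) = \mu(\ug,\ulambda) - \mu(\og,\olambda),
$$
which is the assertion.
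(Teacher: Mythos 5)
Your proposal is correct and follows essentially the same route as the paper's proof: trivialize, invoke the spectral-flow theorem, identify $\ker A(s)$ with the degenerate kernel of $M(\Psi(s,1),X(s,1),E(s,1))-\one$, match the two crossing forms via $\widehat S$ and the relation $C\Psi=\dot X^TJ_0$, and conclude with Proposition~\ref{prop:RSstratum} applied with $E(s)\equiv\{0\}\oplus\{0\}\oplus\R^m$ together with the \emph{(Homotopy)} and \emph{(Catenation)} axioms. The only minor omission is the preliminary reduction to $p=2$ by elliptic regularity, which the paper records to justify using the $L^2$ spectral-flow framework for all $1<p<\infty$.
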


In the above statement, it is understood that the trivialization used
to define $\mu(\og,\olambda)$ is obtained from the trivialization used
to define $\mu(\ug,\ulambda)$ by continuation along the map $u$. We
denote by
$$
\overline M:=M(\overline \Psi,\overline X,\overline E), \qquad
\underline M:=M(\underline \Psi,\underline X,\underline E)
$$
the paths (based at $\one$) in $\cS_{n,m}$ used to define $\mu(\og,\olambda)$,
respectively $\mu(\ug,\ulambda)$.

\begin{proof} By elliptic regularity for $D_{(u,\lambda)}$ (and its
  formal adjoint), it is enough to prove the statement for
  $p=2$. Indeed, the kernel and cokernel of $D_{(u,\lambda)}$ are
  spanned by smooth elements, so that the index does not depend on
  $p$. Given a unitary trivialization of $u^*T\widehat W$ and
  an orthogonal trivialization of $\lambda^* T\Lambda$ we can write
  $D_{(u,\lambda)}$ as
$$
D_{(u,\lambda)}:H^1(\R\times S^1,\R^{2n}) \times H^1(\R,\R^m) \to
L^2(\R\times S^1,\R^{2n})\times L^2(\R,\R^m),
$$
$$
D_{(u,\lambda)}(\zeta,\ell) = \left(\begin{array}{c} \p_s\zeta \\
    \p_s\ell \end{array}\right) +
A(s)\left(\begin{array}{c}\zeta\\\ell\end{array}\right),
$$
where $A(s):H^1(S^1,\R^{2n}) \times \R^m \to
L^2(S^1,\R^{2n})\times \R^m$ has the property that $A(s)\to A^\pm$,
$s\to\pm\infty$ and $A^\pm$ coincide through the given trivializations
with the asymptotic operators $D_{(\ug,\ulambda)}$ and
$D_{(\og,\olambda)}$, which are bijective in view of our standing nondegeneracy assumption.

In order to compute the Fredholm index of the operator $D_{(u,\lambda)}$, we use
the spectral flow of the family of self-adjoint operators $A(s)$, $s
\in \R$~\cite[Theorem~A]{RSspec}.
In view of \eqref{eq:Dasy}, these operators can be written in the
given trivializations of $T\widehat W$ and $T\Lambda$ along $u$ and
$\lambda$ as
\begin{equation} \label{eq:A}
A(s)(\zeta,l) = \left( \begin{array}{c}
J_0 \partial_\theta \zeta(\theta) + S(s,\theta) \zeta(\theta)
+ C(s,\theta)^T l \\
\int_{S^1} C(s,\theta) \zeta(\theta) \, d\theta + \int_{S^1} D(s,\theta) \, d\theta \ l
\end{array} \right) ,
\end{equation}
where $S(s,\theta) = S(s,\theta)^T$ and $D(s,\theta) = D(s,\theta)^T$
are symmetric matrices.

Let us compute the kernel of the operator $A(s)$ for a fixed value of
$s\in\R$. We define
$$
\Psi:\R\times [0,1]\to\Sp(2n)
$$
by $\dot \Psi(s,\theta) = J_0 S(s,\theta) \Psi(s,\theta)$ and
$\Psi(s,0)=\one$, so that
$$
\lim_{s\to-\infty} \Psi(s,\cdot)=\overline \Psi(\cdot), \qquad
\lim_{s\to\infty} \Psi(s,\cdot)=\underline \Psi(\cdot).
$$

For $(\zeta, l) \in \ker A(s)$, we write $\zeta(\theta) =
\Psi(s,\theta) \eta(\theta)$ for some smooth
function $\eta : [0,1] \to \R^{2n}$. Substituting this in the first
component of $A(s)(\zeta, l)$, we obtain
\begin{equation}  \label{eq:diffeta}
\dot \eta(\theta) = \Psi(s,\theta)^{-1} J_0 C(s,\theta)^T l .
\end{equation}

We define $X : \R\times [0,1] \to \Mat_{2n,m}(\R)$ by
\begin{equation}\label{eq:X}
\dot X(s,\theta) = \Psi(s,\theta)^{-1} J_0 C(s,\theta)^T
\end{equation}
and $X(s,0) = 0$. The solution
of \eqref{eq:diffeta} is then $\eta(\theta) = X(s,\theta) l + \eta(0)$, so that
\begin{equation} \label{eq:zeta}
\zeta(\theta) = \Psi(s,\theta) \zeta_0 + \Psi(s,\theta) X(s,\theta) l ,
\end{equation}
with $\zeta_0 = \zeta(0) = \eta(0)$. Comparing~\eqref{eq:zeta}
with~\eqref{eq:linflow} we see that
$$
\lim_{s\to-\infty} X(s,\cdot)=\overline X(\cdot), \qquad
\lim_{s\to\infty} X(s,\cdot)=\underline X(\cdot).
$$
The solution $\zeta(\theta)$ given by~\eqref{eq:zeta} descends to $S^1 = \R/\Z$
if and only if
\begin{equation}  \label{eq:degen1}
\zeta_0 = \Psi(s,1) \zeta_0 + \Psi(s,1) X(s,1) l .
\end{equation}
Substituting the expression~\eqref{eq:zeta} for $\zeta(\theta)$ in the
second component of\break $A(s)(\zeta, l)$, we obtain
\begin{equation}  \label{eq:degen2}
\int_0^1 C(s,\theta) \Psi(s,\theta)  d\theta \, \zeta_0
+ \int_0^1 \big(C(s,\theta) \Psi(s,\theta) X(s,\theta) + D(s,\theta)\big) d\theta \, l = 0 .
\end{equation}
We now notice that, for any $\theta\in [0,1]$, we have
\begin{equation}  \label{eq:CT=B}
\int_0^\theta C(\tau) \Psi(\tau) \, d\tau = \int_0^\theta \dot
X(\tau)^T \Psi(\tau)^T J_0 \Psi(\tau)\, d\tau = X(\theta)^T J_0.
\end{equation}

We define
$$
E : \R\times [0,1] \to \Mat_m(\R)
$$
by
\begin{equation}  \label{eq:E}
E(s,\theta) = \int_0^\theta \big( C(s,\tau) \Psi(s,\tau) X(s,\tau) +
D(s,\tau) \big)d\tau  - \frac 12 X(s,\theta)^T J_0 X(s,\theta).
\end{equation}
We claim that $\frac 1 2 X(s,\theta)^T J_0 X(s,\theta)$ is the
anti-symmetric part of the matrix $\int_0^\theta
C(s,\tau)\Psi(s,\tau)X(s,\tau)d\tau$, so that $E(s,\theta)$ is
symmetric. Omitting the $s$-variable for clarity, and using that
$C(\tau)\Psi(\tau)=\dot X(\tau)^TJ_0$, we obtain
\begin{eqnarray*}
\lefteqn{\hspace{-1.5cm}\int_0^\theta C(\tau)\Psi(\tau)X(\tau)d\tau - \int_0^\theta X(\tau)^T
\Psi(\tau)^TC(\tau)^Td\tau}\\
& = & \int_0^\theta \dot
X(\tau)^TJ_0X(\tau)d\tau+\int_0^\theta X(\tau)^TJ_0\dot X(\tau)d\tau
\\
& = & X(\theta)^TJ_0X(\theta).
\end{eqnarray*}
It follows that $E(s,\theta)$ is the symmetric part of
$\int_0^\theta
\big(C(s,\tau)\Psi(s,\tau)X(s,\tau)+D(s,\tau)\big)d\tau$. Comparing
this with~\eqref{eq:Easy}, it follows that
$$
\lim_{s\to-\infty} E(s,\cdot)=\overline E(\cdot), \qquad
\lim_{s\to\infty} E(s,\cdot)=\underline E(\cdot).
$$

With our new notations in place, we see that~\eqref{eq:degen1}
and~\eqref{eq:degen2} are equivalent to the $(2n+m) \times (2n+m)$
system of linear equations
\begin{equation}  \label{eq:degen}
\left( \begin{array}{cc}
\Psi(s,1) - \one & \Psi(s,1) X(s,1) \\
X(s,1)^TJ_0 & E(s,1) + \frac 1 2 X(s,1)^TJ_0X(s,1)
\end{array} \right)
\left( \begin{array}{c}
\zeta_0 \\ l
\end{array} \right) =
\left( \begin{array}{c}
0 \\ 0
\end{array} \right) .
\end{equation}
The solutions of the system~\eqref{eq:degen} are in bijective correspondence
with the elements $(\zeta, l) \in \ker A(s)$ through equation~\eqref{eq:zeta}.
On the other hand, it follows from the definition of $\cS_{n,m}$ that
solutions of~\eqref{eq:degen} are in bijective correspondence with elements
$$
(\zeta_0,l,0)\in \ker\, \big(M(\Psi(s,1), X(s,1)
, E(s,1)) - \one\big).
$$
Since $(0,0,v)\in \ker\, \big(M(\Psi(s,1), X(s,1)
, E(s,1)) - \one\big)$ for all $v\in \R^m$, we infer that $\ker A(s)
\neq 0$ if and only if
\begin{equation}
\dim\ker \, \big( M(\Psi(s,1), X(s,1) , E(s,1)) - \one \big) >m.
\end{equation}

\begin{remark} {\it
To each operator $A(s)$ of the form \eqref{eq:A} we associated
a path of matrices $M : [0,1] \to \cS_{n,m}$,
$M(\theta) = M(\Psi(\theta), X(\theta), E(\theta))$  such that $M(0)
=\one$. Conversely, any such path $M$ determines a unique
operator $A(s)$ of the form \eqref{eq:A} by the formulas
\begin{eqnarray*}
S(\theta) &=& -J_0 \dot \Psi(\theta) \Psi(\theta)^{-1} \\
C(\theta) &=& \dot X(\theta)^T \Psi(\theta)^T J_0  \\
D(\theta) &=& \dot E(\theta) + \mathrm{Sym} \left( X(\theta)^T J_0 \dot X(\theta) \right).
\end{eqnarray*}}
\end{remark}

We now compute the crossing form $\Gamma(A,s)$ on $\ker A(s)$ for the spectral flow of the family
of operators $A(s)$, $s \in \R$. Recall that it is defined by $\Gamma(A,s)(\zeta,l) =
\langle (\zeta, l) , \frac{d}{ds} A(s) (\zeta,l) \rangle$ for all $(\zeta,l) \in \ker A(s)$.
The operator $\frac{d}{ds} A(s)$ is given by
$$
\frac{d}{ds} A(s) (\zeta, l) = \left( \begin{array}{c}
\partial_s S(s,\theta) \zeta(\theta) + \partial_s C(s,\theta)^T l \\
\int_{S^1} \partial_s C(s,\theta) \zeta(\theta)  d\theta
+ \int_{S^1} \partial_s D(s,\theta) d\theta \ l
\end{array} \right) .
$$
Since $(\zeta, l) \in \ker A(s)$, we have
$\zeta(\theta)=\Psi(s,\theta)\zeta_0+\Psi(s,\theta)X(s,\theta)l$. We obtain
\begin{eqnarray}
\lefteqn{\Gamma(A,s)(\zeta,l)} \nonumber \\
&=&\int_{S^1} \left\langle \zeta(\theta) , \partial_s S(s,\theta) \zeta(\theta)
+ \partial_s C(s,\theta)^T l \right\rangle \, d\theta \nonumber \\
&&+ \left\langle l, \int_{S^1} \partial_s C(s,\theta) \zeta(\theta) \, d\theta
+ \int_{S^1} \partial_s D(s,\theta) \, d\theta \ l \right\rangle \nonumber \\
&=& \int_0^1  (\zeta_0 + X (s,\theta) l)^T \Psi(s,\theta)^T
\partial_s S(s,\theta) \Psi(s,\theta) (\zeta_0 +
X(s,\theta) l) \, d\theta \label{eq:first-term} \\
&& + \int_0^1  (\zeta_0 + X (s,\theta) l)^T \Psi(s,\theta)^T
\partial_s C(s,\theta)^T l  \, d\theta \nonumber \\
&& + l^T \int_0^1 \partial_s C(s,\theta)
\Psi(s,\theta) (\zeta_0 + X(s,\theta) l) \, d\theta  \nonumber \\
&& + l^T \int_{S^1} \partial_s D(s,\theta) d\theta \, l.    \label{eq:cross}
\end{eqnarray}
Let us define symmetric matrices $\widehat S(s,\theta)$ by
$\partial_s \Psi(s,\theta) = J_0 \widehat S(s,\theta)
\Psi(s,\theta)$. The condition $\Psi(s,0)=\one$ implies $\widehat
S(s,0)=0$. We claim that (see
also~\cite[proof of Lemma~2.6]{S})
\begin{equation} \label{eq:stheta}
\Psi(s,\theta)^T \partial_s S(s,\theta) \Psi(s,\theta)
= \frac{\partial}{\partial\theta} \left( \Psi(s,\theta)^T \widehat S(s,\theta) \Psi(s,\theta) \right).
\end{equation}
Dropping the $(s,\theta)$ variables for clarity, we have~\cite{S}
\begin{eqnarray*}
\frac{\partial}{\partial\theta}  \left( \Psi^T \widehat S \Psi \right)
 &=&  \Psi^T S^T (- J_0)  \widehat S \Psi
+ \Psi^T \frac{\partial}{\partial\theta} ( \widehat S  \Psi ) \\
 &=& - \Psi^T S \partial_s \Psi
+ \Psi^T \frac{\partial}{\partial\theta} \left( - J_0 \partial_s \Psi \right)  \\
 &=& - \Psi^T S \partial_s \Psi  - \Psi^T J_0 \partial_s
 \frac{\partial}{\partial\theta} \Psi  \\
 &=& - \Psi^T S \partial_s \Psi  - \Psi^T J_0 \partial_s
 (J_0 S \Psi)  \\
 &=& \Psi^T \partial_s S \Psi .
\end{eqnarray*}
Using~\eqref{eq:stheta}, the term~\eqref{eq:first-term} becomes
\begin{eqnarray*}
\lefteqn{ \int_0^1  (\zeta_0 + X (s,\theta) l)^T
\frac{\partial}{\partial\theta}  \left( \Psi(s,\theta)^T \widehat S(s,\theta) \Psi(s,\theta) \right)
(\zeta_0 + X(s,\theta) l) \, d\theta } \\
&=& (\zeta_0 + X(s,1) l)^T  \Psi(s,1)^T \widehat S(s,1) \Psi(s,1)  (\zeta_0 + X (s,1) l) \\
&& - l^T  \int_0^1 \frac{\partial}{\partial\theta} X(s,\theta)^T
\Psi(s,\theta)^T \widehat S(s,\theta) \Psi(s,\theta) (\zeta_0 + X(s,\theta) l) \, d\theta  \\
&& -  \int_0^1 (\zeta_0 + X (s,\theta) l)^T  \Psi(s,\theta)^T \widehat S(s,\theta) \Psi(s,\theta)
\frac{\partial}{\partial\theta} X(s,\theta)  \, d\theta \ l \\
&=& \zeta_0^T    \widehat S(s,1)   \zeta_0
 + l^T  \int_0^1 C(s,\theta) J_0
\widehat S(s,\theta) \Psi(s,\theta) (\zeta_0 + X(s,\theta) l) \, d\theta  \\
&& -  \int_0^1 (\zeta_0 + X (s,\theta) l)^T  \Psi(s,\theta)^T \widehat S(s,\theta)
J_0 C(s,\theta)^T  \, d\theta \ l \\
&=& \zeta_0^T  \widehat S(s,1) \zeta_0
 + l^T  \int_0^1 C(s,\theta) \frac{\partial}{\partial
   s}\Psi(s,\theta) (\zeta_0 + X(s,\theta) l) \, d\theta  \\
&& +  \int_0^1 (\zeta_0 + X (s,\theta) l)^T  \partial_s \Psi(s,\theta)^T
C(s,\theta)^T  \, d\theta \ l .
\end{eqnarray*}
The second equality uses~\eqref{eq:degen1}. Thus, equation \eqref{eq:cross} becomes
\begin{eqnarray*}
\Gamma(A,s)(\zeta,l) &=&
 \zeta_0^T \widehat S(s,1) \zeta_0
  + l^T  \int_0^1  \partial_s \big(C(s,\theta) \Psi(s,\theta) \big)
 (\zeta_0 + X(s,\theta) l) d\theta  \\
&& +  \int_0^1 (\zeta_0 + X (s,\theta) l)^T  \frac{\partial }{\partial s} \big(\Psi(s,\theta)^T
C(s,\theta)^T \big) d\theta \, l \\
&& + l^T \int_{S^1} \partial_s D(s,\theta) d\theta \, l  .
\end{eqnarray*}
We claim that the matrix of the quadratic form $\Gamma(A,s)$ acting on
the vector space of elements $(\zeta_0,l)$
satisfying~\eqref{eq:degen1} is given by
\begin{equation} \label{eq:crossA2}
\left( \begin{array}{cc}
\widehat S(s,1) & - J_0\partial_s X(s,1) \\
\partial_s X(s,1)^T J_0 &
\partial_s E(s,1) -\mathrm{Sym}\big(X^T(s,1) J_0 \partial_s X(s,1)\big)
\end{array} \right) .
\end{equation}
For the anti-diagonal terms, the claim follows
from~\eqref{eq:CT=B}. For the term in the lower right corner, we compute
\begin{eqnarray}
 \frac{\partial} {\partial s} E(s,1)
 &=&   \int_0^1 \partial_s D(s,\theta) +
 \frac {\partial} {\partial s}\mathrm{Sym}(C(s,\theta) \Psi(s,\theta)
 X(s,\theta)) \, d\theta \nonumber \\
  &=& \int_0^1 \partial_s D(s,\theta)
  + \mathrm{Sym}\big(\partial_s (C(s,\theta)
 \Psi(s,\theta)) X(s,\theta)\big)\, d\theta \nonumber \\
&& \hspace{1.8cm}  + \ \int_0^1 \mathrm{Sym}\big(C(s,\theta) \Psi(s,\theta)
 \partial_s X(s,\theta)\big) \, d\theta, \label{eq:last-term}
\end{eqnarray}
while, in view of $C\Psi=\dot X^TJ_0$, the term~\eqref{eq:last-term} becomes
\begin{eqnarray*}
\lefteqn{\int_0^1 \mathrm{Sym}\big(\dot X(s,\theta)^TJ_0
 \partial_s X(s,\theta)\big) \, d\theta} \\
&= & \hspace{-2mm}\mathrm{Sym}\big(X(s,1)^TJ_0\frac \partial {\partial
 s}X(s,1)\big) \!-\!\!
 \int_0^1 \!\mathrm{Sym}\big(X(s,\theta)^TJ_0
 \partial_s \dot X(s,\theta)\big) \, d\theta \\
&=& \hspace{-2mm} \mathrm{Sym}\big(X(s,1)^TJ_0\frac \partial {\partial
 s}X(s,1)\big) \!+\!\!
 \int_0^1 \!\mathrm{Sym}\big(X(s,\theta)^T
 \partial_s (\Psi(s,\theta)^T C(s,\theta)^T)\big) \, d\theta.
\end{eqnarray*}

Let us now compute the crossing form $\Gamma(M,s)$ for the
Robbin-Salamon index of the path
$$
s\mapsto M(s,1)=M(\Psi(s,1),X(s,1),E(s,1)).
$$
By definition, the crossing form is $\Gamma(M,s)(\zeta_0,l,v)=\langle
(\zeta_0,l,v), Q(s)(\zeta_0,l,v)\rangle$, with $Q(s):=-\widetilde J_0
\frac {\partial} {\partial s} M(s,1)M(s,1)^{-1}$.
Using~\eqref{eq:inverse} and the definition of $\widehat
S(s,1)=-J_0\frac {\partial} {\partial s} \Psi(s,1) \Psi(s,1)^{-1}$, a
straightforward computation shows that $Q(s)$ is given by
$$
\left(
  \begin{array}{ccc}
\widehat S(s,1) & -J_0\Psi(s,1)\frac {\partial} {\partial s}X(s,1) & 0
\\
\frac {\partial} {\partial s}X(s,1)^T\Psi(s,1)^TJ_0 & \frac {\partial}
{\partial s}E(s,1) + \mathrm{Sym}\big(X(s,1)^TJ_0\frac {\partial}
{\partial s}X(s,1)\big) & 0 \\
0 & 0 & 0
  \end{array}\right).
$$

The key observation now is that, for any $(\zeta_0,l,0)\in \ker \,
(M(s,1)-\one)$, we have
$$
\Gamma(M,s)(\zeta_0,l,0) =\Gamma(A,s)(\zeta,l),
$$
with $\zeta(\theta)=\Psi(s,\theta)\zeta_0 +
\Psi(s,\theta)X(s,\theta)l$. This is seen by a straightforward
computation, substituting
$\zeta_0=\Psi(s,1)\zeta_0+\Psi(s,1)X(s,1)l$ in the non-diagonal terms
of $\Gamma(M,s)(\zeta_0,l,0)$.

By Proposition~\ref{prop:RSstratum} in Appendix~\ref{app:RS} (applied with
$E(s)\equiv \{0\}\oplus\{0\}\oplus\R^m$), it follows that the spectral
flow of $A(s)$ coincides with the Robbin-Salamon index of the
degenerate path $s\mapsto M(s,1)$. Thus
 $$
 \ind \, D_{(u,\lambda)} = \mu_{RS}\left( M(\Psi(s,1), X(s,1), E(s,1)), s \in \R \right) .
 $$
 By the \emph{(Homotopy)} and \emph{(Catenation)} axioms for the
 Robbin-Salamon index~\cite{RS}, and using that
 $\lim_{s\to-\infty}M(s,\theta)=\overline M(\theta)$ and
 $\lim_{s\to\infty}M(s,\theta)=\underline M(\theta)$, we obtain
\begin{eqnarray*}
 \ind \, D_{(u,\lambda)}
 &=& \mu_{RS}\left( M(\underline \Psi(\theta), \underline X(\theta),
 \underline E(\theta)), \theta \in [0,1] \right) \\
&& -  \mu_{RS}\left( M(\overline \Psi(\theta), \overline X(\theta),
 \overline E(\theta)), \theta \in [0,1] \right) \\
&=& \mu(\ug, \ulambda) - \mu(\og, \olambda).
\end{eqnarray*}
\end{proof}

\subsection{The parametrized chain complex}
Given $H\in\cH_{\Lambda,\mathrm{reg}}$, $(J,g)\in\Jreg(H)$, and a free
homotopy class $a$ in $\widehat W$, we
define $SC^{a,\Lambda}_*(H,J,g)$ as a chain complex whose underlying
$\Lambda_\omega$-module is
$$
SC^{a,\Lambda}_*(H,J,g):=\bigoplus _{p\in\cP^a(H)}
\Lambda_\omega\langle p \rangle.
$$
We define the degree of a generator $p\in\cP(H)$ in terms of the
parametrized Robbin-Salamon index by
$$
|p|:= -\mu(p)+\frac m 2\in \Z.
$$
The fact that the grading is integral follows
from~\cite[Theorem~4.7]{RS} (we recall that $m=\dim\,\Lambda$). We
define $|p\, e^A|:=|p|-2\langle
c_1(T\widehat W),A\rangle$, where $c_1(T\widehat W)$ is computed with
respect to a compatible almost complex structure.

Recall that, for each $p=(\gamma,\lambda)\in\cP(H)$, we have chosen a
cylinder $\sigma_p:[0,1]\times S^1\to\widehat W$ such that
$\sigma_p(0,\cdot)=l_{[\gamma]}$ and $\sigma_p(1,\cdot)=\gamma$. We
define $\overline \sigma_p(s,\theta):=\sigma_p(1-s,\theta)$. Given
$\op=(\og,\olambda),\up=(\ug,\ulambda)\in\cP(H)$ we define
$$
\cM^A(\op,\up;H,J,g)\subset \cM(\op,\up;H,J,g)
$$
to consist of trajectories $(u,\lambda)$ such that
$[\sigma_{\op}\#u\#\overline \sigma_{\up}]=A\in H_2(\widehat
W;\Z)$. It follows from Theorem~\ref{thm:index} that
$$
\dim\, \cM^A(\op,\up;H,J,g) = |\op| - |\up\, e^A| -1.
$$

Let $\op:=(\og,\olambda),\up:=(\ug,\ulambda)\in\cP(H)$. Whenever
$|\op|-|\up \, e^A|=1$, one can associate to each
element $(u,\lambda)\in \cM^A(\op,\up;H,J,g)$ a sign $\eps(u,\lambda)$
via the coherent orientations recipe of Floer and Hofer~\cite{FH}. As
in their construction, since the asymptotics are fixed, the relevant
spaces of Fredholm operators are contractible, and the corresponding
determinant line bundles are trivial. Hence the moduli spaces of
parametrized Floer trajectories are orientable. Since our moduli
spaces are modeled on $\R$ as gradient trajectories, we can use the
algorithm in~\cite{FH} to construct a set of orientations which is
coherent with respect to the gluing operation. More precisely, one
chooses an element $p\in\cP(H)$, and for each $p\neq \up\in\cP(H)$ one
chooses arbitrary orientations of the spaces of operators
$\cO(p,\up)$ asymptotic to $D_p$ at $-\infty$ and to $D_{\up}$ at
$+\infty$. These determine orientations of $\cO(\up,p)$ by requiring
that the glued orientation on $\cO(p,p)$ be the one determined by the
canonical orientation of the constant operator $D_p$. We obtain
orientations on $\cO(\op,\up)$ by requiring that the glued orientation
with $\cO(p,\op)$ and $\cO(\up,p)$ be the canonical one on
$\cO(p,p)$.

We define a differential $\partial$ on $SC^{a,\Lambda}_*(H,J,g)$ by
$$
\partial\op:=\sum_{|\op|-|\up \, e^A|=1} \Big( \sum_{(u,\lambda)\in
  \cM^A(\op,\up;H,J,g)} \eps(u,\lambda)\Big) \ \up \, e^A.
$$
This expression is well-defined by standard compactness
arguments~\cite{HS,S}. More precisely, for each $A\in H_2(W;\Z)$
satisfying $|\op|-|\up\, e^A|=1$ the
set $\cM^A(\op,\up;H,J,g)$ is finite, and for each $c>0$ the number of
$A\in H_2(W;\Z)$ such that $\omega(A)\le c$ and
$\cM^A(\op,\up;H,J,g)\neq\emptyset$ is finite.

It follows from standard compactness and gluing arguments~\cite{F,S} that
$\partial^2=0$. Compactness is established in three steps. Firstly, one
obtains a uniform $C^0$-bound on the $\widehat W$-component
of parametrized Floer trajectories using the
maximum principle and the fact that $\partial^2 H/\partial s \partial
t=0$ outside a compact set~\cite[Lemma~1.5]{O}. Secondly, one proves
that the $\Lambda$-component converges by applying the
Arzel\'a-Ascoli theorem. Thirdly, the $\widehat W$-component converges
by Floer-Gromov compactness because it satisfies an $s$-dependent
Floer equation. Gluing involves exactly the same kind of estimates as
in Floer theory.

We denote the resulting homology groups by
$SH^{a,\Lambda}_*(H,J,g)$. As for usual symplectic homology, we obtain by
passing to the direct limit {\bf parametrized symplectic homology groups}
$$
SH^{a,\Lambda}_*(W) := \lim _{\stackrel \longrightarrow
  {H\in\cH_{\Lambda,\mathrm{reg}}}} SH^{a,\Lambda}_*(H,J,g).
$$

\begin{proposition}[K\"unneth formula] \label{prop:Kunneth}
 The following isomorphism holds with field coefficients
\begin{equation}
 SH^{a,\Lambda}_*(W) \simeq SH^a_*(W) \otimes H_*(\Lambda).
\end{equation}
\end{proposition}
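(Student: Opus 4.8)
The strategy is to compute $SH^{a,\Lambda}_*(W)$ with \emph{split} data, for which the parametrized Floer equation decouples into an ordinary Floer equation on $W$ and a finite-dimensional gradient flow equation on $\Lambda$, and then to combine the algebraic K\"unneth theorem with a direct limit argument. Concretely, I would fix a Morse function $f$ on $\Lambda$ together with a metric $g$ making $(f,g)$ Morse--Smale, and work with Hamiltonian families of the form $H(\theta,x,\lambda)=K(\theta,x)+f(\lambda)$, where $K\in\cH_{\mathrm{reg}}$ is admissible on $W$ and $J=(J^\theta)$ is independent of $\lambda$ and regular for $K$. Such families are admissible (their slope at infinity is that of $K$), and the subfamily obtained by letting $K$ run through a cofinal sequence in $\cH_{\mathrm{reg}}$ while $f$ and $g$ stay fixed is cofinal in the directed system defining $SH^{a,\Lambda}_*(W)$.

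For split data the critical point equations reduce to $\gamma\in\cP(K)$ and $\lambda\in\mathrm{Crit}(f)$, so that $\cP^a(H)=\cP^a(K)\times\mathrm{Crit}(f)$. Since $\vec\nabla_\lambda H=\vec\nabla_g f(\lambda)$ does not depend on the point of $W$, while $J$ and $X_{H_\lambda}=X_K$ do not depend on $\lambda$, an inspection of the linearized operator $D_{(u,\lambda)}$ of~\S\ref{sec:action} shows that all its coupling terms vanish: $D_{(u,\lambda)}$ is block diagonal, equal to the Floer operator of $(K,J)$ along $u$ in the first factor and to the linearized gradient operator $\nabla_s\ell-\mathrm{Hess}_g f(\lambda)\,\ell$ along $\lambda$ in the second. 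Hence $(J,g)$ is regular for $H$ (by the transversality of~\cite{BOtrans} applied in each factor), the space $\widehat{\cM}((\og,\olambda),(\ug,\ulambda);H,J,g)$ is the product of the Floer moduli space of $(K,J)$ with the Morse moduli space of $(f,g)$, carrying the diagonal $\R$-action, and, using Theorem~\ref{thm:index} together with the \emph{(Splitting)} property of Proposition~\ref{prop:mu} (applied with $X\equiv 0$, $E(0)=0$ and $E(1)=-\mathrm{Hess}_g f(\lambda)$), the degree $|(\gamma,\lambda)|$ splits as the Floer degree of $\gamma$ plus the Morse degree of $\lambda$. A dimension count then shows that the rigid trajectories counted by $\partial^\Lambda$ are exactly those with $\lambda$ constant and $u$ a rigid Floer trajectory of $(K,J)$, or with $u$ constant and $\lambda$ a rigid gradient line of $(f,g)$; after checking that the coherent orientations of~\cite{FH} are compatible with the product decomposition up to the Koszul sign, one obtains an isomorphism of chain complexes
$$
SC^{a,\Lambda}_*(H,J,g)\;\cong\;SC^a_*(K,J)\otimes CM_*(f,g),\qquad
\partial^\Lambda=\partial^K\otimes 1\pm 1\otimes\partial^{\mathrm{Morse}},
$$
with $CM_*(f,g)$ a Morse complex of $(f,g)$.

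Since we work with field coefficients, the algebraic K\"unneth theorem gives $H_*\big(SC^a_*(K,J)\otimes CM_*(f,g)\big)\cong SH^a_*(K,J)\otimes H_*(\Lambda)$, using that Morse homology computes $H_*(\Lambda)$. It remains to pass to the direct limit. Along the cofinal family above the continuation maps are induced by $s$-dependent split homotopies $K_s+f$, and the same decoupling identifies them on homology with $\Phi^K_{\mathrm{cont}}\otimes\mathrm{id}_{H_*(\Lambda)}$; thus the parametrized directed system is the tensor product of the directed system computing $SH^a_*(W)$ with the constant system $H_*(\Lambda)$. As $\Lambda$ is closed, $H_*(\Lambda)$ is a finite-dimensional graded vector space, so tensoring with it commutes with direct limits, and we conclude
$$
SH^{a,\Lambda}_*(W)=\varinjlim_K\big(SH^a_*(K,J)\otimes H_*(\Lambda)\big)
=\Big(\varinjlim_K SH^a_*(K,J)\Big)\otimes H_*(\Lambda)=SH^a_*(W)\otimes H_*(\Lambda).
$$
The main obstacle is organizational rather than analytic: one must make the cofinality of split data precise, verify the block structure of $D_{(u,\lambda)}$ and the ensuing transversality for the product moduli spaces, and check that the product decomposition is compatible with coherent orientations \emph{and} with continuation maps, so that the K\"unneth isomorphism is natural enough to survive the direct limit. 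The compactness and gluing input is otherwise identical to the one already used to define $\partial^\Lambda$.
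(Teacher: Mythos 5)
Your proposal is correct and follows essentially the same route as the paper: split Hamiltonians $H=K(\theta,x)+f(\lambda)$, decoupling of the parametrized Floer equation into the Floer equation for $(K,J)$ and the gradient flow of $f$ on $\Lambda$, the resulting tensor-product decomposition of the chain complex with the degree splitting coming from the parametrized Robbin--Salamon index, and the algebraic K\"unneth theorem over a field. The only differences are expository (you spell out the block structure of $D_{(u,\lambda)}$, orientations, and the behaviour of continuation maps in the direct limit, which the paper leaves implicit), plus a harmless convention point: since the $\lambda$-equation is the \emph{positive} gradient flow of $f$, the paper records the second factor as the Morse complex of $-f$, graded by $\mathrm{ind}_{-f}(\lambda)$, which of course still computes $H_*(\Lambda)$.
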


\begin{proof} We use Hamiltonians of the form
$$
H_\lambda(\theta,x):= K(\theta,x) + f(\lambda).
$$
Here $f:\Lambda\to \R$ is a Morse function and $K$ is an admissible
Hamiltonian having nondegenerate orbits. We choose a generic
admissible almost complex structure $J$ on $W$ and a generic
Riemannian metric $g$ on $\Lambda$.

The critical points of the parametrized action functional are of the
form $(\gamma,\lambda)$, $\gamma\in \cP(K)$,
$\lambda\in \mathrm{Crit}(f)$. The
properties of the parametrized Robbin-Salamon index described in
Appendix~\ref{app:RS} imply that
$$
\mu(\gamma,\lambda)=\mu_{RS}(\gamma) + \mathrm{ind}_f(\lambda)-\frac m
2,
$$
where $\mathrm{ind}_f(\lambda)$ denotes the Morse index of
$\lambda\in\mathrm{Crit}(f)$. It follows that
$$
|(\gamma,\lambda)|=-\mu_{RS}(\gamma) + m - \mathrm{ind}_f(\lambda) =
-\mu_{RS}(\gamma) + \mathrm{ind}_{-f}(\lambda).
$$
The parametrized Floer equation is
split and has the form
$$
\left\{\begin{array}{rcl} \dbar_J u & = & J X_{H_\lambda} = JX_K, \\
\dot \lambda(s) & = & \int_{S^1} \vec \nabla_\lambda H(\theta,
u(s,\theta), \lambda(s)) d\theta = \vec \nabla f(\lambda(s)).
\end{array}\right.
$$
This follows from the obvious identities
$$
X_{H_\lambda}(\theta,x,\lambda) \equiv X_K(\theta,x),
\qquad
\vec \nabla_\lambda H(\theta,x,\lambda) \equiv \vec \nabla
f(\lambda).
$$
We obtain an isomorphism of complexes
$$
SC^{a,\Lambda}_*(H,J,g) \simeq SC^a_*(K,J) \otimes C_*(-f,g),
$$
where $SC^a_*(K,J)$ denotes the Floer complex for $(K,J)$ in the free
homotopy class $a$ (graded by
$-\mu_{RS}(\gamma)$) and $C_*(-f,g)$ denotes the Morse complex for
$(-f,g)$ (graded by $\mathrm{ind}_{-f}(\lambda)$). Since we use
field coefficients the conclusion follows by the algebraic K\"unneth
theorem.
\end{proof}

\begin{remark}{\bf (Naturality)} \label{rmk:SHincl}
 An embedding of parameter spaces $\iota:\Lambda \hookrightarrow \Lambda'$ induces
 a natural map $S\iota_*:SH^{a,\Lambda}_*(W)\to SH^{a,\Lambda'}_*(W)$ which is equal
 to $\mathrm{Id}\otimes \iota_*$ via the K\"unneth isomorphism. This
 can be seen by using a Hamiltonian
 $K(\theta,x)+f(\lambda)$ on $S^1\times \widehat
 W\times \Lambda$ as in the proof of Proposition~\ref{prop:Kunneth} above,
 and a Hamiltonian $K(\theta,x)+\widetilde f(\lambda')$ on $S^1\times \widehat
 W\times \Lambda'$, where $\widetilde f=f+|y|^2$ in a tubular
 neighbourhood of $\Lambda\subset \Lambda'$ and $y$ is the normal
 coordinate.
\end{remark}

%%%%%%%%%%%%%%%%%%%%%%%%%%%%%%%%%%%%%%%%%%%%%%
%%%%%%%%%%%%%%%%%%%%%%%%%%%%%%%%%%%%%%%%%%%%%%
%%%%%%%%%%%%%%%% Section 4 %%%%%%%%%%%%%%%%%%%
%%%%%%%%%%%%%%%%%%%%%%%%%%%%%%%%%%%%%%%%%%%%%%
%%%%%%%%%%%%%%%%%%%%%%%%%%%%%%%%%%%%%%%%%%%%%%

\section{\boldmath$S^1$-equivariant theories} \label{sec:S1}

In \S\ref{sec:S1equivhom} we
give a Morse theoretic presentation of $S^1$-equivariant homology of a
manifold carrying an $S^1$-action. This serves as a motivation for
\S\ref{sec:S1equivsymplhom}, where we give the definition of the
$S^1$-equivariant symplectic homology groups $SH_*^{S^1}(W)$
following Viterbo~\cite[\S5]{V}.
We adopt a slightly more general setting and define groups
$SH_*^{a,S^1}(W)$ corresponding to nontrivial free homotopy classes of
loops in $W$.

\subsection[$S^1$-equivariant homology and Morse theory]{\boldmath$S^1$-equivariant homology and Morse theory}
  \label{sec:S1equivhom}

In this section $M$ denotes a finite-dimensional smooth manifold
carrying a smooth action of $S^1$. Our aim is to give a description
of
$$
H_*^{S^1}(M):=H_*(M\times_{S^1} ES^1)
$$
in terms of Morse homology. We recall that
$\displaystyle ES^1=\lim_{\stackrel \longrightarrow N} S^{2N+1}$ and
therefore $\displaystyle M\times_{S^1} ES^1 = \lim _{\stackrel
  \longrightarrow N} \,
M\times_{S^1} S^{2N+1}$. We denote
$$
M_{S^1} := M\times _{S^1} ES^1, \quad M_{S^1}^{(N)} := M\times _{S^1}
S^{2N+1}.
$$

The first observation is
that, given a positive integer $k$, the homology groups
$H_*(M_{S^1}^{(N)})$ stabilize in degree $*\le k$ for $N$ large
enough. Indeed, the equivariant inclusion $S^{2N+1}\hookrightarrow
S^{2N+3}$ induces an inclusion of fibrations
$$
\xymatrix
@R=15pt
{M\ \ar@{^(->}[r] & M_{S^1}^{(N)} \ar[r] \ar[d] & \C P^N \ar[d] \\
M\ \ar@{^(->}[r] & M_{S^1}^{(N+1)} \ar[r]  & \C P^{N+1}
}
$$
This induces in turn a morphism between the associated Leray-Serre
spectral sequences which is an isomorphism on the $E^2$-page in total
degree less than $N$. Functoriality of the Leray-Serre spectral
sequence implies that, for $N$ large enough (and determined by $k$),
the above inclusion induces isomorphisms $H_*(M_{S^1}^{(N)}) \stackrel
\sim \to H_*(M_{S^1}^{(N+1)})$, $*\le k$ (see for
example~\cite[Theorem~3.5]{McC}).

We can give a description of $H_*(M_{S^1}^{(N)})$ in terms of
Morse-Bott functions on $M\times S^{2N+1}$ as follows.
We choose a function $a :M \times S^{2N+1} \to \R$ which is
$S^1$-invariant, i.e.
$$
a(\tau x,\tau\lambda)=a(x,\lambda), \quad \tau \in S^1, \
(x,\lambda)\in M\times S^{2N+1},
$$
and which has only Morse-Bott circles of critical
points, i.e. the induced function $\underline a:
M_{S^1}^{(N)}\to \R$ is Morse. We denote by $S_p$, $p\in
\textrm{Crit}(a)$ these circles
of critical points and by $[p]\in M_{S^1}^{(N)}$ the
nondegenerate critical point of $\underline a$ corresponding to $S_p$,
so that $S_p = S_{\tau\cdot p}$ and $[p]=[\tau\cdot p]$,
$\tau\in S^1$. We denote by
$$
\ind(S_p) = \ind([p])
$$
the Morse-Bott index of $S_p$.

We choose
a generic $S^1$-invariant metric $g$ on $M\times S^{2N+1}$
such that the gradient flow of $a$ has the Thom-Smale transversality
property, i.e.
$$
W^u(S_p) \pitchfork W^s(S_q), \quad p,q\in \textrm{Crit}(a).
$$
This is equivalent to asking that the gradient flow of
$\underline a$ with respect to the induced metric $\underline g$ on
$M_{S^1}^{(N)}$ satisfies $W^u([p]) \pitchfork W^s([q])$, $[p],[q]\in
\textrm{Crit}(\underline a)$.
Given $\op,\up\in \textrm{Crit}(a)$ we denote by
$$
\widehat \cM(S_\op,S_\up; a,g)
$$
the {\bf space of gradient trajectories} consisting of maps
$v=(u,\lambda):\R\to M\times S^{2N+1}$ which satisfy
\begin{equation}
\dot v = -\vec \nabla a (v)
\quad \Leftrightarrow \quad
\left\{\begin{array}{rcl}
 \dot u & = & - \vec \nabla _x  a (u,\lambda), \\
 \dot \lambda & = & - \vec \nabla _\lambda  a(u,\lambda),
\end{array}\right.
\end{equation}
and
\begin{equation}
\left\{\begin{array}{rcl}
\displaystyle
\lim_{s\to -\infty} v(s) & \in & S_\op, \\
\displaystyle
\lim_{s\to \infty} v(s) & \in & S_\up,
\end{array}\right.
\quad \Leftrightarrow \quad
\left\{\begin{array}{rcl}
\displaystyle
\lim_{s\to -\infty} (u(s),\lambda(s)) & = & (\overline x,\overline
\lambda)\in S_\op, \\
\displaystyle
\lim_{s\to \infty} (u(s),\lambda(s)) & = & (\underline x,\underline
\lambda)\in S_\up.
\end{array}\right.
\end{equation}
Here the gradient $\vec \nabla$ is considered with respect to the metric
$g$ and $\vec \nabla_x, \vec \nabla_\lambda$ are its components along $TM$
and $TS^{2N+1}$ respectively.
Under the transversality assumption for
the metric $g$ the space of gradient trajectories is a smooth manifold
of dimension
$$
\dim \, \widehat \cM(S_\op,S_\up; a,g) =
\ind(S_\op) - \ind(S_\up) +1.
$$
It carries a natural action of $\R$ by reparametrization and we
denote by
$$
\cM(S_\op,S_\up; a,g) := \widehat \cM(S_\op,S_\up; a,g) /\R
$$
the {\bf moduli space of gradient trajectories}. In our setting the
moduli space carries an action of $S^1$ and the quotient
$$
\cM_{S^1}(S_\op,S_\up; a,g) := \cM(S_\op,S_\up; a,g)/S^1
$$
is a smooth manifold of dimension
$$
\dim \, \cM_{S^1}(S_\op,S_\up; a,g)= \ind(S_\op)-\ind(S_\up)-1.
$$

The bundle with fiber $TW^u(\tau\cdot p)$, $\tau\in S^1$ over
$S_p$ is orientable since $W^u(S_p):=\bigcup_{\tau\in S^1}
W^u(\tau\cdot p)$ carries an action of $S^1$. We choose for each
$S_p$ an orientation of this bundle, which amounts to choosing an
orientation of $W^u(S_p)$. Since each $S_p$ inherits a natural
orientation from $S^1$, this determines a coorientation of the bundle
with fiber $TW^s(\tau\cdot p)$, $\tau\in S^1$ over
$S_p$ and therefore a coorientation of $W^s(S_p):=\bigcup_{\tau\in S^1}
W^s(\tau\cdot p)$. We get orientations on $\widehat
\cM(S_\op,S_\up;a,g)$ and, after quotienting out $\R$ and $S^1$, we
get orientations on $\cM_{S^1}(S_\op,S_\up;a,g)$, $\op,\up\in
\textrm{Crit}(a)$. In particular, if $\ind(\op)-\ind(\up)=1$ the
moduli space $\cM_{S^1}(S_\op,S_\up;a,g)$ is zero-dimensional and each
element $[v]$ inherits a sign $\epsilon([v])$.

We define the {\bf \boldmath$S^1$-equivariant Morse complex} by
$$
C_k^{S^1} (a,g) := \bigoplus _{\ind(S_p)=k} \Z \langle S_p \rangle,
$$
with the {\bf \boldmath$S^1$-equivariant Morse differential}
$$
d^{S^1} : C_k^{S^1} \to C_{k-1} ^{S^1},
$$
$$
d^{S^1}\langle S_\op \rangle := \sum_{\ind(S_\op)-\ind(S_\up)=1 \ }
\sum_{\ [v]\in \cM_{S^1}(S_\op,S_\up;a,g)} \epsilon([v]) \langle S_\up
\rangle.
$$

Since the elements of $\cM_{S^1}(S_\op,S_\up; a,g)$ are in one-to-one
correspondence with elements of the moduli space
$\cM([\op],[\up];\underline a, \underline g)$ of gradient trajectories
of $\underline a$ with respect to the metric $\underline g$ on
$M_{S^1}^{(N)}$, and since the rule for obtaining signs on
$\cM_{S^1}(S_\op,S_\up;a,g)$ if $\ind(S_\op)-\ind(S_\up)=1$
induces the usual Morse homology rule for signs on
$\cM([\op],[\up];\underline a, \underline g)$, we infer that the
complex $(C_*^{S^1},d^{S^1})$ is tautologically isomorphic with the
Morse complex of the pair $(\underline a,\underline g)$. Therefore
$$
H_k(C_*^{S^1},d^{S^1}) \simeq H_k(M_{S^1}^{(N)}), \quad k\in \N
$$
and, for $N$ large enough (depending on $k$), we have
$$
H_k(C_*^{S^1},d^{S^1}) \simeq H_k^{S^1}(M).
$$

\begin{remark} {\rm
The previous construction admits an obvious reformulation for any
manifold $P$ endowed with a free $S^1$-action: the homology of the
quotient $P/S^1$ can be described in terms of Morse-Bott data on $P$
alone.
}
\end{remark}

\subsection[$S^1$-equivariant symplectic homology]{\boldmath$S^1$-equivariant symplectic homology}
\label{sec:S1equivsymplhom}

In this section we give the definition of $S^1$-equivariant symplectic
homology following Viterbo~\cite{V}. Our treatment parallels the
finite dimensional case as presented in~\S\ref{sec:S1equivhom}. We obtain the definition of
$S^1$-equivariant symplectic homology as a variant of parametrized
symplectic homology with $\Lambda=S^{2N+1}$.

The space of smooth loops $\gamma:S^1\to \widehat W$ carries an action
of $S^1$ given by
$$
(\tau \cdot \gamma)(\cdot) := \gamma(\cdot -\tau), \quad \tau \in S^1.
$$

Let $H:S^1 \times \widehat W \times S^{2N+1} \to \R$ be a family of
Hamiltonian functions denoted by
$H(\theta,x,\lambda)=H_\lambda(\theta,x)$. This defines a family of
action functionals
$$
\cA : C^\infty(S^1,\widehat W)\times S^{2N+1} \to \R,
$$
$$
\cA(\gamma,\lambda) = \cA_\lambda(\gamma) := -\int_{[0,1]\times S^1} \sigma^*
\om - \int_{S^1} H_\lambda(\theta,\gamma(\theta)) d\theta,
$$
where $\sigma:[0,1]\times S^1\to \widehat W$ is a smooth homotopy from
$l_{[\gamma]}$ to $\gamma$, and $l_{[\gamma]}$ is a fixed
representative of the free homotopy class of $\gamma$.

\begin{lemma}
 The family $\cA$ is invariant with respect to the diagonal action
of $S^1$ if and only if the family of Hamiltonians satisfies
\begin{equation} \label{eq:H}
H_{\tau\lambda}(\theta+\tau,\cdot)=H_\lambda(\theta,\cdot) +
r(\theta,\tau,\lambda)
\end{equation}
for some function $r:S^1\times S^1\times S^{2N+1} \to \R$ such that
\begin{equation} \label{eq:h1}
\int_{S^1} r(\theta,\tau,\lambda) d\theta =0 \mbox{ for all } \tau\in
S^1,\ \lambda\in S^{2N+1}
\end{equation}
and
\begin{equation} \label{eq:h2}
r(\theta,1,\lambda) = 0, \qquad
r(\theta+\tau,-\tau,\tau\lambda)=-r(\theta,\tau,\lambda).
\end{equation}
\end{lemma}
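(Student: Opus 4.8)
The plan is to split $\cA$ into its two natural summands and handle them separately. Write $\cA(\gamma,\lambda)=T(\gamma)+B(\gamma,\lambda)$, where $T(\gamma)=-\int_{[0,1]\times S^1}\sigma^*\om$ is the symplectic area term (with $\sigma$ any homotopy from $l_{[\gamma]}$ to $\gamma$, well-defined by~\eqref{eq:asph}) and $B(\gamma,\lambda)=-\int_{S^1}H_\lambda(\theta,\gamma(\theta))\,d\theta$. First I would check that $T$ alone is $S^1$-invariant, irrespective of $H$: the rotated loop $\tau\cdot\gamma=\gamma(\cdot-\tau)$ lies in the same free homotopy class as $\gamma$, hence uses the same reference loop, and $\sigma_\tau(s,\theta):=\sigma(s,\theta-s\tau)$ is a homotopy from $l_{[\gamma]}$ to $\gamma(\cdot-\tau)$. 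Since $(s,\theta)\mapsto(s,\theta-s\tau)$ is an orientation-preserving diffeomorphism of the cylinder $[0,1]\times S^1$ onto itself preserving the two boundary circles, the change-of-variables formula gives $\int\sigma_\tau^*\om=\int\sigma^*\om$, so $T(\tau\cdot\gamma)=T(\gamma)$. Therefore $\cA$ is $S^1$-invariant if and only if $B$ is, and, after a change of variable in the integral defining $B(\tau\cdot\gamma,\tau\lambda)$, this holds if and only if
\[
\int_{S^1}\big[\,H_{\tau\lambda}(\theta+\tau,\gamma(\theta))-H_\lambda(\theta,\gamma(\theta))\,\big]\,d\theta=0
\]
for every $\tau\in S^1$, $\lambda\in S^{2N+1}$ and $\gamma\in C^\infty(S^1,\widehat W)$.

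Next I would prove the equivalence with~\eqref{eq:H}--\eqref{eq:h2}. The implication ``$\Leftarrow$'' is immediate: if~\eqref{eq:H} holds with $r$ satisfying~\eqref{eq:h1}, the integrand above equals $r(\theta,\tau,\lambda)$, which integrates to zero over $S^1$ (here~\eqref{eq:h2} is not used). For ``$\Rightarrow$'', fix $\tau,\lambda$ and put $g(\theta,x):=H_{\tau\lambda}(\theta+\tau,x)-H_\lambda(\theta,x)$; the hypothesis says $\int_{S^1}g(\theta,\gamma(\theta))\,d\theta=0$ for all $\gamma$. I would deduce that $g(\theta,\cdot)$ is constant for each $\theta$ by a localization argument: given $\theta_0$ and two points $x_0,x_1$, compare two contractible loops that agree outside a small interval about $\theta_0$ and pass near $x_0$, respectively near $x_1$, on that interval; subtracting the two vanishing integrals and shrinking the interval forces $g(\theta_0,x_0)=g(\theta_0,x_1)$. (Equivalently, one differentiates $\epsilon\mapsto\int_{S^1}g(\theta,\gamma_\epsilon(\theta))\,d\theta\equiv0$ along a compactly supported variation $\gamma_\epsilon$ of a fixed loop and localizes to obtain $d_xg\equiv0$.) Thus $g(\theta,x)=:r(\theta,\tau,\lambda)$ is independent of $x$, which is~\eqref{eq:H}, and feeding any loop back into the hypothesis gives $\int_{S^1}r(\theta,\tau,\lambda)\,d\theta=0$, i.e.~\eqref{eq:h1}. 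Finally~\eqref{eq:h2} is automatic from the defining formula for $r$: $r(\theta,1,\lambda)=H_\lambda(\theta,x)-H_\lambda(\theta,x)=0$, and $r(\theta+\tau,-\tau,\tau\lambda)=H_\lambda(\theta,x)-H_{\tau\lambda}(\theta+\tau,x)=-r(\theta,\tau,\lambda)$.

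The only nonroutine step is the ``$\Rightarrow$'' claim that vanishing of $\int_{S^1}g(\theta,\gamma(\theta))\,d\theta$ for \emph{all} loops $\gamma$ forces $g$ to be independent of $x$; this is where connectedness of $\widehat W$ enters, together with a little care about free homotopy classes in the comparison-of-loops (or variational) argument. The $S^1$-invariance of the area term and the verification of~\eqref{eq:h2} are straightforward bookkeeping.
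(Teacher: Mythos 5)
Your argument is correct and essentially the paper's: after reducing invariance of $\cA$ to the vanishing of $\int_{S^1}\bigl[H_{\tau\lambda}(\theta+\tau,\gamma(\theta))-H_\lambda(\theta,\gamma(\theta))\bigr]\,d\theta$ for all loops (the paper leaves the $S^1$-invariance of the area term implicit, which you verify via the shear reparametrization of $\sigma$), your key step coincides with the paper's, namely differentiating along loop variations to conclude $D_x\bigl(H_{\tau\lambda}(\theta+\tau,x)-H_\lambda(\theta,x)\bigr)=0$ and using connectedness of $\widehat W$, the verification of~\eqref{eq:h1} and~\eqref{eq:h2} then being routine. One small caution on your primary shrinking-interval comparison: it only forces $g(\theta_0,x_0)=g(\theta_0,x_1)$ if the modified loops dwell near $x_0$, resp.\ $x_1$, for all but a negligible fraction of the interval (otherwise the difference of the two integrals is merely $O(|I|)$ and shrinking gives no information), but your parenthetical variational version --- which is exactly the paper's argument --- handles this cleanly.
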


\begin{proof}
The nontrivial implication is that invariance of $\cA$ implies the
desired condition on $H$. We thus assume that $\cA$ is
invariant, i.e. $\cA_{\tau\lambda}
(\tau\gamma) = \cA_\lambda(\gamma)$ for all loops $\gamma$. This
is equivalent to the equality
$$
\int_{S^1} H_{\tau\lambda}(\theta+\tau,\gamma(\theta))d\theta
= \int_{S^1} H_\lambda(\theta,\gamma(\theta)) d\theta, \ \forall \ \gamma
$$
and, denoting
$F(\theta,\tau,\lambda,x):=H_{\tau\lambda}(\theta+\tau,x)-H_\lambda(\theta,x)$, we
obtain
$$
\int_{S^1} F(\theta,\tau,\lambda,\gamma(\theta))d\theta=0, \ \forall \ \gamma,\tau,\lambda.
$$
By letting $\gamma$ vary in the neighbourhood of the constant loop at
some $x\in \widehat W$ we see that
we must have $\int_{S^1} D_x F(\theta,\tau,\lambda,x)\cdot \zeta(\theta) d\theta =0$ for
all loops $\zeta$ of tangent vectors at $x$. It follows that $D_x
F(\theta,\tau,\lambda,x)=0$ for all $\theta\in S^1$ and, since $x$ was chosen
arbitrarily, we get $F(\theta,\tau,\lambda,x)=r(\theta,\tau,\lambda)$
with $\int_{S^1} r(\theta,\tau,\lambda) d\theta =0$. This
shows~\eqref{eq:h1}, whereas~\eqref{eq:h2} is straightforward.
\end{proof}

\begin{remark} {\rm
 Condition~\eqref{eq:H} holds for example if $r\equiv 0$,
i.e. if the family $H$ satisfies
\begin{equation} \label{eq:HS1inv}
H_{\tau\lambda}(\theta+\tau,\cdot)=H_\lambda(\theta,\cdot).
\end{equation}
In particular one can choose the family $H$ to be given by a
single autonomous Hamiltonian $H(\theta,x,\lambda)=H(x)$.
}
\end{remark}

We denote by $\cH^{S^1}_N\subset \cH_{S^{2N+1}}$ the set of admissible
Hamiltonian families $H:S^1\times \widehat W\times S^{2N+1}\to \R$
satisfying condition~\eqref{eq:HS1inv}. It follows from the
definitions that there exists $t_0\ge 0$ such that, for $t\ge t_0$, we
have $H(\theta,p,t,\lambda)=\beta e^t +\beta'(\lambda)$, with
$0<\beta\notin\mathrm{Spec}(M,\alpha)$, and $\beta'\in
C^\infty(S^{2N+1},\R)$ invariant under the action of $S^1$.

The differential of $\cA$ is given by~\eqref{eq:dA} and critical
points of $\cA$ satisfy~\eqref{eq:periodicpar}. Since $\cA$ is
$S^1$-invariant, the set of critical points of $\cA$ is
$S^1$-invariant as well,
i.e. if $(\gamma,\lambda)\in\cP(H)$, then $(\tau\gamma,\tau\lambda)\in
\cP(H)$ for all $\tau\in S^1$. Given
$p:=(\gamma,\lambda)\in \cP(H)$ we denote
$$
S_p=S_{(\gamma,\lambda)}:= \{(\tau\gamma,\tau\lambda) \ : \
\tau\in S^1\} \subset \cP(H),
$$
so that $S_p=S_{\tau \cdot p}$, $\tau\in S^1$. We shall refer to $S_p$
as an {\bf \boldmath$S^1$-orbit of critical points} (of $\cA$).

An {\bf admissible family of almost complex structures}
$J=(J_\lambda^\theta)$ (in the sense of Section~\ref{sec:param}) is
called {\bf \boldmath$S^1$-invariant} if it satisfies the condition
\begin{equation} \label{eq:J}
J_{\tau\lambda}^{\theta+\tau}=J_\lambda^\theta, \qquad \theta\in S^1,\
\tau\in S^1, \ \lambda\in S^{2N+1}.
\end{equation}
Such a $J^\theta$ induces an $S^{2N+1}$-family of $L^2$-metrics on
$C^\infty(S^1,\widehat W)$ defined by
$$
\langle \zeta,\eta\rangle_\lambda := \int_{S^1}
\om(\zeta(\theta),J_\lambda^\theta\eta(\theta)) d\theta, \quad \zeta,\eta\in
T_\gamma C^\infty(S^1,\widehat
W)=\Gamma(\gamma^*T\widehat W).
$$
Condition~\eqref{eq:J} ensures that, when coupled with an
$S^1$-invariant metric $g$ on $S^{2N+1}$, this family gives
rise to an $S^1$-invariant metric on
$C^\infty(S^1,\widehat W) \times S^{2N+1}$. We denote by $\cJ_N^{S^1}$
the set of pairs $(J,g)$ consisting of an $S^1$-invariant admissible
family of almost complex structures $J$ on $\widehat W$ and of an
$S^1$-invariant Riemannian metric $g$ on $S^{2N+1}$.

Given
$H\in\cH^{S^1}_N$, $(J,g)\in\cJ^{S^1}_N$, and
$\op:=(\og,\olambda),\up:=(\ug,\ulambda)\in \cP(H)$, we denote by
$$
\widehat \cM(S_\op,S_\up;H,J,g)
$$
the {\bf space of \boldmath$S^1$-equivariant Floer trajectories}, consisting of
pairs $(u,\lambda)$ with
$$
u:\R\times S^1 \to \widehat W, \qquad \lambda:\R\to S^{2N+1},
$$
satisfying
\begin{eqnarray}
\label{eq:Floer1}
 \p_s u + J_{\lambda(s)}^\theta \p_\theta u -
J_{\lambda(s)}^\theta X_{H_{\lambda(s)}}^\theta (u) & = & 0, \\
\label{eq:Floer2}
 \dot \lambda (s) - \int_{S^1} \vec \nabla_\lambda
H(\theta,u(s,\theta),\lambda(s)) d\theta & = & 0,
\end{eqnarray}
and
\begin{equation} \label{eq:asymptotic}
 \lim_{s\to -\infty} (u(s,\cdot),\lambda(s)) \in S_\op, \quad
 \lim_{s\to +\infty} (u(s,\cdot),\lambda(s)) \in S_\up.
\end{equation}

The additive group $\R$ acts on $\widehat \cM(S_\op,S_\up;H,J,g)$
by reparametrization in the $s$-variable. We denote by
$$
\cM(S_\op,S_\up;H,J,g) := \widehat \cM(S_\op,S_\up;H,J,g)/\R
$$
the {\bf moduli space of \boldmath$S^1$-equivariant Floer trajectories}.
This space is endowed with natural evaluation maps
$$
\oev:\cM(S_\op,S_\up;H,J,g)\to S_\op, \qquad
\uev:\cM(S_\op,S_\up;H,J,g)\to S_\up.
$$

An $S^1$-orbit of critical points $S_p\subset \cP(H)$ is called {\bf
  nondegenerate} if the Hessian $d^2\cA(\gamma,\lambda)$ has a
  $1$-dimensional kernel $V_p$ for some (and hence any) $(\gamma,\lambda)\in
  S_p$. It follows from~\cite[Lemma~2.3]{BOtrans}
  that nondegeneracy is equivalent to the fact
  that the kernel of the asymptotic operator $D_p$ is also
  $1$-dimensional and equal to $V_p$. In both cases, a generator of
  $V_p$ is given by the infinitesimal generator of the $S^1$-action.

We define the set $\cH^{S^1}_{N,\reg}\subset \cH^{S^1}_N$ to consist
of elements $H$ such that, for any $p\in\cP(H)$, the $S^1$-orbit $S_p$
is nondegenerate. We proved in~\cite[Proposition~5.1]{BOtrans} that
 the set $\cH^{S^1}_{N,\reg}$ is of the second Baire category in
 $\cH^{S^1}_N$. Moreover, if $H\in \cH^{S^1}_{N,\reg}$, each
 $S^1$-orbit $S_p\subset C^\infty(S^1,\widehat W)\times S^{2N+1}$ is
 isolated.

Let $d>0$ be small enough (for a fixed $H\in\cH^{S^1}_{N,\reg}$, one
can take $d>0$ to be smaller than the minimal spectral gap of the asymptotic
operators $D_p$, $p\in\cP(H)$), and fix $1<p<\infty$. Given
$\op,\up\in \cP(H)$ and $(u,\lambda)\in \widehat
\cM(S_\op,S_\up;H,J,g)$, we define
\begin{eqnarray*}
  \cW^{1,p,d} & := & W^{1,p}(u^*T\widehat
  W;e^{d|s|}dsd\theta) \oplus W^{1,p}(\lambda^*
  TS^{2N+1};e^{d|s|}ds)\oplus V_{\op}\oplus V_{\up}, \\
 \cL^{p,d} & := & L^p(u^*T\widehat
  W;e^{d|s|}dsd\theta) \oplus L^p(\lambda^*
  TS^{2N+1};e^{d|s|}ds).
\end{eqnarray*}
Here we identify $V_{\op}$, $V_{\up}$ with the $1$-dimensional spaces
generated by the sections $\beta(s)(\dot\og,X_{\olambda})$, respectively $\beta(-s)(\dot\ug,X_{\ulambda})$
 of $u^*T\widehat W\oplus \lambda^*TS^{2N+1}$. For this identification, we denote by $X_{\olambda}$, $X_{\ulambda}$ the values of the infinitesimal generator of the $S^1$-action on $S^{2N+1}$
 at the points $\olambda$, respectively $\ulambda$, and choose a cut-off function $\beta:\R\to [0,1]$ which is equal to $1$ near $-\infty$, and vanishes near $+\infty$.

\begin{proposition} \label{prop:indexMB}
Assume $S_\op,S_\up\subset \cP(H)$ are nondegenerate. For any
$(u,\lambda)\in \widehat \cM(S_\op,S_\up;H,J,g)$ the operator
$$
D_{(u,\lambda)}: \cW^{1,p,d} \to \cL^{p,d}
$$
is Fredholm of index
$$
\ind\, D_{(u,\lambda)} = -\mu(\op) +\mu(\up) + 1.
$$
\end{proposition}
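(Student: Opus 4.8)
The plan is to deduce this from Theorem~\ref{thm:index} and its proof by passing to exponentially weighted Sobolev spaces, the standard device for handling Morse--Bott asymptotics. As in the proof of Theorem~\ref{thm:index}, elliptic regularity for $D_{(u,\lambda)}$ and its formal adjoint (now in the weighted setting, where it is again standard) shows that kernel and cokernel consist of smooth sections and that the index is independent of $p$, so it suffices to treat $p=2$. Fixing a unitary trivialization of $u^*T\widehat W$ and an orthogonal trivialization of $\lambda^*TS^{2N+1}$, write $D_{(u,\lambda)}=\partial_s+A(s)$ with $A(s)\to A^\pm$ as $s\to\pm\infty$, where after an order-zero (hence compact) perturbation $A(s)$ is self-adjoint for all $s$ and $A^-=D_\op$, $A^+=D_\up$ are the asymptotic operators~\eqref{eq:Dasy}. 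By the nondegeneracy hypothesis on the $S^1$-orbits, $\ker A^-=V_\op$ and $\ker A^+=V_\up$ are one-dimensional and $0$ is an isolated point of the spectrum of each, separated from the rest of the spectrum by more than $d$.

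\emph{Fredholmness.} Write $\cW^{1,p,d}_0:=W^{1,p}(u^*T\widehat W;e^{d|s|})\oplus W^{1,p}(\lambda^*TS^{2N+1};e^{d|s|})$ for the first two summands, so that $\cW^{1,p,d}=\cW^{1,p,d}_0\oplus V_\op\oplus V_\up$ is a genuine direct sum into a larger space, the sections $\beta(s)(\dot\og,X_{\olambda})$ and $\beta(-s)(\dot\ug,X_{\ulambda})$ not lying in the weighted spaces. Conjugation by the weight identifies $D_{(u,\lambda)}|_{\cW^{1,p,d}_0}$ with an operator $\widetilde D_0=\partial_s+\widetilde A(s)$ on the unweighted spaces, where $\widetilde A(s)=A(s)-\tfrac dp\rho(s)$ for a smoothing $\rho$ of $\mathrm{sgn}(s)$; its asymptotic operators are thus $D_\op+\tfrac dp$ and $D_\up-\tfrac dp$, which are invertible since $0<d$ lies below the spectral gap, so $\widetilde D_0$ is Fredholm by~\cite{RSspec}. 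Since $(\dot\og,X_{\olambda})\in\ker D_\op$ (the orbit $S_\op$ lies in $\mathrm{Crit}(\cA)$, so its infinitesimal generator lies in $\ker d^2\cA$), $D_{(u,\lambda)}$ sends each added generator to an exponentially decaying section, hence an element of $\cL^{p,d}$, and therefore extends to a Fredholm operator $\cW^{1,p,d}\to\cL^{p,d}$ with $\ind D_{(u,\lambda)}=\ind\big(D_{(u,\lambda)}|_{\cW^{1,p,d}_0}\big)+\dim V_\op+\dim V_\up=\ind\big(D_{(u,\lambda)}|_{\cW^{1,p,d}_0}\big)+2$.

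\emph{The index on $\cW^{1,p,d}_0$.} The perturbed operator $\widetilde A(s)$ is again of the form~\eqref{eq:A} (the shift $-\tfrac dp\rho(s)$ modifies only the symmetric matrices $S$ and $D$), so the entire computation in the proof of Theorem~\ref{thm:index} applies verbatim to $\widetilde D_0$: its index equals the spectral flow of $\widetilde A(s)$, which in turn equals $\mu_{RS}(\underline M_d)-\mu_{RS}(\overline M_d)$, where $\overline M_d,\underline M_d:[0,1]\to\cS_{n,m}$ are the paths associated via~\eqref{eq:linflow}--\eqref{eq:Easy} to the now invertible asymptotic operators $D_\op+\tfrac dp$ and $D_\up-\tfrac dp$. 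These paths end in $\cS^0_{n,m}$ and are obtained from $\overline M$ and $\underline M$ by the small perturbations $S\mapsto S+\tfrac dp$, resp.\ $S\mapsto S-\tfrac dp$, of the generating symmetric matrices; a comparison of the relevant one-dimensional crossing forms at $\theta=1$ (shifted by the opposite amounts $\pm\tfrac dp$ at the two ends) gives $\mu_{RS}(\overline M_d)=\mu(\op)+\tfrac12$ and $\mu_{RS}(\underline M_d)=\mu(\up)-\tfrac12$. Hence $\ind\big(D_{(u,\lambda)}|_{\cW^{1,p,d}_0}\big)=\mu(\up)-\tfrac12-\mu(\op)-\tfrac12=-\mu(\op)+\mu(\up)-1$, and adding $2$ yields $\ind D_{(u,\lambda)}=-\mu(\op)+\mu(\up)+1$.

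The main obstacle is the last step: keeping track of the signs of the two half-integer corrections, i.e.\ verifying that the perturbation dictated by the choice of weight (the weight entering the definition of $\cW^{1,p,d}$, which forces extra decay) shifts $\mu_{RS}$ by $+\tfrac12$ at the incoming end and by $-\tfrac12$ at the outgoing end --- equivalently, that the two dimensions $\dim V_\op+\dim V_\up$ recovered when passing from $\cW^{1,p,d}_0$ to $\cW^{1,p,d}$ over-compensate the crossing contributions suppressed by the weight by exactly $+1$. This is the standard Morse--Bott index bookkeeping; everything else is a routine adaptation of arguments already carried out in the proof of Theorem~\ref{thm:index}.
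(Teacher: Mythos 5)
Your proposal is correct and follows essentially the same route as the paper's proof: conjugate by the weight $e^{\frac dp|s|}$ to get an operator with nondegenerate asymptotics $D_\op+\frac dp\one$ and $D_\up-\frac dp\one$, apply the index theorem (spectral flow) to it, add $\dim V_\op+\dim V_\up=2$ for the two finite-dimensional summands, and account for the half-integer shifts $\mu(\op)+\frac12$, $\mu(\up)-\frac12$ of the parametrized Robbin--Salamon indices caused by the perturbation, exactly as in the paper. The only differences are cosmetic (you rerun the spectral-flow argument for the perturbed operator instead of quoting Theorem~\ref{thm:index} directly, and you cite~\cite{RSspec} where the paper cites its companion transversality paper), and your sign bookkeeping for the $\pm\frac12$ shifts agrees with the paper's.
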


In the above statement, it is understood that the trivialization used
to define $\mu(\op)$ is obtained from the trivialization used
to define $\mu(\up)$ by continuation along the map $u$.

\begin{proof}
The Fredholm property was proved in~\cite[Proposition~5.2]{BOtrans} as follows.
Let $\cW^{1,p}$ and $\cL^p$ be defined as $\cW^{1,p,d}$ and $\cL^{p,d}$ above, with $d=0$ and without taking into account the direct summands $V_{\op}$, $V_{\up}$. Let $\widetilde D_{(u,\lambda)}:\cW^{1,p}\to \cL^p$ be the operator obtained by conjugating with $e^{\frac d p |s|}$ the restriction of $D_{(u,\lambda)}$ to
$W^{1,p}(u^*T\widehat   W;e^{d|s|}dsd\theta) \oplus W^{1,p}(\lambda^* TS^{2N+1};e^{d|s|}ds)$.
Then $\widetilde D_{(u,\lambda)}$ has nondegenerate asymptotics, hence it is Fredholm by~\cite[Theorem~2.6]{BOtrans}. Since the restriction of $D_{(u,\lambda)}$ to a codimension $2$ subspace is conjugate to $\widetilde D_{(u,\lambda)}$, it follows that $D_{(u,\lambda)}$ is Fredholm as well.

The asymptotic operator at $-\infty$ is $\widetilde D_{\op}=D_{\op}+\frac d p \one$, and the asymptotic operator at $+\infty$ is $\widetilde D_{\up}=D_{\up}-\frac d p \one$.  The parametrized Robbin-Salamon indices after perturbation are given by $\mu(\op)+\frac 12$, respectively $\mu(\up)-\frac 12$ (this can be seen for example using a Taylor expansion at order $1$ in $\eps$ for a perturbation of the asymptotic operators of the form $\widetilde D_p=D_p+\eps\one$).  Using Theorem~\ref{thm:index} we obtain
{\begin{eqnarray*}
\ind \, D_{(u,\lambda)} & = & \ind \, \widetilde D_{(u,\lambda)} + 2 \\
& = & -(\mu(\op)+\frac 1 2) + (\mu(\up)-\frac 1 2) + 2 \\
& = & -\mu(\op) +\mu(\up) + 1.
\end{eqnarray*}}
\end{proof}

Let $H\in\cH^{S^1}_{N,\reg}$. A pair $(J,g)\in \cJ^{S^1}_N$ is called
{\bf regular for \boldmath$H$} if the operator $D_{(u,\lambda)}$ is surjective
for any $\op,\up\in\cP(H)$ and any $(u,\lambda)\in \widehat
\cM(\op,\up;H,J,g)$. We denote the set of such regular pairs by
$\cJ^{S^1}_{N,\reg}(H)$.

We defined in~\cite[\S7]{BOtrans} two special classes $\cH_*\cJ'\subset \cH\cJ'$ in $\cH_N^{S^1}\times\cJ_N^{S^1}$. We proved in~\cite[Theorem~7.4]{BOtrans} that
there exists an open subset $\cH\cJ'_{\mathrm{reg}}\subset \cH\cJ'$
which is dense in a neighbourhood of $\cH_*\cJ'\subset \cH\cJ'$, and
which consists of triples $(H,J,g)$ such that
$$
H\in\cH^{S^1}_{N,\mathrm{reg}},\qquad (J,g)\in\cJ^{S^1}_{N,\mathrm{reg}}(H).
$$

Let $(H,J,g)\in \cH\cJ'_{\reg}$.
Recall that, for each $p=(\gamma,\lambda)\in\cP(H)$, we have chosen a
cylinder $\sigma_p:[0,1]\times S^1\to\widehat W$ such that
$\sigma_p(0,\cdot)=l_{[\gamma]}$ and $\sigma_p(1,\cdot)=\gamma$. We
define $\overline \sigma_p(s,\theta):=\sigma_p(1-s,\theta)$. Given
$\op=(\og,\olambda),\up=(\ug,\ulambda)\in\cP(H)$ we define
$$
\cM^A(S_\op,S_\up;H,J,g)\subset \cM(S_\op,S_\up;H,J,g)
$$
to consist of trajectories $(u,\lambda)$ such that
$[\sigma_{\op}\#u\#\overline \sigma_{\up}]=A\in H_2(\widehat
W;\Z)$. It follows from Proposition~\ref{prop:indexMB} that
\begin{equation} \label{eq:dimMS}
\dim\, \cM^A(S_\op,S_\up;H,J,g) = -\mu(\op) +\mu(\up) +2\langle
c_1(T\widehat W),A\rangle.
\end{equation}
Since $\cA$ and $(J,g)$ are $S^1$-invariant,
the moduli space $\cM^A(S_\op,S_\up;H,J,g)$
carries a free action of $S^1$ induced by the diagonal
action on $C^\infty(S^1,\widehat W) \times S^{2N+1}$, i.e.
$$
\tau\cdot (u,\lambda) := (u(\cdot,\cdot-\tau),\tau\lambda).
$$
We denote the quotient by
$$
\cM_{S^1}(S_\op,S_\up;H,J,g) := \cM(S_\op,S_\up;H,J,g)/S^1.
$$
This is a smooth manifold of dimension
$$
\dim\, \cM^A_{S^1}(S_\op,S_\up;H,J,g) = -\mu(\op) +\mu(\up) +2\langle
c_1(T\widehat W),A\rangle -1.
$$

An important feature of these moduli spaces is that they admit a system
of coherent orientations in the sense of~\cite{FH}. The difference with respect to
the setup of Floer homology is that the asymptotes for the moduli spaces are not fixed, but can vary along circles $S_p$, $p=(\gamma,\lambda)\in\cP(H)$. However, if one chooses the trivializations of $\gamma^*T\widehat W \oplus T_\lambda S^{2N+1}$ so that they are invariant under the $S^1$-action,
then the analytical expression of the asymptotic operators $D_p$, $p\in\cP(H)$ only depends on $S_p$.
It then follows from the arguments in~\cite{FH} that the spaces of Fredholm operators of the
form~\eqref{eq:Dtriv} with nondegenerate asymptotics of the form
$D_p$, $p\in\cP(H)$ are contractible, and hence the corresponding determinant line bundles are orientable.
The system of coherent orientations on the moduli spaces $ \cM^A_{S^1}(S_\op,S_\up;H,J,g)$ is obtained by pulling back a system of coherent orientations on these spaces of Fredholm operators, as in~\cite{FH}.

Given a free homotopy class $a$ in $\widehat W$, we define
the {\bf \boldmath$S^1$-equivariant chain complex} $SC^{a,S^1,N}_*(H,J,g)$ as a
chain complex whose underlying $\Lambda_\omega$-module is
\begin{equation} \label{eq:SCS1}
SC^{a,S^1,N}_*(H):=SC^{a,S^1,N}_*(H,J,g):=
\bigoplus_{S_p\subset \cP^a(H)} \Lambda_\omega\langle
S_p\rangle.
\end{equation}
The grading is defined by $|S_p\, e^A| := -\mu(p) +\frac m 2 -2\langle
c_1(T\widehat W),A\rangle$. The {\bf \boldmath$S^1$-equivariant differential}
$\partial^{S^1}:SC^{a,S^1,N}_*(H)\to SC^{a,S^1,N}_{*-1}(H)$ is defined
by
$$
\partial^{S^1}(S_\op):=\sum_{\substack{
  S_\up\subset \cP^a(H) \\
|S_\op| - |S_\up\, e^A|=1}}
\ \sum_{\scriptstyle [u]\in \cM^A_{S^1}(S_\op,S_\up;H,J,g)}
\epsilon([u])S_\up\, e^A.
$$
The sign $\epsilon([u])$ is obtained by comparing the coherent
orientation of the moduli space
$\cM^A_{S^1}(S_\op,S_\up;H,J,g)$ with the orientation induced by the
infinitesimal generator of the $S^1$-action.

\begin{proposition} \label{prop:partialS1}
The map $\partial^{S^1}$ satisfies
$$
\partial^{S^1}\circ \partial^{S^1}=0.
$$
\end{proposition}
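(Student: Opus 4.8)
The plan is to run the standard argument that a Floer-type differential squares to zero, adapted to the Morse--Bott situation created by the circles $S_p$ of critical points. First I would unwind the composition: for $S_\op,S_\up\subset\cP^a(H)$ and $A\in H_2(\widehat W;\Z)$ with $|S_\op|-|S_\up\,e^A|=2$, the coefficient of $S_\up\,e^A$ in $\partial^{S^1}\circ\partial^{S^1}(S_\op)$ is a signed count of pairs $([u_1],[u_2])$ with $[u_1]\in\cM^{A_1}_{S^1}(S_\op,S_q;H,J,g)$ and $[u_2]\in\cM^{A_2}_{S^1}(S_q,S_\up;H,J,g)$, where $q\in\cP^a(H)$, $A_1+A_2=A$, and each factor has index $1$, hence is rigid modulo $\R$ and $S^1$. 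By Proposition~\ref{prop:indexMB} and~\eqref{eq:dimMS} these are exactly the broken configurations that can appear as limits in the one-dimensional moduli spaces $\cM^A_{S^1}(S_\op,S_\up;H,J,g)$. The claim will follow once one shows that each such one-dimensional space admits a compactification $\overline{\cM^A_{S^1}}(S_\op,S_\up;H,J,g)$ which is a compact oriented $1$-manifold with boundary, the boundary being precisely this set of broken pairs with the induced sign; summing the signed boundary points over all components then gives $0$, i.e.\ the vanishing of every coefficient of $\partial^{S^1}\circ\partial^{S^1}$. (The same compactness also gives that the index-one moduli spaces counted by $\partial^{S^1}$ are finite, so $\partial^{S^1}$ is well defined.)

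Next I would establish compactness. The $C^0$-bound on the $\widehat W$-component $u$ comes from the maximum principle, using that the Hamiltonians are linear at infinity and $\partial^2H/\partial s\,\partial\theta=0$ outside a compact set --- exactly the estimate invoked for $\partial^2=0$ in parametrized symplectic homology (cf.\ the compactness discussion preceding Proposition~\ref{prop:Kunneth}). The parameter component $\lambda$ is automatically bounded since $S^{2N+1}$ is compact, and its convergence on compact $s$-intervals follows from Arzel\`a--Ascoli applied to~\eqref{eq:Floer2}; Floer--Gromov compactness then yields convergence of $u$ to a possibly broken, possibly bubbled limit. Sphere bubbles are excluded by the asphericity hypothesis~\eqref{eq:asph} together with the dimension count (a nonconstant bubble would force the principal stratum to have negative index). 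Because $H\in\cH^{S^1}_{N,\reg}$, the circles $S_p$ are isolated and the asymptotic operators are Morse--Bott nondegenerate, so no degeneration within a given $S_p$ occurs, and in a one-dimensional space the only possible breaking is a single splitting at an intermediate $S^1$-orbit $S_q$ of index one less than $S_\op$. Passing to the $S^1$-quotient preserves all of this because the action on $C^\infty(S^1,\widehat W)\times S^{2N+1}$ is free.

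Then comes gluing: every broken pair $([u_1],[u_2])$ as above is the limit of a unique half-open end of $\cM^A_{S^1}(S_\op,S_\up;H,J,g)$. This is the step I expect to be the main obstacle, since the asymptotic orbits lie on the circles $S_q$ rather than being nondegenerate, so the linearized operators carry the extra $S^1$-direction in kernel/cokernel. The correct framework is the weighted Sobolev setup of Proposition~\ref{prop:indexMB}: on $\cW^{1,p,d}\to\cL^{p,d}$ the operators $D_{(u,\lambda)}$ become Fredholm with the summands $V_{\op},V_{\up}$ absorbing the $S^1$-degeneracy, and in these coordinates pre-gluing, the Newton iteration, and surjectivity of the gluing map onto a neighbourhood of the broken configuration proceed as in the nondegenerate Floer case. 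Equivalently, one may glue $S^1$-equivariantly upstairs and quotient, or glue on the free quotient $\bigl(C^\infty(S^1,\widehat W)\times S^{2N+1}\bigr)/S^1$, where the asymptotes become honest nondegenerate critical submanifolds of the induced functional. Either way, $\partial\overline{\cM^A_{S^1}}$ is exactly the set of broken pairs.

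Finally, the signs. The coherent orientation system on the spaces of Fredholm operators of the form~\eqref{eq:Dtriv} with nondegenerate asymptotics $D_p$ --- which exists because those spaces are contractible, as recalled before the definition of $\partial^{S^1}$ --- induces orientations on all the moduli spaces $\cM^A_{S^1}(S_\op,S_\up;H,J,g)$ that are compatible with gluing, and the sign $\epsilon([u])$ of a rigid trajectory is obtained by comparing this orientation with the one fixed by the infinitesimal generator of the $S^1$-action. Compatibility with gluing says precisely that the two boundary points of each one-dimensional component of $\overline{\cM^A_{S^1}}$ carry opposite $\epsilon$-signs, so the total signed count of $\partial\overline{\cM^A_{S^1}}$ vanishes. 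Reading this off coefficient by coefficient in $SC^{a,S^1,N}_*(H)$ yields $\partial^{S^1}\circ\partial^{S^1}=0$.
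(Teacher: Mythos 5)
Your route is correct in outline, but it is genuinely different from the one the paper takes. You argue directly on the $S^1$-equivariant Floer moduli spaces: compactify the $1$-dimensional spaces $\cM^A_{S^1}(S_\op,S_\up;H,J,g)$, identify their boundary with the once-broken pairs counted by $\partial^{S^1}\circ\partial^{S^1}$, and invoke coherent orientations so the signed boundary count vanishes. The paper never does this. Its proof of Proposition~\ref{prop:partialS1} is two lines: it embeds $\partial^{S^1}$ into the auxiliary Morse--Bott (cascade) complex $BC^{a,N}_*(H,\{f_p\},J,g)$, built from perfect Morse functions $f_p$ on the circles $S_p$ and fibered products of Floer trajectories with gradient segments; the relation $d\circ d=0$ for that complex is imported from the Correspondence Theorem of~\cite{BOauto} (identification with an honest nondegenerate Floer complex of a perturbed Hamiltonian), the index filtration gives $d=d^0+d^1+d^2$ with $d^0=0$ (Proposition~\ref{prop:d0}), hence $\bar d^1\circ\bar d^1=0$, and Proposition~\ref{prop:d1} identifies $\bar d^1$ with $\partial^{S^1}\otimes\mathrm{Id}$ via $\Phi$. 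What each approach buys: yours is geometrically self-contained and is the ``expected'' argument, but the step you yourself flag as the main obstacle --- gluing (and compactness) with Morse--Bott asymptotics along the circles $S_p$, whether done equivariantly, on the free quotient, or in the weighted spaces $\cW^{1,p,d}$ --- is precisely the analysis the paper deliberately avoids; note that Proposition~\ref{prop:indexMB} only supplies the Fredholm index in that setting, not a gluing theorem, so ``proceeds as in the nondegenerate Floer case'' hides real new work. The paper's algebraic detour outsources all gluing to the already-established nondegenerate machinery of~\cite{BOauto}, and the same filtered complex is then reused to produce the Gysin exact sequence (degeneration at $E^2$) and the filtered continuation maps, so the extra structure is not wasted. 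One small point in your sketch: bubbling is excluded already because condition~\eqref{eq:asph} forces $\omega$ to vanish on spheres (compose with a degree-one map $T^2\to S^2$), so no dimension count is needed there; also, transversality of the matching condition at an intermediate circle $S_q$ is automatic here because the evaluation maps are equivariant for the free, transitive $S^1$-action on $S_q$, a point worth making explicit if you pursue the direct proof.
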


The proof of Proposition~\ref{prop:partialS1} is given in
Section~\ref{sec:MBparam}.
We define the {\bf \boldmath$S^1$-equivariant Floer homology groups} by
$$
SH^{a,S^1,N}_*(H,J,g):=H_*(SC^{a,S^1,N}_*(H),\partial^{S^1}).
$$

\begin{proposition} \label{prop:indepJg}
Let $H\in\cH^{S^1}_{N,\reg}$. Given
$(J_1,g_1),(J_2,g_2)\in\cJ^{S^1}_{N,\reg}(H)$, there exists a
canonical isomorphism
$$
SH^{a,S^1,N}_*(H,J_1,g_1) \simeq SH^{a,S^1,N}_*(H,J_2,g_2).
$$
\end{proposition}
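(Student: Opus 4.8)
The plan is to follow the standard continuation‑map argument of Floer theory, carried out $S^1$‑equivariantly. Fix $H\in\cH^{S^1}_{N,\reg}$; since $H$ is not changed, the asymptotic operators $D_p$, $p\in\cP(H)$, their kernels $V_p$, and hence the gradings of the generators of $SC^{a,S^1,N}_*(H)$, are the same for $(J_1,g_1)$ and $(J_2,g_2)$. First I would choose a smooth path $(J_s,g_s)_{s\in\R}$ in $\cJ^{S^1}_N$ which equals $(J_1,g_1)$ for $s\le -s_0$ and $(J_2,g_2)$ for $s\ge s_0$, each member being $S^1$‑invariant, and generic so that the $s$‑dependent parametrized Floer equations \eqref{eq:Floer1par}--\eqref{eq:Floer2par} with $(J,g)$ replaced by $(J_s,g_s)$ are cut out transversally; this requires the $s$‑dependent analogue of the transversality theorem \cite[Theorem~7.4]{BOtrans} (i.e.\ the extension of the special classes $\cH_*\cJ'\subset\cH\cJ'$ to homotopies). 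Denote by $\widehat\cM^A_{\mathrm{cont}}(S_\op,S_\up)$ the set of solutions with asymptotics as in \eqref{eq:asymptotic} and relative class $A$, and by $\cM^A_{\mathrm{cont},S^1}(S_\op,S_\up)$ its quotient by the diagonal $S^1$‑action; there is no $\R$‑action since the equation is genuinely $s$‑dependent. The linearization at a solution has exactly the form analysed in Proposition~\ref{prop:indexMB} --- the same degenerate asymptotics, only the middle term is now $s$‑dependent --- so it is Fredholm as a map $\cW^{1,p,d}\to\cL^{p,d}$ of index $-\mu(\op)+\mu(\up)+1+2\langle c_1(T\widehat W),A\rangle$, whence $\dim\cM^A_{\mathrm{cont},S^1}(S_\op,S_\up)=|S_\op|-|S_\up\,e^A|$. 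Orienting these moduli spaces coherently as in \cite{FH} (the relevant spaces of Fredholm operators being contractible), I define a $\Lambda_\omega$‑linear map $\Phi:SC^{a,S^1,N}_*(H,J_1,g_1)\to SC^{a,S^1,N}_*(H,J_2,g_2)$ by
\[
\Phi(S_\op):=\sum_{\substack{S_\up\subset\cP^a(H)\\ |S_\op|=|S_\up\,e^A|}}\Big(\sum_{[u]\in\cM^A_{\mathrm{cont},S^1}(S_\op,S_\up)}\epsilon([u])\Big)\,S_\up\,e^A.
\]

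Next I would prove that $\Phi$ is a chain map, $\partial^{S^1}_{(J_2,g_2)}\circ\Phi=\Phi\circ\partial^{S^1}_{(J_1,g_1)}$, by describing the boundary of the compactified one‑dimensional spaces $\overline{\cM^A_{\mathrm{cont},S^1}(S_\op,S_\up)}$ when $|S_\op|-|S_\up\,e^A|=1$: the ends are once‑broken configurations, either a Floer trajectory for $(H,J_1,g_1)$ followed by a continuation trajectory, or a continuation trajectory followed by a Floer trajectory for $(H,J_2,g_2)$, and the signed count of the boundary vanishes. Compactness is the same three‑step argument used for $\partial^{S^1}$ in Proposition~\ref{prop:partialS1}: a uniform $C^0$‑bound on the $\widehat W$‑component from the maximum principle (here even the energy identity is unaffected by the $s$‑dependence of $g_s$, since $H$ is $s$‑independent), Arzel\`a--Ascoli for the $S^{2N+1}$‑component, and Floer--Gromov compactness for the $\widehat W$‑component, with bubbling excluded by \eqref{eq:asph}; the exponential weights and the summands $V_\op,V_\up$ are carried along so that broken and glued configurations have matching indices. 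Running the same construction with a homotopy from $(J_2,g_2)$ to $(J_1,g_1)$ produces a chain map $\Psi$ in the other direction.

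Finally I would show that $\Psi\circ\Phi$ and $\Phi\circ\Psi$ are chain homotopic to the identity. For $\Psi\circ\Phi$ I would interpolate, by a parameter $r\in[0,1]$, between the concatenation of the two homotopies ($r=0$) and the constant homotopy at $(J_1,g_1)$ ($r=1$), all data $S^1$‑invariant; counting rigid points of the resulting $[0,1]$‑parametrized $S^1$‑equivariant moduli spaces defines a degree $+1$ operator $K$, and analysis of the one‑dimensional such moduli spaces --- whose ends are the broken configurations together with the fibers over $r=0$ and $r=1$ --- yields $\partial^{S^1}K+K\partial^{S^1}=\Psi\Phi-\mathrm{id}$, and symmetrically for $\Phi\Psi$. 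Applying the same device to a homotopy between any two interpolating paths shows that the induced map on homology is independent of all choices, hence canonical. \emph{The main obstacle} is transversality in the $S^1$‑equivariant setting for the $s$‑dependent continuation equations and for the $(r,s)$‑dependent homotopies of homotopies: the $S^1$‑invariance of all the data forbids free perturbation, so one must check that the mechanism of \cite[\S7]{BOtrans} extends to these families; granting that, everything else is a routine adaptation of the standard Floer machinery once the index bookkeeping of Proposition~\ref{prop:indexMB} is in place.
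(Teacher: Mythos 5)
Your outline takes the direct route: define an equivariant continuation map $\Phi$ on $SC^{a,S^1,N}_*(H)$ by counting rigid elements of the $S^1$-quotients of the $s$-dependent moduli spaces, and prove the chain-map and chain-homotopy identities by analysing the boundaries of the one-dimensional quotient moduli spaces. The map you write down is indeed the one that ultimately induces the isomorphism (this is exactly the content of the remark at the end of \S\ref{sec:continuation}), and your index bookkeeping $\dim\cM^A_{\mathrm{cont},S^1}(S_\op,S_\up)=|S_\op|-|S_\up\,e^A|$ is correct. The genuine gap is in the step ``the ends are once-broken configurations \dots and the signed count of the boundary vanishes''. The generators of $SC^{a,S^1,N}_*$ are circles $S_p$, so the asymptotics are Morse--Bott degenerate: a one-dimensional family in $\cM^A_{\mathrm{cont},S^1}(S_\op,S_\up)$ breaks along an intermediate circle $S_q$, and the boundary is a fibered product over $S_q$ of two moduli spaces via their evaluation maps, not a product of the rigid objects counted by $\partial^{S^1}$ and by $\Phi$. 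To turn the boundary count into $\partial^{S^1}\circ\Phi=\Phi\circ\partial^{S^1}$ you need transversality of these evaluation fibered products using only $S^1$-invariant perturbations, a gluing theorem with degenerate asymptotics in the weighted setup of Proposition~\ref{prop:indexMB} whose image exhausts the ends, compatibility of the coherent orientations with the fibered-product description, and the observation that each $S^1$-orbit of trajectories evaluates bijectively onto $S_q$ (which is what makes the two counts agree at all). None of this is a routine adaptation of nondegenerate Floer theory, and it is precisely what this paper does not establish: even $\partial^{S^1}\circ\partial^{S^1}=0$ (Proposition~\ref{prop:partialS1}) is not proved by direct breaking/gluing but deduced from the cascade complex via Proposition~\ref{prop:d1} and $\bar d^1\circ\bar d^1=0$. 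The same objection applies verbatim to your chain-homotopy step for $\Psi\circ\Phi\simeq\mathrm{id}$.

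By contrast, the paper's proof bypasses all of this. Continuation morphisms are defined on the filtered Morse--Bott complex $BC^{a,N}_*$ of \S\ref{sec:MBparam} by counting cascade-type fibered products with the auxiliary Morse functions $f_p$ (as in~\cite{BOauto}); Lemma~\ref{lem:continuation} shows these morphisms preserve the filtration (the free $S^1$-action forces any nonempty equivariant moduli space to have dimension at least one); Propositions~\ref{prop:chain-homotopy} and~\ref{prop:composition} produce chain homotopies of order $1$; and the isomorphism of Proposition~\ref{prop:indepJg} is then read off on the second page of the associated spectral sequence, where $SH^{a,S^1,N}_*$ sits and where order-$1$ homotopies act trivially. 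Your recognition that $S^1$-equivariant transversality for $s$-dependent data is needed is correct (the paper assumes it in \S\ref{sec:continuation} as well), and your homotopy-of-homotopies scheme for canonicity matches the role of Proposition~\ref{prop:chain-homotopy}; what is missing is the Morse--Bott gluing and boundary analysis at the level of $SC^{S^1}$, which the filtered cascade detour is specifically designed to avoid. As written, your proposal therefore does not constitute a proof within the framework developed here.
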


We prove Proposition~\ref{prop:indepJg} in
Section~\ref{sec:continuation}. Given $H\in\cH^{S^1}_{N,\reg}$ we shall denote
$SH^{a,S^1,N}_*(H):=SH^{a,S^1,N}_*(H,J,g)$ for $(J,g)\in
\cJ^{S^1}_{N,\reg}(H)$. In analogy with the construction of symplectic
homology, we define
$$
SH^{a,S^1,N}_*(W):=\lim_{\stackrel \longrightarrow {H\in
  \cH^{S^1}_{N,\reg}}} SH^{a,S^1,N}_*(H).
$$
The {\bf \boldmath$S^1$-equivariant symplectic homology groups of \boldmath$W$} are
defined by
$$
SH^{a,S^1}_*(W):=\lim_{\stackrel \longrightarrow N} SH^{a,S^1,N}_*(W).
$$
The direct limit is taken with respect to the embeddings
$S^{2N+1}\hookrightarrow S^{2N+3}$, inducing maps
$SH^{a,S^1,N}_*(W)\to SH^{a,S^1,N+1}_*(W)$ (see
Remark~\ref{rmk:SHincl}).

For the particular case of the trivial homotopy class $a=0$, we denote
the $S^1$-equivariant symplectic homology groups by $SH^{S_1}_*(W)$.
Given $H\in\cH^{S^1}_{N,\reg}$ we define the {\bf parametrized reduced
  action functional} $\cA^0:C^\infty_{\mathrm{contr}}(S^1,\widehat
W)\times S^{2N+1}\to\R$ by
$$
\cA^0(\gamma,\lambda):=-\int_{D^2}\sigma^*\widehat \omega - \int_{S^1}
H(\theta,\gamma(\theta),\lambda)\, d\theta.
$$
Here $\sigma:D^2\to\widehat W$ is a smooth extension of $\gamma$, and
$\cA^0$ is well-defined due to assumption~\eqref{eq:asph}.

Similarly to the case of symplectic homology, we define a special
cofinal class of Hamiltonian families $\cH^{\prime\, S^1}_N\subset
\cH^{S^1}_N$, consisting of elements $H=(H_\lambda)\in\cH^{S^1}_N$
such that $H_\lambda\in \cH'$ for all $\lambda\in S^{2N+1}$ (see
Section~\ref{sec:symplhom} for the definition of the class $\cH'$).

Given $H\in\cH^{\prime\, S^1}_{N,\reg}:=\cH^{\prime\, S^1}_N\cap
\cH^{S^1}_{N,\reg}$, $(J,g)\in\cJ^{S^1}_{N,\reg}(H)$, and $\eps>0$
small enough, we define the chain complexes
$$
SC^{-,S^1,N}_*(H,J,g):=\bigoplus _{\substack{S_p\subset\cP^0(H)
    \\ \cA^0(p)\le \eps)}} \Lambda_\omega \langle S_p\rangle \subset
SC^{S^1,N}_*(H,J,g)
$$
and
$$
SC^{+,S^1,N}_*(H,J,g):=SC^{S^1,N}_*(H,J,g)/SC^{-,S^1,N}_*(H,J,g).
$$
The differential on $SC^{\pm,S^1,N}_*(H,J,g)$ is induced by
$\partial^{S^1}$. The corresponding homology groups
$SH^{\pm,S^1,N}_*(H,J,g)$ do not depend on $(J,g)$ and $\eps$, and we
define
$$
SH^{\pm,S^1,N}_*(W):=\lim_{\stackrel \longrightarrow
  {H\in\cH^{\prime\, S^1}_{N,\reg}}}SH^{\pm,S^1,N}_*(H,J,g).
$$
Passing to the direct limit over $N\to\infty$, we define
$$
SH^{\pm,S^1}_*(W):=\lim_{\stackrel \longrightarrow
  {N}}SH^{\pm,S^1,N}_*(W).
$$
We call $SH^{+,S^1}_*(W)$ the {\bf positive \boldmath$S^1$-equivariant
  symplectic homology group} of $(W,\omega)$. It follows from the
definitions that this fits into the {\bf tautological long exact
  sequence}
$$
\dots \to SH^{+,S^1}_{k+1}(W) \to SH^{-,S^1}_k(W)\to SH^{S^1}_k(W) \to
SH^{+,S^1}_k(W)\to \dots
$$

\begin{lemma} \label{lem:minus} Assume $W$ has positive contact type boundary in the
  sense of Section~\ref{sec:symplhom}. There is a natural isomorphism
$$
SH^{-,S^1}_*(W) \simeq H^{S^1}_{*+n}(W,\partial W;\Lambda_\omega).
$$
Here $H^{S^1}_{*+n}(W,\partial W;\Lambda_\omega)\simeq
H_{*+n}(W,\partial W;\Lambda_\omega)\otimes H_*(\C P^\infty;\Q)$
denotes the $S^1$-equivariant homology of the pair $(W,\partial W)$
with respect to the trivial $S^1$-action.
\end{lemma}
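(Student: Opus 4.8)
The plan is to prove this as the $S^1$-equivariant refinement of Viterbo's isomorphism $SH^-_*(W,\om)\simeq H_{*+n}(W,\p W;\Lambda_\om)$ recalled at the end of Section~\ref{sec:symplhom}, using the same mechanism together with the Morse-theoretic model of $S^1$-equivariant homology from Section~\ref{sec:S1equivhom} and the split-Hamiltonian technique of Proposition~\ref{prop:Kunneth}. It suffices to produce, for each $N$, an isomorphism
$$
SH^{-,S^1,N}_*(W)\ \simeq\ H_{*+n}(W,\p W;\Lambda_\om)\otimes H_*(\C P^N;\Q)
$$
compatible with the stabilization maps induced by $S^{2N+1}\hookrightarrow S^{2N+3}$ (these act on the right-hand factor as the standard maps $H_*(\C P^N)\to H_*(\C P^{N+1})$, cf.~Remark~\ref{rmk:SHincl}) and independent of the auxiliary data. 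Passing to the direct limit over $N$ then gives $SH^{-,S^1}_*(W)\simeq H_{*+n}(W,\p W;\Lambda_\om)\otimes H_*(\C P^\infty;\Q)$, which is $H^{S^1}_{*+n}(W,\p W;\Lambda_\om)$ by the Borel construction for the trivial $S^1$-action, together with $BS^1=\C P^\infty$ and the Künneth formula.

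Fixing $N$, I would first choose an $S^1$-invariant metric $g$ on $S^{2N+1}$ and an $S^1$-invariant, $C^2$-small function $f:S^{2N+1}\to\R$ whose induced function on $\C P^N$ is Morse; by the remark closing Section~\ref{sec:S1equivhom} (applied with $M$ a point) the associated $S^1$-equivariant Morse complex $C_*^{S^1}(-f,g)$ has homology $H_*(\C P^N;\Q)$ in the stable range. Next I would use admissible $S^1$-invariant Hamiltonian families $H\in\cH^{\prime\,S^1}_{N,\reg}$ which on $W$ and a neighbourhood of it are of split form $H_\lambda(\theta,x)=K(x)+f(\lambda)$, with $K\in\cH'$ a $C^2$-small autonomous Hamiltonian of slope smaller than every element of $\mathrm{Spec}(M,\alpha)$, while in the collar region they are kept $S^1$-invariant and $C^2$-close to increasing in $t$, so that every $1$-periodic orbit located there forms a nondegenerate $S^1$-orbit which, by the positive contact type hypothesis, carries reduced action $\cA^0$ bounded below by a fixed positive constant. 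Using the Hessian formula~\eqref{eq:d2A} one checks that for $p=(x,\lambda)$ with $x\in\mathrm{Crit}(K)$ and $\lambda$ a critical point of $f$ the orbit $S_p$ is a nondegenerate $S^1$-orbit with $\cA^0(p)=-K(x)-f(\lambda)$, which is $\le\eps$ once $K$ and $f$ are small enough. Choosing $\eps$ between these two scales, $SC^{-,S^1,N}_*(H,J,g)$ is freely generated over $\Lambda_\om$ by exactly these $S_p$, and $\partial^{S^1}$ restricted to them counts only trajectories staying in the region where $H$ is split.

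Over that region the parametrized Floer equation decouples --- exactly as in the proof of Proposition~\ref{prop:Kunneth} --- into the autonomous (hence, for $C^2$-small $K$, gradient-like) Floer equation for $K$ on $\widehat W$ and the negative gradient equation of $f$ on $S^{2N+1}$. Identifying the $S^1$-quotiented moduli spaces with products of a Morse trajectory of $-K$ between critical points of $K$ and a negative gradient trajectory of $f$ on $S^{2N+1}$, and tracking the grading via the formula for $\mu(p)$ in terms of $\mu_{RS}$ and the Morse--Bott index used in the proof of Proposition~\ref{prop:Kunneth}, should give an isomorphism of complexes
$$
SC^{-,S^1,N}_*(H,J,g)\ \simeq\ SC^-_*(K,J)\otimes C_*^{S^1}(-f,g),
$$
where $SC^-_*(K,J)$ is the non-equivariant negative complex, whose homology is $H_{*+n}(W,\p W;\Lambda_\om)$ by Viterbo's computation recalled in Section~\ref{sec:symplhom} (here one uses that $K$ has slope below the smallest period and that $W$ has positive contact type boundary). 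Since $\Lambda_\om$ is a $\Q$-algebra and $H_*(\C P^N;\Q)$ is free, the algebraic Künneth theorem yields the claimed isomorphism on homology in the stable range; continuation-map invariance (Proposition~\ref{prop:indepJg} and its evident analogues) makes it independent of $(K,f,J,g)$, and compatibility with stabilization holds by construction.

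The hard part will be the rigorous identification in the previous paragraph. Three points require care: (i) the action-window argument isolating the ``constant-in-$W$'' generators, which rests on the positive contact type hypothesis and on the control of collar orbits built into the class $\cH'$; (ii) the decoupling of the parametrized Floer equation together with the attendant transversality in the $S^1$-equivariant, Morse--Bott setting, where the critical set of $f$ consists of circles and all moduli spaces are quotiented by the free $S^1$-action, so that one must match coherent orientations with the sign conventions of Section~\ref{sec:S1equivhom}; and (iii) the fact that one cannot take $H$ globally split --- the collar orbits would then be \emph{degenerate} $S^1$-orbits --- so one must argue, as is standard for truncated Floer complexes, that the negative subcomplex and its differential are unaffected by making $H$ split only in a neighbourhood of the constant orbits. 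None of these steps is conceptually new beyond Viterbo's original argument and the machinery of Sections~\ref{sec:param}--\ref{sec:S1}, but this is where the work lies.
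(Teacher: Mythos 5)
Your proposal is correct and follows essentially the same route as the paper: the paper's proof also takes a split Hamiltonian $K(x)+\widetilde f(\lambda)$ on $S^1\times W\times S^{2N+1}$ with $K$ a $C^2$-small function and $\widetilde f$ the lift of a Morse function on $\C P^N$, observes that the parametrized Floer equation decouples so that $SC^{-,S^1,N}_*(H,J,g)\simeq C_{*+n}(K,J;\Lambda_\om)\otimes C^{S^1}_*(\widetilde f,g;\Q)$, and concludes by the Morse-complex identification on $\C P^N$ and passage to the limit over $N$. The technical caveats you flag (action window, split form only on $W$ rather than on the cone, equivariant orientations) are exactly the points the paper absorbs into its choice of the class $\cH^{\prime\,S^1}_{N,\reg}$ and the $\eps$-truncation, so no new ingredient is missing.
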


\begin{proof}
We consider a Hamiltonian $H\in\cH^{\prime\, S^1}_{N,\reg}$ which has
the form
$$
H(\theta,x,\lambda)=K(x)+\widetilde f(\lambda)
$$
on $S^1\times W\times S^{2N+1}$, with $K:W\to \R$ a $C^2$-small
function, and $\widetilde f:S^{2N+1}\to \R$ the lift of a Morse
function $f:\C P^N\to\R$. We choose $(J,g)\in
\cJ^{S^1}_{N,\reg}(H)$ such that $J$ is independent of $\theta$ and
$\lambda$ on $W$ (this is possible because $W$ is symplectically
aspherical~\cite{SZ}). The
parametrized Floer equation is split, the Floer complex for $(K,J)$
reduces to the Morse complex, and we have an isomorphism of complexes
$$
SC^{-,S^1,N}_*(H,J,g)=C_{*+n}(K,J;\Lambda_\omega)\otimes
C^{S^1}_*(\widetilde f,g;\Q).
$$
Here $C_*$ denotes the corresponding Morse complexes. Since
$C^{S^1}_*(\widetilde f,g;\Q)$ corresponds to a Morse complex on $\C
P^N$, the conclusion follows.
\end{proof}

%%%%%%%%%%%%%%%%%%%%%%%%%%%%%%%%%%%%%%%%%%%%%%
%%%%%%%%%%%%%%%%%%%%%%%%%%%%%%%%%%%%%%%%%%%%%%
%%%%%%%%%%%%%%%% Section 5 %%%%%%%%%%%%%%%%%%%
%%%%%%%%%%%%%%%%%%%%%%%%%%%%%%%%%%%%%%%%%%%%%%
%%%%%%%%%%%%%%%%%%%%%%%%%%%%%%%%%%%%%%%%%%%%%%

\section{Morse-Bott constructions} \label{sec:MB}

\subsection{Morse-Bott complex for parametrized symplectic homology}
  \label{sec:MBparam}

We describe in this section a Morse-Bott construction for parametrized
symplectic homology in the case when $\Lambda=S^{2N+1}$ and the action
functional $\cA:C^\infty(S^1,\widehat W)\times S^{2N+1}\to \R$ is
$S^1$-invariant with respect to the diagonal action of $S^1$. The
situation is analogous to that of Floer homology for an autonomous
Hamiltonian considered in~\cite{BOauto}.

Let $H\in\cH^{S^1}_{N,\reg}$ and $(J,g)\in\cJ^{S^1}_{N,\reg}(H)$ as in
Section~\ref{sec:S1equivsymplhom}. For each $S^1$-orbit of critical
points $S_p\subset \cP(H)$ we choose a perfect Morse function
$f_p:S_p\to\R$. We denote by $m_p$, $M_p$ the minimum, respectively
the maximum of $f_p$. Given $\op,\up\in\cP(H)$,
$Q_\op\in\mathrm{Crit}(f_\op)$, $Q_\up\in\mathrm{Crit}(f_\up)$, and
$m\ge 0$, we denote by
$$
\cM^A_m(Q_\op,Q_\up;H,\{f_p\},J,g)
$$
the union for $p_1,\dots,p_{m-1}\in\cP(H)$ and $A_1+\dots+A_m=A$ of
the fibered products
\begin{eqnarray*}
&&
\hspace{-1cm}W^u(Q_\op)
\times_{\oev}
(\cM^{A_1}(S_{\op}\,,S_{p_1})\!\times\!\R^+)
{_{\varphi_{f_{p_1}}\!\circ\uev}}\!\times
_{\oev}
(\cM^{A_2}(S_{p_1},S_{p_2})\!\times\!\R^+) \\
&&
{_{\varphi_{f_{p_2}}\!\circ\uev}\times_{\oev}} \ldots\,
{_{\varphi_{f_{p_{m-1}}}\!\!\circ\uev}}\!\!\times
_{\oev}
\cM^{A_m}(S_{p_{m-1}},\!S_{\up})
{_{\uev}\times} W^s(Q_\up).
\end{eqnarray*}
It follows from~\cite[Lemma~3.6]{BOauto} that, for a generic choice of
the collection of Morse functions $\{f_p\}$, the previous fibered
product is a smooth manifold of dimension
\begin{eqnarray*}
\lefteqn{\dim \, \cM^A_m(Q_\op,Q_\up;H,\{f_p\},J,g)} \\
& = &
-\mu(\op)+\ind_{f_\op}(Q_\op) + \mu(\up) - \ind_{f_\up}(Q_\up) +
2\langle c_1(T\widehat W),A\rangle -1.
\end{eqnarray*}
We denote
$$
\cM^A(Q_\op,Q_\up;H,\{f_p\},J,g):=\bigcup_{m\ge 0}  \cM^A_m(Q_\op,Q_\up;H,\{f_p\},J,g).
$$

Given a free homotopy class $a$ of loops in $\widehat W$, we define the {\bf
  parametrized Morse-Bott chain complex} $BC^{a,N}_*(H,\{f_p\},J,g)$
as a chain complex whose underlying $\Lambda_\omega$-module is
$$
BC^{a,N}_*(H):=BC^{a,N}_*(H,\{f_p\},J,g) := \bigoplus_{S_p\subset \cP^a(H)} \Lambda_\omega \langle
m_p,M_p\rangle.
$$
The grading is given by
\begin{eqnarray*}
|m_p \, e^A| & := & -\mu(\gamma,\lambda) + 1 - 2\langle c_1(T\widehat
 W),A\rangle, \\
|M_p\, e^A| & := & -\mu(\gamma,\lambda) - 2\langle c_1(T\widehat
 W),A\rangle.
\end{eqnarray*}
The {\bf parametrized Morse-Bott differential}
$$
d:BC^{a,N}_*(H)\to BC^{a,N}_{*-1}(H)
$$
is defined by
\begin{equation} \label{eq:dMB}
dQ_\op := \sum_{\substack{
  \up\in \cP^a(H), Q_\up\in \mathrm{Crit}(f_{\up}) \\
|Q_\op| - |Q_\up\, e^A|=1}}
\ \sum_{\scriptstyle \u\in \cM^A(Q_\op,Q_\up;H,\{f_\gamma\},J,g)}
\epsilon(\u)Q_\up\, e^A, \qquad Q_\op \in {\rm Crit}(f_\op).
\end{equation}
The sign $\eps(\u)$ is determined by the fibered-sum rule from
coherent orientations on the relevant spaces of Fredholm operators, as
explained in~\cite[Section~4.4]{BOauto}.

The Correspondence Theorem~3.7 in~\cite{BOauto} shows that $d^2=0$.
Similarly to the construction of symplectic homology, we define
$$
BH^{a,N}_*(W):=\lim_{\stackrel \longrightarrow {H\in
  \cH^{S^1}_{N,\reg}}} H_*(BC^{a,N}_*(H),d),
$$
where the direct limit is taken with respect to increasing homotopies
of Hamiltonians. It then follows from the Correspondence Theorem~3.7
in~\cite{BOauto} that
$$
BH^{a,N}_*(W)=SH^{a,S^{2N+1}}_*(W).
$$

We now define a filtration on $BC^{a,N}_*(H)$ as follows. Let
$$
B_kC^{a,N}_*(H) := \bigoplus_{
\substack{S_p\subset \cP^a(H) \\ A\in H_2(W;\Z) \\ -\mu(p)-2\langle
  c_1(T\widehat W),A\rangle =k}} \langle m_p\, e^A, \ M_p \, e^A\rangle.
$$

\begin{proposition} The $\Q$-vector spaces
$$
F_\ell BC^{a,S^1}_*(H):= \bigoplus _{k\le \ell} B_kC^{a,N}_*(H),
\qquad \ell\in\Z
$$
form an increasing filtration on $BC^{a,S^1}_*(H)$.
\end{proposition}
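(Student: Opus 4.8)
The plan is to verify the three conditions that make $\{F_\ell BC^{a,S^1}_*(H)\}_{\ell\in\Z}$ an increasing filtration by subcomplexes: (i) nesting $F_\ell\subseteq F_{\ell+1}$; (ii) exhaustion $\bigcup_\ell F_\ell=BC^{a,S^1}_*(H)$, together with finiteness of the filtration in each total degree (which in particular makes it Hausdorff and bounded below); and (iii) the subcomplex property $d(F_\ell)\subseteq F_\ell$. Conditions (i) and (ii) I will dispose of directly from the definitions; (iii) is the only point with content, and I expect it to follow formally from the two grading formulas $|m_p\,e^A|=-\mu(\gamma,\lambda)+1-2\langle c_1(T\widehat W),A\rangle$ and $|M_p\,e^A|=-\mu(\gamma,\lambda)-2\langle c_1(T\widehat W),A\rangle$ together with the fact that $d$ lowers total degree by exactly one.

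For (i), nesting is immediate since $F_\ell BC^{a,S^1}_*(H)=\bigoplus_{k\le\ell}B_kC^{a,N}_*(H)$. For (ii), each generator $m_p\,e^A$ or $M_p\,e^A$ of $BC^{a,S^1}_*(H)$ lies in $B_kC^{a,N}_*(H)$ for the single value $k=-\mu(p)-2\langle c_1(T\widehat W),A\rangle$, hence in $F_\ell$ for every $\ell\ge k$, so the union exhausts the complex. From the grading formulas one reads off that this generator has filtration index $k$ while its total degree equals $k+1$ if it is of the minimum type $m_p\,e^A$ and equals $k$ if it is of the maximum type $M_p\,e^A$; consequently, in a fixed total degree $d$ only the indices $k\in\{d-1,d\}$ occur, so $F_\ell BC^{a,S^1}_d(H)$ equals $0$ for $\ell\le d-2$ and equals $BC^{a,S^1}_d(H)$ for $\ell\ge d$. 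The filtration is therefore finite in each degree.

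The substantive step is (iii). The key observation, already recorded above, is that for every generator $g$ the filtration index $k(g)$ and the total degree $|g|$ satisfy $|g|-1\le k(g)\le|g|$; precisely, $k(m_p\,e^A)=|m_p\,e^A|-1$ and $k(M_p\,e^A)=|M_p\,e^A|$. Since the differential, defined by $dQ_\op=\sum_{|Q_\op|-|Q_\up\,e^A|=1}(\cdots)Q_\up\,e^A$ and extended $\Lambda_\omega$-linearly, lowers total degree by exactly one, every generator $g'$ occurring in $dg$ has $|g'|=|g|-1\le k(g)$, whence $k(g')\le|g'|\le k(g)$ and thus $dg\in F_{k(g)}BC^{a,S^1}_*(H)$. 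Therefore $d(B_kC^{a,N}_*(H))\subseteq F_kBC^{a,S^1}_*(H)$ and hence $d(F_\ell)\subseteq F_\ell$ for all $\ell$.

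I do not expect a genuine obstacle here, since everything reduces to the two numerical identities for $|m_p\,e^A|$ and $|M_p\,e^A|$. The one point I would verify with care is the Novikov-ring bookkeeping: writing $d(Q_\op\,e^{A_0})=\sum(\cdots)Q_\up\,e^{A+A_0}$, the inequality needed is $-\mu(\up)-2\langle c_1(T\widehat W),A+A_0\rangle\le-\mu(\op)-2\langle c_1(T\widehat W),A_0\rangle$, which is exactly the $A_0=0$ instance, so no generality is lost. As an independent cross-check one can re-derive the inequality $k(g')\le k(g)$ from the dimension formula for $\cM^A_m(Q_\op,Q_\up;H,\{f_p\},J,g)$ together with $\ind_{f_\op}(Q_\op)\le 1$ and $\ind_{f_\up}(Q_\up)\ge 0$, which hold because each circle $S_p$ carries a perfect Morse function. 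With (i)--(iii) established, the proposition follows.
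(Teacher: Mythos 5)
Your proof is correct and follows essentially the same route as the paper: the whole content is the observation that a generator has filtration index equal to its total degree minus $0$ or $1$, so since $d$ drops total degree by exactly one, the filtration index can drop by $0$, $1$, or $2$ but never increase; this is precisely the paper's computation $-\mu(\op)+\mu(\up)+2\langle c_1(T\widehat W),A\rangle\in\{0,1,2\}$, which your cross-check via the dimension formula reproduces verbatim. Your extra remarks on nesting, exhaustion, and the Novikov-coefficient bookkeeping are fine and are simply left implicit in the paper.
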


\begin{proof} The formula~\eqref{eq:dMB} involves elements $Q_\op$,
  $Q_\up$ satisfying $|Q_\op|-|Q_\up\, e^A|=1$, i.e.
$$
-\mu(\op)+\mathrm{ind}_{f_\op}(Q_\op) + \mu(\up)
-\mathrm{ind}_{f_\up}(Q_\up) + 2\langle c_1(T\widehat W),A\rangle =1.
$$
Since
$\mathrm{ind}_{f_\op}(Q_\op)-\mathrm{ind}_{f_\up}(Q_\up)\in\{-1,0,1\}$,
we obtain that $-\mu(\op)+ \mu(\up)+ 2\langle c_1(T\widehat
W),A\rangle\in\{0,1,2\}$.
\end{proof}

The differential $d$ splits as
$$
d=d^0+d^1+d^2
$$
with $d^r:B_kC^{a,N}_*(H)\to B_{k-r}C^{a,N}_*(H)$. The complex
$BC^{a,N}_*(H)$ admits a bi-grading which, for an element $Q_p\, e^A$
is $(-\mu(p)-2\langle c_1(T\widehat
W),A\rangle,\mathrm{ind}_{f_p}(Q_p))$. The associated
spectral sequence $(E^{a,N;r}_{*,*}(H),\bar d^r)$ is supported in two lines
and converges to $SH^{a,S^{2N+1}}_*(H)$. In particular, it degenerates
at $r=2$ and takes the form of a long exact sequence~\cite{BOcont}
\begin{equation}  \label{eq:seq}
{ %\scriptsize
\xymatrix
@C=15pt
@R=10pt@W=1pt@H=1pt
{
\dots \ar[r] & SH^{a,S^{2N+1}}_k(H) \ar[r]
 & E^{a,N;2}_{k,0}(H) \ar[r]^{\bar d^2} & E^{a,N;2}_{k-2,1}(H) \ar[r]
& SH^{a,S^{2N+1}}_{k-1}(H) \ar[r] & \dots
}
}
\end{equation}
The differentials $\bar d^r$, $r=0,1,2$ are induced by $d^r$. In
particular, $\bar d^1$ satisfies
\begin{equation} \label{eq:bard1}
\bar d^1\circ \bar d^1=0
\end{equation}
on $E^{a,N;1}_{*,*}(H)$.

\begin{proposition}  \label{prop:d0}
 For any $p\in\cP^a(H)$ and any $B\in H_2(W;\Z)$, the differential
 $d^0:B_kC^{a,N}_*(H)\to B_kC^{a,N}_*(H)$ vanishes.
\end{proposition}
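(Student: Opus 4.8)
The plan is a grading computation reinforced by the freeness of the $S^1$-action on the parametrized Floer moduli spaces; the only Floer-theoretic inputs are Proposition~\ref{prop:indexMB}, the dimension formula for the strata $\cM^A_m$, and the fact recalled in Section~\ref{sec:S1equivsymplhom} that $\cM^A(S_\op,S_\up;H,J,g)$ carries a \emph{free} $S^1$-action. First I would determine which coefficients of $d^0$ can be nonzero. Since $d$ lowers the total degree by one while $d^0$ preserves the filtration index $k=-\mu(p)-2\langle c_1(T\widehat W),A\rangle$, a comparison with the explicit gradings $|m_p\,e^A|=-\mu(p)+1-2\langle c_1,A\rangle$ and $|M_p\,e^A|=-\mu(p)-2\langle c_1,A\rangle$ shows that $d^0$ annihilates every generator $M_\op$ (there is no generator in the filtration level $B_{-\mu(\op)}$ of total degree $|M_\op|-1$), and that $d^0 m_\op$ can only contain terms $M_\up\,e^A$ for which $-\mu(\op)+\mu(\up)+2\langle c_1,A\rangle=0$. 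So it suffices to show that these coefficients, which are signed counts of $\cM^A(m_\op,M_\up;H,\{f_p\},J,g)=\bigcup_{m\ge0}\cM^A_m(m_\op,M_\up;H,\{f_p\},J,g)$, all vanish.

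Next I would eliminate the strata with $m\ge1$. If one of them were nonempty it would contain at least one genuine parametrized Floer cylinder, and decomposing $0=-\mu(\op)+\mu(\up)+2\langle c_1,A\rangle=\sum_{i=1}^{m}\bigl(-\mu(p_{i-1})+\mu(p_i)+2\langle c_1,A_i\rangle\bigr)$ (with $p_0=\op$, $p_m=\up$) identifies each summand with $\dim\cM^{A_i}(S_{p_{i-1}},S_{p_i};H,J,g)$. A nonempty moduli space of this type cannot have dimension $\le0$: negative virtual dimension is excluded by transversality, and dimension zero is excluded because a nonempty $0$-dimensional manifold cannot carry a free $S^1$-action. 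Hence every summand is $\ge1$, which forces the sum to be $\ge m\ge1>0$, contradicting the value $0$. Therefore only the stratum $m=0$ can contribute.

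In the stratum $m=0$ one necessarily has $S_\op=S_\up=:S$ and $A=0$, and $\cM^0_0(m_\op,M_\up;H,\{f_p\},J,g)$ is the reduced space of $f_S$-gradient segments joining the two critical points of the perfect Morse function $f_S$ on the circle $S\cong S^1$. There are exactly two such segments, one along each arc, and under the coherent-orientation scheme they enter with opposite signs, so they cancel. (Depending on the orientation convention used in the dimension formula this stratum may instead have negative virtual dimension, in which case it is empty for a generic choice of the $f_p$; either way it contributes nothing.) Combining this with the previous two steps shows that all coefficients of $d^0$ vanish, so $d^0=0$.

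The one step that requires genuine care is the exclusion of the cascades with $m\ge1$: it is precisely here that the freeness of the $S^1$-action on the spaces $\cM^A(S_\op,S_\up;H,J,g)$ is used to rule out rigid genuine Floer cylinders at constant filtration level. This is the parametrized, $S^1$-invariant counterpart of the corresponding vanishing in the autonomous Morse--Bott construction of~\cite{BOauto}; everything else is bookkeeping with the gradings, and no additional transversality input beyond what is already available is needed.
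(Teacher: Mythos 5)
Your argument is correct and follows essentially the same route as the paper's proof: the filtration-preserving constraint identifies the relevant index relation, the free $S^1$-action on the parametrized Floer moduli spaces forces any nonempty such space to have dimension at least one (ruling out all cascades with $m\ge 1$), and the remaining two gradient arcs of the perfect Morse function on the critical circle cancel by coherent orientations, exactly as the paper argues via~\cite[Lemma~4.28]{BOauto}. The only cosmetic differences are that the paper first reduces to $B=0$ by $\Lambda_\omega$-linearity and deduces $d^0(M_p)=0$ from the reduction to gradient trajectories rather than from a pure degree count, neither of which changes the substance.
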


\begin{proof} Since $d$ is $\Lambda_\omega$-linear, it is enough to
  prove the statement for $B=0$. By definition $d^0(Q_p)$,
  $Q_p\in\mathrm{Crit}(f_p)$ involves critical points of $f_\up$,
  $\up\in\cP^a(H)$ satisfying $-\mu(p)+\mu(\up) +2\langle
  c_1(T\widehat W),A\rangle=0$. On the other hand, the dimension of
  the moduli spaces $\cM^{A_1}(S_{p_1},S_{p_2};H,J,g)$ is equal to
  $-\mu(p_1)+\mu(p_2)+2\langle c_1(T\widehat W),A_1\rangle$. Since
  these moduli spaces carry a free $S^1$-action, their dimension must
  be at least $1$. This proves that $d^0(Q_p)$ counts only gradient
  trajectories of $f_p$ emanating from $Q_p$. In particular
  $d^0(M_p)=0$, and $d^0(m_p)$ is either $0$ or equal to $\pm 2 M_p$.
That $d^0(m_p)=0$ follows from the existence of a system of coherent orientations on the moduli spaces
of $S^1$-equivariant Floer trajectories, by the same arguments as in~\cite[Lemma~4.28]{BOauto}.
\end{proof}

As a consequence, provided that we work with rational coefficients,
the term $E^{a,N;1}_{*,*}(H)$ can be expressed as
$$
E^{a,N;1}_{*,*}(H) = \bigoplus_{S_p\subset \cP^a(H)}
\Lambda_\omega \langle m_p,M_p\rangle.
$$
Let us denote by $M$ the generator of $H_0(S^1)$ and by $m$ the
generator of $H_1(S^1)$. It follows from the
definition~\eqref{eq:SCS1} of the $S^1$-equivariant chain complex that
there is a natural isomorphism of $\Lambda_\omega$-modules which
preserves the bi-degree
$$
\Phi:E^{a,N;1}_{*,*}(H) \stackrel \sim \to SC^{a,S^1,N}_*(H) \otimes H_*(S^1),
$$
given by
$$
\Phi(m_p):= S_p\otimes m,\qquad \Phi(M_p):=S_p\otimes M.
$$

\begin{proposition} \label{prop:d1}
There is a commutative diagram
$$
\xymatrix
@R=25pt
{E^{a,N;1}_{*,*}(H) \ar[r]^-\Phi \ar[d]_{\bar d^1} & SC^{a,S^1,N}_*(H)
  \otimes H_*(S^1) \ar[d]^{\partial^{S^1}\otimes \mathrm{Id}} \\
E^{a,N;1}_{*,*}(H) \ar[r]_-\Phi & SC^{a,S^1,N}_*(H)
  \otimes H_*(S^1)
}
$$
\end{proposition}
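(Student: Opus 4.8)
The plan is to show that, once $d^{0}=0$ (Proposition~\ref{prop:d0}) has collapsed the spectral sequence, the component $\bar d^{1}$ of the Morse--Bott differential counts precisely the rigid $S^{1}$-equivariant Floer cylinders that enter $\partial^{S^{1}}$, the dictionary between the two counts being a slicing of a free $S^{1}$-action.

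By Proposition~\ref{prop:d0} we have $E^{a,N;1}_{*,*}(H)=\bigoplus_{S_p\subset\cP^a(H)}\Lambda_\omega\langle m_p,M_p\rangle$, and $\bar d^{1}$ is induced by the part $d^{1}$ of the differential~\eqref{eq:dMB} that lowers the filtration by exactly one, i.e. by those pairs with $-\mu(\op)+\mu(\up)+2\langle c_1(T\widehat W),A\rangle=1$. First I would read off from the gradings $|m_p\,e^A|=-\mu(p)+1-2\langle c_1,A\rangle$ and $|M_p\,e^A|=-\mu(p)-2\langle c_1,A\rangle$ that, under the constraint $|Q_\op|-|Q_\up\,e^A|=1$ together with a filtration drop equal to $1$, one is forced to have $\ind_{f_\op}(Q_\op)=\ind_{f_\up}(Q_\up)$; hence $d^{1}$ has only the components $m_\op\mapsto\sum m_\up\,e^A$ and $M_\op\mapsto\sum M_\up\,e^A$, which is exactly what commutativity with $\partial^{S^{1}}\otimes\mathrm{Id}$ requires, since the latter preserves the $H_*(S^{1})$-factor. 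Second, for such a component one has $\sum_{i}\dim\cM^{A_i}(S_{p_{i-1}},S_{p_i})=-\mu(\op)+\mu(\up)+2\langle c_1,A\rangle=1$, while each nonempty $\cM^{A_i}$ carries a free $S^{1}$-action and so has dimension at least $1$; thus only configurations with a single unbroken Floer cylinder contribute, and $d^{1}$ counts points of $W^u(Q_\op)\times_{\oev}\cM^A(S_\op,S_\up)\times_{\uev}W^s(Q_\up)$ with $\dim\cM^A(S_\op,S_\up)=1$, cf.~\eqref{eq:dimMS}.

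Next I would treat the two surviving components separately. For $m_\op\mapsto m_\up\,e^A$ one has $W^u(m_\op)=\{m_\op\}$ and $W^s(m_\up)=S_\up\setminus\{M_\up\}$, so the fibered product is identified with $\oev^{-1}(m_\op)\subset\cM^A(S_\op,S_\up)$ once one discards the finitely many points of $\oev^{-1}(m_\op)$ lying over $M_\up$ --- a generic-position requirement on the auxiliary functions $\{f_p\}$. Since $\oev$ is $S^{1}$-equivariant and $S^{1}$ acts freely and transitively on $S_\op\cong S^{1}$, the set $\oev^{-1}(m_\op)$ is a global slice for the free $S^{1}$-action on $\cM^A(S_\op,S_\up)$ and hence maps bijectively onto $\cM^A_{S^{1}}(S_\op,S_\up)$. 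Symmetrically, for $M_\op\mapsto M_\up\,e^A$ one uses $W^s(M_\up)=\{M_\up\}$ and $W^u(M_\op)=S_\op\setminus\{m_\op\}$ and identifies the fibered product with the slice $\uev^{-1}(M_\up)$, again in bijection with $\cM^A_{S^{1}}(S_\op,S_\up)$. Under $\Phi(m_p)=S_p\otimes m$ and $\Phi(M_p)=S_p\otimes M$ this yields $\Phi\circ\bar d^{1}=(\partial^{S^{1}}\otimes\mathrm{Id})\circ\Phi$ at the level of unsigned counts.

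The main obstacle, and the last step, is the sign comparison: the sign attached to an element of the Morse--Bott moduli space by the fibered-sum rule of~\cite[Section~4.4]{BOauto} (applied with the oriented point $W^u(m_\op)$, resp. the cooriented point $W^s(M_\up)$, and the coherent orientations) must agree with the sign $\epsilon([u])$ of the corresponding element of $\cM^A_{S^{1}}(S_\op,S_\up)$, which is read off by comparing its coherent orientation with the orientation induced by the infinitesimal generator of the $S^{1}$-action. The key point is that the slices $\oev^{-1}(m_\op)$ and $\uev^{-1}(M_\up)$ are exactly the transversals realising the splitting ``coherent orientation $=$ generator of the $S^{1}$-action $\oplus$ transversal'' used to define $\epsilon([u])$, and that the orientations of $W^u(m_p)$, $W^s(M_p)$ and of $S_p$ were chosen compatibly with the $S^{1}$-action; reconciling this with the bookkeeping of~\cite{BOauto} and~\cite{FH} completes the proof.
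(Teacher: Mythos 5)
Your proposal is correct and follows essentially the same route as the paper's proof: the grading/filtration bookkeeping plus the free $S^1$-action on each Floer moduli space forces configurations with a single cylinder and the components $m_\op\mapsto m_\up$, $M_\op\mapsto M_\up$, and the count is then identified with the count of connected components, i.e.\ of elements of $\cM^A_{S^1}(S_\op,S_\up;H,J,g)$, with signs matching by the orientation conventions. Your slice description via $\oev^{-1}(m_\op)$ and $\uev^{-1}(M_\up)$ is just a more explicit way of phrasing the paper's "components are in bijection with $\cM^A_{S^1}$" step, so no substantive difference.
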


\begin{proof} By definition $\bar d^1(Q_\op)$ involves critical points
  of $f_\up$, $\up\in\cP^a(H)$ such that
  $-\mu(\op)+\mu(\up)+2\langle c_1(T\widehat W),A\rangle=1$. It
  follows from the dimension formula~\eqref{eq:dimMS} that
  $\cM^A(Q_\op,Q_\up;H,\{f_p\},J,g)$ involves exactly one parametrized
  Floer trajectory $u_1\in\cM^A(S_\op,S_\up;H,J,g)$. Since the
  dimension of the moduli space $\cM^A(Q_\op,Q_\up;H,\{f_p\},J,g)$ is
  zero, it follows that either $\oev(u_1)=M_\op$ and $Q_\up=M_\up$, or
  $\uev(u_1)=m_\up$ and $Q_\op=m_\op$.

  Using that the $S^1$-action on $\cM^A(S_\op,S_\up;H,J,g)$ is free,
  we see that the coefficient of $Q_\up\, e^A$ in $\bar d^1(Q_\op)$ is
  given by an algebraic count of connected components of
  $\cM^A(S_\op,S_\up;H,J,g)$. The latter are in bijective
  correspondence with elements of
  $\cM^A_{S^1}(S_\op,S_\up;H,J,g)$, and the signs are the same
  by our convention for orienting the latter moduli space. Thus, the
  coefficient of $Q_\up\, e^A$ in $\bar d^1(Q_\op)$ is equal to the
  coefficient of $S_\up\, e^A$ in $\partial^{S^1}(S_\op)$. This proves
  the Proposition.
\end{proof}

\begin{proof}[Proof of Proposition~\ref{prop:partialS1}]
The claim $\partial^{S^1} \!\!\circ \, \partial^{S^1}=0$ follows directly
from Proposition~\ref{prop:d1}, using that $\bar d^1 \circ \bar
d^1=0$ (see~\eqref{eq:bard1}).
\end{proof}

It follows from Proposition~\ref{prop:d1} that $\Phi$ induces an
isomorphism which respects the bi-degree
\begin{equation} \label{eq:barPhi}
\bar \Phi : E^{a,N;2}_{*,*}(H) \stackrel \sim \to SH^{a,S^1,N}_*(H)
  \otimes H_*(S^1).
\end{equation}

We are now ready to prove Theorem~\ref{thm:SGysin}. We need two
preparatory Lemmas.

\begin{lemma} \label{lem:limit}
 We have $\lim_{N\to \infty} SH_*^{S^{2N+1}}(W) = SH_*(W)$ in
 each degree.
\end{lemma}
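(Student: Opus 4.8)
The plan is to deduce the statement from two results already at hand: the K\"unneth formula (Proposition~\ref{prop:Kunneth}) and the naturality of parametrized symplectic homology under embeddings of the parameter space (Remark~\ref{rmk:SHincl}). Together with the elementary computation of $H_*(S^{2N+1})$ and of the maps it induces, these identify the direct system whose limit is to be computed.

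First I would apply Proposition~\ref{prop:Kunneth} with $a=0$ and $\Lambda=S^{2N+1}$, working with field coefficients so that the formula applies. This yields, for each $N$, a graded isomorphism $SH_*^{S^{2N+1}}(W)\simeq SH_*(W)\otimes H_*(S^{2N+1})$. Since $H_j(S^{2N+1})\cong\Q$ for $j\in\{0,2N+1\}$ and vanishes otherwise, in a fixed degree $k$ this reads
$$
SH_k^{S^{2N+1}}(W)\;\simeq\;SH_k(W)\oplus SH_{k-2N-1}(W),
$$
the first summand coming from $H_0(S^{2N+1})$ and the second from the fundamental class in $H_{2N+1}(S^{2N+1})$.

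Next I would describe the structure maps of the direct system. By Remark~\ref{rmk:SHincl}, the map $SH_*^{S^{2N+1}}(W)\to SH_*^{S^{2N+3}}(W)$ induced by the standard inclusion $\iota\colon S^{2N+1}\hookrightarrow S^{2N+3}$ is, under the K\"unneth isomorphisms, equal to $\mathrm{Id}\otimes\iota_*$. Now $\iota_*\colon H_*(S^{2N+1})\to H_*(S^{2N+3})$ is an isomorphism in degree $0$ and vanishes in degree $2N+1$, the latter because $H_{2N+1}(S^{2N+3})=0$. Hence in each degree $k$ the structure map $SH_k(W)\oplus SH_{k-2N-1}(W)\to SH_k(W)\oplus SH_{k-2N-3}(W)$ is the identity on the first summand and zero on the second.

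Finally, for any direct system of the form $A_N=B\oplus C_N$ whose structure maps are $(b,c)\mapsto(b,0)$, the colimit is $B$: every class is represented, after one step, by an element $(b,0)$, and such a class vanishes precisely when $b=0$. Taking $B=SH_k(W)$ and $C_N=SH_{k-2N-1}(W)$ gives $\lim_{N\to\infty}SH_k^{S^{2N+1}}(W)=SH_k(W)$ in each degree, which is the assertion. I do not expect any genuine obstacle here; the argument is formal once Proposition~\ref{prop:Kunneth} and Remark~\ref{rmk:SHincl} are granted. The only point deserving care is that the K\"unneth isomorphisms for successive $N$ be compatible with the inclusions $S^{2N+1}\hookrightarrow S^{2N+3}$, which is precisely what Remark~\ref{rmk:SHincl} records, and that one works throughout with field coefficients.
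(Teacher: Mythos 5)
Your argument is correct and coincides with the paper's own proof: both deduce the statement from the K\"unneth isomorphism of Proposition~\ref{prop:Kunneth} together with the identification $S\iota_*=\mathrm{Id}\otimes\iota_*$ from Remark~\ref{rmk:SHincl}, the paper merely leaving implicit the elementary computation of $\iota_*$ on $H_*(S^{2N+1})$ and the resulting colimit that you spell out.
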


\begin{proof} The limit $\lim_{N\to \infty} SH_*^{S^{2N+1}}(W)$ is
 taken with respect to the maps $S\iota_*$ corresponding to the
 inclusions $\iota:S^{2N+1}\hookrightarrow S^{2N+3}$ as in
 Remark~\ref{rmk:SHincl}. Moreover, we saw that, modulo the K\"unneth
 isomorphism $SH_*^{S^{2N+1}}(W)\simeq SH_*(W)\otimes H_*(S^{2N+1})$
 proved in Proposition~\ref{prop:Kunneth}, the map $S\iota_*$ is equal to
 $\mathrm{Id}\otimes \iota_*$. The conclusion follows.
\end{proof}

\begin{lemma} \label{lem:continuation}
 Let $H_s$ be a smooth increasing homotopy from
 $H_0\in\cH^{S^1}_{N,\reg}$ to $H_1 \in\cH^{S^1}_{N,\reg}$. Let
 $(J_i,g_i)\in\cJ^{S^1}_{N,\reg}(H_i)$, $i=0,1$ and $(J_s,g_s)$ a
 regular smooth homotopy in $\cJ^{S^1}_N$ from $(J_0,g_0)$ to
 $(J_1,g_1)$. The induced chain morphism $BC^{a,S^1,N}_*(H_0,J_0,g_0)\to
 BC^{a,S^1,N}_*(H_1,J_1,g_1)$ respects the filtrations.
\end{lemma}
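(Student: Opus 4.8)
The plan is to carry over to the continuation map the two mechanisms that made the filtration work for the differential: the index shift computed in the proof that $(F_\ell BC^{a,S^1,N}_*)_\ell$ is an increasing filtration, and the free-$S^1$-action argument of Proposition~\ref{prop:d0}. Recall first that
$$
\Phi:BC^{a,S^1,N}_*(H_0,J_0,g_0)\longrightarrow BC^{a,S^1,N}_*(H_1,J_1,g_1)
$$
is built, in the Morse--Bott formalism of~\cite{BOauto}, by an algebraic count of the rigid elements of moduli spaces $\cM^A(Q_\op,Q_\up;\{H_s\},\{f_p\},\{J_s\},\{g_s\})$ attached to the $s$-dependent parametrized Floer equation interpolating between $(H_0,J_0,g_0)$ and $(H_1,J_1,g_1)$. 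A contributing configuration is a fibered product of a gradient segment of $f_\op$, a chain of Floer pieces exactly one of which is $s$-dependent (the others being $s$-independent, for $H_0$ on its left and for $H_1$ on its right), gradient segments of intermediate Morse functions $f_p$, and a gradient segment of $f_\up$. We take the interpolating datum inside the $S^1$-equivariant classes, $H_s\in\cH^{S^1}_N$ and $(J_s,g_s)\in\cJ^{S^1}_N$; the increasing hypothesis on $H_s$ is used, as usual, to obtain the a priori energy estimate that makes $\Phi$ a well-defined degree-preserving chain map. Since $F_\ell BC^{a,S^1,N}_*=\bigoplus_{k\le\ell}B_kC^{a,S^1,N}_*$, it suffices to check that $\Phi$ sends $B_kC^{a,S^1,N}_*$ into $\bigoplus_{k'\le k}B_{k'}C^{a,S^1,N}_*$, i.e.\ that every matrix coefficient of $\Phi$ has nonpositive filtration shift.

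Such a coefficient is carried by a configuration from a generator $Q_\op\,e^B$ to a generator $Q_\up\,e^{B+A}$, where $A\in H_2(\widehat W;\Z)$ is the total homotopy class of the configuration. From $k(Q_p\,e^C)=-\mu(p)-2\langle c_1(T\widehat W),C\rangle$ one computes
$$
k(Q_\up\,e^{B+A})-k(Q_\op\,e^B)=-\big(-\mu(\op)+\mu(\up)+2\langle c_1(T\widehat W),A\rangle\big),
$$
which moreover lies in $\{-1,0,1\}$ because $\Phi$ preserves the total degree and the Morse part of the bigrading is $0$ or $1$. Hence the lemma reduces to the inequality $-\mu(\op)+\mu(\up)+2\langle c_1(T\widehat W),A\rangle\ge 0$ for every contributing configuration.

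To prove this, decompose the configuration into its Floer pieces, which connect $S_\op=S_{p_0}\to S_{p_1}\to\cdots\to S_{p_\ell}=S_\up$ with classes $A_1+\cdots+A_\ell=A$, and set $\nu_i:=-\mu(p_{i-1})+\mu(p_i)+2\langle c_1(T\widehat W),A_i\rangle$. By~\eqref{eq:dimMS}, $\nu_i$ is the dimension of $\cM^{A_i}(S_{p_{i-1}},S_{p_i};H,J,g)$ when the $i$-th piece is $s$-independent, and is one less than the dimension of the corresponding $s$-dependent moduli space otherwise (the missing unit being the $\R$-reparametrization, absent in the $s$-dependent case). As $(H_s,J_s,g_s)$ is $S^1$-equivariant for every $s$, each of these moduli spaces carries the free diagonal $S^1$-action — the stabilizer being already trivial on the asymptotic orbit — so a nonempty one has dimension at least $1$; consequently $\nu_i\ge 1$ for each $s$-independent piece and $\nu_j\ge 0$ for the single $s$-dependent piece. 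Summing the telescoping expressions gives
$$
-\mu(\op)+\mu(\up)+2\langle c_1(T\widehat W),A\rangle=\sum_{i=1}^{\ell}\nu_i\ \ge\ 0,
$$
as desired. The step I expect to require the most care is the dimension bookkeeping for the broken configurations — verifying that the quantities $\nu_i$ assemble into the expected-dimension formula for $\cM^A(Q_\op,Q_\up;\{H_s\},\dots)$, which is obtained from the one for the differential by replacing the ``$-1$'' with ``$0$'' — and this is handled by the gluing and transversality analysis of~\cite[Lemma~3.6]{BOauto}, together with the remark that the proof of Theorem~\ref{thm:index} also computes the Fredholm index of the $s$-dependent linearization, since that index depends only on the nondegenerate asymptotes.
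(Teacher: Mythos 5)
Your proof is correct and follows essentially the same route as the paper's: it reduces filtration preservation to the inequality $-\mu(\op)+\mu(\up)+2\langle c_1(T\widehat W),A\rangle\ge 0$ for each contributing broken configuration, and derives it from the free $S^1$-action on every Floer piece (forcing dimension at least one when nonempty) together with the dimension formulas for the $s$-independent pieces and the single $s$-dependent piece, exactly as in the paper's argument via~\eqref{eq:dimMS} and~\eqref{eq:dim-MB-s}. No gaps to report.
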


The proof of Lemma~\ref{lem:continuation} is given in
Section~\ref{sec:continuation} below.

\begin{proof}[Proof of Theorem~\ref{thm:SGysin}]
 Using the isomorphism $\bar \Phi$ in~\eqref{eq:barPhi}, the long
 exact sequence~\eqref{eq:seq} becomes
$$
... \to SH^{a,S^{2N+1}}_k\!(H) \to SH^{a,S^1,N}_k\!(H) \to
SH^{a,S^1,N}_{k-2}\!(H) \to SH^{a,S^{2N+1}}_{k-1}\!(H) \to ...
$$
By Lemma~\ref{lem:continuation}, a smooth increasing homotopy of
Hamiltonian families in $\cH^{S^1}_{N,\reg}$ induces a filtered chain
morphism, and therefore a commutative diagram of exact sequences.
Passing to the direct limit over $H\in\cH^{S^1}_{N,\reg}$ and using
that the direct limit functor is exact, we obtain a long exact
sequence
$$
... \to SH^{a,S^{2N+1}}_k\!(W) \to SH^{a,S^1,N}_k\!(W) \to
SH^{a,S^1,N}_{k-2}\!(W) \to SH^{a,S^{2N+1}}_{k-1}\!(W) \to ...
$$
Passing further to the direct limit over $N\to\infty$, and using
Lemma~\ref{lem:limit}, we obtain
$$
... \to SH^a_k(W) \to SH^{a,S^1}_k(W) \to
SH^{a,S^1}_{k-2}(W) \to SH^a_{k-1}(W) \to ...
$$
\end{proof}

We will now prove Theorem~\ref{thm:grid}. We need a preliminary
algebraic lemma involving the cone of a chain morphism. Recall that,
given a chain morphism $f:(A_*,\p_A)\to (A'_*,\p_{A'})$ of degree $0$, we
define the {\bf cone of \boldmath$f$} as the chain complex
$$
\mathcal{C}(f)_*:=A'_{*+1}\oplus A_*,
$$
with differential $\p$ given in matrix form by
$$
\p:=\left(\begin{array}{cc} \p_{A'} & f \\ 0 & \p_A \end{array}\right).
$$
There is a short exact sequence of complexes
\begin{equation} \label{eq:short}
\xymatrix{0\ar[r] & A'_{*+1} \ar[r]^i &  \mathcal{C}(f)_* \ar[r]^p &
  A_* \ar[r] & 0,
}
\end{equation}
with $i$, $p$ the obvious inclusion, respectively projection. The main
property of the cone construction is that the connecting homomorphism
in the homology long exact sequence associated to~\eqref{eq:short} is
precisely $f_*:H_*(A)\to H_*(A')$.

\begin{lemma} \label{lem:grid}
 Let
\begin{equation} \label{eq:fgh}
\xymatrix{
0\ar[r] & A_* \ar[r] \ar[d]^f & B_* \ar[r] \ar[d]^g & C_* \ar[r]
\ar[d]^h & 0 \\
0\ar[r] & A'_* \ar[r]  & B'_* \ar[r]  & C'_* \ar[r]
 & 0
}
\end{equation}
be a morphism of short exact sequences of complexes. This induces the
commutative diagram of homological long exact sequences
\begin{equation} \label{eq:big-grid}
\xymatrix
@C=20pt
@R=20pt
{
 & \vdots \ar[d] & \vdots \ar[d] & \vdots \ar[d] & \vdots \ar[d] & \\
\cdots \ar[r] & H_{*+1}(A') \ar[r] \ar[d] & H_{*+1}(B') \ar[r] \ar[d]
& H_{*+1}(C') \ar[r] \ar[d] & H_*(A') \ar[r] \ar[d] & \cdots \\
\cdots \ar[r] & H_{*+1}(\mathcal{C}(f)) \ar[r] \ar[d] &
H_{*+1}(\mathcal{C}(g)) \ar[r] \ar[d] & H_{*+1}(\mathcal{C}(h))
\ar[r] \ar[d] & H_*(\mathcal{C}(f)) \ar[r] \ar[d] & \cdots \\
\cdots \ar[r] & H_{*+1}(A) \ar[r] \ar[d]^{f_*} & H_{*+1}(B) \ar[r]
\ar[d]^{g_*} & H_{*+1}(C) \ar[r] \ar[d]^{h_*} & H_*(A) \ar[r]
\ar[d]^{f_*} & \cdots \\
\cdots \ar[r] & H_*(A') \ar[r] \ar[d] & H_*(B') \ar[r] \ar[d]
& H_*(C') \ar[r] \ar[d] & H_{*-1}(A') \ar[r] \ar[d] & \cdots \\
& \vdots & \vdots & \vdots & \vdots &
}
\end{equation}
\end{lemma}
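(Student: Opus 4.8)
The plan is to realise every row and every column of \eqref{eq:big-grid} as the homology long exact sequence of a short exact sequence of complexes, after which commutativity of all squares follows from functoriality of $H_*$ together with naturality of the connecting homomorphism. First I would form the mapping cones $\mathcal{C}(f)$, $\mathcal{C}(g)$, $\mathcal{C}(h)$. The horizontal maps of \eqref{eq:fgh} induce degree preserving maps $\mathcal{C}(f)_*\to\mathcal{C}(g)_*\to\mathcal{C}(h)_*$, acting as $(\alpha',\alpha)\mapsto(\iota'\alpha',\iota\alpha)$ where $\iota$ and $\iota'$ denote the relevant horizontal maps; a direct check shows that these are chain maps for the cone differential \emph{exactly} because the two squares in \eqref{eq:fgh} commute (so that $f$ and $g$ intertwine correctly with the horizontal maps), and since a cone is, as a graded module, a direct sum, the resulting sequence $0\to\mathcal{C}(f)_*\to\mathcal{C}(g)_*\to\mathcal{C}(h)_*\to 0$ is short exact.

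Together with \eqref{eq:fgh} itself and with the three instances of \eqref{eq:short} attached to $f$, $g$, $h$, this gives six short exact sequences of complexes. Passing to homology, the two rows of \eqref{eq:fgh} produce the long exact sequences that form the first, third and fourth rows of \eqref{eq:big-grid} (rows one and four being a single long exact sequence displayed in two consecutive degrees); the cone sequence produces its second row; and the three instances of \eqref{eq:short} produce its three columns. The crucial input here is the property recalled just before the lemma: the connecting homomorphism of \eqref{eq:short} is $f_*$ (resp. $g_*$, $h_*$), which is exactly the map occurring as the third vertical arrow in each column. Exactness along every row and every column is then immediate.

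Finally I would check commutativity. The inclusions $i$ and projections $p$ of \eqref{eq:short}, together with the horizontal maps of \eqref{eq:fgh}, assemble the three instances of \eqref{eq:short} into two morphisms of short exact sequences of complexes; naturality of the homology long exact sequence with respect to these morphisms yields commutativity of every square in which a column arrow meets a row arrow — including the squares built from two connecting homomorphisms, which reduce to the commutativity of the left square of \eqref{eq:fgh} on homology. Squares lying entirely within one row, or within one column, commute by the definition of the long exact sequence of a single short exact sequence. The only point requiring care is formal bookkeeping: fixing the sign in the cone differential so that $\partial^2=0$ (equivalently, so that $f$ is genuinely a chain map) and tracking the degree shift built into \eqref{eq:short}, so that the indices match \eqref{eq:big-grid} exactly. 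I expect no substantive obstacle; the argument is pure homological algebra, the only mildly delicate step being the reconciliation of the cone's degree shift with the indexing of the displayed grid.
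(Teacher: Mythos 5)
Your proposal is correct and follows essentially the same route as the paper: both assemble the three cone short exact sequences $0\to A'_{*+1}\to \mathcal{C}(f)_*\to A_*\to 0$ (and its analogues for $g$, $h$) together with the two rows of \eqref{eq:fgh} and the induced sequence $0\to\mathcal{C}(f)_*\to\mathcal{C}(g)_*\to\mathcal{C}(h)_*\to 0$ into a $3\times 3$ diagram of short exact sequences of complexes, and obtain \eqref{eq:big-grid} from the associated homology long exact sequences, with commutativity given by naturality of the long exact sequence and the identification of the connecting homomorphism of the cone sequence with $f_*$, $g_*$, $h_*$. Your extra remarks (checking the cone maps are chain maps, the sign in the cone differential, the degree shift) are just the bookkeeping the paper leaves implicit.
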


\begin{proof}
Applying the cone construction to each column of~\eqref{eq:fgh} we
obtain the short exact sequence of short exact sequences of complexes
\begin{equation} \label{eq:cone-fgh}
\xymatrix
@C=25pt
@R=15pt
{
& 0 \ar[d] & 0 \ar[d] & 0 \ar[d] & \\
0 \ar[r] & A'_{*+1} \ar[d] \ar[r] & B'_{*+1} \ar[d] \ar[r] & C'_{*+1}
\ar[d] \ar[r] & 0 \\
0 \ar[r] & \mathcal{C}(f)_* \ar[d] \ar[r] & \mathcal{C}(g)_* \ar[d]
\ar[r] & \mathcal{C}(h)_* \ar[d] \ar[r] & 0 \\
0 \ar[r] & A_* \ar[d] \ar[r] & B_* \ar[d] \ar[r] & C_*
\ar[d] \ar[r] & 0 \\
& 0 & 0 & 0 &
}
\end{equation}
The lines/columns in~\eqref{eq:big-grid} are obtained as homological long exact
sequences associated to the horizontal/vertical short exact sequences
in~\eqref{eq:cone-fgh}. Commutativity of the horizontal strips
in~\eqref{eq:big-grid} follows from functoriality of the homological
long exact sequence with respect to morphisms of short exact sequences
of complexes. More precisely, for the first two horizontal strips
in~\eqref{eq:big-grid} we use~\eqref{eq:cone-fgh}, and for the third
horizontal strip in~\eqref{eq:big-grid} we use~\eqref{eq:fgh}.
\end{proof}

\begin{proof}[Proof of Theorem~\ref{thm:grid}]
Given $H\in\cH^{\prime\, S^1}_{N,\reg}$, $(J,g)\in
\cJ^{S^1}_{N,\reg}(H)$, and a generic collection of perfect Morse
functions $f_p:S_p\to \R$, $p\in\cP^0(H)$, we denote
$C_*:=BC^N_*(H,\{f_p\},J,g)$ (we recall that we work in the
trivial free homotopy class). Filtering by the action as in the
definition of $SC^{\pm,S^1,N}_*(H,J,g)$ in
Section~\ref{sec:S1equivsymplhom}, we obtain filtered complexes
$C^\pm_*:=BC^{\pm,N}_*(H,\{f_p\},J,g)$. We denote by
$(E^{\pm,N;r}_{*,*},\bar d^r)$ the corresponding spectral sequences,
which degenerate at $r=3$ for dimensional reasons.

Since $d^0=0$, we have a short exact sequence
$0\to (E^{-,1}_{*,*},\bar d) \to
(E^1_{*,*},\bar d) \to (E^{+,1}_{*,*},\bar d) \to 0$ with $\bar d=\bar
d^1+\bar d^2$. This can be rewritten as a morphism of short exact
sequences of chain complexes
\begin{equation} \label{eq:bard2}
\xymatrix{
0 \ar[r] & (E^{-,1}_{*,0},\bar d^1) \ar[r] \ar[d]^{\bar d^2} &
(E^1_{*,0},\bar d^1)
\ar[r] \ar[d]^{\bar d^2} & (E^{+,1}_{*,0},\bar d^1) \ar[r] \ar[d]^{\bar d^2} & 0
\\
0 \ar[r] & (E^{-,1}_{*-2,1},\bar d^1) \ar[r] & (E^1_{*-2,1},\bar d^1)
\ar[r] & (E^{+,1}_{*-2,1},\bar d^1) \ar[r] & 0
}
\end{equation}

We claim that the commutative diagram~\eqref{eq:grid} in the statement
is obtained by applying Lemma~\ref{lem:grid} to~\eqref{eq:bard2}.
This follows from the following three observations. Firstly, the cone
$\mathcal{C}(\bar d^2)$ is canonically identified with
$(C,\bar d)$, respectively
$(C^\pm,\bar d)$, so that its homology is
$H_*(BC^N_*(H),d)$, respectively $H_*(BC^{\pm,N}_*(H),d)$. Secondly,
the homology of $(E^{N;1}_{*,i},\bar d^1)$, $i=0,1$ is isomorphic to
$SH^{S^1,N}_*(H)$, and the homology of $(E^{\pm, N;1}_{*,i},\bar
d^1)$, $i=0,1$ is
isomorphic to $SH^{\pm,S^1,N}_*(H)$. Thirdly, a straightforward
verification shows that, via the above identifications with the cone
$\mathcal{C}(\bar d^2)$, the Gysin exact sequences obtained from the
spectral sequences $E^{N;r}_{*,*}$ and $E^{\pm,N;r}_{*,*}$ coincide
with the homological long exact sequences of the corresponding cone
constructions.

Passing to the direct limit on $H\in\cH^{\prime\, S^1}_{N,\reg}$ and
$N\to\infty$ we obtain the commutative diagram~\eqref{eq:grid}.
\end{proof}

\begin{remark} \label{rmk:Delta} Denoting the maps in the Gysin exact sequence by
$$
\xymatrix
@C=20pt
{
\dots \ar[r] & SH_*(W) \ar[r]^E & SH_*^{S^1}(W) \ar[r]^D &
SH_{*-2}^{S^1}(W) \ar[r]^M & SH_{*-1}(W) \ar[r] & \dots
}
$$
we defined in the Introduction the Batalin-Vilkovisky (BV) operator
$$
\Delta:=M\circ E:SH_*(W)\to SH_{*+1}(W).
$$
The above interpretation of the Gysin exact sequence as the long exact sequence of the cone $C_*:=\cC(\bar d^2)$ allows us to give the following description of $\Delta$ at chain level. We identify $C_*=BC_*^N(H,\{f_p\},J,g)$ with $SC_{*-1}^{S^1}\oplus SC_*^{S^1}:=SC_{*-1}^{S^1,N}(H,J,g)\oplus SC_*^{S^1,N}(H,J,g)$ via
$$
m_p\longmapsto (S_p,0), \qquad M_p\longmapsto (0,S_p).
$$
Via this identification, the map $\Delta$ is induced by the chain map $\bar\Delta :C_*\to C_{*+1}$ given by
$$
\bar \Delta : SC_{*-1}^{S^1}\oplus SC_*^{S^1}\longrightarrow SC_*^{S^1}\oplus SC_{*+1}^{S^1},
$$
$$
(S_p,S_q) \longmapsto (S_q,0).
$$
Indeed, the short exact sequence of the cone $\cC(\bar d^2)=SC_{*-1}^{S^1}\oplus SC_*^{S^1}$ writes
$$
0\to SC_{*-1}^{S^1}\stackrel i \to SC_{*-1}^{S^1}\oplus SC_*^{S^1}\stackrel p \to SC_*^{S^1}\to 0.
$$
The maps $i$ and $p$ are the canonical inclusion and projection. The connecting homomorphism in the homological long exact sequence is the map $D$, so that the maps
$i$ and $p$ induce $M$ and $E$ respectively. Hence the composition $\bar\Delta=i\circ p$ induces $\Delta=M\circ E$.

\end{remark}

\subsection{Filtered continuation maps for parametrized symplectic homology} \label{sec:continuation}

Let $H_s$, $s\in\R$  be a smooth increasing homotopy from $H_-\in\cH^{S^1}_{N,\mathrm{reg}}$ to $H_+\in\cH^{S^1}_{N,\mathrm{reg}}$, such that $H_s\equiv H_-$ for $s<<0$ and $H_s\equiv H_+$ for $s>>0$. Let $(J_\pm,g_\pm)\in\cJ^{S^1}_{N,\reg}(H_\pm)$ and $(J_s,g_s)$, $s\in\R$ a
 regular smooth homotopy in $\cJ^{S^1}_N$ from $(J_-,g_-)$ to
 $(J_+,g_+)$, which is constant near $\pm\infty$. Given
$\op\in\cP(H_-)$ and $\up\in\cP(H_+)$, we define the {\bf moduli space
of \boldmath$s$-dependent \boldmath$S^1$-equivariant Floer
trajectories} $\cM(S_\op,S_\up;H_s,J_s,g_s)$ to consist of pairs
$(u,\lambda)$ with
 $$
 u:\R\times S^1\to \widehat W, \qquad \lambda:\R\to S^{2N+1}
 $$
satisfying
\begin{equation} \label{eq:Floer-u-s}
\p_s u + J^\theta_{s,\lambda(s)}\p_\theta u - J^\theta_{s,\lambda(s)}X^\theta_{H_{s,\lambda(s)}}(u)=0,
\end{equation}
\begin{equation} \label{eq:Floer-lambda-s}
\dot \lambda(s) -\int_{S^1} \vec\nabla_\lambda H_s(\theta,u(s,\theta),\lambda(s))d\theta=0,
\end{equation}
and
\begin{equation} \label{eq:Floer-asy-s}
\lim_{s\to-\infty} (u(s,\cdot),\lambda(s))\in S_{\op}, \qquad \lim_{s\to+\infty} (u(s,\cdot),\lambda(s))\in S_{\up}.
\end{equation}
Due to the $s$-dependence, the additive group $\R$ does not act on the moduli space $\cM(S_\op,S_\up;H_s,J_s,g_s)$.
Recall that, for each $p=(\gamma,\lambda)\in\cP(H_\pm)$, we have chosen a
cylinder $\sigma_p:[0,1]\times S^1\to\widehat W$ such that
$\sigma_p(0,\cdot)=l_{[\gamma]}$ and $\sigma_p(1,\cdot)=\gamma$. We
define $\overline \sigma_p(s,\theta):=\sigma_p(1-s,\theta)$. We define
$$
\cM^A(S_\op,S_\up;H_s,J_s,g_s)\subset \cM(S_\op,S_\up;H_s,J_s,g_s)
$$
to consist of trajectories $(u,\lambda)$ such that
$[\sigma_{\op}\#u\#\overline \sigma_{\up}]=A\in H_2(\widehat
W;\Z)$. It follows from Proposition~\ref{prop:indexMB} that
\begin{equation}  \label{eq:dim-MB-s}
\dim\, \cM^A(S_\op,S_\up;H_s,J_s,g_s) = -\mu(\op) +\mu(\up) +2\langle
c_1(T\widehat W),A\rangle+1.
\end{equation}

For each $S^1$-orbit of critical
points $S_p\subset \cP(H_\pm)$ we choose a perfect Morse function
$f^\pm_p:S_p\to\R$. We denote by $m_p$, $M_p$ the minimum, respectively
the maximum of $f^\pm_p$. Given $\op\in\cP(H_-)$, $\up\in\cP(H_+)$,
$Q_\op\in\mathrm{Crit}(f^-_\op)$, $Q_\up\in\mathrm{Crit}(f^+_\up)$, $A\in H_2(W;\Z)$, and
$m_\pm\ge 0$, we denote by
$$
\cM^A_{m_-,m_+}(Q_\op,Q_\up;H_s,\{f^\pm_p\},J_s,g_s)
$$
the union for $p^-_1,\dots,p^-_{m_-}\in\cP(H_-)$, $p^+_1,\dots,p^+_{m_+}\in\cP(H_+)$, and $A^-_1+\dots+A^-_{m_-}+A^0+A^+_1+\dots+A^+_{m_+}=A$ of
the fibered products
\begin{eqnarray*}
&&
%\hspace{-1cm}
W^u(Q_\op)
\times_{\oev}
(\cM^{A^-_1}(S_{\op}\,,S_{p^-_1};H_-,J_-,g_-)\!\times\!\R^+)\\
&& {_{\varphi_{f^-_{p^-_1}}\!\circ\uev}}\!\times
_{\oev}\dots {_{\varphi_{f^-_{p^-_{m_- -1}}}\!\circ\uev}}\!\times
_{\oev} (\cM^{A^-_{m_-}}(S_{p^-_{m_- -1}},S_{p^-_{m_-}};H_-,J_-,g_-)\!\times\!\R^+) \\
&& {_{\varphi_{f^-_{p^-_{m_-}}}\!\circ\uev}\times_{\oev}}
(\cM^{A^0}(S_{p^-_{m_-}},S_{p^+_1};H_s,J_s,g_s)\!\times\!\R^+)\\
&& {_{\varphi_{f^+_{p^+_1}}\!\circ\uev}}\!\times
_{\oev} (\cM^{A^+_1}(S_{p^+_1},S_{p^+_2};H_+,J_+,g_+)\!\times\!\R^+) \\
&&{_{\varphi_{f^+_{p^+_2}}\!\circ\uev}}\!\times
_{\oev}\dots
{_{\varphi_{f^+_{p^+_{m_+}}}\!\!\circ\uev}}\!\!\times
_{\oev}
\cM^{A^+_{m_+}}(S_{p^+_{m_+}},\!S_{\up})
{_{\uev}\times} W^s(Q_\up).
\end{eqnarray*}
It follows from~\cite[Lemma~3.6]{BOauto} that, for a generic choice of
the collection of Morse functions $\{f^\pm_p\}$, the previous fibered
product is a smooth manifold of dimension
\begin{eqnarray*}
\lefteqn{\dim \, \cM^A_{m_-,m_+}(Q_\op,Q_\up;H_s,\{f^\pm_p\},J_s,g_s)} \\
& = &
-\mu(\op)+\ind_{f^-_\op}(Q_\op) + \mu(\up) - \ind_{f^+_\up}(Q_\up) +
2\langle c_1(T\widehat W),A\rangle.
\end{eqnarray*}
We denote
$$
\cM^A(Q_\op,Q_\up;H_s,\{f^\pm_p\},J_s,g_s):=\bigcup_{m_\pm\ge 0}  \cM^A_{m_-,m_+}(Q_\op,Q_\up;H_s,\{f^\pm_p\},J_s,g_s).
$$
Whenever $\dim\, \cM^A(Q_\op,Q_\up;H_s,\{f^\pm_p\},J_s,g_s)=0$, we can associate a sign $\eps(\u)$ to each of its elements via the choice of coherent orientations and the fibered sum rule~\cite[Section~4.4]{BOauto}. We define the {\bf continuation morphism}
$$
\sigma_{H_+,H_-}:BC_*^{a,N}(H_-)\to BC_*^{a,N}(H_+)
$$
by
$$
\sigma_{H_+,H_-}(Q_\op):=\sum_{\substack{
  \up\in \cP^a(H_+), Q_\up\in \mathrm{Crit}(f^+_{\up}) \\
|Q_\op| - |Q_\up\, e^A|=0}}
\ \sum_{\scriptstyle \u\in \cM^A(Q_\op,Q_\up;H_s,\{f^\pm_\gamma\},J_s,g_s)}
\epsilon(\u)Q_\up\, e^A,
$$
for all $\op\in\cP(H_-)$ and $Q_\op \in {\rm Crit}(f^-_\op)$. In order to emphasize the homotopy used to
define $\sigma_{H_+,H_-}$, we shall sometimes write $\sigma_{H_+,H_-}^{H_s}$.

\begin{proof}[Proof of Lemma~\ref{lem:continuation}]
That the map $\sigma_{H_+,H_-}$ is a chain morphism satisfying $\sigma_{H_+,H_-}\circ d=d\circ \sigma_{H_+,H_-}$ follows from a straightforward generalization of the Correspondence Theorem~3.7 in~\cite{BOauto}.  Via the identification of the parametrized Morse-Bott complexes with the Floer complexes of suitable perturbations of the Hamiltonians $H_\pm$, the morphism $\sigma_{H_+,H_-}$ corresponds to the continuation morphism induced by an increasing homotopy of Hamiltonians.

That $\sigma_{H_+,H_-}$ preserves the filtration follows from the fact that each of the moduli spaces
$\cM^{A^0}(S_{p^-_{m_-}},S_{p^+_1};H_s,J_s,g_s)$, $\cM^{A^-_j}(S_{p^-_{i-1}},S_{p^-_i};H_-,J_-,g_-)$, $1\le i\le m_-$ and
$\cM^{A^+_i}(S_{p^+_i},S_{p^+_{i+1}};H_+,J_+,g_+)$, $1\le i \le m_+$ carries a free $S^1$-action (we denote $p^-_0=\op$, $p^+_{m_+ +1}=\up$). In case they are nonempty, their dimension is therefore at least $1$. It then follows from the dimension formulas~\eqref{eq:dim-MB-s}
and~\eqref{eq:dimMS} that $|\op|-|\up e^A|=-\mu(\op)+\mu(\up)+2\langle c_1(T\widehat W),A\rangle\ge 0$.
\end{proof}

For the next statement it is useful to introduce the following
algebraic concept. Let $(C_*,d_C)$ and $(D_*,d_D)$ be differential
complexes endowed with increasing filtrations $F_\ell C_*$, $F_\ell
D_*$, $\ell\in \Z$. A map
$K:C_*\to D_*$ is said to be {\bf of order \boldmath$k\ge 0$} if
$K(F_\ell C_*)\subset F_{\ell+k}D_*$ (we allow $K$ to shift the
grading). This definition is relevant in the following context. Assume
$f,g:C_*\to D_*$ are filtration preserving chain maps such that
$f-g=d_D \circ K + K\circ d_C$ for a chain homotopy $K:C_*\to D_{*+1}$
of order $k\ge 0$. Then the maps $f_r, g_r$, $r\ge 0$ induced on the
associated spectral sequences are homotopic for $r=k$, and coincide
for $r>k$~\cite[Exercise~3.8, p.87]{McC}.

\begin{proposition} \label{prop:chain-homotopy}
Let $H_-\le H_+$ be Hamiltonians in $\cH^{S^1}_{N,\mathrm{reg}}$ and $H_s^0, H_s^1\in \cH^{S^1}_N$, $s\in\R$ be generic smooth increasing homotopies from $H_-$ to $H_+$, which are constant near $\pm\infty$. Let $(J_\pm,g_\pm)\in\cJ^{S^1}_{N,\reg}(H_\pm)$ and $(J^0_s,g^0_s)$, $(J^1_s,g^1_s)$ be two generic  smooth homotopies in $\cJ^{S^1}_N$ from $(J_-,g_-)$ to
 $(J_+,g_+)$, which are constant near $\pm\infty$. A generic homotopy of homotopies $(H_s^\rho,J^\rho_s,g^\rho_s)$, $\rho\in [0,1]$ induces a map $K:BC_*^{a,N}(H_-)\to BC_{*+1}^{a,N}(H_+)$ of order $1$ such that
$$
\sigma_{H_+,H_-}^{H_s^1} - \sigma_{H_+,H_-}^{H_s^0} = d\circ K+K\circ d.
$$
\end{proposition}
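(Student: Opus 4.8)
The plan is to construct $K$ exactly as the continuation morphism $\sigma_{H_+,H_-}$ was constructed in \S\ref{sec:continuation}, but counting $\rho$-parametrized $S^1$-equivariant Floer configurations, and then to obtain the homotopy identity by analysing the boundary of the associated one-dimensional moduli spaces. Fix a generic homotopy of homotopies $(H_s^\rho,J_s^\rho,g_s^\rho)$, $\rho\in[0,1]$, with $(H_s^0,J_s^0,g_s^0)$ and $(H_s^1,J_s^1,g_s^1)$ as prescribed and with $(H_s^\rho,J_s^\rho,g_s^\rho)$ equal to $(H_\pm,J_\pm,g_\pm)$ near $s=\pm\infty$ for all $\rho$. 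For $\op\in\cP(H_-)$, $\up\in\cP(H_+)$, $Q_\op\in\mathrm{Crit}(f^-_\op)$, $Q_\up\in\mathrm{Crit}(f^+_\up)$ and $A\in H_2(W;\Z)$, I would introduce the moduli space
\[
\cM^A\big(Q_\op,Q_\up;(H_s^\rho,J_s^\rho,g_s^\rho),\{f^\pm_p\}\big):=\bigcup_{\rho\in[0,1]}\{\rho\}\times\cM^A\big(Q_\op,Q_\up;H_s^\rho,\{f^\pm_p\},J_s^\rho,g_s^\rho\big),
\]
with the right-hand factor defined by the same chain of fibered products as in \S\ref{sec:continuation}, except that the single $s$-dependent factor $\cM^{A^0}(S_{p^-_{m_-}},S_{p^+_1};H_s^\rho,J_s^\rho,g_s^\rho)$ is now allowed to vary with $\rho$. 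A straightforward extension of \cite[Lemma~3.6]{BOauto} applied to a generic homotopy of homotopies and a generic collection $\{f^\pm_p\}$ shows this is a smooth manifold of dimension $-\mu(\op)+\ind_{f^-_\op}(Q_\op)+\mu(\up)-\ind_{f^+_\up}(Q_\up)+2\langle c_1(T\widehat W),A\rangle+1$, one more than in the $s$-dependent case because of the free parameter $\rho$. Using the coherent orientations and the fibered-sum rule of \cite[Section~4.4]{BOauto}, each element $\u$ of a zero-dimensional such space carries a sign $\eps(\u)$, and I would define
\[
K(Q_\op):=\sum_{\substack{\up\in\cP^a(H_+),\ Q_\up\in\mathrm{Crit}(f^+_\up)\\ |Q_\op|-|Q_\up\,e^A|=-1}}\ \sum_{\u\in\cM^A(Q_\op,Q_\up;(H_s^\rho,J_s^\rho,g_s^\rho),\{f^\pm_p\})}\eps(\u)\,Q_\up\,e^A.
\]

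To prove the identity $\sigma_{H_+,H_-}^{H_s^1}-\sigma_{H_+,H_-}^{H_s^0}=d\circ K+K\circ d$ I would describe the ends of the compactification of the one-dimensional moduli spaces $\cM^A(Q_\op,Q_\up;(H_s^\rho,\dots),\{f^\pm_p\})$, which occur when $|Q_\op|-|Q_\up\,e^A|=0$. By the generalization of the Correspondence Theorem~3.7 of \cite{BOauto} to $s$- and $\rho$-dependent equations, and by the asphericity assumption \eqref{eq:asph} which excludes bubbling, these ends are of three kinds: configurations with $\rho=0$, contributing $\sigma_{H_+,H_-}^{H_s^0}$; configurations with $\rho=1$, contributing $\sigma_{H_+,H_-}^{H_s^1}$; and configurations at an interior $\rho$ where an additional Floer trajectory or Morse flow line for $H_-$ breaks off at $-\infty$, respectively for $H_+$ at $+\infty$, contributing $K\circ d$, respectively $d\circ K$. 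Counting the boundary points of a compact oriented one-manifold with sign then yields the desired identity, once one checks that the $\rho=0$ and $\rho=1$ ends appear with opposite signs; this verification proceeds exactly as in the proof of Lemma~\ref{lem:continuation} and in \cite{BOauto}.

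It remains to see that $K$ has order $1$, i.e.\ $K\big(F_\ell BC^{a,N}_*(H_-)\big)\subset F_{\ell+1}BC^{a,N}_*(H_+)$. As in the proof of Lemma~\ref{lem:continuation}, every factor in the fibered products defining $K$ carries a free $S^1$-action, hence has dimension at least $1$ whenever it is nonempty. For the $s$-independent Floer factors $\cM^{A^\pm_i}(\dots;H_\pm,J_\pm,g_\pm)$ this gives, via \eqref{eq:dimMS}, an inequality of the form $-\mu(p_{i-1})+\mu(p_i)+2\langle c_1(T\widehat W),A_i\rangle\ge 1$; for the middle $\rho$-family factor $\bigcup_\rho\{\rho\}\times\cM^{A^0}(\dots;H_s^\rho,J_s^\rho,g_s^\rho)$, whose dimension equals $-\mu+\mu+2\langle c_1(T\widehat W),A^0\rangle+2$, it gives $-\mu+\mu+2\langle c_1(T\widehat W),A^0\rangle\ge -1$. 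Summing these inequalities over all the factors, the $\mu$-terms telescope and one obtains $|\op|-|\up\,e^A|=-\mu(\op)+\mu(\up)+2\langle c_1(T\widehat W),A\rangle\ge -1$, which is precisely the order-$1$ statement.

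The step I expect to demand the most care is the boundary analysis of the $\rho$-parametrized one-dimensional moduli spaces together with the sign bookkeeping: one must rule out every degeneration other than interior Floer/Morse breakings and breakings of the $\rho$-interval at its two endpoints, and verify that the coherent orientations make these endpoint contributions cancel against $\sigma_{H_+,H_-}^{H_s^1}$ and $\sigma_{H_+,H_-}^{H_s^0}$ with the correct relative sign. All of this is a routine but lengthy adaptation of the Morse--Bott gluing and orientation arguments of \cite{BOauto}; the feature genuinely specific to the present $S^1$-equivariant setting is the free $S^1$-action bookkeeping used in the previous paragraph to pin down the order of $K$.
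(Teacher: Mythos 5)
Your proposal follows essentially the same route as the paper: you define $K$ by counting elements of the $\rho$-parametrized fibered products in the index $-1$ range (occurring at finitely many interior values of $\rho$), you obtain $\sigma_{H_+,H_-}^{H_s^1}-\sigma_{H_+,H_-}^{H_s^0}=d\circ K+K\circ d$ from the boundary of the corresponding one-dimensional spaces (ends at $\rho=0$, $\rho=1$, and interior Floer/Morse breakings), and you prove the order-$1$ property exactly as the paper does, via the free $S^1$-action on each Floer factor combined with the dimension formulas. The only small imprecision is that the remaining Morse--Bott degenerations (the length of a gradient segment of some $f_p$ shrinking to zero, and a Floer trajectory breaking at a point of $S_p$ which is not a critical point of $f_p$) are not ruled out but do occur and compensate each other, as in the cascade formalism you invoke; this does not affect the argument.
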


\begin{proof} Given $\op\in\cP(H_-)$, $\up\in \cP(H_+)$, $Q_\op\in\mathrm{Crit}(f^-_\op)$, $Q_\up\in\mathrm{Crit}(f^+_\up)$, and $A\in H_2(W;\Z)$ such that
$$
|Q_\op|-|Q_\up e^A|=0,
$$
we define
$$
\cM^A:=\bigcup_{\rho\in [0,1]} \cM^A(Q_\op,Q_\up;H^\rho_s, \{f^\pm_p\},J^\rho_s, g^\rho_s).
$$
For a generic choice of the triple $(H_s^\rho,J^\rho_s,g^\rho_s)$, $\rho\in [0,1]$, the space $\cM^A$ is a smooth $1$-dimensional manifold. Its boundary splits as
$$
\p \cM^A= \p^0\cM^A \cup \p^1\cM^A \cup \p^{int}\cM^A.
$$
Here $\p^i\cM^A=\cM^A(Q_\op,Q_\up;H^i_s, \{f^\pm_p\},J^i_s, g^i_s)$, $i=0,1$ and $\p^{int}\cM^A$ corresponds to degeneracies at some point $\rho\in]0,1[$, namely
\begin{eqnarray*}
\lefteqn{\p^{int}\cM^A}\\
 &\hspace{-.3cm} = & \!\!\bigcup_{\rho\in ]0,1[} \cM^B(Q_\op,Q_{p_-};H_-,\{f^-_p\},J_-,g_-) \times \cM^{A-B}(Q_{p_-},Q_\up;H^\rho_s, \{f^\pm_p\},J^\rho_s, g^\rho_s) \\
& \hspace{-.3cm}& \hspace{-.5cm}\cup \bigcup_{\rho\in ]0,1[}\!\!\cM^{A-B}(Q_\op,Q_{p_+};H^\rho_s, \{f^\pm_p\},J^\rho_s, g^\rho_s) \times \cM^B(Q_{p_+},Q_\up;H_+,\{f^+_p\},J_+,g_+).
\end{eqnarray*}
Here the union is taken over $B\in H_2(W;\Z)$, $p_\pm\in\cP(H_\pm)$, $Q_{p_\pm}\in\mathrm{Crit}(f^\pm_{p_\pm})$ such that $|Q_{p_-}|-|Q_\up e^{A-B}|=-1$ and $|Q_\op|-|Q_{p_+} e^{A-B}|=-1$. For a generic choice of the triple $(H_s^\rho,J^\rho_s,g^\rho_s)$, $\rho\in [0,1]$, there are only a finite number of values of $\rho$ involved in the above union. The elements of $\p^{int}\cM^A$ correspond to the breaking of a gradient trajectory involved in one of the fiber products defining $\cM^A(Q_\op,Q_\up;H^\rho_s, \{f^\pm_p\},J^\rho_s, g^\rho_s)$, as $\rho$ converges to some $\rho_0\in]0,1[$. There are yet two other types of degeneracy in $\cM^A$, which compensate each other: the length of a gradient trajectory in a fibered product as above can shrink to zero, or a Floer trajectory can break at a point $Q\in S_p\setminus \mathrm{Crit}(f^\pm_p)$, for some $p\in\cP(H_\pm)$.

We define $K:BC_*^{a,N}(H_-)\to BC_{*+1}^{a,N}(H_+)$ by
$$
K(Q_\op)=\sum_{\rho\in]0,1[} \ \sum_{|Q_\op|-|Q_\up e^A|=-1} \ \sum_{\u\in\cM^A(Q_\op,Q_\up;H^\rho_s, \{f^\pm_p\},J^\rho_s, g^\rho_s)} \eps(\u)Q_\up e^A.
$$
The above description of $\p\cM^A$ shows that we have indeed $\sigma_{H_+,H_-}^{H_s^1} - \sigma_{H_+,H_-}^{H_s^0} = d\circ K+K\circ d$. That the chain homotopy $K$ is of order $1$ means that it satisfies $K(F_\ell B_*^{a,N}(H_-))\subset F_{\ell+1} B_{*+1}^{a,N}(H_+)$. This follows from the fact that each family of moduli spaces $\bigcup_{\rho\in]0,1[} \cM^{A^0}(Q_{p_-},Q_{p_+};H^\rho_s, \{f^\pm_p\},J^\rho_s, g^\rho_s)$
carries a free $S^1$-action. In case it is nonempty, its dimension must therefore be at least $1$. On the other hand, it follows from~\eqref{eq:dim-MB-s} that this dimension is equal to $|p_-|-|p_+ e^{A^0}|+2$, so that $|p_-|-|p_+ e^{A^0}|\ge -1$. A similar argument shows that, for the moduli spaces $\cM^{A^\pm}(Q_{p^\pm_0},Q_{p^\pm_1};H_\pm,\{f^\pm_p\},J_\pm,g_\pm)$ appearing in the definition of $K$, we must have $|p^\pm_0|-|p^\pm_1 e^{A^\pm}|\ge 0$. Thus, for the fibered products appearing in the definition of $K$ we have $|\op|-|\up e^A|\ge -1$.
\end{proof}

\begin{proposition} \label{prop:composition}
Let $H_0\le H_1\le H_2$ be three Hamiltonians in $\cH^{S^1}_{N,\mathrm{reg}}$, and let $H_s^{01}, H_s^{12}\in  \cH^{S^1}_N$, $s\in\R$ be two generic smooth increasing homotopies from $H_0$ to $H_1$, respectively from $H_1$ to $H_2$, which are constant near $\pm\infty$. Let $(J_i,g_i)\in\cJ^{S^1}_{N,\reg}(H_i)$, $i=0,1,2$ and $(J^{01}_s,g^{01}_s)$, $(J^{12}_s,g^{12}_s)$ be two generic  smooth homotopies in $\cJ^{S^1}_N$ from $(J_0,g_0)$ to
 $(J_1,g_1)$, respectively from $(J_1,g_1)$ to $(J_2,g_2)$, which are constant near $\pm\infty$.
For $R>0$ sufficiently large we denote
$$
H^{02,R}_s:=\left\{\begin{array}{ll}
H^{01}_{s+R}, & s\le 0, \\
H^{12}_{s-R}, & s\ge 0.
\end{array}\right.
$$
We define the homotopies $J^{02,R}_s, g^{02,R}_s$ in a similar way. There exists a map $K:BC_*^{a,N}(H_0)\to BC_{*+1}^{a,N}(H_2)$ of order $1$ such that
$$
\sigma_{H_2,H_1}^{H_s^{12}}\circ \sigma_{H_1,H_0}^{H_s^{01}} - \sigma_{H_2,H_0}^{H^{02,R}_s} = d\circ K+K\circ d.
$$
\end{proposition}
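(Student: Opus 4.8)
The plan is to run a gluing argument in which the neck length $R$ plays the role of the deformation parameter of a ``homotopy of homotopies'', exactly as in the proof of Proposition~\ref{prop:chain-homotopy}, but with the degenerate endpoint being the \emph{broken} homotopy $(H^{01}_s,H^{12}_s)$ rather than a second genuine one. Concretely, for $\rho\in(0,1]$ put $R(\rho):=R/\rho\ge R$ and consider the pre-glued data $(H^{02,R(\rho)}_s,J^{02,R(\rho)}_s,g^{02,R(\rho)}_s)$; as $\rho\to 0^+$ this family Floer--Gromov converges to the concatenation at $H_1$ of $(H^{01}_s,J^{01}_s,g^{01}_s)$ and $(H^{12}_s,J^{12}_s,g^{12}_s)$.

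First I would fix $\op\in\cP(H_0)$, $\up\in\cP(H_2)$, critical points $Q_\op\in\mathrm{Crit}(f^0_\op)$, $Q_\up\in\mathrm{Crit}(f^2_\up)$, and $A\in H_2(W;\Z)$ with $|Q_\op|-|Q_\up\,e^A|=-1$, and form the parametrized moduli space
\[
\cM^A_{\mathrm{par}}:=\bigcup_{\rho\in(0,1]}\cM^A\!\big(Q_\op,Q_\up;H^{02,R(\rho)}_s,\{f^i_p\},J^{02,R(\rho)}_s,g^{02,R(\rho)}_s\big).
\]
For a generic choice of the homotopy-of-homotopies data this is a smooth $1$-manifold, the extra dimension coming from the $\rho$-direction, and I would define
\[
K(Q_\op):=\sum_{|Q_\op|-|Q_\up\,e^A|=-1}\ \sum_{\u\in\cM^A_{\mathrm{par}}}\eps(\u)\,Q_\up\,e^A ,
\]
with signs from the coherent orientations and the fibered-sum rule of \cite[Section~4.4]{BOauto}.

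The core of the proof is the identification of the boundary of $\overline{\cM^A_{\mathrm{par}}}$. At $\rho=1$ the boundary stratum is exactly the set of configurations counted by $\sigma^{H^{02,R}_s}_{H_2,H_0}$. At $\rho\to 0^+$ one must prove, via a gluing theorem, that the boundary stratum is precisely the set of pairs consisting of an $H^{01}_s$-cascade from $Q_\op$ to some $Q_{p_1}\in\mathrm{Crit}(f^1_{p_1})$ of total index $0$ and an $H^{12}_s$-cascade from $Q_{p_1}$ to $Q_\up$ of total index $0$, carrying the product orientation --- i.e.\ exactly the configurations counted by $\sigma^{H^{12}_s}_{H_2,H_1}\circ\sigma^{H^{01}_s}_{H_1,H_0}$. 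The remaining boundary comes from interior breaking of a cascade at some $\rho\in(0,1)$, which contributes $d\circ K+K\circ d$, together with the two types of degeneration that cancel in pairs as in \cite{BOauto} (the length of a gradient segment shrinking to zero, or a Floer trajectory breaking at a non-critical point of some $f^i_p$). Since the signed count of the boundary of $\overline{\cM^A_{\mathrm{par}}}$ vanishes, this yields $\sigma^{H^{12}_s}_{H_2,H_1}\circ\sigma^{H^{01}_s}_{H_1,H_0}-\sigma^{H^{02,R}_s}_{H_2,H_0}=d\circ K+K\circ d$. Finally, $K$ has order $1$ for the same reason as the chain homotopy of Proposition~\ref{prop:chain-homotopy}: every $s$-dependent Floer factor, as well as every intermediate $H_i$-Floer factor, appearing in a configuration contributing to $K$ carries a free $S^1$-action, hence has dimension at least $1$ when nonempty; combined with the dimension formulas \eqref{eq:dim-MB-s} and \eqref{eq:dimMS} this forces $|\op|-|\up\,e^A|\ge -1$, i.e.\ $K(F_\ell BC^{a,N}_*(H_0))\subset F_{\ell+1}BC^{a,N}_{*+1}(H_2)$.

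I expect the main obstacle to be exactly the $\rho\to 0^+$ gluing analysis: one needs a gluing theorem in the Morse--Bott (cascades) framework asserting that two cascades over $H^{01}_s$ and $H^{12}_s$ sharing an endpoint $Q_{p_1}\in\mathrm{Crit}(f^1_{p_1})$ glue, for $R(\rho)$ large, to a unique $H^{02,R(\rho)}_s$-cascade --- the long neck near $s=0$ forcing the glued solution to shadow $S_{p_1}$ at the Hamiltonian $H_1$ --- together with the compatibility of this gluing with the coherent orientations. This is the ``concatenation equals composition'' principle for Floer continuation maps, here transported to the setting of the Correspondence Theorem~3.7 of \cite{BOauto}; the orientation bookkeeping at the broken end is the delicate point.
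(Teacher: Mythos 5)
Your overall strategy (stretch the neck parameter $R$ and read off a chain homotopy from the boundary of a one-parameter moduli space) is the first half of the paper's argument, but as written it has a genuine gap at the $\rho\to 0^+$ (i.e.\ $R\to\infty$) end. When the glued homotopy breaks, the limit configurations are fibered products of the form
$\cM^{B_0}(Q_\op,S_p;H^{01}_s,\dots)\ {_{\uev}\!\times_{\oev}}\ \cM^{B_2}(S_p,Q_\up;H^{12}_s,\dots)$,
i.e.\ the two halves are matched at an \emph{arbitrary} point of the intermediate orbit $S_p\subset\cP(H_1)$, with no gradient segment of $f^1_p$ between them and no reason for the matching point to be a critical point of $f^1_p$. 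This is \emph{not} what $\sigma^{H^{12}_s}_{H_2,H_1}\circ\sigma^{H^{01}_s}_{H_1,H_0}$ counts: the composition factors through $Q_p\in\mathrm{Crit}(f^1_p)$, with the first cascade constrained to end on the stable set of $Q_p$ and the second to start on its unstable set. So your claimed identification of the $\rho\to0^+$ boundary stratum with the composition, and the "gluing of two cascades sharing an endpoint $Q_{p_1}\in\mathrm{Crit}(f^1_{p_1})$", mis-describe the actual limit. Your construction therefore only produces the analogue of the paper's first homotopy $K_1$, which relates $\sigma^{H^{02,R_0}_s}_{H_2,H_0}$ to the count of point-matched broken configurations; a second homotopy is still needed. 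The paper supplies it via a second one-dimensional moduli space $\cM^A_2$, obtained by inserting a finite-length gradient trajectory of $f^1_p$ (together with possible $H_1$-cascades) between the two $s$-dependent Floer pieces: its length-zero boundary recovers the point-matched broken configurations, while the breaking of that $f^1_p$-segment at critical points recovers the composition, yielding $K_2$ and finally $K=K_1+K_2$. Without this step (or an equivalent argument), the identity $\sigma^{H^{12}_s}_{H_2,H_1}\circ\sigma^{H^{01}_s}_{H_1,H_0}-\sigma^{H^{02,R}_s}_{H_2,H_0}=d\circ K+K\circ d$ does not follow.

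A smaller point: for $|Q_\op|-|Q_\up e^A|=-1$ the union over $\rho$ of the continuation moduli spaces is generically a $0$-dimensional set (solutions occur at isolated values of $\rho$), which is what makes your definition of $K$ a finite signed count; the $1$-dimensional parametrized space whose boundary yields the chain-homotopy relation is the one with $|Q_\op|-|Q_\up e^A|=0$. Your order-$1$ argument for $K$ (free $S^1$-actions on the $s$-dependent and intermediate Floer factors forcing $|\op|-|\up e^A|\ge-1$) is correct and matches the paper, but it must also be applied to the extra moduli spaces entering $K_2$ once those are added.
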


\begin{proof} Let $\{f^i_p\}$, $i=0,1,2$ be three generic collections of perfect Morse functions on $S_p$, for $p\in\cP(H_i)$ respectively. Given $\op\in\cP(H_0)$, $\up\in\cP(H_2)$, $Q_\op\in\mathrm{Crit}(f^0_\op)$, $Q_\up\in\mathrm{Crit}(f^2_\up)$, $A\in H_2(W;\Z)$ such that $|Q_\op|-|Q_\up e^A|=0$, we define for $R_0>0$ sufficiently large the family of moduli spaces
$$
\cM^A_1:=\bigcup_{R\ge R_0} \cM^A(Q_\op,Q_\up;H^{02,R}_s, \{f^0_p,f^2_p\},J^{02,R}_s,g^{02,R}_s).
$$
For a generic choice of the homotopies, this is a smooth $1$-dimensional manifold. Its boundary splits as
$$
\p\cM^A_1=\p^{R_0}\cM^A_1 \cup \p^\infty\cM^A_1\cup \p^{int}\cM^A_1.
$$
Here $\p^{R_0}\cM^A_1=\cM^A(Q_\op,Q_\up;H^{02,R_0}_s, \{f^0_p,f^2_p\},J^{02,R_0}_s,g^{02,R_0}_s)$.
We now describe $\p^\infty\cM^A_1$, which corresponds to degenerations as $R\to\infty$. Let $p\in\cP(H_1)$, $m_0\ge 0$, $B\in H_2(W;\Z)$, and define
$\cM^B_{m_0}(Q_\op,S_p;H^{01}_s, \{f^0_p\},J^{01}_s,g^{01}_s)$ as the union for
$p^0_1,\dots,p^0_{m_0}\in\cP(H_0)$ and $A^0_1+\dots+A^0_{m_0}+A^{01}=B$ of the fibered products
\begin{eqnarray*}
&&
%\hspace{-1cm}
W^u(Q_\op)
\times_{\oev}
(\cM^{A^0_1}(S_{\op}\,,S_{p^0_1};H_0,J_0,g_0)\!\times\!\R^+)\\
&& {_{\varphi_{f^0_{p^0_1}}\!\circ\uev}}\!\times
_{\oev}\dots {_{\varphi_{f^0_{p^0_{m_0 -1}}}\!\circ\uev}}\!\times
_{\oev} (\cM^{A^0_{m_0}}(S_{p^0_{m_0 -1}},S_{p^0_{m_0}};H_0,J_0,g_0)\!\times\!\R^+) \\
&& {_{\varphi_{f^0_{p^0_{m_0}}}\!\circ\uev}\times_{\oev}}
\cM^{A^{01}}(S_{p^0_{m_0}},S_p;H^{01}_s,J^{01}_s,g^{01}_s).
\end{eqnarray*}
We define $\cM^B(Q_\op,S_p;H^{01}_s, \{f^0_p\},J^{01}_s,g^{01}_s)$ as the union over $m_0\ge 0$ of the moduli spaces $\cM^B_{m_0}(Q_\op,S_p;H^{01}_s, \{f^0_p\},J^{01}_s,g^{01}_s)$. This is a smooth manifold of dimension
$$
\dim\, \cM^B(Q_\op,S_p;H^{01}_s, \{f^0_p\},J^{01}_s,g^{01}_s)=|Q_\op|-|pe^B|.
$$
Given $p\in\cP(H_1)$, $m_2\ge 0$, $B\in H_2(W;\Z)$, we define
the moduli space $\cM^B_{m_2}(S_p,Q_\up;H^{12}_s, \{f^2_p\},J^{12}_s,g^{12}_s)$ as the union for
$p^2_1,\dots,p^2_{m_2}\in\cP(H_2)$ and $A^{12}+A^2_1+\dots+A^2_{m_2}=B$ of the fibered products
\begin{eqnarray*}
&&
(\cM^{A^{12}}(S_p,S_{p^2_1};H^{12}_s,J^{12}_s,g^{12}_s)\!\times\!\R^+)\\
&& {_{\varphi_{f^2_{p^2_1}}\!\circ\uev}}\!\times
_{\oev} (\cM^{A^2_1}(S_{p^2_1},S_{p^2_2};H_2,J_2,g_2)\!\times\!\R^+) \\
&&{_{\varphi_{f^2_{p^2_2}}\!\circ\uev}}\!\times
_{\oev}\dots
{_{\varphi_{f^2_{p^2_{m_2}}}\!\!\circ\uev}}\!\!\times
_{\oev}
\cM^{A^2_{m_2}}(S_{p^2_{m_2}},\!S_{\up})
{_{\uev}\times} W^s(Q_\up).
\end{eqnarray*}
We define $\cM^B(S_p,Q_\up;H^{12}_s, \{f^2_p\},J^{12}_s,g^{12}_s)$ as the union over $m_2\ge 0$ of the moduli spaces $\cM^B_{m_2}(S_p,Q_\up;H^{12}_s, \{f^2_p\},J^{12}_s,g^{12}_s)$. This is a smooth manifold of dimension
$$
\dim\, \cM^B(S_p,Q_\up;H^{12}_s, \{f^2_p\},J^{12}_s,g^{12}_s)=|p|-|Q_\up e^B|+1.
$$
The boundary $\p^\infty\cM^A_1$ is then equal to
$$
\bigcup_{\substack{p\in\cP(H_1)\\ B_0+B_2=A}}
\hspace{-.4cm}\cM^{B_0}(Q_\op,S_p;H^{01}_s, \{f^0_p\},J^{01}_s,g^{01}_s)
{_{\uev}\times_{\oev}}
\cM^{B_2}(S_p,Q_\up;H^{12}_s, \{f^2_p\},J^{12}_s,g^{12}_s).
$$
The boundary $\p^{int}\cM^A_1$ corresponds to degeneracies at a point $R\in]R_0,\infty[$, namely
\begin{eqnarray*}
\lefteqn{\p^{int}\cM^A_1}\\
 &\hspace{-.4cm} = & \hspace{-.4cm}\bigcup_{R> R_0} \hspace{-.1cm}\cM^{B_0}(Q_\op,Q_{p_0};H_0,\{f^0_p\},J_0,g_0) \hspace{-.1cm}\times\hspace{-.1cm} \cM^{B_2}(Q_{p_0},Q_\up;H^{02,R}_s, \{f^i_p\},J^{02,R}_s, g^{02,R}_s) \\
& \hspace{-.4cm}& \hspace{-.7cm}\cup\hspace{-.1cm} \bigcup_{R> R_0}\hspace{-.1cm}\cM^{B_0}(Q_\op,Q_{p_2};H^{02,R}_s, \{f^i_p\},J^{02,R}_s, g^{02,R}_s) \hspace{-.1cm}\times\hspace{-.1cm} \cM^{B_2}(Q_{p_2},Q_\up;H_2,\{f^2_p\},J_2,g_2).
\end{eqnarray*}
Here we used the shortcut notation $\{f^i_p\}=\{f^0_p,f^2_p\}$, and the union is taken over $B_0+B_2=A$, $p_i\in\cP(H_i)$, $Q_{p_i}\in\mathrm{Crit}(f^i_{p_i})$, $i=0,2$, such that $|Q_{p_0}|-|Q_\up e^{A-B}|=-1$ and $|Q_\op|-|Q_{p_2} e^{A-B}|=-1$. For a generic choice of the triple $(H^{02,R}_s,J^{02,R}_s,g^{02,R}_s)$, $R\ge R_0$, there are only a finite number of values of $R$ involved in the above union. The elements of $\p^{int}\cM^A_1$ correspond to the breaking of a gradient trajectory involved in one of the fiber products defining $\cM^A(Q_\op,Q_\up;H^{02,R}_s, \{f^0_p,f^2_p\},J^{02,R}_s, g^{02,R}_s)$, as $R$ converges to some $R_{int}\in]R_0,\infty[$. There are yet two other types of degeneracy in $\cM^A_1$, which compensate each other: the length of a gradient trajectory in a fibered product as above can shrink to zero, or a Floer trajectory can break at a point $Q\in S_p\setminus \mathrm{Crit}(f^i_p)$, for some $p\in\cP(H_i)$, $i=0,2$.
We define a map $K_1:BC_*^{a,N}(H_0)\to BC_{*+1}^{a,N}(H_2)$ by
$$
K_1(Q_\op)=\sum_{R>R_0} \ \sum_{|Q_\op|-|Q_\up e^A|=-1} \ \sum_{\u\in\cM^A(Q_\op,Q_\up;H^{02,R}_s, \{f^0_p, f^2_p\},J^{02,R}_s, g^{02,R}_s)} \eps(\u)Q_\up e^A.
$$
The same argument as in the proof of Proposition~\ref{prop:chain-homotopy} shows that $K_1$ is of order $1$.
The previous description of $\p\cM^A_1$ can be summarized by saying that $d\circ K_1+K_1\circ d+\sigma_{H_2,H_0}^{H^{02,R_0}_s}$
is equal to the chain map obtained by the count of elements in $\p^\infty\cM^A_1$.

We now exhibit another $1$-dimensional moduli space whose boundary contains $\p^\infty \cM^A_1$.
Given $\op\in\cP(H_0)$, $\up\in\cP(H_2)$,
$Q_\op\in\mathrm{Crit}(f^0_\op)$, $Q_\up\in\mathrm{Crit}(f^2_\up)$, $A\in H_2(W;\Z)$, and
$m_0,m_1,m_2\ge 0$, we denote by
$$
\cM^A_{m_0,m_1,m_2}(Q_\op,Q_\up;H^{ij}_s,\{f^k_p\},J^{ij}_s,g^{ij}_s)
$$
the union for $p^0_1,\dots,p^0_{m_0}\in\cP(H_0)$, $p^1_1,\dots,p^1_{m_1+1}\in\cP(H_1)$,
$p^2_1,\dots,p^2_{m_2}\in\cP(H_2)$, and $A^0_1+\dots+A^0_{m_0}+A^{01}+A^1_1+\dots+A^1_{m_1}+A^{12}+A^2_1+\dots+A^2_{m_2}=A$ of the fibered products
\begin{eqnarray*}
&&
W^u(Q_\op)
\times_{\oev}
(\cM^{A^0_1}(S_{\op}\,,S_{p^0_1};H_0,J_0,g_0)\!\times\!\R^+)\\
&& {_{\varphi_{f^0_{p^0_1}}\!\circ\uev}}\!\times
_{\oev}\dots {_{\varphi_{f^0_{p^0_{m_0 -1}}}\!\circ\uev}}\!\times
_{\oev} (\cM^{A^0_{m_0}}(S_{p^0_{m_0 -1}},S_{p^0_{m_0}};H_0,J_0,g_0)\!\times\!\R^+) \\
&& {_{\varphi_{f^0_{p^0_{m_0}}}\!\circ\uev}\times_{\oev}}
(\cM^{A^{01}}(S_{p^0_{m_0}},S_{p^1_1};H^{01}_s,J^{01}_s,g^{01}_s)\!\times\!\R^+)\\
&& {_{\varphi_{f^1_{p^1_1}}\!\circ\uev}}\!\times_{\oev}
(\cM^{A^1_1}(S_{p^1_1}\,,S_{p^1_1};H_1,J_1,g_1)\!\times\!\R^+)\\
&& {_{\varphi_{f^1_{p^1_1}}\!\circ\uev}}\!\times
_{\oev}\dots {_{\varphi_{f^1_{p^1_{m_1}}}\!\circ\uev}}\!\times
_{\oev} (\cM^{A^1_{m_1}}(S_{p^1_{m_1}},S_{p^1_{m_1 +1}};H_1,J_1,g_1)\!\times\!\R^+) \\
&& {_{\varphi_{f^1_{p^1_{m_1 +1}}}\!\circ\uev}}\times_{\oev}
(\cM^{A^{12}}(S_{p^1_{m_1 +1}},S_{p^2_1};H^{12}_s,J^{12}_s,g^{12}_s)\!\times\!\R^+)\\
&& {_{\varphi_{f^2_{p^2_1}}\!\circ\uev}}\!\times_{\oev}
(\cM^{A^2_1}(S_{p^2_1},S_{p^2_2};H_2,J_2,g_2)\!\times\!\R^+) \\
&&{_{\varphi_{f^2_{p^2_2}}\!\circ\uev}}\!\times
_{\oev}\dots
{_{\varphi_{f^2_{p^2_{m_2}}}\!\!\circ\uev}}\!\!\times
_{\oev}
\cM^{A^2_{m_2}}(S_{p^2_{m_2}},\!S_{\up})
{_{\uev}\times} W^s(Q_\up).
\end{eqnarray*}
In the above notation we abridged $H^{ij}_s=\{H^{01}_s,H^{12}_s\}$ (similarly for $J^{ij}_s,g^{ij}_s$) and $\{f^k_p\}=\{f^0_p,f^1_p,f^2_p\}$.
We denote $\cM^A_2:=\cM^A(Q_\op,Q_\up;H^{ij}_s,\{f^k_p\},J^{ij}_s,g^{ij}_s)$ the union over $m_k\ge 0$, $k=0,1,2$ of the previously defined moduli spaces $\cM^A_{m_0,m_1,m_2}(Q_\op,Q_\up;H^{ij}_s,\{f^k_p\},J^{ij}_s,g^{ij}_s)$. This is a smooth manifold of dimension $|Q_\op|-|Q_\up e^A|+1$. In the case $|Q_\op|-|Q_\up e^A|=0$ that we are considering, $\cM^A_2$ is a smooth $1$-dimensional manifold whose boundary splits as
$$
\p\cM^A_2=\p^0\cM^A_2\cup \p^{\infty,0}\cM^A_2\cup \p^{\infty,1}\cM^A_2\cup \p^{\infty,2}\cM^A_2.
$$
Here $\p^0\cM^A_2$ corresponds to $m_0=0$ and the length of the gradient trajectory running between the endpoints of the two $s$-dependent Floer trajectories being equal to $0$. Thus $\p^0\cM^A_2=\p^\infty\cM^A_1$. The elements of $\p^{\infty,k}\cM^A_2$, $k=0,1,2$ correspond to the breaking of a gradient trajectory of $f^k_p$ appearing in the fibered product which defines $\cM^A_2$, for some $p\in\cP(H_k)$. Thus we have
\begin{eqnarray*}
\lefteqn{\p^{\infty,1}\cM^A_2}\\
&\hspace{-.5cm}=& \hspace{-.8cm}\bigcup_{\substack{p\in\cP(H_1) \\ Q_p\in\mathrm{Crit}(f^1_p) \\ B^{01}+B^{12}=A}}
\hspace{-.5cm}\cM^{B^{01}}(Q_\op,Q_p;H^{01}_s,\{f^i_p\},J^{01}_s,g^{01}_s) \times \cM^{B^{12}}(Q_p,Q_\up;H^{12}_s,\{f^j_p\},J^{12}_s,g^{12}_s).
\end{eqnarray*}
Here we abridged $\{f^i_p\}=\{f^0_p,f^1_p\}$ and $\{f^j_p\}=\{f^1_p,f^2_p\}$. Similarly, we have
\begin{eqnarray*}
\lefteqn{\p^{\infty,0}\cM^A_2}\\
&\hspace{-.2cm}=& \hspace{-.8cm}\bigcup_{\substack{p\in\cP(H_0) \\ Q_p\in\mathrm{Crit}(f^0_p) \\ B^0+B^{02}=A}}
\hspace{-.5cm}\cM^{B_0}(Q_\op,Q_p;H_0,\{f^0_p\},J_0,g_0)
 \times \cM^{B^{02}}(Q_p,Q_\up;H^{ij}_s,\{f^k_p\},J^{ij}_s,g^{ij}_s)
\end{eqnarray*}
with $|Q_p|-|Q_\up e^{B^{02}}|=-1$, and
\begin{eqnarray*}
\lefteqn{\p^{\infty,1}\cM^A_2}\\
&\hspace{-.2cm}=& \hspace{-.8cm}\bigcup_{\substack{p\in\cP(H_2) \\ Q_p\in\mathrm{Crit}(f^2_p) \\ B^{02}+B^2=A}}
\hspace{-.5cm}\cM^{B^{02}}(Q_\op,Q_p;H^{ij}_s,\{f^k_p\},J^{ij}_s,g^{ij}_s) \times \cM^{B^2}(Q_p,Q_\up;H_2,\{f^2_p\},J_2,g_2)
\end{eqnarray*}
with $|Q_\op|-|Q_p e^{B^{02}}|=-1$. We define $K_2:BC_*^{a,N}(H_0)\to BC_{*+1}^{a,N}(H_2)$ by
$$
K_2(Q_\op)=\sum_{\substack{\up\in\cP(H_2) \\ B^{02}\in H_2(W;\Z) \\ |Q_\op|-|Q_\up e^{B^{02}}|=-1}}
\
\sum_{\u\in \cM^{B^{02}}(Q_\op,Q_\up;H^{ij}_s,\{f^k_p\},J^{ij}_s,g^{ij}_s)}
\eps(\u) Q_\up e^{B^{02}}.
$$
The same argument as in the proof of Proposition~\ref{prop:chain-homotopy} shows that $K_2$ is of order $1$.
The previous description of $\p\cM^A_2$ shows that the chain map determined by the count of elements in $\p^0\cM^A_2$ is equal to $\sigma_{H_2,H_1}^{H_s^{12}}\circ \sigma_{H_1,H_0}^{H_s^{01}} - d\circ K_2 - K_2\circ d$. Since $\p^0\cM^A_2=\p^\infty\cM^A_1$, we obtain the conclusion of the Proposition by setting $K:=K_1+K_2$.
\end{proof}

\begin{proof}[Proof of Proposition~\ref{prop:indepJg}]
We consider a generic homotopy $(J^{12}_s,g^{12}_s)$, $s\in \R$ inside $\cJ^{S^1}_N$ from $(J_1,g_1)$ to $(J_2,g_2)$, which is constant near $\pm\infty$. Then $(J^{21}_s,g^{21}_s):=(J^{12}_{-s},g^{12}_{-s})$ is a homotopy from $(J_2,g_2)$ to $(J_1,g_1)$. These determine filtered chain maps $\sigma_{21}:BC_*^{a,N}(H,J_1,g_1)\to BC_*^{a,N}(H,J_2,g_2)$ and $\sigma_{12}:BC_*^{a,N}(H,J_2,g_2)\to BC_*^{a,N}(H,J_1,g_1)$.  By Proposition~\ref{prop:composition}, the composition $\sigma_{21}\circ \sigma_{12}$ is homotopic to
the filtered chain map $\sigma_{22}:BC_*^{a,N}(H,J_2,g_2)\to BC_*^{a,N}(H,J_2,g_2)$ determined by the concatenation $(J^{21}_s\#_R J^{12}_s, g^{21}_s\#_R g^{12}_s)$ for $R>0$ large enough. The latter is homotopic to the identity by Proposition~\ref{prop:chain-homotopy}.

Since all the homotopies involved are of order $1$, we obtain that $\sigma_{21}\circ \sigma_{12}$  induces on the first page $E^{a,N;1}_{*,*}(H,J_2,g_2)$ of the corresponding spectral sequence a chain morphism which is homotopic to the identity. The induced morphism on the second page $E^{a,N;2}_{*,*}(H,J_2,g_2)$ is therefore the identity. Similarly, $\sigma_{12}\circ \sigma_{21}$ induces the identity on the second page $E^{a,N;2}_{*,*}(H,J_1,g_1)$.

Thus the induced morphism $\sigma_{21}:E^{a,N;2}_{*,*}(H,J_1,g_1)\to E^{a,N;1}_{*,*}(H,J_2,g_2)$ is an isomorphism. Since $\sigma_{21}$ preserves both the degree and the filtration, it follows that
$\sigma_{21}(E^{a,N;2}_{*,1}(H,J_1,g_1))=E^{a,N;1}_{*,*}(H,J_2,g_2)$. Since $E^{a,N;2}_{*,1}(H,J_i,g_i)\simeq SH_{*+1}^{a,S^1,N}(H,J_i,g_i)$, $i=1,2$, we obtain the desired isomorphism. The fact that it does not depend on the choice of homotopy $(J^{12}_s,g^{12}_s)$ is a consequence of Proposition~\ref{prop:chain-homotopy}.
\end{proof}

\begin{remark} The isomorphism $SH_*^{a,S^1,N}(H,J_1,g_1)\stackrel\simeq\to SH_*^{a,S^1,N}(H,J_2,g_2)$ constructed in the proof of Proposition~\ref{prop:indepJg} is induced by the chain map
$SC_*^{a,S^1,N}(H,J_1,g_1)\to SC_*^{a,S^1,N}(H,J_2,g_2)$
given by the count of the elements of the $0$-dimensional moduli spaces
$$
\cM_{S^1}^A(S_\op,S_\up;H,J^{12}_s,g^{12}_s):=\cM^A(S_\op,S_\up;H,J^{12}_s,g^{12}_s)/S^1.
$$
\end{remark}

\appendix

\section{The parametrized Robbin-Salamon index} \label{app:RS}

We summarize in this section some important properties of the
parametrized Robbin-Salamon index. We recall from~\S\ref{sec:RS} the definition of the
subgroup $\cS_{n,m}\subset \Sp(2n+2m)$, consisting of matrices of the
form
$$
M=M(\Psi,X,E)=\left(\begin{array}{ccc}
\Psi & \Psi X & 0 \\
0 & \one & 0 \\
X^TJ_0 & E+\frac 1 2 X^T J_0 X & \one
\end{array}\right),
$$
with $\Psi\in\Sp(2n)$,
$X\in\mathrm{Mat}_{2n,m}(\R)$, and $E\in \mathrm{Mat}_m(\R)$
symmetric. We have denoted by $J_0:=\left(\begin{array}{cc} 0 &
    -\one \\ \one & 0 \end{array}\right)$ the standard complex
structure on $\R^{2n}$, so that $\Psi\in\Sp(2n)$ if and only if
$\Psi^T J_0\Psi=J_0$. The standard complex structure on $\R^{2n}\times
\R^{2m}$ is
$$
\widetilde J_0:=\left(
  \begin{array}{ccc}
J_0 & 0 & 0 \\
0 & 0 & -\one \\
0 & \one & 0
  \end{array}\right),
$$
and we have $M^T\widetilde J_0 M=\widetilde J_0$. Note that we have
natural embeddings (which respect the group structure)
$$
\cS_{n,m}\times \cS_{n',m'}\hookrightarrow \cS_{n+n',m+m'}
$$
which associate to $M=M(\Psi,X,E)\in\cS_{n,m}$ and
$M'=M(\Psi',X',E')\in\cS_{n',m'}$ the matrix
$$
M\oplus M':=M(\Psi\oplus \Psi',X\oplus X',E\oplus
E')\in\cS_{n+n',m+m'}.
$$
The space $\cS_{n,m}$ is stratified as $\coprod_{k=0}^{2n+m}
\cS^k_{n,m}$, with
$$
\cS^k_{n,m}:=\big\{M\in\cS_{n,m} \ : \ \dim\, \ker \, (M-\one) = m+k. \big\}.
$$

\begin{proposition} \label{prop:mu}
The Robbin-Salamon index $\mu=\mu_{RS}$ defined on paths $M:[a,b]\to
\cS_{n,m}$, $M(\theta)=M(\Psi(\theta),X(\theta),E(\theta))$ has the
following properties.
\begin{description}
\item[(Homotopy)] If $M,M':[a,b]\to \cS_{n,m}$ are homotopic with
  fixed endpoints then
$$
\mu(M)=\mu(M');
$$
\item[(Catenation)] For any $c\in [a,b]$ we have
$$
\mu(M)=\mu(M|_{[a,c]}) + \mu(M|_{[c,b]});
$$
\item[(Naturality)] For any path $P:[a,b]\to \Sp(2n)\times \Sp(2m)$ of
the form
\begin{equation} \label{eq:Ptheta}
P(\theta)=\left(\begin{array}{ccc}
\Phi(\theta) & 0 & 0 \\
0 & A(\theta) & 0 \\
0 & 0 & A(\theta)
  \end{array}\right)
\end{equation}
(with $\Phi(\theta)\in\Sp(2n)$ and $A(\theta)\in\OO(m)$), we have
$$
\mu(PMP^{-1})=\mu(M);
$$
\item[(Loop)] For any loop $P:[a,b]\to \Sp(2n)\times \Sp(2m)$ of the
form~\eqref{eq:Ptheta}, we have
$$
\mu(PM)=\mu(M)+ 2\mu(\Phi);
$$
\item[(Product)] For any $M\in \cS_{n,m}$ and $M'\in \cS_{n',m'}$ we
  have
$$
\mu(M\oplus M')=\mu(M) + \mu(M');
$$
\item[(Splitting)] Given $M=M(\Psi,0,E):[a,b]\to \cS_{n,m}$, we have
$$
\mu(M)=\mu(\Psi) + \frac 1 2 \mathrm{sign}\, E(b) - \frac 1 2
\mathrm{sign}\, E(a);
$$
\item[(Signature)] Given symmetric matrices $E\in \R^{m\times m}$
  and $S\in\R^{2n\times 2n}$ with $\| S \| <2\pi$, we have
$$
\mu\bigg\{\left(\begin{array}{cc}\exp(J_0St) & 0 \\ 0 & tE
  \end{array}\right),\ t\in[0,1]\bigg\} = \frac 1 2 \mathrm{sign}(S) +
\frac 1 2 \mathrm{sign}(E);
$$
\item[(Zero)] For any path $M:[a,b]\to \cS^k_{n,m}$ we have
$$
\mu(M)=0;
$$
\item[(Integrality)] Given a path $M:[a,b]\to \cS_{n,m}$ with $M(a)\in
  \cS^{k_a}_{n,m}$, $M(b)\in \cS^{k_b}_{n,m}$, we have
$$
\mu(M)+\frac {k_a-k_b} 2 \in \Z;
$$
\item[(Determinant)] Given a path $M=M(\Psi,X,E):[a,b]\to \cS_{n,m}$ with
  $M(a)=\one$ and $M(b)\in\cS^0_{n,m}$, we have
$$
(-1)^{n+\frac m 2 - \mu(M)}=\mathrm{sign}\, \det \,
\left(\begin{array}{cc}
\Psi-\one & \Psi X \\
X^TJ_0 & E+\frac 1 2 X^TJ_0 X
\end{array}\right).
$$
We have denoted for simplicity $\Psi=\Psi(b)$, $X=X(b)$, $E=E(b)$.
\item[(Involution)] For any $M=M(\Psi,X,E):[a,b]\to\cS_{n,m}$ we have
$$
\mu(M(\Psi,X,E))=\mu(M(\Psi,-X,E))
$$
and
$$
\mu(M(\Psi^{-1},X,-E))=\mu(M(\Psi^T,J_0X,-E))=-\mu(M(\Psi,X,E)).
$$
\end{description}
\end{proposition}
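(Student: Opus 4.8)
The plan is to reduce all eleven properties to the axiomatic characterization of the Robbin--Salamon index on paths in $\Sp(2n+2m)$ from \cite{RS} together with the endpoint normalizations of \cite{SZ92}, using only the block form of $\cS_{n,m}$, the composition rule, and the inversion formula~\eqref{eq:inverse}. The ambient properties are immediate: \emph{(Homotopy)} and \emph{(Catenation)} are the corresponding axioms of $\mu_{RS}$ on $\Sp(2n+2m)$, while \emph{(Zero)} and \emph{(Integrality)} follow because along a path in $\cS_{n,m}$ the number $\dim\ker(M(\theta)-\one)-m$ is exactly the stratum index, which is the quantity governing vanishing and half-integrality of $\mu_{RS}$ in \cite[Theorems~4.1 and~4.7]{RS}. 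For \emph{(Naturality)} I would invoke symplectic invariance of the Maslov index: writing $\mu_{RS}(M)=\mu(\mathrm{graph}\,M(\theta),\Delta)$ inside $(\R^{2n+2m}\times\R^{2n+2m},(-\widetilde{J}_0)\oplus\widetilde{J}_0)$, one has $\mathrm{graph}(PMP^{-1})=(P\times P)\,\mathrm{graph}\,M$ and $(P\times P)\Delta=\Delta$ for any symplectic $P$, so the index is unchanged; the hypotheses $\Phi\in\Sp(2n)$, $A\in\OO(m)$ serve only to make $P$ symplectic and to keep $PMP^{-1}=M(\Phi\Psi\Phi^{-1},\Phi XA^{T},AEA^{T})$ inside $\cS_{n,m}$, which is a direct block computation. \emph{(Product)} is the direct-sum axiom for $\mu_{RS}$ composed with the constant coordinate permutation identifying the naive block sum with $M(\Psi\oplus\Psi',X\oplus X',E\oplus E')$; that permutation is a symplectic conjugation, hence harmless by \emph{(Naturality)}.

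For \emph{(Loop)} I would use the standard behaviour of $\mu_{RS}$ under pre-composition by a loop to write $\mu(PM)=\mu(M)+2\mu(\Phi)+2\mu(A\oplus A)$ (with $\mu$ on loops the Maslov number), and then observe that the loop $A\oplus A$, coming from $\OO(m)$, is nullhomotopic in $\Sp(2m)$ — indeed it acts $\C$-linearly with determinant $\det_{\R}A=1$ on the identity component — so the last term vanishes. For \emph{(Splitting)}, taking $X=0$ exhibits $M(\Psi,0,E)$ as the direct sum of $\Psi$ with a path of symplectic shears in $\Sp(2m)$, so \emph{(Product)} together with the normalization of $\mu_{RS}$ on shear paths (equal to $\tfrac12\mathrm{sign}$ of the off-diagonal block at the endpoints) gives $\mu(\Psi)+\tfrac12\mathrm{sign}\,E(b)-\tfrac12\mathrm{sign}\,E(a)$. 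Then \emph{(Signature)} is immediate from \emph{(Splitting)} and $\mu_{RS}(\exp(J_0St))=\tfrac12\mathrm{sign}(S)$ for $\|S\|<2\pi$ \cite[Theorem~3.3(iv)]{SZ92}.

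For \emph{(Determinant)} I would catenate $M$, which runs from $\one$ to $M(b)\in\cS^0_{n,m}$ (so $\ker(M(b)-\one)=\{0\}\oplus\{0\}\oplus\R^{m}$), with a short auxiliary symplectic path $M'$ that moves those degenerate directions off the eigenvalue $1$; $M'$ is arranged to have a single crossing with crossing form $-\one_m$, contributing $-\tfrac m2$, so the catenated path $N$ satisfies $\mu_{RS}(N)=\mu(M)-\tfrac m2$. Computing $\det(N(1)-\one)=\eps^{m}(-1)^{m}\det\left(\begin{array}{cc}\Psi-\one&\Psi X\\X^{T}J_0&E+\tfrac12X^{T}J_0X\end{array}\right)$ and inserting it into $(-1)^{n+m-\mu_{RS}(N)}=\mathrm{sign}\det(N(1)-\one)$ from \cite[Theorem~3.3(iii)]{SZ92} yields the claimed sign identity with exponent $n+\tfrac m2-\mu(M)$. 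Finally \emph{(Involution)}: conjugation by the constant matrix $J_0\oplus\one_{2m}$ sends $M(\Psi,X,E)$ to $M(J_0\Psi J_0^{-1},J_0X,E)$ and, using $J_0\Psi^{-1}J_0^{-1}=\Psi^{T}$ for $\Psi\in\Sp(2n)$, sends $M(\Psi^{-1},X,-E)$ to $M(\Psi^{T},J_0X,-E)$, so these have equal index by \emph{(Naturality)}; applying the conjugation twice and using $J_0^{2}=-\one$ gives $\mu(M(\Psi,X,E))=\mu(M(\Psi,-X,E))$; and the sign reversal follows from~\eqref{eq:inverse} together with the elementary fact that replacing a symplectic path $M(\theta)$ by the pointwise inverse $M(\theta)^{-1}$ negates every crossing form (immediate from $\tfrac{d}{d\theta}M^{-1}=-M^{-1}\dot M M^{-1}$ and $M^{-1}=J_0^{-1}M^{T}J_0$), hence negates $\mu_{RS}$.

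The one genuinely nonformal step, and the place I expect trouble, is \emph{(Determinant)}: one must exhibit the auxiliary path $M'$ explicitly, check that it remains symplectic with exactly one crossing and precisely the crossing form $-\one_m$, and then track the powers of $-1$ (including the half-integer $m/2$) carefully enough that the exponent $n+\tfrac m2-\mu(M)$ comes out correctly. Everything else is either an axiom of $\mu_{RS}$, an instance of symplectic invariance, or a short matrix computation.
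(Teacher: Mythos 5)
Your proposal is correct and follows essentially the same route as the paper's proof: the general properties are quoted from the Robbin--Salamon axioms (with the block computation showing $PMP^{-1}\in\cS_{n,m}$), \emph{(Loop)} is reduced to the triviality of the image of $\pi_1(\OO(m))$ in $\pi_1(\Sp(2m))$, \emph{(Splitting)} and \emph{(Signature)} to the Product axiom, the shear normalization and $\mu_{RS}(\exp(J_0St))=\frac12\,\mathrm{sign}(S)$, and \emph{(Determinant)} and \emph{(Involution)} are treated exactly as in the paper, by catenation with an auxiliary path plus the Salamon--Zehnder determinant formula, respectively by conjugation with $J_0\oplus\one_{2m}$ and the inverse formula. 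Two points need attention.

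First, the step you defer in \emph{(Determinant)} is precisely what the paper supplies, and it is short: take $M'(b+\theta):=M(b)\,T_\theta$ with
$$
T_\theta=\left(\begin{array}{ccc}\one&0&0\\0&\one&\theta\one\\0&0&\one\end{array}\right),
$$
a symplectic shear in the $(\lambda,p)$-block, so $M'$ is a path in $\Sp(2n+2m)$ starting at $M(b)$. Since $M(b)\in\cS^0_{n,m}$, the only crossing of $M'$ is at $\theta=0$, with kernel $\{0\}\oplus\{0\}\oplus\R^m$, and for $v=(0,0,p)$ one computes $\Gamma(M',b)(v)=-|p|^2$, i.e.\ crossing form $-\one_m$, contributing $-\frac m2$; moreover $\det\big(M'(b+\eps)-\one\big)=\eps^m(-1)^m\det\left(\begin{array}{cc}\Psi-\one&\Psi X\\X^TJ_0&E+\frac12X^TJ_0X\end{array}\right)\neq0$. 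With these three facts your sign bookkeeping goes through and yields the exponent $n+\frac m2-\mu(M)$, so the gap is easily closed, but as written it is not yet a proof.

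Second, in \emph{(Involution)} your inversion argument (correct: passing to the pointwise inverse path negates all crossing forms, hence negates $\mu_{RS}$) combined with $M(\Psi,X,E)^{-1}=M(\Psi^{-1},-\Psi X,-E)$ and the invariance under $X\mapsto-X$ gives $-\mu(M(\Psi,X,E))=\mu(M(\Psi^{-1},\Psi X,-E))=\mu(M(\Psi^{T},J_0\Psi X,-E))$. This is the identity with $\Psi X$ in the second slot, which is how the proposition is stated in the appendix of the main text; the form printed in the statement above, with $X$ and $J_0X$, does not follow from your argument (nor from the paper's own proof) and is evidently a misprint. Your first equality, conjugation by the constant matrix $J_0\oplus\one_{2m}$, is unaffected, since it holds for an arbitrary entry in the $X$-slot.
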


\begin{proof}

The \emph{(Homotopy)},  \emph{(Catenation)}, \emph{(Naturality)},
\emph{(Product)}, and \emph{(Zero)} properties are exactly the
corresponding properties of the Robbin-Salamon
index~\cite[Theorem~4.1]{RS}. For the \emph{(Naturality)} property, a
straightforward verification shows that $PMP^{-1}\in\cS_{n,m}$.

To prove the \emph{(Loop)} property we use the equality
$$
\mu(PM)=\mu(M)+2\mu(P)=\mu(M)+2\mu(\Phi)+
2\mu\left(\begin{array}{cc} A& 0 \\ 0 & A \end{array}\right).
$$
Since $\pi_1(\OO(m))=\Z/2\Z$ and $\pi_1(\Sp(2m))=\Z$, the last term
vanishes.

The \emph{(Splitting)} property follows from the \emph{(Product)}
property and the normalization axiom for the Robbin-Salamon index of a
symplectic shear.

The \emph{(Signature)} property follows from the
\emph{(Splitting)} property and from the identity
$\mu_{RS}(\exp(J_0St))=\frac 1 2
\mathrm{sign}(S)$~\cite[Theorem~3.3.(iv)]{SZ}.

The \emph{(Integrality)} property follows directly from the analogous
property for the Robbin-Salamon index~\cite[Theorem~4.7]{RS}.

We prove the \emph{(Involution)} property. The identity
$\mu(M(\Psi^{-1},X,-E))=\mu(M(\Psi^T,J_0X,-E))$ follows from the
\emph{(Naturality)} axiom by conjugating with the constant path
$J_0\oplus \one_{2m}$. The identity
$\tmu(M(\Psi,X,E))=\tmu(M(\Psi,-X,E))$ follows by conjugating twice
with $J_0\oplus \one_{2m}$.

It remains to prove the \emph{(Determinant)} property. Given a path
$N:[0,1]\to \Sp(2n+2m)$ satisfying $N(0)=\one$ and $\det\,
(N(1)-\one)\neq 0$, we have~\cite[Theorem~3.3.(iii)]{SZ}
$$
(-1)^{n+m-\mu_{RS}(N)}=\mathrm{sign}\, \det \, (N(1)-\one).
$$
We construct such a path $N:[a,b+\eps]\to \Sp(2n+2m)$ by catenating
$M=M(\Psi(\theta),X(\theta),E(\theta))$ with the path
$M':[b,b+\eps]\to \Sp(2n+2m)$ given by
$$
M'(b+\theta):=\left(\begin{array}{ccc}
\Psi & \Psi X & \theta \Psi X \\
0 & \one & \theta\one \\
X^TJ_0 & E+\frac 1 2 X^T J_0 X & \one+\theta (E + \frac
1 2 X^TJ_0 X)
\end{array}\right).
$$
We have denoted for simplicity $\Psi:=\Psi(b)$, $X:=X(b)$, and $E:=
E(b)$. Since $M(b)\in\cS^0_{n,m}$, the path $M'$ has a single crossing
at $b$ and the kernel of
$M'(b)-\one=M(b)-\one$ is $\{0\}\oplus \{0\}\oplus \R^m$. The crossing
form at $b$ is $-\one_m$, so that $\mu_{RS}(M')=-\frac m 2$. Thus
$\mu_{RS}(N)=\mu(M)-\frac m 2$. On the other hand
$$
\det\,(N(b+\eps)-\one)=\eps^m(-1)^m\det\,\left(\begin{array}{cc}
\Psi-\one & \Psi X \\
X^TJ_0 & E+\frac 1 2 X^TJ_0 X
\end{array}\right).
$$
This implies the desired statement.
\end{proof}

\begin{example}
The index $\mu(M(\Psi,X,E))$ depends in an essential way on $X$, as
the following example shows. Given $a,b\in\R$, let
$$
\Psi:=\left(\begin{array}{cc} 2 & 0 \\ 0 & \frac 1 2
\end{array}\right), \qquad
X_{a,b}:=\left(\begin{array}{c} a \\ b \end{array}\right), \qquad E:=1.
$$
We denote $M_{a,b}:=M(\Psi,X_{a,b},E)\in\cS_{1,1}$.
It follows from the
\emph{(Determinant)} property that a path in $\cS_{1,1}$ starting at
$\one$ and ending at $M_{0,0}$ has an index in $\frac 1 2
+ 2\Z$, whereas a path in $\cS_{1,1}$ starting at
$\one$ and ending at $M_{1,1}$ has an index in  $\frac 1 2
+ 2\Z+1$ (the value of the relevant determinant is $-\frac 1 2 +\frac
3 2 ab$).
\end{example}

For the rest of this Appendix we place ourselves in $\R^{2N}$ equipped with the standard symplectic form $\omega_0$ and the standard complex structure $J_0$. The next Proposition is relevant for the parametrized Robbin-Salamon index when applied with $N=n+m$ and $E(t)\equiv \{0\}\oplus \{0\}\oplus \R^m$. We recall that, given a path of symplectic matrices $M:[0,1]\to \Sp(2N)$, the crossing form at a point $t\in[0,1]$ is the quadratic form $\Gamma(M,t)$ on $\ker\, (M(t)-\one)$ given by $\Gamma(M,t)(v)=\langle v,-J_0\dot M(t) M(t)^{-1} v\rangle$.

\begin{proposition} \label{prop:RSstratum}
Let $M:[0,1]\to \Sp(2N)$ be a $C^1$-path of symplectic
  matrices with the following property: there exists a continuous family of vector spaces
  $t\mapsto E(t)\subset \R^{2N}$ such
  that $E(t)\subset \ker\,(M(t)-\one)$ and the crossing form
  $\Gamma(M,t)$ induces a nondegenerate quadratic form on $\ker\,(M(t)-\one)/E(t)$. Assume $\omega_0$ has constant rank on $E(t)$. Then
$$
\mu_{RS}(M)=\frac 1 2 \mathrm{sign}\, \Gamma(M,0) + \sum_{t:\dim F(t)\cap
  \ker(M(t)-\one)>0} \mathrm{sign}\, \Gamma(M,t) + \frac 1 2
\mathrm{sign}\, \Gamma(M,1).
$$
\end{proposition}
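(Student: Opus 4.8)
Here is how I would approach the proof of Proposition~\ref{prop:RSstratum}.

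The plan is to replace $M$ by a path $\tM$ which is homotopic to $M$ with fixed endpoints — so that $\mu_{RS}(\tM)=\mu_{RS}(M)$ by homotopy invariance of the Robbin--Salamon index — and which, in the open interval $]0,1[$, has eliminated exactly the ``bad'' directions $E(t)$ that prevent $\Gamma(M,t)$ from being nondegenerate on the whole kernel. Then $\tM$ has only regular crossings in $]0,1[$ and the crossing-form definition of $\mu_{RS}$ applies directly. (Observe first that $\Gamma(M,t)$ vanishes on $E(t)$: for a path $v(t)\in E(t)$ one has $M(t)v(t)=v(t)$, so differentiating and pairing with $v(t)$ gives $\langle v,-J_0\dot M M^{-1}v\rangle=0$; this is what makes the induced form on $\ker(M(t)-\one)/E(t)$ well defined, and it shows that $\mathrm{sign}\,\Gamma(M,t)$ equals the signature of the induced quotient form.) The perturbation is built as $\tM(t)=M(t)\Phi_\eps(t)$, where $\Phi_\eps$ is the identity near $t=0$ and near $t=1$, and in the interior is governed by a smoothing $\beta_\eps$ of the tent function equal to $t$ on $[0,\eps]$, to $\eps$ on $[\eps,1-\eps]$, and to $1-t$ on $[1-\eps,1]$.

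I would carry this out in three steps according to the behaviour of $\omega_0$ on $E(t)$. If $E(t)$ is isotropic, decompose $\R^{2N}=E(t)\oplus J_0E(t)\oplus F(t)$ with $F(t)$ the symplectic orthogonal of $E(t)\oplus J_0E(t)$, and take $\Phi_\eps(t)$ to be the symplectic shear which is the identity on $J_0E(t)\oplus F(t)$ and sends $v\mapsto v+\beta_\eps(t)J_0v$ on $E(t)$. If $E(t)$ is symplectic, decompose $\R^{2N}=E(t)\oplus F(t)$, pick a continuous family $J(t)$ of $\omega_0$-compatible complex structures on $E(t)$, and take $\Phi_\eps(t)=\exp(J(t)\beta_\eps(t))\oplus\one$ with $0<\eps<\pi$. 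In the general case, use that $\omega_0$ has constant rank on $E(t)$ to split $E(t)=E_0(t)\oplus E_1(t)$ into an isotropic part and a symplectic part, and compose the two perturbations above (one supported on $E_0\oplus J_0E_0$, the other on $E_1$).

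The heart of the matter in each case is the identification, for $t\in]0,1[$, of $\ker(\tM(t)-\one)$ with the intersection of $\ker(M(t)-\one)$ with the $E(t)$-complementary symplectic subspace (that is, $F(t)\cap\ker(M(t)-\one)$ in the notation of the statement, up to the harmless addition of the $J_0E_0$-summand in the isotropic direction). One inclusion is immediate because $\Phi_\eps(t)$ fixes that subspace; for the converse, writing $v=v_1+v_2$ with $v_1$ in the ``bad'' summand and $v_2$ in its complement, the equation $\tM(t)v=v$ together with $M(t)v_1=v_1$ gives, after pairing with $v_1$ via $\omega_0$, a relation of the form $\beta_\eps(t)\,\omega_0(v_1,J_0v_1)=0$ (respectively $\sin(\beta_\eps(t))\,\omega_0(v_1,J(t)v_1)=0$), which forces $v_1=0$ since $\beta_\eps(t)\neq0$. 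Since $M$ and $\tM$ agree on this complementary subspace, their crossing forms agree on the common kernel throughout $]0,1[$, so the interior crossings of $\tM$ are precisely the interior $t$ with $F(t)\cap\ker(M(t)-\one)\neq0$, with crossing form $\Gamma(M,t)$. At the endpoints a direct computation gives $\mathrm{sign}\,\Gamma(\tM,0)=\mathrm{sign}\,\Gamma(M,0)+\dim E(0)$ and $\mathrm{sign}\,\Gamma(\tM,1)=\mathrm{sign}\,\Gamma(M,1)-\dim E(1)$; because $t\mapsto E(t)$ is continuous, $\dim E(0)=\dim E(1)$, so the two corrections cancel inside $\tfrac12\mathrm{sign}\,\Gamma(\tM,0)+\tfrac12\mathrm{sign}\,\Gamma(\tM,1)$. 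Substituting into the crossing-form formula for $\mu_{RS}(\tM)$ and using $\mu_{RS}(\tM)=\mu_{RS}(M)$ yields the stated identity.

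I expect the main obstacle to be the bookkeeping in the general (mixed-rank) case: one must check that composing the shear on $E_0\oplus J_0E_0$ with the rotation on $E_1$ still produces a path that is the identity on the full symplectic complement $F(t)$, that the two perturbations do not interfere in the eigenvalue computation above (their ``bad'' summands are $\omega_0$-orthogonal and the pairing argument decouples), that no spurious interior crossings appear, and that the endpoint signature corrections again sum to $\dim E(0)=\dim E(1)$ and cancel. A secondary point is ensuring that the auxiliary data — the splittings $E_0(t)\oplus E_1(t)$, the complements $F(t)$, and the family $J(t)$ — can be chosen continuously (indeed smoothly) in $t$, so that $\Phi_\eps$ is a genuine $C^1$ path in $\Sp(2N)$ and the homotopy from $M$ to $\tM$ stays in $\Sp(2N)$; this is routine once the constant-rank hypothesis is used to fix the splitting of $E(t)$.
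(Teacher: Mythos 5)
Your proposal is correct and follows essentially the same route as the paper's proof: the same perturbation $\tM(t)=M(t)\Phi_\eps(t)$ built from the smoothed tent function $\beta_\eps$, the same three-case split (isotropic, symplectic, mixed rank of $\omega_0$ on $E(t)$), the same $\omega_0$-pairing argument identifying $\ker(\tM(t)-\one)$ inside $\ker(M(t)-\one)$ for $t\in\,]0,1[$, and the same endpoint signature corrections $\pm\dim E(0)$, $\mp\dim E(1)$ cancelling by constancy of $\dim E(t)$.
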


\begin{proof} Let us first assume that the rank of $\omega_0$ is constant equal to $0$ on $E(t)$, i.e. $E(t)$ is isotropic. Let us decompose $\R^{2N}=E(t)\oplus J_0 E(t)\oplus F(t)$, where $F(t)$ is the symplectic orthogonal of $E(t)\oplus J_0 E(t)$. Given $\eps>0$ we denote by $\beta_\eps:[0,1]\to[0,\eps]$ a smoothing of the function
$$
t\mapsto\left\{\begin{array}{ll} t, & 0\le t\le \eps,\\
\eps, & \eps\le t\le 1-\eps,\\
1-t, & 1-\eps\le t\le 1.
\end{array}\right.
$$
We define an element $\Phi^0_\eps(t)\in\Sp(2N)$ which has the following matrix form with respect to the splitting $E(t)\oplus J_0 E(t)\oplus F(t)$:
$$
\Phi^0_\eps(t)=\left(\begin{array}{ccc} \one & 0 & 0 \\ \beta_\eps(t) & \one & 0 \\ 0 & 0 & \one \end{array}\right).
$$
We define $\tM_\eps(t):=M(t)\Phi^0_\eps(t)$, and we have $\mu_{RS}(\tM)=\mu_{RS}(M)$ since these paths are homotopic with fixed endpoints. We claim that the following equality holds for all $t\in]0,1[$:
\begin{equation} \label{eq:isotropic}
\ker\, (\tM(t)-\one) = \ker\, (M(t)-\one) \cap (J_0 E(t)\oplus F(t)).
\end{equation}
That $\ker\, (M(t)-\one) \cap (J_0 E(t)\oplus F(t))\subset \ker\, (\tM(t)-\one)$ follows from the fact that
$\Phi^0_\eps(t)$ acts by the identity on $J_0 E(t)\oplus F(t)$. Conversely, let $v=v_1+v_2\in\ker\, (\tM(t)-\one)$, with $v_1\in E(t)$ and $v_2\in J_0 E(t)\oplus F(t)$. The identity $\tM(t)v=v$ is equivalent to $M(t)(v_1+\beta_\eps(t)J_0v_1+v_2)=v_1+v_2$, hence to $(M(t)-\one)v_2=-\beta_\eps(t)M(t)J_0v_1$. Using that $M(t)v_1=v_1$ we obtain
$$
0 = \omega_0(v_1,(M(t)-\one)v_2) = -\beta_\eps(t)\omega_0(v_1,M(t)J_0v_1)=-\beta_\eps(t)\omega_0(v_1,J_0v_1).
$$
Since $\beta_\eps(t)\neq 0$, this implies $v_1=0$, so that $v=v_2\in J_0 E(t)\oplus F(t)$
and $(M(t)-\one)v_2=(\tM(t)-\one)v_2=0$, as desired.

Since the restrictions of $M(t)$ and $\tM(t)$ to $J_0E(t)\oplus F(t)$ are the same, it follows that the crossing form $\Gamma(\tM,t)$ coincides with $\Gamma(M,t)$ on $\ker\, (\tM(t)-\one)$ for $t\in]0,1[$. On the other hand, a straightforward computation shows that
\begin{eqnarray} \label{eq:E01}
\mathrm{sign}\,\Gamma(\tM,0)&=&\mathrm{sign}\,\Gamma(M,0) + \dim\, E(0),\\
\mathrm{sign}\,\Gamma(\tM,1)&=&\mathrm{sign}\,\Gamma(M,1) - \dim\, E(1). \nonumber
\end{eqnarray}
Thus, the contributions at the endpoints compensate each other, and the conclusion follows using the definition of the Robbin-Salamon index via crossing forms.

We now assume that the rank of $\omega_0$ on $E(t)$ is equal to $\dim\, E(t)$, i.e. $E(t)$ symplectic. Let us decompose $\R^{2N}=E(t)\oplus F(t)$, where $F(t)$ is the symplectic orthogonal of $E(t)$. Let $J(t)$ be a continuous family of complex structures on $E(t)$ which are compatible with $\omega_0$. For $\eps>0$ we define a path $\Phi^1_\eps:[0,1]\to \Sp(2N)$ whose matrix with respect to the decomposition $E(t)\oplus F(t)$ is
$$
\Phi^1_\eps(t):=\left(\begin{array}{cc} \exp(J(t)\beta_\eps(t)) & 0 \\ 0 & \one \end{array}\right).
$$
We denote $\tM(t):=M(t)\Phi^1_\eps(t)$, so that we have $\mu_{RS}(\tM)=\mu_{RS}(M)$. We claim that
\begin{equation} \label{eq:symplectic}
\ker\,(\tM(t)-\one)=\ker \,(M(t)-\one) \cap F(t)
\end{equation}
for all $t\in]0,1[$, whenever $0<\eps<\pi$. That $\ker \,(M(t)-\one) \cap F(t)\subset \ker\,(\tM(t)-\one)$ follows from the fact that $\Phi^1_\eps(t)$ acts as the identity on $F(t)$. Conversely, let $v=v_1+v_2\in\ker \,(\tM(t)-\one)$ such that $v_1\in E(t)$ and $v_2\in F(t)$. The relation $\tM(t)v=v$ is equivalent to $(M(t)-\one)v_2=(\one- \exp(J(t)\beta_\eps(t)))v_1$. Then
\begin{eqnarray*}
0&=&\omega_0(v_1,(M(t)-\one)v_2)\\
&=&\omega_0(v_1,(\one- \exp(J(t)\beta_\eps(t)))v_1)\\
&=&-\sin(\beta_\eps(t))\omega_0(v_1,J(t)v_1).
\end{eqnarray*}
Since $\sin(\beta_\eps(t))\neq 0$, we obtain $v_1=0$ and the claim follows.

Since the restrictions of $M(t)$ and $\tM(t)$ to $F(t)$ are the same, it follows that the crossing form $\Gamma(\tM,t)$ coincides with $\Gamma(M,t)$ on $\ker\, (\tM(t)-\one)$ for $t\in]0,1[$. On the other hand, a straightforward computation shows that equations~\eqref{eq:E01} still hold, and the conclusion follows.

Finally, we assume that the rank of $\omega_0$ on $E(t)$ is lies strictly between $0$ and $\dim\, E(t)$. We choose a continuous splitting $E(t)=E_1(t)\oplus E_0(t)$ with $E_0(t):=E(t)\cap E(t)^{\omega_0}$ isotropic and $E_1(t)=E_0(t)^\perp$ symplectic. Here $E(t)^{\omega_0}$ denotes the symplectic orthogonal of $E(t)$, and $E_0(t)^\perp$ denotes the Euclidean orthogonal of $E_0(t)$ in $E(t)$. We decompose $\R^{2N}=E_1(t)\oplus E_0(t)\oplus J_0E_0(t)\oplus F(t)$, such that $F(t)$ is the symplectic orthogonal of $E_1(t)\oplus E_0(t)\oplus J_0E_0(t)$. Given $0<\eps<\pi$ we define as above two paths $\Phi^0_\eps(t)$  acting as the identity on $E_1(t)\oplus F(t)$, and $\Phi^1_\eps(t)$ acting as the identity on $E_0(t)\oplus J_0E_0(t)\oplus F(t)$. We denote $\tM(t):=M(t)\Phi^0_\eps(t)\Phi^1_\eps(t)$, so that $\mu_{RS}(\tM)=\mu_{RS}(M)$. One proves as above that the crossings of $\tM$ and $M$ on $]0,1[$ are the same, with the same crossing forms on $\ker\, (\tM(t)-\one)$, and moreover equations~\eqref{eq:E01} still hold. This finishes the proof.
\end{proof}

\begin{remark}
The crossing form $\Gamma(M,t)$ vanishes identically on
$E(t)$. Indeed, given a path $v(t)\in
E(t)$ we have $M(t)v(t)=v(t)$ and $\dot M(t) v(t)+M(t)\dot v(t)=\dot
v(t)$. Dropping the $t$-variable for clarity, we have
\begin{eqnarray*}
  \Gamma(M,t)(v(t)) & = & \langle  v,-J_0\dot M M^{-1}v\rangle \\
&=& \langle  v,-J_0(\dot v-M\dot v)\rangle \\
&=& -\langle v,J_0\dot v\rangle + \langle v,(M^{-1})^TJ_0\dot v\rangle
\\
&=& -\langle v,J_0\dot v\rangle + \langle M^{-1}v,J_0\dot v\rangle \ =
\ 0.
\end{eqnarray*}
\end{remark}

%%%%%%%%%%%%%%%%%%%%%%%%%%%%%%%%%%%%%%%%%%%%%%
%%%%%%%%%%%%%%%%%%%%%%%%%%%%%%%%%%%%%%%%%%%%%%
%%%%%%%%%%%%%%%% References %%%%%%%%%%%%%%%%%%
%%%%%%%%%%%%%%%%%%%%%%%%%%%%%%%%%%%%%%%%%%%%%%
%%%%%%%%%%%%%%%%%%%%%%%%%%%%%%%%%%%%%%%%%%%%%%

\end{document}